\def\MR#1{}
\crefname{equation}{}{}
\Crefname{equation}{}{}
\theoremstyle{plain}
\newtheorem{thm}{Theorem}
\newtheorem{lem}[thm]{Lemma}
\newtheorem{prop}[thm]{Proposition}
\newtheorem{cor}[thm]{Corollary}
\newenvironment{thmbis}[1]
  {%
   \addtocounter{thm}{-1}%
   \begin{thm}}
  {\end{thm}}
\newtheorem*{prop*}{Proposition}
\theoremstyle{definition}
\theoremstyle{remark}
\newtheorem{rem}[thm]{Remark}
\newtheorem*{rem*}{Remark}
\newtheorem*{ack}{Acknowledgements}
\newcommand{\be}{\begin{equation}}    
\newcommand{\ee}{\end{equation}}    
\newcommand{\beu}{\begin{equation*}}    
\newcommand{\eeu}{\end{equation*}}    
\newcommand{\bea}{\begin{eqnarray}}    
\newcommand{\eea}{\end{eqnarray}}    
\newcommand{\beaa}{\begin{eqnarray*}}    
\newcommand{\eeaa}{\end{eqnarray*}}    
\newcommand{\bmx}{\begin{pmatrix}}    
\newcommand{\emx}{\end{pmatrix}}    
\newcommand{\uu}{\mathsf u}
\newcommand{\ol}{\overline}    
\newcommand{\del}{\partial}    
\newcommand{\g}{{\mathfrak g}}
\renewcommand{\b}{{\mathfrak b}}
\newcommand{\gh}{{\widehat \g}}
\newcommand{\hh}{{\widehat \h}}
\renewcommand{\a}{{\mathfrak a}}
\newcommand{\n}{{\mathfrak n}}    
\newcommand{\h}{{\mathfrak h}}    
\newcommand{\m}{{\mathfrak m}}    
\newcommand{\p}{{\mathfrak p}}
\newcommand{\mf}{\mathfrak}
\newcommand{\mc}{\mathcal}
\newcommand{\half}{\frac{1}{2}}
\newcommand{\nn}{\nonumber}
\newcommand{\8}{{\infty}}
\newcommand{\eps}{\epsilon}
\newcommand{\vareps}{\varepsilon}
\newcommand{\rank}{{\rm rank}}
\newcommand{\ad}{{\rm ad}}
\newcommand{\ZZ}{{\mathbb Z}}
\newcommand{\CC}{{\mathbb C}}
\newcommand{\bra}[1]{{\,\left<#1\right|}\,}    
\newcommand{\ket}[1]{{\,\left|#1\right>}\,}
\newcommand{\id}{{\textup{id}}}
\newcommand{\A}{\mc A}
\newcommand{\goi}[2]{=}    
\newcommand{\Hom}{\mathrm{Hom}}
\newcommand{\Homres}{\mathrm{Hom}^{\mathrm{res}}}
\newcommand{\lon}{\triangleright}
\newcommand{\on}{.}
\newcommand{\CCx}{\mathbb C^\times}
\renewcommand{\binom}[2]{{#1 \brack #2}}
\newcommand{\zFF}{\mathfrak z_{\mathrm{FF}}}
\newcommand{\btp}{\begin{tikzpicture}[baseline=0pt,scale=0.9,line width=0.25pt]}    
\newcommand{\etp}{\end{tikzpicture}}
\newcommand{\germ}{\mf}
\newcommand{\wt}{\widetilde}
\DeclareMathOperator{\res}{res}
\DeclareMathOperator{\Span}{span}
\newcommand{\nord}[1]{\,{:\!\!#1 \!\!:}\,}
\newcommand*{\longhookrightarrow}{\ensuremath{\lhook\joinrel\relbar\joinrel\rightarrow}}
\newcommand{\twist}{\varphi}
\newcommand{\la}{\langle}
\newcommand{\ra}{\rangle}
\newcommand{\shift}{\mathsf T}
\newcommand{\K}{\mc K}
\renewcommand{\O}{\mc O}
\newcommand{\sv}{\xi}
\newcommand{\Pisv}{{}^\sv \Pi^\eps}
\newcommand{\Pic}{\Pi}
\newcommand{\Rsv}{{}^\sv \! R}
\newcommand{\Hsv}{{}^\sv\hspace{-.3mm}\mathcal H^\eps}
\newcommand{\Hsvp}{{}^\sv\hspace{-.3mm}\mathcal H^{\eps,+}}
\newcommand{\Hsvpp}{{}^\sv\hspace{-.3mm}\mathcal H^{\eps,++}}
\newcommand{\hsv}{{{}^\sv\h}}
\newcommand{\Hsvloc}{\bigoplus_{i=1}^N \Hsv_{x_i}}
\newcommand{\Rnought}{{}^0 \! R}
\newcommand{\Hnought}{{}^0\hspace{-.3mm}\mathcal H^\eps}
\newcommand{\Hnoughtp}{{}^0\hspace{-.3mm}\mathcal H^{\eps,+}}
\newcommand{\Hnoughtpp}{{}^0\hspace{-.3mm}\mathcal H^{\eps,++}}
\newcommand{\triv}{\varphi}
\newcommand{\Vcrit}{{\mathbb V_0^{-h^\vee}}}
\newcommand{\M}{\mathcal M}
\newcommand{\vac}{\!\ket{0}\!}
\newcommand{\vacl}{\ket\lambda}
\newcommand{\Gaud}{\mathscr Z}
\DeclareMathOperator{\End}{End}
\DeclareMathOperator{\Fun}{Fun}
\DeclareMathOperator{\Der}{Der}
\DeclareMathOperator{\Aut}{Aut}
\newcommand{\ox}{\otimes}
\newcommand{\invlim}{\varprojlim}
\newcommand{\atp}[2]{\underset{\substack{$ $ \\ \tikz{
\draw[->] (0,0) -- (0,.13);} \\[-.2mm] {#1}}}{\smash{#2}}}
\newcommand{\into}{\hookrightarrow}
\newcommand{\onto}{\twoheadrightarrow}
\newcommand{\vpa}{\mathscr{P}}
\def\cocent{\mathsf d}
\newcommand{\lauleft}{\left\{\!\left\{}
\newcommand{\lauright}{\right\}\!\right\}}
\newcommand{\laurent}[1]{\!\lauleft #1 \lauright}
\newcommand{\picc}{\pi}
\newcommand{\pir}{{}^{\mathrm{(no\, der)}}\pi}
\newcommand{\pifin}{{}^{\mathrm{(fin)}}\pi}
\newcommand{\piaff}{{}^{\mathrm{(aff)}}\pi}
\newcommand{\Faff}{{}^{\mathrm{(aff)}}\mc F}
\newcommand{\Iaff}{{}^{\mathrm{(aff)}}\mc I}
\newcommand{\hr}{{}^{\mathrm{(no\, der)}}\h}
\newcommand{\haff}{{}^{\mathrm{(aff)}}\h}
\newcommand{\hfin}{{}^{\mathrm{(fin)}}\h}
\newcommand{\Qfin}{{}^{\mathrm{(fin)}}Q}
\def\lg{{{}^L\!\g}}
\def\lh{{{}^L\!\h}}
\def\ln{{}^L\!\n}
\def\lh{{}^L\h}
\def\hn{\hat\n}
\newcommand{\hb}{\hat\b}
\def\hg{\hat\g}
\def\hh{\hat\h}
\def\hN{\hat N}
\def\hB{ \hat B}
\def\H{ H}
\DeclareMathOperator{\diag}{diag}
\DeclareMathOperator{\Conn}{Conn}
\DeclareMathOperator{\op}{op}
\DeclareMathOperator{\Op}{Op}
\DeclareMathOperator{\MOp}{M\hspace{-.2mm}Op}
\newcommand{\F}{\mc F}
\newcommand{\Fc}{\ol{\mc F}}
\newcommand{\Wg}{W}
\newcommand{\Wgc}{W}
\DeclareMathOperator{\Lie}{Lie}
\newcommand{\Uloc}{\wt U_1(\hh^\eps)}
\newcommand{\Ulocc}{\wt U_1(L\h)}
\newcommand{\isom}{\xrightarrow\sim}
\newcommand{\GL}{GL}
\newcommand{\x}{{\bm x}}
\newcommand{\cp}[1]{{\mathbb P^{#1}}}
\newcommand{\conn}[1]{{}^{#1}\!\Conn}
\newcommand{\Diff}{\mathsf\Omega}
\newcommand{\Tk}{T^{\mathrm{(aff)}}}
\newcommand{\Tkn}{\nabla^{\mathrm{(aff)}}}
\newcommand{\vol}{\mathsf{dt}}
\newcommand{\U}{\mathscr U}
\newcommand{\largewedge}{\mbox{\Large $\wedge$}}
\newcommand{\hamd}{\mathbf v}
\newcommand{\coinv}{\mathsf F}
\newcommand{\nabt}{\nabla^T}
\newcommand{\confvec}{\omega}
\newcommand{\chal}{\check\alpha}
\newcommand{\al}{\alpha}
\newcommand{\bilin}[2]{\hspace{.5mm}\kappa\hspace{-.5mm}\big( #1|#2\big)}
\newcommand{\bilinvee}[2]{\hspace{.5mm}\kappa^\vee\hspace{-.8mm}\big( #1|#2\big)}
\newcommand{\coxeter}{h}
\newcommand{\dualcoxeter}{{h^\vee}}
\newcommand{\aaa}{a}
\newcommand{\chaaa}{\check a}
\newcommand{\cent}{\mathsf k}
\newcommand{\chcent}{\updelta}
\newcommand{\weyl}{\rho}
\newcommand{\chweyl}{{\check\rho}}
\newcommand{\La}{\Lambda}
\newcommand{\chLa}{\check\Lambda}
\newcommand{\trcart}{{}^tA}
\newcommand{\cart}{A}
\newcommand{\tildeal}{\widetilde\alpha}
\author{Charles Young}
\address{
Department of Physics, Astronomy and Mathematics, University of Hertfordshire, College Lane, Hatfield AL10 9AB, UK.}  \email{c.a.s.young@gmail.com}
\date{\today}
\begin{document} 
\title{Affine opers and conformal affine Toda}

\begin{abstract}
For $\mathfrak g$ a Kac-Moody algebra of affine type, we show that there is an $\textup{Aut}\, \mathcal O$-equivariant identification between $\Fun\Op_{\mathfrak\g}(D)$, the algebra of functions on the space of ${\g}$-opers on the disc, and $W\subset \pi_0$, the intersection of kernels of screenings inside a vacuum Fock module $\pi_0$. This kernel $W$ is generated by two states: a conformal vector, and a state $\updelta_{-1}\left|0\right>$. 
We show that the latter endows $\pi_0$ with a canonical notion of translation $T^{\mathrm{(aff)}}$, and use it to define the densities in $\pi_0$ of integrals of motion of classical Conformal Affine Toda field theory. 

The $\Aut\O$-action defines a bundle $\Pi$ over $\mathbb P^1$ with fibre $\pi_0$. We show that the product bundles $\Pi \ox \Omega^j$, where $\Omega^j$ are tensor powers of the canonical bundle, come endowed with a one-parameter family of holomorphic connections, $\nabla^{\mathrm{(aff)}} - \alpha T^{\mathrm{(aff)}}$, $\alpha\in \CC$. The integrals of motion of Conformal Affine Toda define global sections  $[\mathbf v_j dt^{j+1} ] \in H^1(\mathbb P^1, \Pi\ox \Omega^j,\nabla^{\mathrm{(aff)}})$ of the de Rham cohomology of $\nabla^{\mathrm{(aff)}}$.

Any choice of 
${\mathfrak\g}$-Miura oper $\chi$ 
gives a 
connection $\nabla^{\mathrm{(aff)}}_\chi$ on $\Omega^j$. Using coinvariants, we define a map $\mathsf F_\chi$ from sections of  $\Pi \ox \Omega^j$ to sections of $\Omega^j$. We show that $\mathsf F_\chi \nabla^{\mathrm{(aff)}} = \nabla^{\mathrm{(aff)}}_\chi \mathsf F_\chi$, so that $\mathsf F_\chi$ descends to a well-defined map of cohomologies. Under this map, the classes $[\mathbf v_j dt^{j+1} ]$ are sent to the classes in $H^1(\mathbb P^1, \Omega^j,\nabla^{\mathrm{(aff)}}_\chi)$ defined by the ${\g}$-oper 
underlying $\chi$. 
\end{abstract}


\maketitle
\setcounter{tocdepth}{1}
\tableofcontents

\section{Introduction and overview}
In this paper we generalize certain results about classical integrable systems associated to a finite-dimensional simple Lie algebra $\g$, to the case in which $\g$ is an affine algebra. 
These results concern the interplay of \emph{global} and \emph{local} pictures: here \emph{global} means associated to a copy of the Riemann sphere $\cp 1$ with some collection of marked points $\x = \{x_1,\dots,x_N\}$, while \emph{local} means associated to the formal disc $D$, which one should think of as a small disc about some point in $\cp 1\setminus\x$.

\subsection{}
To set the scene, let us begin with an overview of the situation when $\g$ is of finite type. 
The starting point is the \emph{free-field realization} of the \emph{classical W-algebra} associated to $\g$:
\be W\longhookrightarrow \pi_0 .\label{Wemb}\ee
Here $\pi_0$ is the vacuum Fock module for a system of $\rank(\g)$ free bosons and its subspace $W$ is defined to be the intersection of the kernels of certain \emph{screening operators} associated to the simple roots of $\g$. 
(Details are in \cref{sec: pieps}.) 
It turns out that $W$ is a Poisson vertex algebra generated by $\rank(\g)$ generators.
In the language of integrable systems, these generators correspond to the fields of the classical $\g$-KdV hierarchy, and the embedding \cref{Wemb} describes how the latter are expressed in terms of the fields of the classical $\g$-mKdV hierarchy. Equivalently, the generators of $W$ can be seen as the integrals of motion of the classical Toda field theory associated to $\g$, whose Hamiltonian $H$ in the light-cone formalism is nothing but the sum of the screening operators \cite{FFIoM}. 

The first of the generators of $W$ is a \emph{conformal vector} $\ol\confvec \in W\subset \pi_0$. Such a vector defines a preferred action, on $W$ and $\pi_0$, of the group $\Aut\O$ of changes of local holomorphic coordinate on the disc. 
Moreover, one has the following commutative diagram, which identifies $W$ and $\pi_0$ with geometrically-defined objects associated to the disc \cite[Theorems 11.2, 11.3]{Frenkel_2005}:
\be\label{pffw}
\begin{tikzcd}
\picc_0 \rar{\sim} & \Fun \MOp_\g(D) \\
\Wgc\uar[hook]  \rar{\sim}& \Fun \Op_\g(D) \uar[hook] 
\end{tikzcd}
\ee
Namely, $\Fun \Op_\g(D)$ is the algebra of functions on the space of $\g$-\emph{opers} on the disc \cite{DS}, and it embeds into $\Fun \MOp_\g(D)$, the algebra of functions on the space of $\g$-\emph{Miura opers} on the disc. We recall the definitions below: for the moment the important point is that the group $\Aut\O$ acts naturally on both these algebras, and the diagram \cref{pffw} is then $\Aut\O$-equivariant.

Now let us recall the passage from this local picture to the global picture. 
An oper is a certain gauge equivalence class of connections \cite{BDopers} (see \cref{sec: def oper}). 
It turns out that, for $\g$ of finite type, to give an $\g$-oper on an open subset $U$ of the Riemann sphere is the same thing as giving, on $U$, sections of certain tensor powers $\Omega^{j+1}$ of the canonical bundle $\Omega$ (i.e the bundle of holomorphic one-forms), together with a projective connection.\footnote{For the meaning of \emph{projective connection} see e.g. \cite[\S3.5.7]{Fre07}, but it is not crucial here.} 
(The relevant powers are given by the exponents of $\g$.) 
So opers form a sheaf, and one has the restriction map to the stalk of this sheaf at any point $x\in U$. 
This restriction map is an embedding. Indeed, relative to a choice of local holomorphic coordinate  $t:U\to \CC$, a meromorphic section $f_j(t) dt^{j+1}$ of $\Omega^{j+1}$ is given by a meromorphic function $f_j(t)$, and the germ of this section at a regular point $x$ is given by the Taylor expansion of this function at $x$. And, as usual, a meromorphic function can always be recovered from its Taylor series. (The resulting germs of sections correspond to the $\rank(\g)$ generators of $\Wgc$ mentioned above.) 
The upshot is that, for $\g$ of finite type, there is an embedding, global into local,
\be \Op_{\g}(\cp1)_{\x} \into \Op_{\g}(D) \label{emb}\ee
of the space of meromorphic opers on $\cp1$ holomorphic away from the marked points, into the space of opers on the formal disc about any point $x\in \cp1\setminus\x$. 

One can also understand the global picture in terms of the objects on the left of the diagram \cref{pffw}, the Poisson vertex algebras $W$ and $\pi_0$. Indeed, the $\Aut\O$ action defined by the conformal vector $\ol\confvec$ allows one to attach copies of $\pi_0$ to points in $\cp1$ (or any Riemann surface) in a coordinate-independent fashion, giving rise to a vector bundle over $\cp 1$ with fibre $\pi_0$ \cite{FrenkelBenZvi}. Global Miura opers then correspond to certain global sections of the dual bundle, as we shall recall in \cref{crs,sec: opers}. 

The main point to take away is that, for $\g$ of finite type, one has the surjection of the algebras of functions, 
\be \Fun\Op_{\g}(D) \onto \Fun\Op_{\g}(\cp1)_{\x},\label{surj}\ee 
coming from the embedding \cref{emb}. The algebra  $\Fun\Op_{\g}(\cp1)_{\x}$ of functions on the space of global opers is of great interest, for a reason we return to in \cref{gaudsec} below, and this surjection guarantees we get all such functions starting from states in $W \cong \Fun\Op_{\g}(D)$.

\subsection{}
Now, and for the rest of the paper, let us suppose that $\g$ is a Kac-Moody algebra of affine type. Our goal is to give the natural analogs, in this case, of the statements above.

Let us begin with the local situation on the disc. 
The first observation is that the definitions of all the objects appearing in the diagram \eqref{pffw} still make perfect sense. (They are given in \cref{sec: caf,sec: opers} below.)  In particular, one still has $\pi_0$, the Fock space for the loop algebra $\h\ox\CC((t))$ over the Cartan subalgebra $\h\subset \g$. Importantly, we mean the full Cartan subalgebra, including the central element $\cent$ and derivation element $\mathsf d$. For convenience, we shall identify $\h$ and $\h^*$ by means of the standard nondegenerate bilinear form. Then $\cent$ gets identified with the imaginary root $\chcent$.  

The first result of the  paper is the following.
\begin{thm}\label{propcd}
For $\g$ of affine type one (still) has the commutative diagram \eqref{pffw} and it is (still) $\Aut\O$-equivariant.
\end{thm}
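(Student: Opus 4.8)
The plan is to transplant the finite-type argument of \cite[Theorems 11.2, 11.3]{Frenkel_2005} to the affine setting, isolating the handful of places where the finite-dimensionality of $\g$ was actually used. The key structural observation is that the Feigin--Frenkel free-field realization, the screening operators, and the Drinfeld--Sokolov normal form for (Miura) opers \cite{DS} are all assembled uniformly from the Cartan-matrix data of $\g$ together with the principal grading; in their purely algebraic form none of these constructions knows whether $\g$ is of finite or affine type. So the bulk of the proof is the verification that the standard identifications survive the passage to the infinite-dimensional Lie algebra, once the appropriate completions are in place.

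First I would establish the top horizontal isomorphism $\picc_0 \isom \Fun \MOp_\g(D)$. Every $\g$-Miura oper on $D$ has a unique representative of the canonical form $\partial_t + p_{-1} + \mathbf u(t)$, $p_{-1}=\sum_i f_i$, with $\mathbf u(t)\in\h[[t]]$, so that $\Fun \MOp_\g(D)$ is the polynomial algebra on the Taylor coefficients of $\mathbf u$. Choosing a basis of $\h$ --- now of dimension $\rank(\g)+1$, the extra direction being the derivation $\mathsf d$ --- these coefficients are in canonical bijection with the negative Heisenberg modes acting on the vacuum of $\picc_0$, which yields the underlying isomorphism of commutative algebras. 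Equivariance under $\Aut\O$ is the statement that the action generated by the conformal vector $\ol\confvec$ matches the geometric action on $\Fun \MOp_\g(D)$ induced by changes of coordinate on $D$; I would verify it by computing how the $\h$-valued connection $\mathbf u(t)\,dt$ transforms --- with the expected inhomogeneous, coordinate-derivative term --- and matching this against the operator product of $\ol\confvec$ with the bosonic currents. The inhomogeneous term is precisely what fixes, and is reproduced by, the background-charge improvement in $\ol\confvec$.

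Next comes the bottom isomorphism $\Wgc \isom \Fun \Op_\g(D)$ together with commutativity. The forgetful map $\MOp_\g(D)\to\Op_\g(D)$ dualizes to the embedding $\Fun \Op_\g(D)\into \Fun \MOp_\g(D)$, whose image consists of the functions invariant under the residual freedom of choosing the Borel reduction of a fixed oper. Infinitesimally this freedom is generated by the simple root vectors, and the induced derivations of $\Fun \MOp_\g(D)$ are exactly the screening operators $S_i$, $i\in I$, of the free-field realization; hence $\Fun \Op_\g(D)$ corresponds to $\bigcap_{i\in I}\ker S_i = \Wgc$. Relative to the finite case the only new generator is the affine screening $S_0$ attached to $\alpha_0$: one checks that it is well-defined on $\picc_0$ and that, together with the remaining $S_i$, it implements exactly the extra gauge direction. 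Commutativity of \eqref{pffw} is then immediate, the two horizontal maps being compatible with the vertical inclusions by construction, and $\Aut\O$-equivariance of the bottom row follows from that of the top row since both $\Wgc$ and $\Op_\g(D)$ are $\Aut\O$-stable.

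The main obstacle is the infinite-dimensionality of $\g$, and in particular of its nilpotent subalgebra, which intervenes at two points. First, the gauge transformation bringing a general connection to canonical Miura form involves $\exp$ of elements of $\n((t))$, so one must work in a completion of $\g\ox\CC((t))$ in which these exponentials converge and the normal form exists and is unique; making this precise is the technical heart of the top row. Second, the affine screening $S_0$ must be shown to preserve $\picc_0$ and to act locally finitely, despite $\alpha_0$ pairing nontrivially with the imaginary-root direction $\chcent$; here the full Cartan is indispensable, since it is the $\mathsf d$-grading that controls the convergence of the contour integral defining $S_0$. Once these analytic points are settled, every remaining step is formally identical to the finite-type proof.
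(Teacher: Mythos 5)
Your overall strategy coincides with the paper's. The top row is \cref{prop: doao}: the Taylor coefficients of the $\h$-valued Miura connection are identified with the Heisenberg modes, and equivariance is checked by matching the inhomogeneous term in the transformation of $u(t)\,dt$ against the background charge $-\chweyl$ in $\ol\confvec$. The bottom row is \cref{sec: wop}: one computes the gauge transformations $\exp(a e_i)$ preserving the Miura form (the Riccati equation \cref{ric}), finds that the resulting flows are exactly the classical screenings (\cref{flowprop}), and concludes that $\Fun\Op_\g(D)$ sits inside $\Fun\MOp_\g(D)$ as the joint kernel of the $Q_i$, i.e.\ as $\Wgc$.

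Two caveats. First, your two ``main obstacles'' are largely misplaced. With the paper's definition a Miura oper \emph{is} a connection $d+(p_{-1}+u(t))\,dt$, not an equivalence class, so the top row involves no exponentials of $\hn_+(\A)$ at all: the only gauge transformation needed for equivariance under $t=\mu(s)$ is by the torus element $\mu'(s)^{\chweyl}\in\H(\O)$. Likewise $S_{\al_0}$ is on exactly the same footing as the other screenings --- it is a map $\picc_0\to\picc_{\al_0}$ given by the same residue formula, with no convergence issue, and in fact $\bilinvee{\al_0}{\chcent}=0$, so the pairing you worry about is trivial. Second, and more substantively, the genuinely affine ingredient your proposal never engages with is the structure of $\Op_\g(D)$ itself: the quasi-canonical form (\cref{thm: quasi-canonical form}) fixes the coefficients $v_j$, $j\in E_{\geq 2}$, only up to the coboundaries \cref{vuptof}, and on the disc the corresponding de Rham cohomologies are all trivial, so that $\Fun\Op_{\g}(D)\cong\CC[\twist_{n}, v_{1,n}]_{n<0}$ (\cref{opdisc}). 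This is why $\Wgc$ is generated by just $\chcent_{-1}\vac$ and $\ol\confvec$ rather than by one field per exponent, and it is precisely where ``every remaining step is formally identical to the finite-type proof'' breaks down: the abstract identification (gauge-invariant functions equal the kernel of the screenings) does carry over, but the explicit description of both sides of the bottom row is qualitatively different from finite type, and any finite-type step that counts generators by exponents would fail here.
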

However, whereas in finite types $\Wgc$ was generated by $\rank(\g)$ generators, in affine types we shall see that it is generated by just two states,
\be \chcent_{-1} \vac \qquad\text{and}\qquad \ol\confvec. \nn\ee

The state $\ol\confvec\in \Wgc\subset \picc_0$ is again a conformal vector, and it defines the action of the group of coordinate transformations $\Aut\O$ on $\picc_0$, exactly as above.\footnote{Though note that in the affine case the central charge is zero and the field corresponding to $\ol\confvec$ transforms like a germ of a section of $\Omega^2$ rather than the germ of a projective connection.} 

By contrast, the state $\chcent_{-1}\vac = \cent_{-1} \vac \in \Wgc\subset\picc_0$ is a new feature of the affine case. It too plays a vital role, as follows. 
We shall introduce the \emph{canonical translation operator} $\Tk$. It is given by 
\begin{align} 
\Tk &:= \sum_{m=0}^\8 \frac {(-1)^m}{(\coxeter)^m m!} \sum_{n_1,\dots,n_m \geq 1} \frac{1}{n_1\dots n_m} \chcent_{-n_1} \dots \chcent_{-n_m} L_{n_1+\dots+ n_m-1} \nn
\\&= L_{-1} - \frac {\chcent_{-1}}\coxeter L_0 + \left( \frac 12\frac{\chcent_{-1}}\coxeter\frac{\chcent_{-1}}\coxeter - \frac{\chcent_{-2}}{2\coxeter}\right) L_1 + \dots   ,\nn
\end{align}
where the $L_j$ are the generators of $\Der\O$. Here $\coxeter$ is the Coxeter number of $\g$. $\Tk$ is to be seen as a modification of the usual vertex algebra translation operator $T= L_{-1}$.
It is canonical in the sense that for all $j\geq 1$, $\left[ L_j, \Tk \right] = 0$ (\cref{lem: Lj}), which means it provides a notion of translation independent of our choice of local holomorphic coordinate on the disc (in contrast to $T$). 

The negative modes of any state $X= X_{-1} \vac$ in a vertex algebra can always be obtained by repeated application of the translation operator:  $X_{-2} = [T,X_{-1}]$, $X_{-3} = \frac 1 2[T,[T,X_{-1}]],\dots$ and so on. 
In our setting is natural to define \emph{canonical modes} of states using the canonical translation operator: we set 
\be X_{[-2]} := [\Tk, X_{-1}],\qquad X_{[-3]} := \frac 1 2[\Tk,[\Tk,X_{-1}]], \dots\nn\ee 
and so on. States constructed from the action of such canonical modes on the Fock vacuum $\vac$ will be conformal primaries. 

Armed with this canonical notion of translation, we establish the following. (For details see \cref{sec: tkdef}.) 
\begin{thm}\label{TkdiagIntro} We have the following $\Aut\O$-equivariant double complex:
\be \begin{tikzcd}
\CC\vol \\
\picc_0 \ox \CC\vol \rar{H\ox \id} \uar{\bra 0\ox \id}
& \bigoplus_{i\in I} \picc_{\al_i} \ox \CC\vol 
\\
\picc_0 \uar{\Tk\vol} \rar{H}
& \bigoplus_{i\in I} \picc_{\al_i} \uar{-\Tk\vol}\\
\CC\vac \uar
\end{tikzcd}
\nn\ee
\end{thm}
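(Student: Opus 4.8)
The plan is to verify the three conditions that a bicomplex of this shape demands: that the rows and columns are complexes, that the single non-trivial square commutes up to the indicated sign, and that every arrow is a morphism of $\Aut\O$-modules. Two of these are essentially free of content. Each row is a single arrow ($H$, resp.\ $H\ox\id$) and the right-hand column is a single arrow, so there $d_h^2=0$ is vacuous; only the four-term left-hand column $\CC\vac\to\picc_0\xrightarrow{\Tk\vol}\picc_0\ox\CC\vol\xrightarrow{\bra 0\ox\id}\CC\vol$ carries content. The minus sign in $-\Tk\vol$ is the usual column-parity sign $(-1)^{p}$ that makes the squares anticommute, so that the total differential squares to zero; concretely it reduces the square condition to the \emph{unsigned} identity $\Tk\,H=H\,\Tk$.

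I would first dispatch the left-hand column. Every summand of $\Tk$ terminates in a mode $L_{n_1+\dots+n_m-1}$ with $n_1+\dots+n_m-1\ge -1$, and $L_j\vac=0$ for $j\ge -1$; hence $\Tk\vac=0$ and the bottom composite vanishes. Dually each summand \emph{begins} either with $L_{-1}$ (the $m=0$ term) or with a creation mode $\chcent_{-n_1}$, $n_1\ge 1$; since $\bra 0 L_{-1}=0$ and $\bra 0\chcent_{-n_1}=0$, we get $\bra 0\Tk=0$ and the top composite vanishes. For the square, write $H=\sum_{i\in I}S_i$ with $S_i\colon\picc_0\to\picc_{\al_i}$ the screening of the simple root $\al_i$. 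As $\Tk$ is built solely from the Virasoro modes $L_j$ and the imaginary-root modes $\chcent_{-n}$, it is enough to commute these past each $S_i$. The screening currents are primary of conformal weight one, so their residues commute with every $L_j$ (the integrand is a total derivative), giving $[L_j,S_i]=0$. The decisive affine input is that the imaginary root $\chcent$ pairs trivially with every simple root, $\bilin{\chcent}{\al_i}=0$; hence the operator product of the $\chcent$-current with $S_i$ is regular and $[\chcent_n,S_i]=0$ for all $n$. Together these give $\Tk S_i=S_i\Tk$ for each $i$, hence $\Tk H=H\Tk$.

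For equivariance, recall that $\Aut\O$ acts through the modes $L_n$, $n\ge 0$, of the conformal vector $\ol\confvec$, with $\CC\vac$ the trivial module, $\CC\vol$ the one-dimensional $\Aut\O$-module of one-forms (the fibre of $\Omega$), and tensor factors carrying the diagonal action. The inclusion $\CC\vac\to\picc_0$ and the projection $\bra 0\ox\id$ are equivariant because $\vac$ and $\bra 0$ are $\Aut\O$-invariant, and $H$ is equivariant by the relations $[L_n,S_i]=0$ already used. The one arrow for which the tensor factor $\vol$ is essential is $\Tk\vol$: for $n\ge 1$ the mode $L_n$ acts trivially on $\CC\vol$ and $[L_n,\Tk]=0$ by \cref{lem: Lj}, while for $n=0$ a count of gradings shows that every summand of $\Tk$ has $\ad L_0$-weight $+1$, i.e.\ $[L_0,\Tk]=\Tk$, which is exactly cancelled by the weight of $\vol$. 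Thus pairing $\Tk$ with the one-form $\vol$ is precisely what converts the (non-equivariant) operator $\Tk$ into an $\Aut\O$-morphism, and the whole diagram becomes equivariant.

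The one step with genuine content is the square, and its only delicate aspect is conceptual: a priori the $\chcent$-dependent corrections distinguishing $\Tk$ from the ordinary translation $T=L_{-1}$ could destroy the translation-invariance of the screenings. That they do not is forced entirely by the orthogonality $\bilin{\chcent}{\al_i}=0$, a feature special to the affine case; granting the weight-one primality of the screening currents and the operator-product computations set up in \cref{sec: tkdef}, the remaining verifications are routine bookkeeping.
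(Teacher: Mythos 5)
Your proof is correct and follows essentially the same route as the paper's: the column conditions are checked by inspection, the square reduces to $[H,\Tk]=0$, which rests on the screenings commuting with all modes $L_j$ and $\chcent_n$, and equivariance of the vertical maps comes from \cref{lem: Lj} together with the compensating weight of $\vol$. The only (cosmetic) difference is that you phrase the key commutations in OPE/primary-field language, whereas the paper deduces both at once from \cref{clintcom} applied to the two states $\ol\confvec$ and $\chcent_{-1}\vac$ of $\Wgc$ (\cref{omin}, \cref{kin}) --- the same underlying facts, including the orthogonality $\bilin{\chcent}{\al_i}=0$ you correctly single out as the decisive affine input.
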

Here $H$ is the sum of the \emph{screening operators} corresponding to the simple roots of the affine algebra $\g$, $H := \bigoplus_{i\in I} \ol S_{\al_i}$,  
and by definition $\Wgc := \ker H$. 
In physics terminology, $H$ is the Hamiltonian of classical \emph{Conformal Affine Toda} field theory \cite{Babelon_1990,Bonora_1991,ACFGZ,Paunov1,PapadopoulosSpence} in the light-cone formalism. 

The double complex above is an $\Aut\O$-equivariant analog of a double complex from \cite{FFIoM}, which dealt with Affine Toda field theory.  
In Affine Toda field theory one works with the Fock module corresponding to the Cartan subalgebra of the underlying finite-type algebra, $\hfin := \left(\bigoplus_{i\in I} \CC\al_i\right)/ \CC\chcent$ in our conventions; we shall denote this Fock module by $\pifin_0$. 

We introduce a subspace $\piaff_0\subset \picc_0$ (\cref{sec: piaff}). It is defined using the canonical modes and so it is spanned by conformal primaries. We show that it is isomorphic to the Fock module $\pifin_0$ as a vector space: 
\be \pifin_0 \isom_\CC \piaff_0.\nn\ee
One can think that this isomorphism takes monomials in $\pifin_0$ and ``decorates'' them with appropriate terms involving negative modes of $\chcent$. For example, it turns out that for any $i,j\in I$,
\be [\al_i]_{-2}[\al_{j}]_{-1}\vac \mapsto
\left( \al_{i,-2} - \frac 1 \coxeter \chcent_{-1} \al_{i,-1}\right)\left(\al_{j,-1} - \frac 1 \coxeter \chcent_{-1} \right)\vac.\nn\ee

The classical screening operators obey the Serre relations of $\g$ and stabilize $\piaff_0\subset \picc_0$. This makes $\piaff_0$ into a module over $\n_+$, the subalgebra of $\g$ generated by the positive simple root vectors $e_i$, $i\in I$. We shall establish the following (\cref{afthm}).
\begin{thm}\label{afthmIntro} As $\n_+$-modules, $\piaff_0$ and $\pifin_0$ are isomorphic: \be \piaff_0 \,\,\,\cong_{\n_+} \pifin_0 .\nn\ee
\end{thm}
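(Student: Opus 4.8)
The plan is to upgrade the linear isomorphism $\Phi\colon \pifin_0 \isom_\CC \piaff_0$ of \cref{sec: piaff} to an isomorphism of $\n_+$-modules. Because the classical screening operators obey the Serre relations of $\g$ — so that they generate the two $\n_+$-actions, the Chevalley generator $e_i$ acting as $\ol S_{\al_i}$ — it suffices to prove that $\Phi$ intertwines the screenings, i.e.
\[ \Phi\circ \ol S^{\mathrm{fin}}_{\al_i} \;=\; \ol S_{\al_i}\circ \Phi \qquad (i\in I), \]
where $\ol S^{\mathrm{fin}}_{\al_i}$ is the Affine Toda screening on $\pifin_0$ of \cite{FFIoM} and $\ol S_{\al_i}$ its affine counterpart stabilising $\piaff_0\subset\picc_0$.

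The engine of the argument is a pair of commutation facts. First, each $\ol S_{\al_i}$ is the residue of a screening density of conformal weight one, so $[L_n,\ol S_{\al_i}]=0$ for all $n$ (in particular $\ol S_{\al_i}$ preserves the space of conformal primaries, consistent with its stabilising $\piaff_0$). Since $\Tk$ is assembled solely from the $L_n$ with $n\geq-1$ together with the modes $\chcent_{-m}$, and since $(\chcent,\al_i)=0$ because $\chcent$ is the imaginary root — whence $[\chcent_{-m},\ol S_{\al_i}]=0$ — we obtain $[\Tk,\ol S_{\al_i}]=0$, consistently with the commutativity of the double complex of \cref{TkdiagIntro}. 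These two facts say exactly that $\ol S_{\al_i}$ is transparent to the entire apparatus out of which $\piaff_0$ and $\Phi$ are built: the vacuum, the decorating modes $\chcent_{-m}$, and the canonical modes $\al_{j,[-m]}=\tfrac1{(m-1)!}(\ad\Tk)^{m-1}\al_{j,-1}$. Indeed, as $\ol S_{\al_i}$ is a derivation of the commutative product on $\picc_0$ and commutes with $\Tk$, its bracket with a canonical mode is forced to be the canonical dressing of the elementary bracket,
\[ [\ol S_{\al_i},\al_{j,[-m]}] \;=\; \tfrac1{(m-1)!}(\ad\Tk)^{m-1}[\ol S_{\al_i},\al_{j,-1}]. \]

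I would then conclude by computing on a spanning set of canonical-mode monomials. Applying $\ol S_{\al_i}$ to such a monomial and using the Leibniz rule together with the displayed identity reduces its action, factor by factor, to the single elementary bracket $[\ol S_{\al_i},\al_{j,-1}]$ (and to $\ol S_{\al_i}\vac$), dressed by the canonical modes $\Tk$ supplies. This elementary bracket depends only on the pairings $(\al_i,\al_j)$ and on the finite-type screening data, and is therefore matched under $\Phi$ with the corresponding elementary action of $\ol S^{\mathrm{fin}}_{\al_i}$ on $\pifin_0$; the canonical dressing on the affine side corresponds, again by $[\Tk,\ol S_{\al_i}]=0$, to the ordinary-translation dressing on the finite side. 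The genuine obstacle is this base case rather than the two commutation facts, which are essentially formal: one must check that the elementary screening action is compatible with the shifted representatives $\al_j-\tfrac1\coxeter\chcent$ of the $\hfin$-classes used by $\Phi$ — in particular that all the higher $\chcent_{-m}$-corrections produced by $(\ad\Tk)^{m-1}$ cancel against those hidden in the decoration, leaving precisely the $\Phi$-image of the finite screening. Once this elementary compatibility is established, the Serre relations extend the intertwining from the $e_i$ to all of $\n_+$, giving $\piaff_0\cong_{\n_+}\pifin_0$.
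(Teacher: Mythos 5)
Your overall architecture is the same as the paper's: take the vector-space isomorphism $\Phi:\pifin_0\isom\piaff_0$, show it intertwines the screenings, and do so by a translation-operator recursion that reduces everything to an elementary base case. But there is a genuine gap in the engine of your argument. The $\n_+$-module structure on $\piaff_0\subset\picc_0$ is not given by $\ol S_{\al_i}$, which is a map $\picc_0\to\picc_{\al_i}$ into a \emph{different} Fock module; it is given by the endomorphism $Q_i=-\vareps_i^{-1}\shift_{-\al_i}\ol S_{\al_i}$ of $\picc_0$ (\cref{naction}), which includes the shift operator. Your two commutation facts are correct for $\ol S_{\al_i}$ viewed as a map between Fock modules (they follow from \cref{clintcom} applied to $\ol\confvec$ and $\chcent_{-1}\vac$), but they do not transfer to $Q_i$: the canonical translation operator does \emph{not} commute with the shift, one finds $[\Tk,\shift_{\al_i}]=\tildeal_{i,-1}\shift_{\al_i}$, and hence
\be [\Tk,Q_i]=-\tildeal_{i,-1}\,Q_i\neq 0.\nn\ee
So the "canonical dressing" identity you write for $[\ol S_{\al_i},\al_{j,[-m]}]$ does not directly give you the action of $Q_i$ on canonical-mode monomials, and the claimed correspondence "$[\Tk,\ol S_{\al_i}]=0$ on the affine side matches $[T,\cdot]=0$ on the finite side" is not the relation you need (note that on the finite side one likewise has $[L_{-1},\Qfin_i]=-[\al_i]_{-1}\Qfin_i\neq 0$).

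The fix is exactly the extra term you are missing, and it is where the real content lies. One computes $[\Tk,Q_i]=-\tildeal_{i,-1}Q_i$ and observes that this is \emph{structurally identical} to $[L_{-1},\Qfin_i]=-[\al_i]_{-1}\Qfin_i$ under the decoration map $\tildeal_{i,[-1]}\leftrightarrow[\al_i]_{-1}$; the induction on $n$ defining $[Q_i,\tildeal_{j,[-1-n]}]$ then has the same recursive structure on both sides, which is what forces the intertwining (this is \cref{TkQ} and \cref{afffin} in the paper). You also have the difficulty located in the wrong place: the base case you worry about is immediate, since $[Q_i,\tildeal_{j,-1}]=\la\chal_i,\tildeal_j\ra$ and $\la\chal_i,\chcent\ra=0$, so the $\chcent/\coxeter$ shift in $\tildeal_j$ drops out and one recovers $\la\chal_i,\al_j\ra=\cart_{ij}$, matching the finite side on the nose. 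The nontrivial step is the commutator with the shift operator, not the elementary pairing.
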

This is very useful because it allows us to import results from \cite{FFIoM} wholesale. 
We get the densities $\hamd_j$ of integrals of motion of Conformal Affine Toda field theory. 
They are nothing but the images of the usual densities of integrals of motion of Affine Toda field theory (or equivalently, the densities of the $\g$-mKdV Hamiltonians) under the ``decoration'' map above. See \cref{thm: Iaff} and \cref{vjthm} in \cref{sec: piaff}. 
For each exponent $j\in E$, $\hamd_j$ is a conformal primary in $\picc_0$ of conformal weight $j+1$.

\subsection{}
Now let us describe the passage from the local to the global picture in the affine case. Roughly speaking, the main idea is that rather than merely attaching copies of $\pi_0$ and its subspace $W=\ker H$ to points in the Riemann sphere, we should now attach copies of the whole $\Aut\O$-equivariant double complex from \cref{TkdiagIntro}. 

To explain why that is so, let us consider opers in affine types. While the definition of opers in affine types is itself in very close  analogy to the definition in finite types, the data needed to specify such an oper turn out to be of a different character.  Recall that an \emph{affine connection} $\nabla$ is by definition a connection on the canonical bundle $\Omega$. It allows one to differentiate sections of $\Omega$, and in fact sections of $\Omega^j$ for any integer $j$. 
When $\g$ is of affine type, to give an $\g$-oper on $U$ is the same as giving the following on $U$: first, an affine connection $\nabla$; second, a section of $\Omega^2$; and third, sections of the de Rham cohomologies 
\be H^1(\Omega^j,\nabla):= \Gamma(\Omega\ox \Omega^j)/ \nabla \Gamma(\Omega^j)\label{acc}\ee for the connection $\nabla$ with coefficients in $\Omega^j$, as $j$ ranges over the (now, countably infinite) set $E_{\geq 2}$ of exponents of $\g$. So ``most'' of the information in the oper now comes in the form of these cohomology classes. An important consequence is that the restriction map 
\be \Op_{\g}(\cp1)_\x\to \Op_{\g}(D_x)\nn\ee
to opers on the disc at $x\in \cp1\setminus \x$, is now very far from being an embedding, in contrast to the situation in \cref{emb} above. Indeed, on the (unpunctured) disc all the cohomologies are trivial. All that survives are the germs at $x$ of the affine connection and of the section of $\Omega^2$ (and these correspond to the states $\chcent_{-1}\vac$ and $\ol\confvec$).

Thus, in contrast the case of finite types, in affine types we certainly cannot expect to construct all functions on the space of global opers starting from states in $W \cong \Fun\Op_{\g}(D)$. Instead we must work in an appropriate de Rham cohomology, as follows. 

Using the action of $\Aut\O$ on $\picc_0$ defined by $\ol\confvec$, we define a vector bundle $\Pic$ over $\cp1$ with fibre $\picc_0$, following \cite{FrenkelBenZvi}. Such vertex-algebra bundles always carry a canonical flat holomorphic connection $(\nabt)_{\del_t}:= \del_t + T$, defined by the translation operator $T=L_{-1}$ relative to any local holomorphic coordinate $t$. In our setting though we get more: we show there is a flat holomorphic connection on $\Pic\ox \Omega^j$ for any integer $j$,
given by 
\be (\Tkn)_{\del_t} = \del_t + L_{-1} - j \frac{\chcent_{-1}}{\coxeter},\nn\ee
and, moreover, that this connection belongs to a one-parameter family of connections, as follows. (See \cref{sec: con}.)
\begin{thm}\label{act} Let $\alpha\in \CC$. 
For each $j\geq 0$ there is a flat holomorphic connection 
on $\Pic\ox\Omega^j$ given by 
\be (\Tkn)_{\del_t} + \alpha \Tk. \nn\ee
\end{thm}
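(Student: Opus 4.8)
The plan is to reduce the claim to the single fact that $\Tk$ is a conformal primary of weight one. On any fixed holomorphic vector bundle the holomorphic connections form an affine space modelled on the global holomorphic $\End$-valued one-forms, and since $\Omega^j$ is a line bundle we have $\End(\Pic\ox\Omega^j)\cong\End(\Pic)$; hence any two connections on $\Pic\ox\Omega^j$ differ by an element of $H^0(\cp1,\End(\Pic)\ox\Omega)$. We have just seen that the $\alpha=0$ member $(\Tkn)_{\del_t}=\del_t+L_{-1}-j\,\chcent_{-1}/\coxeter$ is a flat holomorphic connection on $\Pic\ox\Omega^j$. It therefore suffices to show that $\alpha\,\Tk\,\vol$ is a well-defined global holomorphic section of $\End(\Pic)\ox\Omega$: adding it to $\Tkn$ then yields another holomorphic connection, independent of the choice of local coordinate.

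First I would invoke the dictionary of \cite{FrenkelBenZvi} relating $\Aut\O$-equivariant data on $\picc_0$ to geometric objects on the associated bundle $\Pic$. Under the action of $\Der\O=\Lie\Aut\O$ by the modes $L_n$, $n\geq 0$, an operator $\mc O$ on $\picc_0$ descends to a global section of $\End(\Pic)\ox\Omega^k$ precisely when it is a conformal primary of weight $k$ for the Virasoro action defined by $\ol\confvec$, i.e. when $[L_n,\mc O]=0$ for all $n\geq 1$ and $[L_0,\mc O]=k\,\mc O$ (the latter matching the convention, visible already for $\ol\confvec$, under which weight $w$ corresponds to $\Omega^{w}$). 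So the task is to verify that $\Tk$ is primary of weight $k=1$.

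The weight is read off directly from the defining series: using $[L_0,\chcent_{-n}]=n\,\chcent_{-n}$ and $[L_0,L_k]=-k\,L_k$, each monomial $\chcent_{-n_1}\cdots\chcent_{-n_m}L_{n_1+\dots+n_m-1}$ carries conformal weight $(n_1+\dots+n_m)-(n_1+\dots+n_m-1)=1$, so that $[L_0,\Tk]=\Tk$. The primary condition $[L_n,\Tk]=0$ for all $n\geq 1$ is exactly \cref{lem: Lj}. Together these show $\Tk$ is a weight-one primary, whence $\alpha\,\Tk\,\vol$ is the desired global holomorphic section of $\End(\Pic)\ox\Omega$ and $(\Tkn)_{\del_t}+\alpha\Tk$ is a holomorphic connection on $\Pic\ox\Omega^j$. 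Flatness is then automatic: the base $\cp1$ is one-dimensional, so the curvature, a holomorphic two-form valued in $\End(\Pic)$, vanishes identically; concretely, in a local coordinate the connection reads $\del_t+A(t)$ with $A$ holomorphic and $\vol\wedge\vol=0$.

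The conceptual heart of the matter — and where I expect the only real work to sit — is \cref{lem: Lj}. The bare translation operator $L_{-1}=T$ is of weight one but is \emph{not} primary, since $[L_n,L_{-1}]=(n+1)L_{n-1}\neq 0$ for $n\geq 1$; this non-primary part is exactly the obstruction that forces $\del_t+L_{-1}$ to be a genuine connection rather than a modification by a one-form (and similarly $\chcent_{-1}$, which appears in $\Tkn$, fails to be primary). The role of the correction terms in $\Tk$ built from the modes $\chcent_{-n}$ is precisely to cancel this part, turning translation into an honest primary field and thereby making the one-parameter deformation by $\alpha\Tk$ possible. Granting \cref{lem: Lj}, everything else is weight bookkeeping together with the dimension count that trivializes flatness on a curve.
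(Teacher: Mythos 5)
Your argument is correct and is essentially the paper's own. The paper proves this via \cref{affcor}, which says one may add $B$ to an existing flat holomorphic connection on $\Pic$ precisely when $R(\mu)^{-1}BR(\mu)=\mu'(0)B$ for all $\mu\in\Aut\O$; given the form \cref{Rsv} of $R(\mu)$ (with $v_0=\mu'(0)$), that condition is exactly your statement that $B$ is a conformal primary of weight one, and in both treatments the verification for $B=\Tk$ reduces to \cref{lem: Lj} together with the $L_0$-weight count on the monomials $\chcent_{-n_1}\cdots\chcent_{-n_m}L_{n_1+\dots+n_m-1}$. So your ``global section of $\End(\Pic)\ox\Omega$'' packaging is a geometric restatement of \cref{affcor} rather than a different route, and the flatness remark (curvature vanishes on a curve) matches the paper's implicit treatment. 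The one caveat: the body version of this theorem also establishes, as its part (ii), that the $\alpha=0$ connection $\Tkn$ exists on $\Pic\ox\Omega^j$ in the first place; since $\chcent_{-1}$ is \emph{not} primary (as you correctly observe), that part cannot be obtained from the affine-space argument and is instead done by an explicit transition-function computation using $R(\mu_s)\chcent_{-1}R(\mu_s)^{-1}=\frac{\chcent_{-1}}{\mu'(s)}-\coxeter\frac{\mu''(s)}{\mu'(s)^2}+\dots$ against the $\mu'(s)^j$ factor from $\Omega^j$. You take that base case as given, which is consistent with how the statement is phrased in the introduction, but it is the piece of the paper's proof that actually requires a computation beyond \cref{lem: Lj}.
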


Associated to any conformal primary state $v\in \picc_0$ of conformal weight $j$, one gets a global section of $\Pi\ox \Omega^j$, given locally, in any holomorphic coordinate $t$, by $vdt^j$. It is constant with respect to the connection $\Tkn - \Tk dt$, i.e. it obeys
\be \Tkn vdt^j = (\Tk v) dt^{j+1} .\nn\ee 
In particular, this applies to the densities $\hamd_j\in \picc_0$. These densities $\hamd_j$ defined classes in the cohomology for the vertical complex in \cref{TkdiagIntro}, i.e. they were defined only up to the addition of canonical translates $\Tk f_j$ with $f_j\in \picc_0$ primary of weight $j$. It will follow that the corresponding global sections $\hamd_jdt^{j+1}$ define classes
\be [\hamd_j dt^{j+1} ] \in H^1(\cp1, \Pic\ox \Omega^j,\Tkn) \label{ucc}\ee
in the de Rham cohomology of the flat holomorphic connection $\Tkn$ with coefficients in $\Pic\ox\Omega^j$.  These classes are to be seen as ``universal'' versions of the classes in \cref{acc} defined by any one particular oper on $\cp1$ -- in a sense we now make precise. 

\bigskip

To connect back to opers on $\cp 1$, we introduce spaces of coinvariants on $\cp1$. To each marked point $x_i\in \x$ we attach a one-dimensional module $\CC v_{\chi_i}$. Roughly speaking, it is a module over the loop algebra $\h\ox \CC((t))$ and it is specified by an element $\chi_i \in \h^* \ox \CC(t))$. (For the precise statement see \cref{sec: cchi}.) The space of coinvariants is non-trivial (and of dimension one) if and only if these $\chi_i$ are the germs of a global meromorphic section $\chi$ of a certain sheaf of connections, $\conn{-\chweyl}$. These statements are all exactly as in the case of $\g$ of finite type (for which see \cite{Fre07}). The only difference is that in affine types $\chweyl$ is a derivation element, i.e. it doesn't lie in the span of the simple roots. One upshot of that is that we get a preferred map 
\be \conn{-\chweyl}(\cp1)_\x \onto \Conn(\cp1,\Omega)_\x; \quad 
                     \chi \mapsto \Tkn_\chi \nn\ee
which takes our global section $\chi$ of $\conn{-\chweyl}(\cp1)_\x$ and produces an affine connection which we denote by $\Tkn_\chi$.
The connection $\chi\in \conn{-\chweyl}(\cp1)_\x$ is equivalent to a \emph{global Miura oper} $\nabla \in \MOp_{\g}(\cp1)_\x$. To such a Miura oper corresponds an underlying oper $[\nabla] \in \Op_{\g}(\cp1)_\x$ (see \cref{sec: opers}). The affine connection $\Tkn_\chi$ is nothing but the affine connection associated to that oper.

By considering a modified space of coinvariants in which we insert an additional module, isomorphic to $\picc_0$, at a point $x\in \cp1\setminus \x$, we then obtain, for each $\chi$, a map
\be \coinv_\chi: \Gamma(U,\Pic\ox \Omega^j)_\x \to \Gamma(U,\Omega^j)_\x . \nn\ee 
which takes meromorphic sections of $\Pic\ox\Omega^j$ to meromorphic sections of $\Omega^j$. 
We show that this map $\coinv_\chi$ is functorial in the following sense (see \cref{functhm,funccor})
\begin{thm}\label{tkIntro}
We have
\be \coinv_\chi \Tkn = \Tkn_\chi \coinv_\chi \nn\ee
and therefore we get a well-defined map of cohomologies
\be [\coinv_\chi]: H^1(U,\Pic\ox \Omega^j,\Tkn)_\x \to H^1(U,\Omega^j, \Tkn_\chi)_\x . \nn\ee
\end{thm}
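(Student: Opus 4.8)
The plan is to reduce the operator identity $\coinv_\chi \Tkn = \Tkn_\chi \coinv_\chi$ to two local identities for the coinvariant map, and then to check that the correction terms built into $\Tkn$ are exactly those needed to reproduce the affine connection $\Tkn_\chi$. Throughout I work relative to a fixed local holomorphic coordinate $t$ centred at the auxiliary insertion point $x\in\cp1\setminus\x$, so that a section of $\Pic\ox\Omega^j$ is written $v\,dt^j$ with $v\in\picc_0$, and $\coinv_\chi(v\,dt^j)=f_v(t)\,dt^j$ for a meromorphic function $f_v$ obtained by pairing $v$ against the one-dimensional insertions $\CC v_{\chi_i}$ at the marked points. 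Since by \cref{act} the connection acts locally as $(\Tkn)_{\del_t}=\del_t+L_{-1}-j\,\chcent_{-1}/\coxeter$, so that $\Tkn(v\,dt^j)=\big(L_{-1}v-\tfrac{j}{\coxeter}\chcent_{-1}v\big)\,dt^{j+1}$ for constant $v$, it suffices to evaluate $\coinv_\chi$ on the two states $L_{-1}v$ and $\chcent_{-1}v$.

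First I would establish the translation–derivative identity $\coinv_\chi(L_{-1}v)=\del_t\, \coinv_\chi(v)$. This is the usual statement that, for a vertex-algebra bundle, differentiating a correlation function with respect to the position of an insertion equals inserting $T=L_{-1}$ there; it is a formal consequence of the definition of the flat connection $\nabt$ on $\Pic$ and of the way $\coinv_\chi$ is built from coinvariants of $\picc_0$, exactly as in the finite-type case. The content specific to the affine setting lies in the second identity, for the mode $\chcent_{-1}$. Here I would use the Ward identity for the abelian current $\chcent(z)=\sum_n\chcent_n z^{-n-1}$: inserting this current produces a global meromorphic one-form on $\cp1$ whose polar parts at the $x_i$ are prescribed by the germs $\chi_i$ of the global section $\chi$, and whose regular part at $x$ computes $\coinv_\chi(\chcent_{-1}v)$. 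Because $\chi$ is a \emph{global} meromorphic section of $\conn{-\chweyl}(\cp1)_\x$, the residue theorem on $\cp1$ forces this one-form to be exactly (the current associated to) $\chi$ itself, so that $\coinv_\chi(\chcent_{-1}v)=-\coxeter\,b(t)\,\coinv_\chi(v)$, where $b(t)$ is the connection coefficient of the affine connection $\Tkn_\chi$ attached to $\chi$ under the map $\chi\mapsto\Tkn_\chi$, and the factor $\coxeter$ records the pairing $\la\chcent,\chweyl\ra$.

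Combining the two identities with the local formula for $\Tkn$ gives
\begin{align}
\coinv_\chi\Tkn(v\,dt^j)
&= \coinv_\chi\Big(\big(L_{-1}v - \tfrac{j}{\coxeter}\chcent_{-1}v\big)\,dt^{j+1}\Big) \nn \\
&= \big(\del_t f_v + j\,b(t)\,f_v\big)\,dt^{j+1}
= \Tkn_\chi\coinv_\chi(v\,dt^j), \nn
\end{align}
the last equality being the local description of the affine connection $\Tkn_\chi$ acting on $\Omega^j$ (the factor $\coxeter$ cancelling against the $1/\coxeter$ in $\Tkn$). This is the asserted intertwining relation. The final statement is then formal: since $\coinv_\chi$ is a chain map between the two-term complexes $\big(\Gamma(\Pic\ox\Omega^\bullet)_\x,\Tkn\big)$ and $\big(\Gamma(\Omega^\bullet)_\x,\Tkn_\chi\big)$ on the curve, it carries $\Tkn$-exact sections to $\Tkn_\chi$-exact sections and hence descends to the quotient map $[\coinv_\chi]$ on $H^1$.

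The main obstacle is the current Ward identity of the second step. Getting it right requires carefully matching the abelian current $\chcent$ --- the imaginary root, which is central --- with the derivation part of $\chi$, keeping track of the affine Weyl vector $\chweyl$ and the resulting factor of $\coxeter$, and verifying that the global residue computation genuinely collapses the sum over the marked points into the single local expression $b(t)$ defining $\Tkn_\chi$. The signs and normalizations in this step must then be reconciled against the conventions fixing the map $\chi\mapsto\Tkn_\chi$ and the local form of $\Tkn$.
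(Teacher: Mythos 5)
Your proposal is correct and follows essentially the same route as the paper: the identity is reduced to $\coinv_\chi(L_{-1}v)=\del_t\coinv_\chi(v)$ together with $\coinv_\chi(\chcent_{-1}v)=\twist(t)\,\coinv_\chi(v)$, the latter being exactly the residue-theorem/Ward-identity computation the paper records as \cref{bswap} (and \cref{twistrem}), with the $\coxeter$ arising from $\bilin{\chweyl}{\chcent}=\coxeter$ just as you say. The paper simply quotes the explicit formula of \cref{secpair} in the global coordinate $z$ and concludes in two lines, so your write-up is a correct unpacking of the same argument.
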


Finally, we are in a position to apply the map of cohomologies $[\coinv_\chi]$ to the global sections $[\hamd_j dt^{j+1} ] \in H^1(\cp1, \Pic\ox \Omega^j,\Tkn)$, \cref{ucc}, coming from the Conformal Affine Toda integrals of motion. Their images under $[\coinv_\chi]$ are indeed the cohomology classes for that particular oper $[\nabla]$, as in \cref{acc}.  

\subsection{}\label{gaudsec}
To conclude this introduction, let us mention one of the main motivations for this paper. We do so purely for context -- nothing else in the paper depends on the contents of this section.

For Kac-Moody algebras $\g$ of finite type, there is a remarkable correspondence between the Gaudin/Bethe algebra associated to $\g$ and the algebra of functions on the space of opers on $\cp1$ for the Langlands dual Lie algebra $\lg$ \cite{FrenkelGaudinLanglands,Fopers}, and cf. \cite{Fre07,MTV1,MTVSchubert,RybnikovProof}.

It was conjectured in \cite{FFsolitons} that such statements should generalize to affine types: namely, in that paper the authors introduced a notion of affine opers and affine Miura opers and conjectured that they encode the spectra of affine Gaudin models for the Landlands dual Kac-Moody algebra.
Proving such a conjecture is an interesting but difficult open problem: interesting because there are close links between quantum Gaudin models in affine types and the ODE/IM correspondence \cite{DT,BLZ,BLZ4,BLZ5,DDT,MRV1,MRV2,FH,FJM,MR1,MR2} and integrable quantum field theories \cite{FFsolitons,V17,LacroixThesis,DLMV1,DLMV2,Vic2,Lacroix2019}; but difficult because it is not even clear what one should be diagonalizing. Indeed, while the quadratic Hamiltonians which define the Gaudin model can be written down for all symmetrizable Kac-Moody algebras, and are known to be diagonalizable by Bethe ansatz, \cite{SV,RV}, \cite[Appendix A]{LVY}, it is expected, but not known in general, that they belong to some large commutative algebra of integrals of motion.

The conjecture of \cite{FFsolitons} was eludicated somewhat in \cite{LVY,LVY2}: namely, the eigenvalues of the (local) higher Gaudin Hamiltonians should again be given by functions on a space of $\lg$-opers on $\cp1$, but these functions now take the form of certain integrals of hypergeometric type (prompting the further conjecture that the Hamiltonians are also of this form).

One approach to constructing the Hamiltonians is to work grade-by-grade in the principal gradation. 
Indeed, one is interested in the diagonalization problem in tensor products of highest weight irreducible $\g$-modules. That problem breaks up into a sequence of finite-dimensional diagonalization problems, labelled by the depth in the principal gradation (or equivalently by the number of Bethe roots). 
The starting point in that program, and setting of the present paper, is the \emph{vacuum case}, i.e. the case of no Bethe roots. So one can think that the content of this paper is to describe, in this vacuum case, how global objects arise from local ones in affine types. 

To explain that statement, let us recall some facts about the relationship between local and global objects for Gaudin models in finite types. 
For $\g$ of finite type, one has commutative diagrams of the following sort:  \cite[Theorem 2.7]{Fopers}\footnote{We are skirting over subtleties about singularities at $\8\in \cp1$ here. Note that, in the present paper, $\x= \{x_1,\dots,x_N\}$ will be the only allowed singularities; $\8$ will have no special status. Cf. the constraint in \cref{globsec}.}
\be\label{zzff}
\begin{tikzcd}
\zFF(\gh) \dar[two heads] \rar{\sim} & \Fun\Op_{\lg}(D)\dar[two heads]\\
\Gaud(\g)_\x \rar{\sim} & \Fun\Op_{\lg}(\cp1)_{\x}
\end{tikzcd}
\ee
Here, in the top line, the algebra $\Fun\Op_{\lg}(D)$ of opers on the disc is identified in an $\Aut\O$-equivariant fashion with $\zFF(\gh) \subset \Vcrit(\g)$, the space 
of singular vectors of the vacuum Verma module at critical level \cite{FFGD}. 
On the bottom line $\Fun\Op_{\lg}(\cp1)_{\x}$, the algebra of functions on the space $\Op_{\lg}(\cp1)_{\x}$ of meromorphic opers on $\cp1$ holomorphic away from the marked points, is identified with $\Gaud(\g)_\x\subset U(\g)^{\otimes N}$, the Gaudin/Bethe algebra.
The surjection on the right is the one from \cref{surj}. 
The surjection $\zFF(\gh) \onto \Gaud(\g)_\x$ is defined using coinvariants \cite{FFR}.
Thus, in finite types, $\zFF(\gh)$ plays the role of a sort of ``universal object'' from which all the Gaudin Hamiltonians can be constructed after supplying the extra data of the marked points $x_1,\dots,x_N$.\footnote{It also allows one to construct generalizations of Gaudin models including cyclotomic Gaudin models \cite{VY1} and Gaudin models with irregular singularities \cite{FFT,FFRyb,RCactus,VY3}.}
The proof of the existence of the $\Aut\O$-equivariant isomorphism $\zFF(\gh) \isom  \Fun\Op_{\lg}(D)$ in finite types uses the Wakimoto realization \cite{FF1990,Wakimoto}: an embedding of vertex algebras $\Vcrit(\g) \into M \ox \pi_0$, where the vertex algebras on the right are Fock spaces for certain systems of free fields. 
Under this embedding, the space of singular vectors $\zFF(\gh)$ is mapped into $\pi_0$, and its image in $\pi_0$ is precisely the subspace $W= \ker H \subset \pi_0$ from \cref{pffw}. 
One can go on to find joint eigenvectors for the Gaudin Hamiltonians by considering spaces of coinvariants of $\gh$-modules on $\cp1$. One assigns \emph{Wakimoto modules} $M \ox \CC v_{\chi_i}$ to each of the marked points $x_i$ and also to certain other points, the Bethe roots, and one assigns a copy of $M \ox \pi_0$ to an additional point $x$, distinct from these. For details on this perspective on the Bethe ansatz see \cite{FFR}. 

If one is merely interested in \emph{vacuum} eigenvalues, there are no Bethe roots, and one arrives at a much simpler space of coinvariants -- namely of $\hh$-modules on $\cp1$, where $\h$ is the Cartan subalgebra -- in which one assigns one-dimensional modules $\CC v_{\chi_i}$ to the marked points $x_i$, and the Fock module $\pi_0$ to some additional point $x$. It is this simpler space of coinvariants which we study, in the case of $\g$ of affine type, in the present paper (see \cref{crs}).

\subsection{}
This paper is structured as follows. 

In \cref{sec: pieps,sec: cls} we recall standard facts about the vertex algebra $\pi_0^\eps$ and its classical ($\eps\to 0$) limit $\picc_0$. In particular, we recall the definition of the screening charges and their classical limits. In \cref{sec: ct} we give the definitions of the group $\Aut\O$ of coordinate transformations and its action on $\picc_0$. 
Conventions about Cartan matrices, roots, coroots, etc. are fixed in \cref{sec: cartan}. 

In \cref{sec: caf} we introduce the canonical translation operator $\Tk$ and canonical modes, and use them to define the subspace $\piaff_0$. We establish the isomorphism of $\n_+$-modules $\pifin_0\cong\piaff_0$ and arrive at the densities of integrals of motion $\hamd_j \in \picc_0$. 

In \cref{crs} we move from local to global objects. We define the vertex algebra bundle $\Pisv$ with fibre $\pi_0^\eps$ and a sheaf of Lie algebras $\Hsv$ associated to this bundle. We define $\conn\sv$ and spaces of coinvariants, and the map $\coinv_\chi$. Then in \cref{sec: tto} we specialise back to our specific case and go on to define the connection $\Tkn$ and show that $\coinv_\chi \Tkn = \Tkn_\chi \coinv_\chi$. 

Finally in \cref{sec: opers} we recall the definitions of opers and Miura opers. We can then establish the $\Aut\O$-equivariant identifications $\picc_0 \cong \Fun\MOp_{\g}(D)$ and $\Wgc \cong \Fun\Op_{\g}(D)$, and the identification of $[\coinv_\chi( \hamd_j dt^j ) ]$ with the cohomology classes for the oper underlying $\chi$.

\begin{ack}
The author is glad to acknowledge the many helpful remarks of the referee, and in particular the thoughtful suggestions of improvements to the introduction.
\end{ack}

\section{Free fields and screening operators}\label{sec: pieps}
Our first objective is to recall the definitions of the objects in the embedding \cref{Wemb}:  the vacuum Fock module $\pi_0$, and the screening operators which act on it. The Fock module $\pi_0$ is a $\eps\to 0$ limit of a one-parameter family of vertex algebras $\pi_0^\eps$.  It is useful to start with this whole family, because doing so provides a natural way of understanding the semi-classical structures on $\pi_0$, as we shall see in the next section.

\subsection{Free fields}\label{sec: ff}
Let $\h$ be a finite-dimensional vector space over $\CC$, equipped with a non-degenerate symmetric bilinear form $\bilin\cdot\cdot$. We pick a basis $\{b_i\}_{i=1}^{\dim \h}$ of $\h$ and let $\{b^i\}_{i=1}^{\dim\h}\subset \h$ be its dual basis with respect to the form $\bilin\cdot\cdot$:
\be \bilin{b_i}{b^j} = \delta_i^j .\nn\ee

We shall regard $\h$ as a commutative Lie algebra. 
Let $\eps$ be a parameter and denote by $\hh^\eps$ the central extension of the loop algebra $\h((t))$, by a one-dimensional centre $\CC \bm 1$, defined by the cocycle $\eps \res \bilin f{dg}$. That is, $\hh^\eps$ has commutation relations
\be [b^i_m, b^j_n] = \eps \bm 1 m \delta_{n+m,0} \bilin{b^i}{b^j}  \label{hcom}\ee 
where the generators are $b^i_m := b^i \ox t^m$ for $i=1,\dots,\dim\h$ and $m\in \ZZ$. 

\subsection{Fock modules}\label{sec: fm}
For any $\lambda\in \h$, define the $\hh^\eps$-module 
\be \pi^\eps_\lambda := U(\hh^\eps) \ox_{U(\h[[t]]\oplus \CC \bm 1)} \CC \vacl \nn\ee
induced from the one-dimensional $(\h[[t]]\oplus \CC \bm 1)$-module $\CC \vacl$ spanned by a vector $\vacl$ obeying
\be b^i_0 \vacl = \eps\bilin \lambda {b^i} \vacl , \qquad b^i_n \vacl = 0,\quad n>0,\label{vldef}\ee
and $\bm 1 \vacl = \vacl$. These are called Fock modules. As a special case we have the vacuum Verma module, $\pi_0^\eps$. 
For every $\lambda$, we have the isomorphism
\be \pi^\eps_\lambda \cong \CC[b_{i,n}]_{i=1,\dots,\dim\h; n<0}, \nn\ee
of vector spaces, and of modules over $U(t^{-1}\h[t^{-1}])\cong \CC[b_{i,n}]_{i=1,\dots,\dim\h; n<0}$.

\subsection{Algebra of fields $\Uloc$}\label{sec: lc}
Let $U_1(\hh^\eps)$ denote quotient of the enveloping algebra of $\hh^\eps$ by the two-sided ideal generated by $1 - \bm 1$. For each $n\in \ZZ_{\geq 0}$ define the left ideal $J_n := U_1(\hh^\eps) \cdot (\h \ox t^n\CC[t])$. 
That is, $J_n$ is the linear span of monomials of the form $b^i_p\dots b^j_q b^k_r$ with $r\geq n$.
The quotients by these ideals $U_1(\hh^\eps) \big/ J_n$ form an inverse system, whose inverse limit we shall denote by $\Uloc$: 
\be \Uloc := \varprojlim_n U_1(\hh^\eps) \big/ J_n\nn\ee
It is a complete topological algebra. We call it the algebra of fields, or the local completion of $U_1(\hh^\eps)$. 
By definition, elements of $\Uloc$ are possibly infinite sums $\sum_{m\geq 0} X_m$ of elements $X_m\in U_1(\hh^\eps)$ which truncate to finite sums when one works modulo any $J_n$, \emph{i.e.} for every $n$, $X_m\in J_n$ for all sufficiently large $m$.

A module $\M$ over $\hh^\eps$ is \emph{smooth} if for all $a \in \h$ and all $v\in \M$, $a_nv=0$ for all sufficiently large $n$. For example the Fock modules $\pi_\lambda^\eps$ are smooth.  Any smooth module over $\hh^\eps$ on which $\bm 1$ acts as the identity is also a module over $\Uloc$.  

\subsection{Vertex algebra}\label{sec: vas}
For every $n\in \ZZ$, there is a linear map \be \pi_0^\eps \to \Uloc;\quad A\mapsto A_{(n)}\nn\ee sending any given state $A$ to its \emph{$n$th formal mode}, $A_{(n)}$. One can arrange these modes into  a formal series called the \emph{formal state-field map},
\be Y[A, x] := \sum_{n\in \ZZ} A_{(n)} x^{-n-1} \label{formalYmap},\ee
and they are defined recursively as follows. First, for all $a\in\h$, 
\be Y[a_{-1} \vac, u] :=\sum_{n\in \ZZ} a_n u^{-n-1}.\nn\ee 
Next, let $[T,\cdot]$ be the derivation on $U_1(\hh^\eps)$ given by $[T, a_n] := -n a_{n-1}$ and $[T, 1] := 0$; then by setting $T(X \vac) := [T, X] \vac$ for any $X \in U_1(\hh^\eps)$, one can regard $T$ also as a linear map  $\pi_0^\eps\to \pi_0^\eps$, the \emph{translation operator}. 
For any $A,B\in \pi_0^\eps$, one sets
\be Y[TA, u] := \del_u Y[A, u]\quad\quad\text{and}\quad\quad  
Y[A_{(-1)} B, u] := \!\!\!\!\!\quad \nord{Y[A, u] Y[B, u]} \nn\ee
where the \emph{normal ordered product} $\nord{Y[A, u] Y[B, u]}$ is given by
\be \nord{Y[A,u] Y[B,u]} \,\,\, := \Bigg(\sum_{m<0} A_{(m)} u^{-m-1} \Bigg)Y[B,u] + Y[B,u] \Bigg(\sum_{m\geq 0} A_{(m)} u^{-m-1}\Bigg).\label{nord}\ee 

Recall that $\pi_\lambda^\eps$ are modules over $\Uloc$. By sending each formal mode $A_{(n)}$ to its image in $\End(\pi_\lambda^\eps)$ we get the \emph{module maps}
\be  Y_{\pi^\eps_\lambda}(\cdot, x) : \pi_0^\eps \to \Hom\left(\pi_\lambda^\eps, \pi_\lambda^\eps((x))\right) \nn\ee
and as a special case the \emph{state-field map}
\be Y(\cdot,x) : \pi_0^\eps \to \Hom\left(\pi_0^\eps, \pi_0^\eps((x))\right) \nn\ee
These maps $Y(\cdot, x)$ and $Y_{\pi_\lambda^\eps}(\cdot,x)$ make $\pi_0^\eps$ into a \emph{vertex algebra} and the $\pi_\lambda^\eps$ into modules over this vertex algebra. The reader is referred to  \cite[\S4-5]{FrenkelBenZvi} for the definitions (see especially \S4.3.1 and \S5.1.5). 

\subsection{The Lie algebras $\F^\eps$, $\F^\eps_0$, $\F^\eps_{\geq 0}$}\label{sec: bla}
The algebra of fields $\Uloc$ is in particular a Lie algebra, with the Lie bracket given by the commutator 
\be [A,B] := AB-BA.\nn\ee 
It has $\hh^\eps$ as a Lie subalgebra. It also has a larger Lie subalgebra of interest. Let 
\be \F^\eps \equiv \Lie(\pi_0^\eps) := \Span_\CC\left\{ A_{(n)}: A\in \pi_0^\eps, n\in \ZZ\right\} \subset \Uloc \nn\ee
denote the linear subspace of $\Uloc$ spanned by all formal modes of all states in $\pi_0^\eps$. This is a Lie subalgebra because one has the commutator formula,
\begin{equation} \label{com Am Bn}
\big[ A_{(m)}, B_{(n)} \big] = \sum_{k \geq 0} \binom{m}{k} \big( A_{(k)} B \big)_{(m + n - k)}.
\end{equation}
Here $\binom mk$ is defined for all $m\in \ZZ$ and $k\geq 0$ as follows: 
\be \binom{m}{k} := \frac{m(m-1)\dots (m-k+1)}{k!},\quad k\neq 0,\qquad \binom m 0 := 0 \nn\ee

The Lie algebra $\F^\eps$ has Lie subalgebras $\F^\eps_0$ and $\F^\eps_{\geq 0}$ spanned by the formal zero modes and all formal  non-negative modes, respectively, of all states in $\pi_0^\eps$. 

\begin{rem}
$\F^\eps$ can be defined abstractly, but we regard it as a subalgebra of $\Uloc$. There is no loss in this, cf. \cite[Lemma 4.3.2]{FrenkelBenZvi}. 
\end{rem}

\subsection{Conformal vectors}\label{sec: cv}
For any $\sv\in \h$ depending polynomially on $\eps$, define the vector $\confvec_\sv\in \pi_0^\eps$ by 
\be \confvec_\sv := \frac 1 \eps \left(\half b^i_{-1} b_{i,-1} + \sv_{-2} \right)\vac. \nn\ee
Here and henceforth we employ summation convention on the index $i$. 

The vector $\confvec_\sv$ is a conformal vector and defines on $\pi_0^\eps$ the structure of a \emph{conformal vertex algebra} (or \emph{vertex operator algebra}) with central charge 
\be c = \eps \dim \h - 12\bilin\sv\sv.\nn\ee 
That is to say, the vector $\confvec_\sv$ obeys 
$(\confvec_\sv)_{(0)} \confvec_\sv = T\confvec_\sv$, $(\confvec_\sv)_{(1)} \confvec_\sv = 2 \confvec_\sv$, $(\confvec_\sv)_{(2)} \confvec_\sv = 0$, 
\begin{align} (\confvec_\sv)_{(3)} \confvec_\sv 
&= \frac 1 \eps \left( \half b^j_1 b_{j,1} + b^j_0 b_{j,2} + \dots - 3 \sv_2 \right) \left( \half b^i_{-1} b_{i,-1} + \sv_{-2} \right)\vac\nn\\
&= \frac 1 \eps \left( \half \eps^2 \dim \h - 6 \eps \bilin \sv\sv\right)\vac  = \frac c 2 \vac,\nn\end{align} 
and $(\confvec_\sv)_{(n)} \confvec_\sv =0$ for $n\geq 4$.

The corresponding generators  $L_n \in \Uloc$ of the Virasoro algebra are defined by  $Y(\confvec_\sv,x) = \sum_{n\in \ZZ} L_n x^{-n-2}$, i.e.
\begin{align} L_n  &:=  (\confvec_\sv)_{(n+1)} 
= \frac 1 {2\eps}\left(\sum_{m<0} b^i_{m} b_{i,n-m} + \sum_{m\geq 0} b_{i,n-m} b^i_m\right) - \frac{n+1}{\eps} \sv_n,\nn\end{align} 
and they obey
\be \left[ L_n,L_m \right] = (n-m) L_{n+m} + \frac{n^3-n}{12} \delta_{n,-m} c .\nn\ee

For any $a\in \h$, the state $a:=a_{-1} \vac\in \pi_0^\eps$ obeys $(\confvec_\sv)_{(0)} a = T a$,  $(\confvec_\sv)_{(1)} a = a$, and 
\be (\confvec_\sv)_{(2)} a =
\frac 1 \eps\left( b^j_0 b_{j,1} + b^j_{-1} b_{j,2} + \dots - 2 \sv_1 \right) a_{-1} \vac
= -2 \bilin \sv a \vac .\nn\ee
Thus, the state $a = a_{-1} \vac\in \pi_0^\eps$ is primary precisely if $\bilin \sv a = 0$. 

\subsection{Actions of $L_0$ and $L_{-1}=T$}\label{sec: LL}
The Virasoro zero mode $L_0=(\confvec_\sv)_{(1)}$ obeys
\be [L_0, b^j_{-m}] = m b^j_m,\qquad L_0 \vacl =  \vacl \Delta_\lambda \nn\ee
where
\be \Delta_\lambda:= - \bilin \sv \lambda + \eps \bilin \lambda\lambda .\label{Deltadef}\ee   
The space $\pi_\lambda^\eps$ is $\ZZ$-graded, with the vacuum $\vacl$ of grade $0$ and the generator $b^j_{-n}$ contributing $+n$ to the grade. For any vector $v\in \pi_\lambda^\eps$ of grade $m$ we see that
\be L_0 v = v (m + \Delta_\lambda).\nn\ee

For any state $A\in \pi_0^\eps$, we have, irrespective of the value of $\sv$,
\be L_{-1} A = (\confvec_\sv)_{(0)} A =  TA. \label{omT}\ee
Indeed, $L_{-1} = \frac 1 \eps\sum_{n\geq 0} b^i_{-n-1} b_{i,n}$ and hence $[L_{-1}, b^j_m] = - m b^j_{m-1}$, 
and $L_{-1} \vac=0$. More generally
\be L_{-1} \vacl = (\confvec_\sv)_{(0)} \vacl = \eps^{-1} b^i_{-1} b_{i,0} \vacl = b^i_{-1} \vacl \bilin\lambda {b_i}= \lambda_{-1} \vacl.\label{Lm1vacl}\ee

\subsection{Intertwiners and screenings}\label{sec: intertwiners}
In addition to the state-field map $Y$ and module maps $Y_{\pi_\lambda^\eps}$, there is also another structure, a linear map
\be V_\lambda(\cdot,x) : \pi_\lambda^\eps \to \Hom\left(\pi_0^\eps, \pi_\lambda^\eps((x))\right) \label{im}\ee
called the intertwining map,
\be V_\lambda(m,x) = \sum_n m_{(n)} x^{-n-1}.\nn\ee
We can take as its definition the following:
for any $m\in \pi_\lambda^\eps$ and $B \in \pi_0^\eps$, 
\be V_\lambda(m, x) B := e^{x L_{-1}} Y_{\pi_\lambda^\eps}(B, -x) m.\nn\ee 
In view of \cref{omT}, this expression can be motivated by comparison with the usual skew-symmetry property of the state-field map $Y$: for any $A, B \in \pi_0^\eps$, 
\be Y(A, x) B = e^{x T} Y(B, -x) A,\nn\ee
or equivalently
\be \label{vaskew}
A_{(n)} B = - \sum_{k \geq 0} \frac{(-1)^{n + k}}{k!} T^k \big( B_{(n + k)} A \big), \qquad n\in \ZZ.
\ee 
Just as the state-field map obeys the equation $\del_x Y(A,x) = Y(TA,x)$, the intertwiner map obeys an analogous equation, namely,
\be \del_x V_\lambda(m,x) = V_\lambda( L_{-1} m,x) .\nn\ee
We are particularly interested in the intertwining maps defined by the vacuum states $\vacl\in \pi_\lambda^\eps$, and for those there is the following explicit expression, which, formally, can be obtained by solving the equation above:
\begin{align}  V_\lambda(\vacl,x) = V_\lambda(x) &:= \shift_\lambda \exp\left( -  \bilin\lambda{b_i} \sum_{n<0} \frac{b^i_n x^{-n}}{n} \right)
\exp\left( - \bilin\lambda{b_i}  \sum_{n>0} \frac{b^i_n x^{-n}}{n} \right)\nn\\
   &=\shift_\lambda\exp\left( - \sum_{n<0} \frac{\lambda_n x^{-n}}{n} \right)
\exp\left( - \sum_{n>0} \frac{\lambda_n x^{-n}}{n} \right)\label{Vf}\end{align}
where $\shift_\lambda: \pi_0\to \pi_\lambda$ is the shift operator which sends $\vac\mapsto \vacl$ and commutes with $b^i_{n}$, $n\neq 0$.

Now define the \emph{screening operator} 
\be S_\lambda: \pi_0 \to \pi_\lambda\nn\ee 
to be the linear map given by the zero mode of the vacuum state $\vacl\in \pi_\lambda^\eps$,
\be S_\lambda A := \ket\lambda_{(0)} A.\nn\ee 

The interwiner maps obey natural analogs of Borcherds identity and its corollaries. (See e.g. \cite[\S7.2.1]{Fre07} and \cite[\S5.2]{FrenkelBenZvi}.) The property we shall need is the following.
\begin{lem}\label{scderlem} For all $m\in \pi_\lambda^\eps$, all $A,B\in \pi_0^\eps$, and all $k\in \ZZ$,
\be m_{(0)} (A_{(k)} B) = A_{(k)} ( m_{(0)} B) + (m_{(0)} A)_{(k)} B .\nn\ee 
\qed\end{lem}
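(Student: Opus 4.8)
The plan is to treat $V_\lambda(\cdot,x)$ as an intertwining operator of type $\binom{\pi_\lambda^\eps}{\pi_\lambda^\eps\,\pi_0^\eps}$ and to deduce the identity from the Borcherds (Jacobi) identity for intertwiners, exactly the ``corollaries'' alluded to in \cref{sec: intertwiners}; see \cite[\S5.2]{FrenkelBenZvi} and \cite[\S7.2.1]{Fre07}. The Jacobi identity for $V_\lambda$ pairs the three operators $Y_{\pi_\lambda^\eps}(A,x_1)V_\lambda(m,x_2)$, $V_\lambda(m,x_2)Y(A,x_1)$ and $V_\lambda(Y_{\pi_\lambda^\eps}(A,x_0)m,x_2)$ through the usual formal delta-functions, where $Y_{\pi_\lambda^\eps}(A,x_1)$ is the module action on the output and $Y(A,x_1)$ the vertex-algebra action on the input $\pi_0^\eps$. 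Taking $\res_{x_0}$ collapses the two delta-functions on the left and produces the commutator formula
$$Y_{\pi_\lambda^\eps}(A,x_1)V_\lambda(m,x_2)-V_\lambda(m,x_2)Y(A,x_1)=\sum_{j\ge0}\frac{(-1)^j}{j!}\big(\del_{x_1}^j\,x_2^{-1}\delta(x_1/x_2)\big)\,V_\lambda(A_{(j)}m,x_2),$$
where $A_{(j)}m$ denotes the module action.

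Second, I would extract the modes relevant to the statement. Applying both sides to $B$ and taking $\res_{x_2}$ (which turns $V_\lambda(m,x_2)$ into its zero mode $m_{(0)}$) together with $\res_{x_1}x_1^{k}$ (which produces $A_{(k)}$), the standard delta-function manipulation $\res_{x_1}x_1^k\,\del_{x_1}^j(x_2^{-1}\delta(x_1/x_2))=(-1)^j j!\binom{k}{j} x_2^{k-j}$ converts the commutator formula into
$$A_{(k)}\big(m_{(0)}B\big)-m_{(0)}\big(A_{(k)}B\big)=\sum_{j\ge0}\binom{k}{j}\big(A_{(j)}m\big)_{(k-j)}B,$$
in which $A_{(k)}$ on the left acts through $Y_{\pi_\lambda^\eps}$ on the first term and through $Y$ on the second, while $(A_{(j)}m)_{(k-j)}$ is the intertwiner mode of the state $A_{(j)}m\in\pi_\lambda^\eps$.

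Third, it remains to identify the right-hand side with $-(m_{(0)}A)_{(k)}B$. Here I would use the defining skew-symmetry relation $V_\lambda(m,x)C=e^{xL_{-1}}Y_{\pi_\lambda^\eps}(C,-x)m$ from \cref{sec: intertwiners} (the intertwiner analog of the skew-symmetry \cref{vaskew}) applied to the inner intertwiner, together with $\del_xV_\lambda(m,x)=V_\lambda(L_{-1}m,x)$, which gives the Taylor shift $V_\lambda(e^{x_0L_{-1}}w,x_2)=V_\lambda(w,x_2+x_0)$ and hence the iterate identity $V_\lambda(V_\lambda(m,x_0)A,x_2)=V_\lambda(Y_{\pi_\lambda^\eps}(A,-x_0)m,\,x_2+x_0)$. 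Feeding this into $(m_{(0)}A)_{(k)}=\res_{x_2}x_2^k\,V_\lambda(\res_{x_0}V_\lambda(m,x_0)A,\,x_2)$ and changing variables $x_2\mapsto x_2-x_0$ and then $x_0\mapsto-x_0$ in the double residue recovers exactly $\sum_{j\ge0}\binom{k}{j}(A_{(j)}m)_{(k-j)}$ up to an overall sign, i.e. $\sum_{j\ge0}\binom{k}{j}(A_{(j)}m)_{(k-j)}=-(m_{(0)}A)_{(k)}$. Substituting this into the displayed relation and reordering the two surviving terms yields the claim.

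I expect the main obstacle to be precisely this last identification, since it is the only place where the intertwiner is composed with itself: one must pass from $V_\lambda$ evaluated on the module action $Y_{\pi_\lambda^\eps}(A,\cdot)m$ (as delivered by the Jacobi identity) to $V_\lambda$ evaluated on the iterate $V_\lambda(m,\cdot)A$. This is a genuine associativity statement for the intertwiner, and the care needed lies in the delta-function and residue bookkeeping and, for $k<0$, in keeping track of the expansion convention for $(x_2+x_0)^k$ in nonnegative powers of $x_0$; once these conventions are fixed consistently with those of \cref{sec: vas}, the computation is routine.
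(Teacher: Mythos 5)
Your argument is correct: the commutator formula from the Jacobi identity for the intertwiner $V_\lambda$, combined with the skew-symmetry $V_\lambda(m,x)A = e^{xL_{-1}}Y_{\pi_\lambda^\eps}(A,-x)m$ and the $L_{-1}$-derivative property to identify the resulting binomial sum $\sum_{j\ge 0}\tbinom{k}{j}(A_{(j)}m)_{(k-j)}$ with $-(m_{(0)}A)_{(k)}$, yields exactly the stated derivation property, and your bookkeeping of the residues and expansion conventions checks out. The paper offers no proof of its own here --- it states the lemma with a reference to the ``analogs of Borcherds identity and its corollaries'' in \cite{Fre07} and \cite{FrenkelBenZvi} --- and your derivation is precisely the standard argument those references supply.
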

\begin{cor}\label{kercl} For all $\lambda\in\h$, the kernel $$\ker S_\lambda\subset \pi_0^\eps$$ of the screening $S_\lambda$ is a vertex subalgebra of $\pi_0^\eps$. 
\end{cor}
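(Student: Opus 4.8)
The plan is to verify directly the three defining properties of a vertex subalgebra of $\pi_0^\eps$ for the subspace $\ker S_\lambda$: that it contains the vacuum $\vac$, that it is stable under the translation operator $T$, and that it is closed under all the products $A_{(n)}B$, $n\in\ZZ$. The engine for all of this is \cref{scderlem}, specialized to $m=\vacl$, which says precisely that $S_\lambda$ acts as a derivation of each product:
\be S_\lambda\!\left(A_{(n)} B\right) = A_{(n)}\!\left(S_\lambda B\right) + \left(S_\lambda A\right)_{(n)} B, \qquad A,B\in \pi_0^\eps,\ n\in \ZZ. \nn\ee
On the right the mode $A_{(n)}$ acts on $\pi_\lambda^\eps$ through the module map $Y_{\pi_\lambda^\eps}$, while $(S_\lambda A)_{(n)}$ acts through the intertwining map $V_\lambda$; both interpretations are already built into the statement of \cref{scderlem}, so no extra setup is needed.

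The key step is closure under products. Given $A,B\in\ker S_\lambda$, so that $S_\lambda A = S_\lambda B = 0$, the derivation identity above immediately forces both terms on the right to vanish, whence $S_\lambda(A_{(n)}B)=0$ and $A_{(n)}B\in\ker S_\lambda$ for every $n\in\ZZ$. This is the substantive content of the corollary, and once \cref{scderlem} is in hand it is essentially immediate.

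It then remains to dispatch the vacuum and the translation operator. For the vacuum I would compute $S_\lambda\vac = \vacl_{(0)}\vac$ directly from the definition of the intertwiner: by the vacuum axiom $Y_{\pi_\lambda^\eps}(\vac,-x)\vacl=\vacl$, so that $V_\lambda(\vacl,x)\vac = e^{xL_{-1}}\vacl$ involves only non-negative powers of $x$; the coefficient of $x^{-1}$, namely $\vacl_{(0)}\vac$, therefore vanishes, giving $\vac\in\ker S_\lambda$. For the translation operator I would use the standard vertex-algebra identity $TA = A_{(-2)}\vac$, so that closure under $T$ becomes a special case of the product closure already established, applied to $A\in\ker S_\lambda$ and $\vac\in\ker S_\lambda$.

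I do not expect any genuine obstacle: the whole content has been front-loaded into \cref{scderlem}. The only point deserving care is the bookkeeping about which map realizes each mode on the two sides of the derivation identity -- the module map $Y_{\pi_\lambda^\eps}$ versus the intertwiner $V_\lambda$ -- but since the identity is stated with these interpretations fixed, the verification is purely formal.
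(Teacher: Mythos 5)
Your proposal is correct and follows essentially the same route as the paper's proof: product closure from the derivation identity of \cref{scderlem}, the vacuum handled separately (the paper reads it off the explicit formula \cref{Vf}, you from the skew-symmetry definition of the intertwiner — an immaterial difference), and stability under $T$ via $TA=A_{(-2)}\vac$ as a special case of product closure.
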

\begin{proof} \Cref{scderlem} implies that $\ker S_\lambda$ is closed for all the $n$th products. From \cref{Vf} it is clear that  $\vac\in \ker S_\lambda$. For any $A\in \pi_0^\eps$ we have $TA = A_{(-2)} \vac$ (this is true in any vertex algebra). Hence if $(\vacl)_{(0)} A = 0$ then $(\vacl)_{(0)} TA = (\vacl)_{(0)}(A_{(-2)} \vac) = 0$, again by \cref{scderlem}. So $\ker S_\lambda$ is closed under the action of the translation operator $T$.  
\end{proof}

\begin{cor}\label{intcom} If $X\in \ker S_{\lambda}\subset \pi^\eps_0$ then the diagram
\be
\begin{tikzcd}
\pi^\eps_0\rar{S_\lambda} & \pi^\eps_{\lambda} \\
\pi^\eps_0\rar{S_\lambda}\uar{X_{(n)}} & \pi^\eps_{\lambda}\uar{X_{(n)}} 
\end{tikzcd}
\nn\ee
commutes for all $n\in \ZZ$, $\lambda\in \h$.
\end{cor}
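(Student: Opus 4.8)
The plan is to reduce the corollary to a single application of \cref{scderlem}. First I would unpack what commutativity of the diagram actually asserts: it is the operator identity $S_\lambda\, X_{(n)} = X_{(n)}\, S_\lambda$ as maps $\pi^\eps_0 \to \pi^\eps_\lambda$, where on the bottom-left $X_{(n)}$ is the $n$th formal mode acting on the source $\pi^\eps_0$ and on the top-right it is the same element of $\Uloc$ acting on the target $\pi^\eps_\lambda$ (both being smooth, hence $\Uloc$-modules). Recalling the definition $S_\lambda = (\vacl)_{(0)}$, the claim becomes $(\vacl)_{(0)}\bigl(X_{(n)} B\bigr) = X_{(n)}\bigl((\vacl)_{(0)} B\bigr)$ for every $B\in \pi^\eps_0$ and every $n\in\ZZ$.

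Next I would apply \cref{scderlem} with $m = \vacl$, $A = X$, $k = n$, and $B\in\pi^\eps_0$ arbitrary, which gives
\[
(\vacl)_{(0)}\bigl(X_{(n)} B\bigr) = X_{(n)}\bigl((\vacl)_{(0)} B\bigr) + \bigl((\vacl)_{(0)} X\bigr)_{(n)} B.
\]
The final step is to observe that the correction term vanishes: since $(\vacl)_{(0)} X = S_\lambda X = 0$ by the hypothesis $X\in\ker S_\lambda$, we have $\bigl((\vacl)_{(0)} X\bigr)_{(n)} = 0_{(n)} = 0$. What then remains is exactly the desired identity, and since $B$ and $n$ were arbitrary the diagram commutes for all $n\in\ZZ$. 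Thus the whole corollary is a direct specialization of the derivation property of \cref{scderlem} to a state annihilated by the screening.

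The only point requiring care — and what one might at first mistake for an obstacle — is the interpretation of the term $\bigl((\vacl)_{(0)} X\bigr)_{(n)}$. Because $(\vacl)_{(0)} X$ lies in $\pi^\eps_\lambda$ rather than in $\pi^\eps_0$, its $n$th mode is not a state-field mode but an intertwining-operator mode, taken via the map $V_\lambda(\cdot,x)$ of \cref{sec: intertwiners}; this is precisely the sense in which \cref{scderlem} is stated. Once this is recognized there is no genuine difficulty, since the hypothesis forces $(\vacl)_{(0)} X = 0$, making this the mode of the zero state. I therefore expect the entire argument to be a short direct computation rather than anything involving a real obstruction.
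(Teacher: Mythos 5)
Your argument is correct and is essentially identical to the paper's proof: the paper applies \cref{scderlem} in the form of the commutator identity $[S_\lambda, X_{(n)}] = (S_\lambda X)_{(n)} = 0$, which is exactly your computation written as an operator identity rather than evaluated on an arbitrary $B$. Your closing remark about interpreting $\bigl((\vacl)_{(0)}X\bigr)_{(n)}$ as an intertwiner mode is a fair point of care, and it is resolved just as you say.
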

\begin{proof} 
By \cref{scderlem} we have
\be [S_{\lambda}, X_{(n)} ] = [ \ket{\lambda}_{(0)}, X_{(n)} ] 
= \left(\lambda_{(0)} X \right)_{(n)} = (S_{\lambda} X)_{(n)} = 0.\nn\ee
\end{proof}

\subsection{Action of $S_\lambda$ on $\confvec_\sv$}\label{Sonom}
We shall need the action of the screening $S_\lambda$ on the conformal vector $\confvec_\sv\in \pi_0^\eps$. 
\begin{lem}\label{Somlem}
  \be S_\lambda \confvec_\sv =  \lambda_{-1} \vacl \left( -1 - \bilin \lambda \sv 
                     +  \half\eps \bilin{\lambda}\lambda \right) .\nn\ee
\end{lem}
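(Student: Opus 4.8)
The plan is to compute $S_\lambda \confvec_\sv$ directly from the explicit formula \cref{Vf} for the intertwining field. By definition $S_\lambda = \ket{\lambda}_{(0)}$ is the coefficient of $x^{-1}$ in $V_\lambda(x) = V_\lambda(\vacl, x)$, and \cref{Vf} exhibits this operator in the factored form $V_\lambda(x) = \shift_\lambda\, V_-(x)\, V_+(x)$, where
\[ V_-(x) := \exp\Bigl(-\sum_{n<0}\frac{\lambda_n x^{-n}}{n}\Bigr), \qquad V_+(x) := \exp\Bigl(-\sum_{n>0}\frac{\lambda_n x^{-n}}{n}\Bigr) \]
are built from the creation modes $\lambda_n$ ($n<0$) and the annihilation modes $\lambda_n$ ($n>0$) respectively. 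Since $\confvec_\sv$ is a polynomial in creation modes applied to $\vac$ and $V_+(x)\vac = \vac$, my strategy is: first conjugate $V_+(x)$ past the creation modes appearing in $\confvec_\sv$; then multiply on the left by $V_-(x)$; and finally extract the coefficient of $x^{-1}$ and apply $\shift_\lambda$.

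For the first step the only input is the central-extension commutator, which with $\lambda_n = \bilin{\lambda}{b_i}\, b^i_n$ and \cref{hcom} reads $[\lambda_n, b^j_m] = \eps\, n\, \delta_{n+m,0}\, \bilin{\lambda}{b^j}$. As this bracket is a scalar, the Baker--Campbell--Hausdorff series truncates and yields, for every $m<0$,
\[ V_+(x)\, b^j_m\, V_+(x)^{-1} = b^j_m - \eps\, x^{m}\, \bilin{\lambda}{b^j}. \]
I would apply this to each of the two summands of $\confvec_\sv = \frac{1}{\eps}\bigl(\half b^i_{-1} b_{i,-1} + \sv_{-2}\bigr)\vac$, using $V_+(x)\vac = \vac$ together with the dual-basis contractions $\bilin{\lambda}{b_i}\, b^i_{-1} = \lambda_{-1} = \bilin{\lambda}{b^i}\, b_{i,-1}$ and $\bilin{\lambda}{b^i}\bilin{\lambda}{b_i} = \bilin{\lambda}{\lambda}$. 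The result is
\[ V_+(x)\,\confvec_\sv = \confvec_\sv - x^{-1}\lambda_{-1}\vac + x^{-2}\Bigl(\half\eps\bilin{\lambda}{\lambda} - \bilin{\lambda}{\sv}\Bigr)\vac. \]

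For the remaining steps I would expand $V_-(x) = 1 + \lambda_{-1} x + O(x^2)$ (note that $V_-(x)$ has only non-negative powers of $x$) and collect the coefficient of $x^{-1}$ in $V_-(x)\bigl(V_+(x)\confvec_\sv\bigr)$. Exactly two contributions survive: the constant term of $V_-(x)$ against the $x^{-1}$ term above gives $-\lambda_{-1}\vac$, and the $\lambda_{-1}x$ term of $V_-(x)$ against the $x^{-2}$ term gives $\lambda_{-1}\bigl(\half\eps\bilin{\lambda}{\lambda} - \bilin{\lambda}{\sv}\bigr)\vac$. Adding these and applying $\shift_\lambda$ (which commutes with $\lambda_{-1}$ and sends $\vac\mapsto\vacl$) produces precisely $\lambda_{-1}\vacl\bigl(-1 - \bilin{\lambda}{\sv} + \half\eps\bilin{\lambda}{\lambda}\bigr)$, as claimed.

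The step I expect to be the main obstacle is the bookkeeping of powers of $x$ in the last paragraph: the $\bilin{\lambda}{\sv}$ and $\half\eps\bilin{\lambda}{\lambda}$ terms arise solely from the cross term pairing the order-$x$ part of $V_-(x)$ with the order-$x^{-2}$ part of $V_+(x)\confvec_\sv$, so dropping this cross term (or mishandling the central term in the commutator) would lose exactly the $\sv$- and $\eps$-dependent pieces. As an independent check, one could instead use the derivation property of \cref{scderlem} with $m = \vacl$: writing $b^i_{-1} b_{i,-1}\vac = (b^i)_{(-1)} b_i$ and $\sv_{-2}\vac = T(\sv_{-1}\vac)$, the same answer follows from the elementary value $S_\lambda(a_{-1}\vac) = -\eps\bilin{\lambda}{a}\vacl$ for $a\in\h$ together with the module and intertwiner actions of the relevant modes.
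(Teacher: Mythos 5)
Your computation is correct and follows essentially the same route as the paper: both proofs work directly from the explicit formula \cref{Vf}, observe that only the terms of order $x^{-1}$, $x^{-2}$ in the annihilation factor and order $x^0$, $x^1$ in the creation factor can contribute against a grade-two state, and then extract the residue. The only (cosmetic) difference is that you organize the action of the annihilation part by conjugating it past the creation modes of $\confvec_\sv$, whereas the paper expands both exponentials and acts term by term; the resulting intermediate expressions and the final bookkeeping of the cross term are identical.
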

\begin{proof}
Consider the formula \cref{Vf} for the intertwiner $V_\lambda(x)$. We have 
\be \exp\left( - \sum_{n>0} \frac{\lambda_n x^{-n}}{n} \right) 
= 1 -  \lambda_1 x^{-1} +  \half \left(-\lambda_2 + \lambda_1 \lambda_1\right)x^{-2} + \dots,\nn\ee
where $\dots$ are terms of higher order in $x^{-1}$ all of which kill $\confvec_\sv$ on grading grounds, and 
\be \exp\left( - \sum_{n<0} \frac{\lambda_n x^{-n}}{n} \right) 
= 1 +  \lambda_{-1} x 
+  \dots ,\nn\ee
where $\dots$ are terms of higher order in $x$. 
Recall that $\shift_{\lambda}^{-1}S_{\lambda}$ is the residue of $\shift_{\lambda}^{-1} V_{\lambda}(x)$. 
Thus, acting on $\pi_0^\eps$, 
\be \shift_{\lambda}^{-1}S_{\lambda}  =  - \lambda_{1} 
+\half \lambda_{-1} \left( -\lambda_2 
               +  \lambda_1 \lambda_1   \right) + \dots
\nn\ee
where $\dots$ are terms that kill $\confvec_\sv$, and so we obtain
\begin{align} \shift_{\lambda}^{-1}S_{\lambda} \confvec_\sv 
&= 
                 - \bilin{\lambda}{b_k} b^k_{-1} \vac 
                 + \lambda_{-1} \vac  
                     \left( - \bilin \lambda \sv 
                     +  \half\eps \bilin{\lambda}{b_k}\bilin{\lambda}{b^k}\right) \nn
\end{align}
and hence the result.
\end{proof}

\section{Classical limit}\label{sec: cls}
Having introduced the vertex algebra $\pi_0^\eps$ in the previous section, we now consider the limit $\eps\to 0$ to recover the Fock module $\pi_0$ which actually appears in \cref{Wemb}. 

\subsection{Classical limits $\picc_0$ and $\Ulocc$}\label{sec: cl}
Let us write $\Ulocc$ for the $\eps\to 0$ limit of the algebra of fields $\Uloc$. It is a Poisson algebra, i.e. a unital commutative associative algebra which is also a Lie algebra, with the compatibility condition that the Lie product (i.e. the Poisson bracket) is a derivation in both slots for the commutative product. The Poisson bracket on $\mc F$ is given by 
\be \{\cdot,\cdot\} := \lim_{\eps\to 0} \frac 1 \eps [\cdot,\cdot].\nn\ee 
Equivalently it is defined by its action on the generators,
\be \left\{ b^i_m, b^j_n \right\} =  1 m \delta_{m+n,0} \bilin{b^i}{b^j}, \nn\ee
cf. \cref{hcom}, together with the derivation condition.

The limit $\eps \to 0$ of the one-parameter family $\pi_0^\eps$ of vertex algebras is a commutative vertex algebra, which means one in which all non-negative products vanish. We shall denote it by $\picc_0$.
A commutative vertex algebra is equivalent to a \emph{differential algebra}; that is, a unital commutative associative algebra equipped with a derivation. Indeed, when all non-negative products vanish then the product defined by $A\cdot B := A_{(-1)} B$ is commutative by \cref{vaskew} and associative by Borcherds identity. The map $\del := T$ is a derivation and $1:= \vac$ is the unit element. In the present case we have
\be \picc_0 \cong \CC[ b_{i,n} ]_{i=1,\dots,\dim\h; n\in \ZZ_{<0}}, \label{picdef}\ee
as differential algebras, the latter equipped with the derivation $\del$ defined by $\del b_{i,n} = -n b_{i,n-1}$.
From its origin as the limit $\eps\to 0$ of a one-parameter family of vertex algebras, $\picc_0$ comes endowed with the structure of a vertex Poisson algebra. 
A \emph{vertex Poisson algebra} $\vpa$ is differential algebra $(\vpa,\cdot,\del,1)$ which is also a \emph{vertex Lie algebra} $(\vpa,T,\{_{\{n\}}\}_{n\in \ZZ_{\geq 0}})$, with the compatibility conditions that $T=\del$ and that the non-negative products are all derivations of the commutative product:
\be A_{\{n\}} (B\cdot C) = (A_{\{n\}} B) \cdot C + B \cdot (A_{\{n\}} C) \nn\ee
for all $n \geq 0$ and all $A,B,C\in \vpa$ \cite{BDChiralAlgebras,FrenkelBenZvi,Yamskulna}. We get this vertex Lie algebra structure, i.e. these non-negative products, by taking the order-$\eps$ term in the limits of the non-negative products on $\pi_0^\eps$:
\be A_{\{n\}} B := \lim_{\eps\to 0} \frac 1 \eps A_{(n)} B ,\nn\ee 
for all $n\geq 0$ and all $A,B\in \ol \pi_0$. 

\begin{rem}\label{identrem}
Here and below we identify $\picc_0$ and all $\pi_0^\eps$ as vector spaces in the obvious way, i.e. by the vector space isomorphisms $\pi_0^\eps \cong \CC[b_{i,n}]_{i=1,\dots,\dim\h, n<0}$. 
\end{rem}

There is a classical analog of the formal state-field map \cref{formalYmap}: for every $n\in \ZZ$ there is a linear map $\picc_0\to \Ulocc; A \mapsto \ol A_{(n)}$ sending $A\in \picc_0$ to its \emph{$n$th classical formal mode} $\ol A_{(n)}$. These modes can be arranged into a formal power series
\be \ol Y[A, x] := \sum_{n\in \ZZ} \ol A_{(n)} x^{-n-1} \label{formalYmapclass}\ee
and are defined by 
$\ol Y[a_{-1},x] = \sum_{n\in \ZZ} a_n x^{-n-1}$, $\ol Y[A\cdot B,x] = \ol Y[A,x] \ol Y[B,x]$, and $\ol Y[TA,x] = \del_x \ol Y[A,x]$. By analogy with the big Lie algebra $\F^\eps = \Lie(\pi_0^\eps)\subset \Uloc$ of \S\ref{sec: bla}, we have the linear subspace 
\be \Fc \equiv \Lie(\picc_0) := \Span_\CC\left\{ \ol A_{(n)}: A\in \picc_0, n\in \ZZ\right\} \subset \Ulocc \nn\ee
spanned by all classical formal modes of all elements of $\picc_0$. It is a Lie subalgebra of $\Ulocc$ with respect to the Poisson bracket. Indeed, notice that the commutator formula \cref{com Am Bn} depends only on the vertex Lie algebra structure of $\pi_0^\eps$, i.e. on the non-negative products. Here we have
\begin{equation} \label{class com Am Bn}
\big\{ \ol A_{(m)}, \ol B_{(n)} \big\} = \sum_{k \geq 0} \binom{m}{k} \ol{\big( A_{\{k\}} B\big) }_{(m + n - k)}.
\end{equation}
Define also the Lie subalgebras $\Fc_0$ and $\Fc_{\geq 0}$ of zero and of non-negative modes respectively, cf. \S\ref{sec: bla}. 

\begin{rem} What we call $\Fc$ is called $\hat{\mc F_0}$ in \cite{FFIoM}. It is the space of \emph{local functionals} (on $\h^*_1$).
\end{rem}

Finally recall that $\pi^\eps_\lambda$ (and in particular $\pi_0^\eps$) is a module over $\Uloc$.
One can ask what structures result from this in the $\eps\to 0$ limit.
First, $\picc_\lambda$ is a module over $\Ulocc$, the latter regarded as a commutative algebra. For this action, all non-negative modes  simply act as zero on all of $\picc_\lambda$. But $\pi^\eps_\lambda$ is also a module over the Lie algebra $\F^\eps\subset \Uloc$ and in particular over its Lie subalgebra $\F^\eps_{\geq 0}$. In the limit we get an action, call it $\lon$, of the Lie algebra $\Fc_{\geq 0}$ on $\picc_\lambda$:
\be \ol A_{(n)} \lon v = \lim_{\eps\to 0} \frac 1 \eps A_{(n)}  v\nn\ee
We can write any state in $\picc_\lambda$ as $X\vacl$ for some $X\in \Ulocc$. One then has 
\be \ol A_{(n)} \lon X \vacl = \left\{ \ol A_{(n)}, X\right\} \vacl + X\, (A_{(n)}\lon \vacl) .\label{olact}\ee
On the highest weight vector $\vacl$, one has
\be a_0 \lon \vacl = \vacl \bilin a \lambda,\qquad  a_n \lon \vacl =0 , \quad n\geq 1,\, a\in \h. \nn\ee
(We should stress that this is the Lie algebra action of $a_0\in \Fc_{\geq 0}$. As an element of the commutative algebra $\Ulocc$, $a_0  \vacl= \lim_{\eps\to 0} \eps \bilin a \lambda \vacl =0$.)

\subsection{$\picc_0$ as a conformal algebra}\label{sec: qc}
The conformal vector $\confvec_\sv\in \pi_0^\eps$ is divergent in the limit $\eps\to 0$. However, $\eps \confvec_\sv$ has a well-defined limit (assuming $s\in \h[\eps]$) which we shall denote by $\ol\confvec_\sv$:
\be \ol\confvec_\sv = \lim_{\eps\to 0} \eps\confvec_\sv .\nn\ee
It obeys
$(\ol\confvec_\sv)_{\{0\}} \ol\confvec_\sv = \del\ol\confvec_\sv$, $(\ol\confvec_\sv)_{\{1\}} \ol\confvec_\sv = 2 \ol\confvec_\sv$, $(\ol\confvec_\sv)_{\{2\}} \ol\confvec_\sv = 0$,
\begin{align} (\ol\confvec_\sv)_{\{3\}} \ol\confvec_\sv &= \lim_{\eps\to 0}\frac 1 \eps (\eps\confvec_\sv)_{(3)} \eps \ol\confvec_\sv\nn\\  
&= \lim_{\eps\to 0}\frac 1 \eps\left( \half b^j_1 b_{j,1} + b^j_0 b_{j,2} + \dots - 3 \sv_2 \right) \left( \half b^i_{-1} b_{i,-1} + \sv_{-2} \right)\vac\nn\\
&= \lim_{\eps\to 0}\frac 1 \eps \left( \half \eps^2 \dim \h - 6 \eps \bilin \sv\sv\right)\vac \nn\\ &= 6 \bilin \sv\sv  \vac,\nn\end{align} 
and $(\ol\confvec_\sv)_{\{n\}} \ol\confvec_\sv =0$ for $n\geq 4$. 
These relations say that, as a vertex Lie algebra, $\picc_0$ contains a subalgebra isomorphic to the \emph{Virasoro vertex Lie algebra} with central charge 
\be \ol c = 12 \bilin \sv\sv .\label{cclas}\ee
Such a vertex Lie algebra is called a \emph{conformal algebra} with central charge $\ol c$ (cf. \cite[16.1.14]{FrenkelBenZvi}).
Note that the $\dim \h$ term does not survive in the limit, which is as expected since it arises from a double contraction, i.e. from two uses of the commutator \cref{hcom}.

\subsection{Classical screenings $\ol S_\lambda$}
Recall the screenings $S_\lambda: \pi_0 \to \pi_\lambda$ from \S\ref{sec: intertwiners}. Now we consider their classical limits.
By definition, the \emph{classical screening operator} \cite{FFIoM}
\be \ol S_\lambda : \picc_0 \to \picc_\lambda\nn\ee
is the map given by
\be \ol S_\lambda = \lim_{\eps\to 0} \frac 1 \eps S_\lambda.\nn\ee

Recall the  explicit form \cref{Vf} of the intertwiner map:
\be V_\lambda(x) := \exp\left( - \sum_{n<0} \frac{\lambda_n x^{-n}}{n} \right)
\exp\left( - \sum_{n>0} \frac{\lambda_n x^{-n}}{n} \right).\label{Vf2}\ee
Consider the left exponential factor here. It contains lowering modes, $\lambda_{n}$, $n<0$. Such modes just act by multiplication on $\pi_0^\eps \cong \CC[b_{i,n}]_{i=1,\dots,\dim\h, n<0}$.  Define polynomials $V_{\lambda}[n]$ in the elements $\lambda_m$, $m\leq 0$, by
\be \sum_{n\leq 0} V_\lambda[n] x^{-n} = \exp\left( -  \sum_{n<0} \frac{\lambda_n x^{-n}}{n} \right) ,\label{Vdef}\ee
so that left exponential factor is $ \sum_{n\leq 0} V_\lambda[n] x^{-n}$.  

Now consider the action of the right exponential factor on $\pi_0^\eps$. 
The action of $b^i_n$, $n\geq 0$, on $\pi_0^\eps$ is given by $\eps n\frac{\del}{\del b_{i,-n}}$, for indeed
\be \left[ \eps n\frac{\del}{\del b_{i,-n}}, b_{j,m}\right]
     = \eps n \delta^i_j \delta_{n+m,0} = \left[ b^{i}_{n}, b_{j,m}\right]. \nn\ee
Thus, on $\pi_0^\eps$,
\be \exp\left(- \sum_{n>0} \frac{\lambda_{n} x^{-n}}{n}\right) 
= 1 - \eps  \sum_{n>0} x^{-n} \bilin\lambda {b^i} \frac{\del}{\del b_{i,-n}} + \O(\eps^2).\nn\ee  
Hence, on $\pi_0^\eps$, the polar part in $x$ of $V_{\lambda}(x)$ is given by 
\be -\eps \sum_{m\leq 0} \sum_{n>m} x^{-n} V_\lambda[m] \bilin\lambda{b_i} \frac{\del}{\del b_{k,-n+m}} + \O(\eps^2).\nn\ee
In particular, the leading term of the residue is $\eps \shift_{\lambda}^{-1} \ol S_{\lambda}$ where
\be \ol S_\lambda : \ol \pi_0 \to \ol \pi_\lambda\nn\ee
are the linear maps defined by
\begin{align} \ol S_\lambda
&:= - \shift_\lambda \sum_{m\leq 0} V_\lambda[m] \bilin \lambda{b_k} \frac{\del}{\del b_{k,-1+m}}. \label{csd}\end{align}
Call these \emph{classical screening operators}.
By construction, we have the following.
\begin{lem}\label{clcor} If $m\in \pi_0^\eps$ lies in the kernel of $S_\lambda$ for all $\eps$ then $m\in \picc_0$ lies in the kernel of $\ol S_\lambda$.\qed
\end{lem}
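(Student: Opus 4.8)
The plan is to argue directly from the defining formula $\ol S_\lambda = \lim_{\eps\to 0}\frac{1}{\eps} S_\lambda$, exploiting the vector-space identification of \cref{identrem}, which lets us regard a single polynomial $m \in \CC[b_{i,n}]$ simultaneously as an element of $\pi_0^\eps$ for every $\eps$ and as an element of $\picc_0$. First I would rewrite the hypothesis in the form $S_\lambda m = 0$ for every value of $\eps$, where $S_\lambda$ is understood as the $\eps$-dependent operator whose leading term in $\eps$ produces $\ol S_\lambda$, as established in the residue computation culminating in \cref{csd}. Dividing by $\eps \neq 0$ then gives $\frac{1}{\eps} S_\lambda m = 0$ for every $\eps \neq 0$, and passing to the limit yields $\ol S_\lambda m = \lim_{\eps\to 0}\frac{1}{\eps} S_\lambda m = 0$, so that $m \in \ker \ol S_\lambda$. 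This is the entire argument.

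There is no genuine obstacle here, which is why the statement is flagged as holding ``by construction'': all the real work has already been done in the passage preceding the lemma, namely the verification that $S_\lambda$ vanishes to first order in $\eps$ (so that the rescaled limit $\ol S_\lambda$ exists at all) and that $S_\lambda$ and $\ol S_\lambda$ act on one and the same underlying polynomial algebra. The only point deserving a moment's care is that the hypothesis be imposed for \emph{all} $\eps$, not merely in the limit: it is precisely this that makes $\frac{1}{\eps} S_\lambda m$ identically zero, rather than merely convergent to zero, so that the conclusion follows without having to control any subleading $\O(\eps)$ corrections to the operator or invoke any further analytic property of the limit.
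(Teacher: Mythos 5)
Your argument is correct and is exactly what the paper means by ``by construction'': the paper offers no proof beyond the \qed, since the identity $S_\lambda = \eps\,\ol S_\lambda + \O(\eps^2)$ on the common underlying polynomial algebra makes the implication immediate. Your added remark --- that the hypothesis for \emph{all} $\eps$ makes $\tfrac{1}{\eps}S_\lambda m$ identically zero rather than merely tending to zero --- is the one point worth making explicit, and you make it.
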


\begin{lem}\label{clkercl} Take any $\lambda\in \h$. The kernel $$\ker\ol S_\lambda\subset \picc_0$$ of the classical screening $\ol S_\lambda$ is a vertex Poisson subalgebra of $\picc_0$. 
\end{lem}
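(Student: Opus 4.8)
The plan is to follow the proof of \Cref{kercl} step by step, verifying that $\ker\ol S_\lambda$ is stable under each piece of the vertex Poisson structure of $\picc_0$: the commutative product $A\cdot B = A_{(-1)}B$, the derivation $\del = T$, and the non-negative products $A_{\{n\}}B$ for $n\geq 0$; and that it contains the unit $\vac$. The last point is immediate from the explicit formula \cref{csd}: every summand of $\ol S_\lambda$ terminates in a derivative $\frac{\del}{\del b_{k,-1+m}}$ with $-1+m<0$, and these all annihilate the constant $\vac=1$, so $\ol S_\lambda\vac=0$.

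For the commutative product I would read off from \cref{csd} that $\ol S_\lambda = -\shift_\lambda\circ D$, where $D := \sum_{m\leq 0} V_\lambda[m]\,\bilin\lambda{b_k}\,\frac{\del}{\del b_{k,-1+m}}$ is a first-order differential operator on $\picc_0\cong\CC[b_{i,n}]_{n<0}$ with no zeroth-order term. Any operator of the form $\sum f_\alpha\partial_\alpha$, with $f_\alpha$ elements of the algebra and $\partial_\alpha$ commuting derivations, is itself a derivation of the commutative product; and since $\shift_\lambda$ is an isomorphism of $U(t^{-1}\h[t^{-1}])$-modules, under the identifications $\picc_0\cong\CC[b_{i,n}]_{n<0}\cong\picc_\lambda$ it intertwines the product on $\picc_0$ with the (multiplicative) module action of $\picc_0$ on $\picc_\lambda$. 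Hence
\be \ol S_\lambda(A\cdot B) = (\ol S_\lambda A)\cdot B + A\cdot(\ol S_\lambda B),\nn\ee
so $\ker\ol S_\lambda$ is closed under $\cdot$. Closure under $\del$ would then follow from $[\del,\ol S_\lambda]=0$. I would obtain this commutation from the fact that $\ol S_\lambda$ is, up to $\shift_\lambda$, the residue in $x$ of the classical intertwiner, together with the translation-covariance recorded in \S\ref{sec: intertwiners} (where $\del_x V_\lambda(m,x)=V_\lambda(L_{-1}m,x)$): the residue of a total $x$-derivative vanishes, so the zero mode commutes with $T$. Equivalently, one extracts the same statement as the leading ($\eps^1$) term of \Cref{scderlem} with $k=-2$ and $B=\vac$, using $\del A = A_{(-2)}\vac$.

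The main point, and the expected main obstacle, is closure under the non-negative products $A_{\{n\}}B$, $n\geq 0$, since these are what promote a differential-algebra statement to a genuine vertex-Poisson one. Here I would establish the classical analog of the Leibniz identity \Cref{scderlem}, namely
\be \ol S_\lambda\big(A_{\{n\}} B\big) = \ol A_{(n)}\lon(\ol S_\lambda B) + (\ol S_\lambda A)_{\{n\}} B,\qquad n\geq 0,\nn\ee
in which $\ol A_{(n)}\lon(-)$ is the Lie action of $\Fc_{\geq 0}$ on $\picc_\lambda$ from \S\ref{sec: cl} and $(\ol S_\lambda A)_{\{n\}}(-)$ is the induced intertwiner product in the $\eps\to 0$ limit, valued in $\picc_\lambda$. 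This is the $\eps\to 0$ limit of \Cref{scderlem} (with $m=\vacl$), read off at the correct order: whereas the commutative product $A_{(-1)}B$ survives already at order $\eps^0$, for $k\geq 0$ one has $A_{(k)}B = \eps\,A_{\{k\}}B+O(\eps^2)$ and likewise the module action on $\picc_\lambda$ and the intertwiner modes each carry one explicit power of $\eps$. Combined with the factor of $\eps$ from $\ol S_\lambda = \lim_{\eps\to 0}\frac1\eps S_\lambda$, all three terms of \Cref{scderlem} are then of order $\eps^2$, and the identity above is the coefficient of $\eps^2$. The delicate part is exactly this bookkeeping: one must check that the algebra product, the module action, and the intertwiner modes all have the claimed leading $\eps$-behaviour (so that the normalizations defining $A_{\{k\}}B$, $\ol A_{(k)}\lon v$, etc.\ as limits of $\frac1\eps(\cdots)$ are the ones that appear) and that no lower-order terms contribute.

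Granting this, the conclusion follows exactly as in \Cref{kercl}: if $\ol S_\lambda A=\ol S_\lambda B=0$, then the right-hand sides of both Leibniz identities vanish, so $A\cdot B\in\ker\ol S_\lambda$ and $A_{\{n\}}B\in\ker\ol S_\lambda$ for all $n\geq 0$; together with $\vac\in\ker\ol S_\lambda$ and closure under $\del$, this shows that $\ker\ol S_\lambda$ is a vertex Poisson subalgebra of $\picc_0$.
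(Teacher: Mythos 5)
Your proposal is correct and follows essentially the same route as the paper: the key step --- closure under the non-negative products $A_{\{n\}}B$ --- is obtained, exactly as in the paper's proof, by applying the quantum Leibniz identity of \cref{scderlem} with $m=\vacl$ and reading off the order-$\eps^2$ term, noting that both terms on the right are $\O(\eps^3)$ when $A,B\in\ker\ol S_\lambda$. The only cosmetic difference is that you verify closure under the commutative product directly from the explicit formula \cref{csd} exhibiting $\ol S_\lambda$ as a first-order differential operator, whereas the paper extracts it from the same $\eps$-expansion of \cref{scderlem}.
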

\begin{proof} Suppose $A,B\in \ker\ol S_\lambda$ and consider $A_{\{n\}} B$ for $n \geq 0$. By definition $A_{\{n\}} B$ is the order $\eps$ term in the product $A_{(n)} B$ of $A$ and $B$ regarded as elements of $\pi_0^\eps$; and hence  
$-\ol S_\lambda(A_{\{n\}}B)$ is the order $\eps^2$ term in $(\vacl)_{(0)}(A_{(n)} B)$. Now,  by \cref{scderlem} we have 
\be (\vacl)_{(0)}(A_{(n)} B) = \left( ((\vacl)_{(0)} A)_{(n)} B + A_{(n)} ((\vacl)_{(0)} B) \right). \nn\ee
But since $A,B\in \ker\ol S_\lambda$ we know that $(\vacl)_{(0)} A$ and $(v_{\lambda})_{(0)} B$ are both $\O(\eps^2)$, and hence both terms on the right here are $\O(\eps^3)$. So we have $\ol S_\lambda(A_{\{n\}}B)=0$, i.e. $\ker\ol S_\lambda$ is closed under the vertex Lie algebra products:
\be A_{\{n\}}B \in \ker\ol S_\lambda. \nn\ee
It is clear  $\vac\in \ker\ol S_\lambda$. And  $(\vacl)_{(0)} TA = (\vacl)_{(0)}(A_{(-2)} \vac) = ( (\vacl)_{(0)}A)_{(-2)} \vac$ is $\O(\eps^2)$ whenever $(\vacl)_{(0)}A$ is $\O(\eps^2)$. That is, $A\in \ker\ol S_\lambda$ implies $TA\in \ker\ol S_\lambda$. A similar argument shows that $\ker\ol S_\lambda$ is closed with respect to the commutative product, i.e. $A\cdot B \in \ker\ol S_\lambda$. 
\end{proof}

The proof of the following, starting from \cref{intcom}, is similar.
\begin{lem}\label{clintcom} Let $\lambda,\mu\in \h$. If $X\in \ker \ol S_{\lambda}\subset \picc_0$ then the following diagram commutes:
\be
\begin{tikzcd}
\picc_\mu     \rar{\ol S_\lambda} & \picc_{\mu+\lambda} \\
\picc_\mu\uar \rar{\ol S_\lambda} & \picc_{\mu+\lambda}\uar 
\end{tikzcd}
\nn\ee
where the vertical arrows can be either the action of the classical mode $X_{(n)}$ for any $n\in \ZZ$ (which is non-trivial only for $n\leq -1$) or the Lie algebra action $X_{(n)} \lon$ for any $n\geq 0$. 
\qed\end{lem}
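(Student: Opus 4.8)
The plan is to mirror the proof of \cref{intcom}, replacing \cref{scderlem} by the order-in-$\eps$ information it carries. The one preliminary point is that \cref{scderlem} generalizes from $B\in\pi_0^\eps$ to $B=v\in\pi_\mu^\eps$, which holds by the intertwiner form of Borcherds' identity recalled just before \cref{scderlem}; taking $m=\vacl$ (so that $m_{(0)}=S_\lambda$) and $A=X$ it reads
\[ S_\lambda\big(X_{(k)} v\big) = X_{(k)}\big(S_\lambda v\big) + \big(S_\lambda X\big)_{(k)} v, \qquad v\in\pi_\mu^\eps,\ k\in\ZZ, \]
and holds for every $\eps$. I would then read off the commutativity of each of the two diagrams by matching coefficients of $\eps$, using $S_\lambda = \eps\,\ol S_\lambda + \O(\eps^2)$ together with the fact that $X\in\ker\ol S_\lambda$ means precisely that $S_\lambda X = \O(\eps^2)$ (\cref{clcor}). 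Throughout, all $\pi_\mu^\eps$ are identified as the single vector space of \cref{identrem}, so each $\eps$-dependent quantity may be expanded as a power series whose coefficients are fixed vectors.

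For the commutative vertical maps one only needs $k\leq -1$, since the modes $X_{(k)}$ with $k\geq 0$ act by zero and there is nothing to prove. Here $X_{(k)} v$ has a finite limit as $\eps\to 0$, equal to the classical commutative action, so $\ol S_\lambda(X_{(k)} v)$ is the coefficient of $\eps$ on the left-hand side above. On the right, $X_{(k)}\big(S_\lambda v\big)$ contributes $X_{(k)}(\ol S_\lambda v)$ at order $\eps$, while $\big(S_\lambda X\big)_{(k)} v=\O(\eps^2)$ because $S_\lambda X=\O(\eps^2)$. Matching the order-$\eps$ terms gives $\ol S_\lambda(X_{(k)} v) = X_{(k)}(\ol S_\lambda v)$.

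For the Lie-algebra vertical maps, $n\geq 0$, one rescales twice: as operators $X_{(n)} = \eps\,\big(X_{(n)}\lon\big) + \O(\eps^2)$ and $S_\lambda = \eps\,\ol S_\lambda + \O(\eps^2)$, so now $\ol S_\lambda(X_{(n)}\lon v)$ is the coefficient of $\eps^2$ in $S_\lambda\big(X_{(n)} v\big)$. On the right, $X_{(n)}\big(S_\lambda v\big)$ supplies $X_{(n)}\lon(\ol S_\lambda v)$ at order $\eps^2$, whereas $\big(S_\lambda X\big)_{(n)} v$ is $\O(\eps^3)$: the state $S_\lambda X$ is $\O(\eps^2)$ and its non-negative mode applied to $v$ carries an additional factor of $\eps$. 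Extracting the order-$\eps^2$ coefficient yields $\ol S_\lambda(X_{(n)}\lon v) = X_{(n)}\lon(\ol S_\lambda v)$, the remaining case.

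The step I expect to demand the most care is the bookkeeping of powers of $\eps$: one must confirm that the commutative and Lie-algebra vertical maps scale as $\O(1)$ and $\O(\eps)$ respectively, and verify in each case that the inhomogeneous term $\big(S_\lambda X\big)_{(k)} v$ sits at strictly higher order in $\eps$ than the terms being matched, so that it drops out precisely because $X\in\ker\ol S_\lambda$. Once these scalings are pinned down, the conclusion follows exactly as in \cref{intcom}.
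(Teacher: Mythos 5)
Your proposal is correct and follows essentially the same route as the paper, which simply remarks that the proof is "similar" to that of \cref{clkercl}: one applies the derivation property of \cref{scderlem} (extended to the module $\pi_\mu^\eps$) and matches coefficients of $\eps$, using that $X\in\ker\ol S_\lambda$ forces $S_\lambda X=\O(\eps^2)$ and that non-negative modes of a state contribute an extra factor of $\eps$, so the inhomogeneous term drops out at the relevant order. Your write-up in fact makes explicit the $\eps$-scaling bookkeeping that the paper leaves implicit.
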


\section{Coordinate transformations on $\picc_0$}\label{sec: ct}
Having defined $\pi_0$ and the classical screening operators, we now want to recall the definitions of the group $\Aut\O$, the Lie algebra $\Der\O$, and how they act on $\pi_0$. 

\subsection{Functions on the formal disc}\label{sec: auto}
Let $\O$ denote the complete topological $\CC$-algebra of formal power series in a variable $t$,
\be \O := \CC[[t]].\nn\ee 
One should think of $\O$ as the algebra of functions on a ``formal pointed disc'' $D$. It is a ``formal'' disc because we don't impose any convergence requirements on the functions beyond the demand that they be well-defined elements of $\CC[[t]]:= \invlim_n \CC[t]/t^n\CC[t]$, i.e. that they converge in the $t$-adic topology. And it is ``pointed'' because $\mc O$ has a unique maximal ideal $\m:= t\CC[[t]]$, which one thinks of as consisting of the functions that vanish at the ``point'' with coordinate $t=0$. 

\begin{rem}\label{rem: holomorphic} We work in the formal setting, but one could also work in the analytic setting, with $\O$ replaced by the algebra of germs of analytic functions at a point $p$ on a Riemann surface.
This amounts to imposing the additional requirement that each power series $f$ is convergent in the analytic topology on some sufficiently small (depending on $f$) neighbourhood of $p$.
\end{rem}

Let $\Aut\O$ be the group of continuous automorphisms of $\O$. Elements $\mu\in \Aut\O$ are changes of coordinate of the form
\be t = \mu(s) = c_1 s + c_2 s^2 + \dots \label{mudef}\ee
with $c_1\in \CCx$ and $c_n \in \CC$ for all $n\geq 2$, and the group operation is  
\be (\mu_1 \ast \mu_2)(s) = \mu_2(\mu_1(s)).\label{astdef}\ee
(Note the order here.)

The derivative
\be \mu'(s) = c_1 + 2c_2 s + \dots \nn\ee
of $\mu\in \Aut\O$ belongs to the group of units (i.e. invertible elements) of the ring $\O$. Denote the latter by $\U(\O)$. (It is the complement of the maximal ideal $\m$ in the local ring $\O$.)

Let $\Der\O$ denote the Lie algebra of continuous derivations of $\O= \CC[[t]]$,
\be \Der \O = \CC[[t]] \del_t.\nn\ee 
It has topological basis $L_n := - t^{n+1} \del_t$, $n\geq -1$.

\subsection{Quasi-conformal structure on $\picc_0$}\label{sec: qcs}
A commutative vertex algebra is called \emph{quasi-conformal} if it carries an action of $\Der\O$ such that the generator $L_{-1} = -\del_t$ acts as the translation operator $T$, the generator $L_0 = -t\del_t$ acts semisimply with integer eigenvalues, and the Lie subalgebra $\Der_+\O := t^2 \CC[[t]]\del_t$ acts locally nilpotently.\footnote{For general vertex algebras there is one extra condition, but this condition follows automatically from those listed in the case of commutative vertex algebras. See  \cite[\S6.3.5]{FrenkelBenZvi}.}

Recall the conformal algebra structure on $\picc_0$ from \S\ref{sec: qc}.
This structure makes $\picc_0$ into a quasi-conformal commutative vertex algebra. 
For indeed, one has generators  $\ol L_n \in \Fc \subset \Ulocc$ of a copy of the Virasoro algebra at central charge $\ol c$, defined by  $Y[\ol\confvec_\sv,x] = \sum_{n\in \ZZ} \ol L_n x^{-n-2}$, i.e. in our case
\begin{align} \ol L_n  
= \half\sum_{m\in \ZZ} b^i_{m} b_{i,n-m} - (n+1) \sv_n.\nn\end{align}
Their Poisson brackets with the generators $b^i_m$ are given by
\begin{subequations}\label{cld}
\be \left\{\ol L_n, b^j_{p} \right\} = - p b^j_{n+p} - n(n+1) \delta_{n+p,0} \bilin \sv {b^j} .\ee
We get an action of $\Der \O$ on $\picc_\lambda$ given by 
\be L_n \on m := \ol L_n \lon m \equiv (\ol\confvec)_{(n+1)} \lon m\label{derO} \ee
for all $n\geq -1$ and $m\in \picc_\lambda$.
Explicitly, $L_n\on \vacl = 0$ for $n\geq 1$,
\be L_0 \on \vacl = - \bilin\sv\lambda \vacl,\quad
 L_{-1} \on \vacl 
= \lambda_{-1} \vacl \nn\ee
together with, for all $m \in \picc_0$ and $n\geq -1$, 
\be L_n \on \left(b^j_{p} m\right) = \left\{\ol L_n, b^j_{p} \right\} m + b^j_{p} (L_n \on m) \ee 
(cf. \S\ref{sec: LL} and \cref{olact}).

In particular, we get an action of $\Der \O$ on $\picc_0$, given by the formulas 
\be
L_n \on b^j_{p} = 
\begin{cases} 
-p b^j_{n+p} & n < -p \\ 
-n(n+1) \bilin \sv {b^j} & n=-p \\ 
0 & n> -p \end{cases}
\ee
\end{subequations}
for $p\leq -1$ and $n\geq -1$. This action obeys the remaining conditions to make $\picc_0$ into a quasi-conformal vertex algebra.
(In particular, every eigenvalue of $L_0$ is an integer because it is nothing but the grade in the $\ZZ$-gradation introduced above; and on grading grounds, for every $m\in \picc_0$, $L_n\on m = 0$ for all sufficiently large $n$.)

Consider the Lie subalgebra of $\Der\O$, 
\be \Der_0\O := t\CC[[t]]\del_t, \nn\ee
consisting of those derivations which preserve the maximal ideal $\m = t\CC[[t]]$ of $\O$. It is the Lie algebra of $\Aut\O$. 
The defining conditions of a quasi-conformal vertex algebra ensure in particular that the action of $\Der_0\O\subset \Der\O$ can be exponentiated up to give an action of $\Aut\O$.
Later we will need explicit expressions for this action of $\Aut\O$ on $\picc_0$. To write them, let $\mu \in \Aut\O$ be the element in \cref{mudef},
\be \mu(s) = c_1 s + c_2 s^2 + \dots .\nn\ee
It can be written in the form 
\be \mu(s) = \exp\left(\sum_{n>0} v_n s^{n+1} \del_s\right) v_0^{s\del_s} \on s\nn\ee
for certain $v_n\in \CC$, $n>0$, uniquely defined by the coefficients $c_n$, $n\geq 1$. (One has  $c_1 = v_0$, $c_2 = v_0v_1$, $\dots$.) If one then lets
\be R(\mu) := \exp\left(-\sum_{n>0} v_n L_n\right) v_0^{-L_0}\label{Rsv}\ee
then the map $\mu \mapsto R(\mu)$ defines (see e.g. \cite[Lemma 6.3.2]{FrenkelBenZvi}) an action of $\Aut\O$ on $\picc_0$. 
We shall need the following in \cref{sec: pisvleq} below: for all $a\in \h$,
\begin{align} R(\mu) \on a_{-1}\vac &= (1 - v_1 L_1) v_0^{-1} a_{-1}\vac\nn \\
&= v_0^{-1} a_{-1}\vac + 2 v_1 v_0^{-1} \bilin \sv {a} \vac  \nn\\
&= \frac 1{\mu'(0)} \left(a_{-1}\vac  + \bilin{\sv}{a} \frac{\mu''(0)}{\mu'(0)} \vac \right). 
\label{btrans}\end{align}

\section{Cartan data}\label{sec: cartan}
Until this point $\h$ was merely a finite-dimensional vector space equipped with a symmetric non-degenerate bilinear form $\bilin\cdot\cdot$. But our interest is in the case in which $\h$ is a Cartan subalgebra of a Kac-Moody algebra $\g$ of affine type, and $\bilin\cdot\cdot$ is the restriction to $\h$ of an invariant symmetric bilinear form on $\g$. Let us recall the definitions from \cite{KacBook}. 

\subsection{} Suppose $\cart$ is an indecomposable Cartan matrix of affine type. 

Let $\g=\g(\cart)$ be the corresponding Kac-Moody algebra and $\h\subset \g$ a Cartan subalgebra.  Let $\{\al_i\}_{i\in I} \subset \h^*$ and $\{\chal_i\}_{i\in I} \subset \h$ be sets of simple roots and coroots of $\g$ respectively, where $I$ is an index set of cardinality the number of rows/columns of $\cart$. By definition, 
\be \la \al_i,\chal_j\ra = \cart_{ji},\nn\ee 
where $\la\cdot,\cdot\ra : \h^* \times \h \to \CC$ is the canonical pairing. 
Let $\lg= \g(\trcart)$ be the Langlands dual Lie algebra, i.e. the Kac-Moody Lie algebra with the transposed Cartan matrix $\trcart$. The Cartan subalgebra $\lh\subset\lg$ is naturally identified with the dual $\h^*$ of $\h$, $\lh = \h^*$ and $\{\al_i\}_{i\in I} \subset \h^* = \lh$ and $\{\chal_i\}_{i\in I} \subset \h = \lh^*$ are respectively sets of simple coroots and roots of $\lg$. 
The centres of $\g$ and $\lg$ are both of dimension one and are spanned by the elements
\be \cent = \sum_{i\in I} \chaaa_i \chal_i\in \h, \qquad \chcent = \sum_{i\in I}\aaa_i \al_i \in \lh\label{kddef}\ee
respectively, where $\{\chaaa_i\}_{i\in I}$ and $\{a_i\}_{i\in I}$ are the unique collections of relatively prime positive integers such that $\sum_{j\in I} \cart_{ij}\aaa_j  =0$ and $\sum_{j\in I} \chaaa_i \cart_{ij} = 0$. The dual Coxeter and Coxeter numbers of $\g$ are given respectively by
\be \dualcoxeter = \sum_{i\in I} \chaaa_i,\qquad \coxeter = \sum_{i\in I}\aaa_i. \nn\ee
We set
\be \vareps_i^{-1} = \chaaa_i\aaa_i^{-1}\label{epsa}\ee
and define $D^{-1} = \diag(\vareps^{-1}_i)_{i\in I}$. Then  $D^{-1} \cart$ is symmetric. 
We fix the symmetric nondegenerate bilinear form $\bilin\cdot\cdot$ on $\h$ defined by
\be \bilin{\chal_i} x := \la \al_i, x \ra \vareps_i \label{kdef}\ee
for $i\in I$ and $x\in \h$, together with the condition that 
\be \bilin yz = 0 \nn\ee 
for all $y,z$ belonging to a choice of complementary subspace $\h''$ in $\h$ to the subspace 
\be \h':= \sum_{i\in I} \CC\chal_i \subset \h \nn\ee
spanned by the simple coroots. It is the restriction of the \emph{standard} invariant symmetric bilinear form on $\g$ \cite{KacBook}. 
Since we assume $\g$ is of affine type, $\cart$ is of rank $|I|-1$ and $\h''$ has dimension one. We pick and fix some choice of $\h''$; the freedom here is not important because it is absorbed by the way we fix a derivation element in \cref{rhonull} below.
Let $\weyl \in \lh$ be the Weyl vector of $\g$, which is defined by the property that 
\be\la\weyl,\chal_i\ra = 1\nn\ee 
for each simple coroot $\chal_i$ of $\g$. 
It is unique up to the addition of an arbitrary multiple of $\chcent$.
Let $\chweyl \in \h$ be the Weyl vector of $\lg$, which is defined by the property that 
\be \la\al_i,\chweyl\ra = 1\nn\ee 
for each simple coroot $\al_i$ of $\lg$. 
It is unique up to the addition of an arbitrary multiple of $\cent$.
We fix these freedoms in $\weyl$ and $\chweyl$ by demanding that
\be\bilin{\chweyl}{\chweyl} = 0\quad\text{and}\quad \bilinvee\weyl\weyl = 0 \label{rhonull}\ee
where $\bilinvee\cdot\cdot$ is the induced nondegenerate bilinear form on $\h^*$.\footnote{One may check that $\bilinvee{\al_i}{x} = \la \chal_i , x\ra \vareps_i^{-1}$. Thus $\bilinvee\cdot\cdot$ is the restriction to $\lh=\h^*$ of the standard bilinear form on the Langlands dual algebra $\lg$, for which $a_i$ and $\check a_i$ are interchanged.}  
We note also that $\bilin{\chweyl}\cent 
= \coxeter$ and
$\bilinvee{\weyl}\chcent 
= \dualcoxeter$, and $\bilin\cent\cent = 0 =\bilinvee\chcent\chcent$.

Let $\{\chLa_i\}_{i=0}^\ell \subset \h$ and $\{\La_i\}_{i=0}^\ell \subset \lh$ be the unique elements such that 
\be \la \weyl, \chLa_i \ra = 0 , \qquad \la \La_i, \chweyl \ra = 0 \nn\ee
and 
\be 
  \la \La_i, \chal_j \ra = \delta_{ij},\qquad
  \la \al_j, \chLa_i \ra = \delta_{ij},
\qquad i,j\in I.\label{ld}
\ee
They are a choice\footnote{The choice is in the demand that $\la \La_i, \chweyl \ra = 0$ rather than e.g. $\la \La_i, \cocent\ra=0$ with $\cocent$ the derivation element corresponding to the homogeneous gradation.} of fundamental coweights of $\g$ and $\lg$ respectively.

\subsection{Principal gradation of $\g$}\label{sec: pg}
Let us introduce a Cartan decomposition
\be \g = \n_- \oplus \h \oplus \n_+ \nn\ee
of $\g$ and Chevalley-Serre generators $e_i\in \ln_+$, $f_i\in \ln_-$, $i\in I$. These latter obey
\begin{subequations} \label{KM relations}
\begin{alignat}{2}
\label{KM rel a} [x, e_i] &= \langle \al_i,x\rangle  e_i, &\qquad
[x, f_i] &= - \langle \al_i,x\rangle f_i, \\
\label{KM rel b} [x, x'] &= 0, &\qquad
[e_i, f_j] &= \chal_i \delta_{ij},
\end{alignat}
for any $x, x' \in \lh$, together with the Serre relations
\be \label{KM rel c}
(\text{ad}\, e_i)^{1- \cart_{ij}} e_j = 0, \qquad (\text{ad}\,f_i)^{1- \cart_{ij}} f_j = 0.
\ee
\end{subequations}
They generate the derived subalgebra $\g'\subset \g$.

We have the principal gradation of $\g$, namely the $\ZZ$-gradation defined by the adjoint action of $\chweyl$, or equivalently the unique $\ZZ$-gradation in which $e_i$ has grade $+1$ and $f_i$ has grade $-1$ for each $i\in I$. Let $\g_j$ denote the subspace of grade $j\in \ZZ$, so we have
\be \g = \bigoplus_{j\in \ZZ} \g_j \nn\ee
with
\be X\in \g_j \qquad \Longleftrightarrow \qquad [\chweyl, X] = j X. \nn\ee

\subsection{Exponents}\label{sec: adef} 
Define 
\be p_{-1} := \sum_{i\in I} f_i \in \n_-\label{pmdef}.\ee
The linear map
\be 
\g_{j+1} \to \g_{j}; \quad X \mapsto [p_{-1},X] \nn\ee
is injective for all $j\geq 1$. If it fails to be surjective for a given $j\geq 1$ then $j$ is called an \emph{exponent} of the Lie algebra $\g$. Let $E\subset \ZZ_{\geq 1}$ be the set of exponents of $\g$. 
In almost all cases, the codimension of $[p_{-1},\g_{j+1}]$ in $\g_j$ is one for every exponent $j$,\footnote{The exceptions are in types ${}^1\!D_{2k}$. In type ${}^1\!D_{2k}$ the codimension of $[p_{-1},\g_{2k+ (4k-2)n}]$ is two for every $n\geq 0$. In such cases one must pick two vectors to span a complementary subspace (and we say the exponent has multiplicity two).} and we may pick a vector $p_j\in \g_j$ such that
\be \g_j = [p_{-1},\g_{j+1}] \oplus \CC p_j,\qquad j\in E.\nn\ee
We make the following choice of these complementary subspaces $\CC p_j$.
First, there is a unique element $p_1\in \g_1$ such that
\be [p_1, p_{-1} ] = \chcent.\nn\ee
Then we may choose nonzero $p_j\in \g_j$, $j\in E$, such that
\be [p_1, p_j] = 0.\label{p1pj}\ee

Let $\a_+$ denote the span of the generators $p_j\in \n_+$, $j\in E$:
\be \a_+ := \bigoplus_{j\in E} \CC p_j \subset \n_+ .\nn\ee 
It is an abelian Lie subalgebra of $\n_+$.

\subsection{Identification of $\h$ and $\h^*$}\label{sec: identification}
From now on we shall simply identify $\h$ with its dual $\h^*$ by means of the linear isomorphism induced by the standard nondegenerate bilinear form $\bilin\cdot\cdot$:
\be \h\xrightarrow\sim \h^*;\qquad x \mapsto \bilin x \cdot.\nn\ee
For each $i\in I$ one then has, from \cref{kdef},
\be \chal_i = \vareps_i \al_i, \quad\text{and}\quad \vareps_i = \frac{\bilin{\chal_i}{\chal_i}} 2 = \frac{2}{\bilin{\al_i}{\al_i}}. \label{nudef}\ee
Note also that then
\be \cent = \sum_{i\in I} \check a_i \chal_i = \sum_{i\in I} \check a_i \eps_i \al_i = \sum_{i\in I} a_i \al_i = \chcent .\nn\ee

\section{Conformal affine Toda}\label{sec: caf}
Having got these conventions about Cartan matrices in place, in this section we describe the intersection of the kernels of the screening operators. We shall find, as we claimed in the introduction, that in affine types it is generated by the conformal vector and the state $\chcent_{-1}\vac$. 

We will then be in a position to state and prove the main local results of the paper, i.e. the main results associated to the disc. They are \cref{Tkdiag} and  \cref{thm: Iaff} (the latter by way of \cref{afthm}).

\subsection{The algebras $\Wg^\eps$ and $\Wgc$}
Define $\Wg^\eps\subset\pi_0^\eps$ to be the intersection of the kernels of the screening operators $S_{\al_i}: \pi_0^\eps \to \pi_{\al_i}^\eps$ corresponding to the simple roots of $\g$:
\be \Wg^\eps  := \bigcap_{i\in I} \ker S_{\al_i} \subset \pi_0^\eps.\nn\ee
Define $\Wgc\subset\picc_0$ to be the intersection of the kernels of the classical screening operators $\ol S_{\al_i}: \picc_0 \to \picc_{\al_i}$ corresponding to the simple roots of $\g$:
\be \Wgc  := \bigcap_{i\in I} \ker \ol S_{\al_i} \subset \picc_0.\nn\ee
By \cref{kercl} and \cref{clkercl} we have the following.
\begin{prop}$ $
\begin{enumerate}[(i)]
\item $\Wg^\eps$ is a vertex subalgebra of $\pi_0^\eps$. 
\item $\Wgc$ is a vertex Poisson subalgebra of $\picc_0$. 
\end{enumerate}\qed
\end{prop}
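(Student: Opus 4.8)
The plan is to observe that $\Wg^\eps$ and $\Wgc$ are by definition finite intersections of the kernels treated in \cref{kercl,clkercl}, and that the relevant algebraic structures are preserved under passing to intersections. No new computation is required beyond those two results; everything reduces to the elementary remark that a subspace which is closed under a fixed family of operations and contains the unit remains so after intersecting with another such subspace.

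For part (i), I would argue as follows. By \cref{kercl}, for each $i\in I$ the kernel $\ker S_{\al_i}$ is a vertex subalgebra of $\pi_0^\eps$; in particular it contains the vacuum $\vac$, is closed under the translation operator $T$, and is closed under every $n$th product $A_{(n)}B$, $n\in\ZZ$. Now let $A,B\in \Wg^\eps=\bigcap_{i\in I}\ker S_{\al_i}$. Then $A,B\in \ker S_{\al_i}$ for each $i$, so $A_{(n)}B\in \ker S_{\al_i}$ for every $i$ and every $n$, whence $A_{(n)}B\in \Wg^\eps$; likewise $TA\in \Wg^\eps$, and $\vac\in \Wg^\eps$ since $\vac$ lies in every $\ker S_{\al_i}$. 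Thus $\Wg^\eps$ is a vertex subalgebra of $\pi_0^\eps$.

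Part (ii) is identical in structure. By \cref{clkercl}, each $\ker\ol S_{\al_i}$ is a vertex Poisson subalgebra of $\picc_0$, hence contains $\vac$ and is closed under the commutative product $A\cdot B=A_{(-1)}B$, under $\del=T$, and under all non-negative products $A_{\{n\}}B$, $n\geq 0$. Each of these closure properties, being of the form ``the result of the operation again lies in the subspace,'' is stable under arbitrary intersection, so $\Wgc=\bigcap_{i\in I}\ker\ol S_{\al_i}$ inherits them all and is therefore a vertex Poisson subalgebra of $\picc_0$.

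I do not expect any genuine obstacle here: the content of the statement lies entirely in \cref{kercl,clkercl} (and, behind them, in \cref{scderlem}), which is why it is marked with \qed. The only point worth checking explicitly is that \emph{every} structural operation defining a vertex (Poisson) subalgebra---the products, the translation operator, and the presence of the unit---is of a shape that is manifestly preserved by intersection, so that a finite intersection of such subalgebras is again one of the same type.
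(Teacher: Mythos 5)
Your proposal is correct and matches the paper's own argument: the paper simply cites \cref{kercl,clkercl} and leaves the (immediate) fact that a finite intersection of vertex (Poisson) subalgebras is again one implicit, which is exactly the remark you spell out. Nothing further is needed.
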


\subsection{Conformal algebra structure on $\picc_0$ and $\Wgc$}\label{sec: cvg}
As in \S\ref{sec: cv} we have a conformal vector $\confvec_\sv\in \pi_0^\eps$ associated to each element $\sv$ of $\h[\eps]$.
We can consider in particular setting $\sv = - \chweyl + \eps\weyl$. Let us write $\confvec$ for the resulting conformal vector:
\be \confvec := \frac 1 \eps \left( \half b^j_{-1} b_{j,-1} \vac - \chweyl_{-2} + \eps\weyl_{-2} \right) \vac \nn\ee
\begin{lem}\label{omegagker} $\confvec\in \Wg^\eps$. 
\end{lem}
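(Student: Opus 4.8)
The plan is to deduce the claim directly from \cref{Somlem}, which already evaluates a screening on a conformal vector. Applying that lemma with $\lambda=\al_i$ and $\sv=-\chweyl+\eps\weyl$ writes $S_{\al_i}\confvec$ as the vector $\al_{i,-1}\ket{\al_i}\in\pi_{\al_i}^\eps$ multiplied by the scalar
\[ c_i := -1 - \bilin{\al_i}{-\chweyl+\eps\weyl} + \tfrac12\eps\bilin{\al_i}{\al_i}. \]
So the entire statement $\confvec\in\Wg^\eps=\bigcap_{i\in I}\ker S_{\al_i}$ reduces to verifying that $c_i=0$ for every $i\in I$.

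To compute $c_i$ I would evaluate the three pairings using the Cartan conventions of \cref{sec: cartan}. Combining $\chal_i=\vareps_i\al_i$ with the defining relation \cref{kdef} gives $\bilin{\al_i}{x}=\la\al_i,x\ra$ for all $x\in\h$, whence $\bilin{\al_i}{\chweyl}=\la\al_i,\chweyl\ra=1$ by the definition of $\chweyl$. For the $\weyl$-term, under the identification $\h\cong\h^*$ the element $\weyl$ corresponds to the vector of $\h$ pairing to $\la\weyl,\cdot\ra$, so that $\bilin{\al_i}{\weyl}=\vareps_i^{-1}\bilin{\chal_i}{\weyl}=\vareps_i^{-1}\la\weyl,\chal_i\ra=\vareps_i^{-1}$. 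Finally $\bilin{\al_i}{\al_i}=2\vareps_i^{-1}$ by \cref{nudef}. Substituting,
\[ c_i = -1 + \bilin{\al_i}{\chweyl} - \eps\bilin{\al_i}{\weyl} + \eps\vareps_i^{-1} = -1 + 1 - \eps\vareps_i^{-1} + \eps\vareps_i^{-1} = 0, \]
the constant terms cancelling and the two $\eps$-contributions — one from $\weyl$, one from the norm $\bilin{\al_i}{\al_i}$ — cancelling against each other. This is precisely the motivation for the choice $\sv=-\chweyl+\eps\weyl$. Hence $S_{\al_i}\confvec=0$ for all $i\in I$, so $\confvec\in\Wg^\eps$.

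There is no serious obstacle here; the only thing to be careful about is the bookkeeping of the several identifications in play — the form-induced isomorphism $\h\cong\h^*$, the identification $\lh=\h^*$, and the distinction between the canonical pairing $\la\cdot,\cdot\ra$ and the bilinear form $\bilin{\cdot}{\cdot}$. I would also note in passing that $c_i$ is insensitive to the residual freedom in $\weyl$ and $\chweyl$ (addition of multiples of $\chcent$, resp.\ $\cent$), since $\la\al_i,\cent\ra=0=\la\chcent,\chal_i\ra$, so the value of $c_i$ is unambiguous.
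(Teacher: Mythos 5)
Your proof is correct and follows essentially the same route as the paper: apply \cref{Somlem} with $\sv=-\chweyl+\eps\weyl$ and check that the resulting scalar vanishes using $\bilin{\al_i}{\chweyl}=1$ and $\bilin{\al_i}{\weyl}=\vareps_i^{-1}=\tfrac12\bilin{\al_i}{\al_i}$. The closing remark about insensitivity to the residual freedom in $\weyl$ and $\chweyl$ is a nice extra touch not present in the paper, but the core argument is the same.
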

\begin{proof} We must show that $\confvec\in\ker S_{\al_i}$ for each simple root $\al_i$.  
According to \cref{Somlem} we are to evaluate 
\be -1 + \bilin{\al_i}{\chweyl - \eps\weyl} + \frac \eps 2 \bilin{\al_i}{\al_i}.\nn\ee
This indeed vanishes, because $\bilin{\al_i}{\chweyl} = \la \chweyl, \al_i \ra = 1$ and, in view of \cref{nudef}
\be 1 = \la \chal_i, \weyl \ra = \bilin{\chal_i}{\weyl} = \frac{2}{\bilin{\al_i}{\al_i}} \bilin{\weyl}{\al_i} \implies  \bilin{\al_i}{\weyl} = \frac{\bilin{\al_i}{\al_i}} 2 \nn\ee
\end{proof}
Define the classical conformal vector $\ol\confvec\in \picc_0$ as in \S\ref{sec: qc},
\be \ol\confvec = \lim_{\eps\to 0} \eps \confvec = \left(\half b^i_{-1} b_{i,-1} - \chweyl_{-2}\right) \vac \in \picc_0.
\nn\ee

\begin{cor}\label{omin} $\ol\confvec \in \Wgc$.
\end{cor}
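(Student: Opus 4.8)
The plan is to show directly that $\ol S_{\al_i}\ol\confvec = 0$ for every $i\in I$, whence $\ol\confvec\in\bigcap_{i\in I}\ker\ol S_{\al_i} = \Wgc$. The one conceptual point to keep in mind is that one cannot simply feed $\ol\confvec$ into \cref{clcor}: that lemma requires a vector lying in $\ker S_{\al_i}$ for \emph{all} $\eps$, whereas $\ol\confvec$ will turn out to be annihilated by $S_{\al_i}$ only to order $\eps^2$. (The correction term $\eps\weyl_{-2}\vac$ built into $\confvec$ in \cref{omegagker} was engineered precisely to cancel this residual contribution at the level of $\pi_0^\eps$, but it drops out in the classical limit.) So instead I would track the $\eps$-expansion by hand and exploit the rescaling built into $\ol S_{\al_i} = \lim_{\eps\to0}\frac1\eps S_{\al_i}$.

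Concretely, under the vector-space identification of \cref{identrem} one has, for every $\eps$, the exact identity $\ol\confvec = \eps\,\confvec_{-\chweyl}$, where $\confvec_{-\chweyl}$ denotes the conformal vector of \cref{sec: cv} with the choice $\sv = -\chweyl$ (note the absence of the $\eps\weyl$ term used in \cref{omegagker}); indeed $\eps\,\confvec_{-\chweyl} = \big(\half b^i_{-1}b_{i,-1} - \chweyl_{-2}\big)\vac$. I would then apply \cref{Somlem} with $\lambda = \al_i$ and $\sv = -\chweyl$. Its scalar prefactor becomes $-1 - \bilin{\al_i}{-\chweyl} + \half\eps\bilin{\al_i}{\al_i} = -1 + 1 + \half\eps\bilin{\al_i}{\al_i}$, using $\bilin{\al_i}{\chweyl} = \la\al_i,\chweyl\ra = 1$, so that
\be S_{\al_i}\confvec_{-\chweyl} = \half\,\eps\,\bilin{\al_i}{\al_i}\,(\al_i)_{-1}\ket{\al_i}. \nn\ee
Multiplying by $\eps$ gives $S_{\al_i}\ol\confvec = \half\,\eps^2\,\bilin{\al_i}{\al_i}\,(\al_i)_{-1}\ket{\al_i}$, which is manifestly of order $\eps^2$.

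Finally I would pass to the classical limit. Since $\ol S_{\al_i} = \lim_{\eps\to0}\frac1\eps S_{\al_i}$,
\be \ol S_{\al_i}\ol\confvec = \lim_{\eps\to0}\half\,\eps\,\bilin{\al_i}{\al_i}\,(\al_i)_{-1}\ket{\al_i} = 0, \nn\ee
and as $i\in I$ was arbitrary this proves $\ol\confvec\in\Wgc$. There is no genuinely hard step; the only thing requiring care is the bookkeeping described above, namely distinguishing the $\eps$-dependent vector $\confvec$ from the $\eps$-independent $\ol\confvec$, and remembering the factor $\frac1\eps$ in the definition of the classical screening, so that a term of order $\eps^2$ at the level of $S_{\al_i}$ is exactly what produces $0$ in the limit.
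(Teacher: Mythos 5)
Your proof is correct, and it takes a slightly different route from the paper's. The paper's own proof is a one-liner: it applies \cref{clcor} to the $\eps$-dependent family $\eps\confvec$, where $\confvec$ is the conformal vector with $\sv = -\chweyl + \eps\weyl$; since \cref{omegagker} already showed $S_{\al_i}\confvec = 0$ exactly (for all $\eps$, thanks to the $\eps\weyl_{-2}$ correction), and $\lim_{\eps\to 0}\eps\confvec = \ol\confvec$, the conclusion is immediate. You instead work with the $\eps$-independent vector $\eps\confvec_{-\chweyl} = \ol\confvec$ (no $\weyl$ correction), apply \cref{Somlem} directly, and observe that $S_{\al_i}\ol\confvec = \half\eps^2\bilin{\al_i}{\al_i}(\al_i)_{-1}\ket{\al_i}$ is $\O(\eps^2)$, which vanishes after dividing by $\eps$ and taking the limit. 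Your computation is right (the prefactor $-1+\bilin{\al_i}{\chweyl}+\half\eps\bilin{\al_i}{\al_i} = \half\eps\bilin{\al_i}{\al_i}$ is exactly as you say), and your opening caveat is a fair one: \cref{clcor} as stated cannot be fed the fixed vector $\ol\confvec$, since that vector is not exactly annihilated for all $\eps$ --- the paper gets around this by feeding it the corrected family $\eps\confvec$ instead. What your version buys is independence from the quantum correction term and from \cref{omegagker}; what the paper's buys is brevity, given that \cref{omegagker} was needed anyway. Both arguments ultimately rest on the same evaluation in \cref{Somlem}.
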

\begin{proof}
We must show that $\ol\confvec \in \ker \ol S_{\al_i}$ for each simple root $\al_i$. But in view of  \cref{clcor}, it is enough to check that $\eps\confvec\in \ker S_{\al_i}$, as we just did. 
\end{proof}

Recall \S\ref{sec: qc}. We have established the following.
\begin{prop} The vertex Poisson algebra $\Wgc\subset \picc_0$ is a conformal algebra with central charge 
\be \ol c = \bilin\chweyl\chweyl, \nn\ee 
and is therefore quasi-conformal as a commutative vertex algebra, with the action of $\Der\O$ given by the modes $\ol L_n$, $n\geq -1$, of the vector $\ol\confvec$. \qed
\end{prop}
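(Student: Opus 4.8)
The plan is to assemble results already established and to verify the one genuinely new point, namely that the modes $\ol L_n$ preserve the subspace $\Wgc$. Recall from \cref{omin} that the conformal vector $\ol\confvec$ lies in $\Wgc$; note that $\ol\confvec$ is precisely $\ol\confvec_\sv$ of \S\ref{sec: qc} specialized to $\sv = -\chweyl$. By the preceding proposition $\Wgc$ is a vertex Poisson subalgebra of $\picc_0$, hence closed under the non-negative products $(\ol\confvec)_{\{n\}}$ and under the translation operator $T$. The products $(\ol\confvec)_{\{n\}}\ol\confvec$ computed in \S\ref{sec: qc} exhibit $\ol\confvec$ as a generator of a Virasoro vertex Lie algebra, and since every state appearing in those relations lies in $\Wgc$, the relations persist in $\Wgc$. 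Thus $\Wgc$ is itself a conformal algebra, its central charge being the value of \cref{cclas} at $\sv = -\chweyl$; by the normalization \cref{rhonull} we have $\bilin\chweyl\chweyl = 0$, so this is the stated $\ol c = \bilin\chweyl\chweyl$.

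It remains to produce the quasi-conformal structure, and the heart of the argument is to check that the $\Der\O$-action on $\picc_0$ defined by $\ol L_n = (\ol\confvec)_{(n+1)}$, $n \geq -1$, restricts to $\Wgc$. By \S\ref{sec: qcs} this action is the Lie-algebra action $\lon$ of the non-negative modes $(\ol\confvec)_{(m)}$, $m \geq 0$. Since $\ol\confvec \in \ker\ol S_{\al_i}$ for every $i \in I$, \cref{clintcom} (applied with $X = \ol\confvec$, $\mu = 0$, $\lambda = \al_i$, in the Lie-algebra-action case $X_{(m)}\lon$, $m \geq 0$) shows that each $(\ol\confvec)_{(m)}\lon$ commutes with $\ol S_{\al_i}$. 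Hence for $v \in \Wgc$ we get $\ol S_{\al_i}\big((\ol\confvec)_{(m)}\lon v\big) = (\ol\confvec)_{(m)}\lon\,\ol S_{\al_i} v = 0$ for all $i \in I$, so $(\ol\confvec)_{(m)}\lon v \in \Wgc$. Therefore the operators $\ol L_n$, $n \geq -1$, preserve $\Wgc$.

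Finally, the three defining conditions of a quasi-conformal commutative vertex algebra were verified for $\picc_0$ in \S\ref{sec: qcs}: $L_{-1}$ acts as the translation operator $T$, $L_0$ acts semisimply with integer eigenvalues (the $\ZZ$-grading), and $\Der_+\O$ acts locally nilpotently. Each is a property of the operators $\ol L_n$, and since these now restrict to the stable subspace $\Wgc$, every condition holds verbatim there. This yields the quasi-conformal structure and completes the proof. The only new work is the stability of $\Wgc$ under the $\ol L_n$, so the real content lies in the application of \cref{clintcom}; everything else is inherited from $\picc_0$.
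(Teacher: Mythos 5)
Your proof is correct and follows essentially the same route as the paper, which states this proposition without further argument as a direct assembly of \cref{omin}, the computations of \S\ref{sec: qc}, and the quasi-conformal structure of \S\ref{sec: qcs}. The one step you verify explicitly --- stability of $\Wgc$ under the $\ol L_n$ --- is indeed the only point the paper leaves implicit, and your appeal to \cref{clintcom} handles it correctly; note it also follows at once from the fact that $\Wgc$ is a vertex Poisson subalgebra containing $\ol\confvec$, hence closed under $T$ and under the non-negative products $(\ol\confvec)_{\{n\}}$.
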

 
This proposition actually holds as stated regardless of whether $\g$ is of finite or affine type. However, in our setting with $\g$ of affine type we have 
\be \ol c = \bilin\chweyl\chweyl = 0,\label{czero}\ee 
as in \cref{rhonull}. (In finite types $\ol c = \bilin\chweyl\chweyl \neq 0$.)
This is one important distinction between finite and affine types. There is another, to which we turn now. 

\subsection{Affine connection component $\chcent_{-1}\vac$}\label{sec: afc}
Consider the subspace of $\pi_0^\eps$, or of its classical limit $\picc_0$, of grade $+1$. It is isomorphic to $\h$ as a vector space, and is spanned by the states $a_{-1} \vac$ with $a\in \h$.  

\begin{lem} The subspace of $\Wg^\eps$ of grade $+1$ has dimension one, and is spanned by the state $\chcent_{-1} \vac$.
\end{lem}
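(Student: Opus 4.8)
The plan is to reduce the statement to an explicit computation of the screenings $S_{\al_i}$ on the grade-one states $a_{-1}\vac$, $a\in\h$, and then to recognise the resulting condition on $a$ as the defining condition of the centre of $\g$. The starting point is the explicit form \cref{Vf} of the intertwiner $V_\lambda(x)$, exactly as in the proof of \cref{Somlem}; the only difference is that $a_{-1}\vac$ sits in grade one rather than grade two, so the computation is shorter. I expect the answer to be
\be S_\lambda\bigl(a_{-1}\vac\bigr) = -\eps\,\bilin\lambda a\,\vacl. \nn\ee

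To obtain this, first I would apply the right-hand exponential factor of \cref{Vf} to $a_{-1}\vac$. Since $\lambda_n\vac=0$ for $n>0$ and, by \cref{hcom}, $[\lambda_1,a_{-1}] = \eps\bilin\lambda a$ while $[\lambda_n,a_{-1}]=0$ for $n\geq 2$, only the linear term $-\lambda_1 x^{-1}$ of the exponent contributes, giving $a_{-1}\vac - \eps\bilin\lambda a\,\vac\,x^{-1}$; all higher-order terms vanish on grading grounds. The left-hand exponential factor involves only the lowering modes $\lambda_n$, $n<0$, and so contributes only non-negative powers of $x$, its constant term being $1$. Taking the residue (the coefficient of $x^{-1}$) and applying $\shift_\lambda$ then yields the displayed formula.

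With this in hand, a grade-one state $a_{-1}\vac$ lies in $\Wg^\eps = \bigcap_{i\in I}\ker S_{\al_i}$ if and only if $\bilin{\al_i}{a}=0$ for every $i\in I$ (here $\eps\neq0$ and $\ket{\al_i}\neq0$). Under the identification of $\h$ with $\h^*$ of \cref{sec: identification}, \cref{kdef} and \cref{nudef} give $\bilin{\al_i}{a} = \la\al_i,a\ra$, so this condition reads $\la\al_i,a\ra=0$ for all $i$; that is, $a$ lies in the centre of $\g$. Finally I would invoke the structure of affine $\g$: the simple roots $\{\al_i\}_{i\in I}$ are linearly independent in $\h^*$ and $\dim\h = |I|+1$, so their common annihilator in $\h$ is one-dimensional; and by \cref{kddef} it is spanned by $\cent = \chcent$, since $\la\al_j,\cent\ra = \sum_{i\in I}\chaaa_i\,\cart_{ij}=0$. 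Hence the grade-one part of $\Wg^\eps$ is the line $\CC\,\chcent_{-1}\vac$, as claimed.

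The only real obstacle is the bookkeeping in the residue computation and, secondarily, keeping the two bilinear forms and the $\h\cong\h^*$ identification straight; the concluding structural step — that the annihilator of the simple roots is the one-dimensional centre in affine type — is standard.
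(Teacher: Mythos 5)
Your proposal is correct and follows essentially the same route as the paper: compute $S_\lambda(a_{-1}\vac) = -\eps\bilin{\lambda}{a}\vacl$ from the explicit intertwiner formula, deduce that membership in $\Wg^\eps$ forces $\bilin{\al_i}{a}=0$ for all $i\in I$, and identify the common annihilator of the simple roots with the one-dimensional centre spanned by $\chcent$. The paper's own proof is just a terser version of this (it cites the computation in \S\ref{Sonom} rather than redoing the residue), so your extra bookkeeping is simply filling in details it leaves implicit.
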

\begin{proof} In $\pi_0^\eps$ we have $S_\lambda a_{-1} \vac = - \eps \bilin\lambda a \vac$ for all $a,\lambda\in \h$. (Compare \S\ref{Sonom}.) Thus $a_{-1}\vac \in \Wg^\eps$ if and only if $\bilin{\al_i}{a} = 0$ for every simple root of $\g$. 
Hence $a$ is proportional to $\chcent$.
\end{proof}

By \cref{clcor}, we have the classical corollary.
\begin{cor}$ $\label{kin} 
The subspace of $\Wgc$ of grade $+1$ has dimension one, and is spanned by the state $\chcent_{-1} \vac$.
\qed\end{cor}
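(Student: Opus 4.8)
The plan is to reduce the statement to a finite-dimensional linear-algebra computation. The grade $+1$ subspace of $\picc_0$ is simply $\{a_{-1}\vac : a\in\h\}$, a copy of $\h$, so it suffices to determine the action of each classical screening $\ol S_{\al_i}$ on an arbitrary grade $+1$ vector $a_{-1}\vac$ and then to intersect the resulting kernels over $i\in I$. This will pin down both the dimension and a spanning vector simultaneously.

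The computation of $\ol S_\lambda a_{-1}\vac$ is immediate from the preceding lemma. There we found $S_\lambda a_{-1}\vac = -\eps\,\bilin{\lambda}{a}\ket\lambda$ in $\pi_0^\eps$, and since by definition $\ol S_\lambda = \lim_{\eps\to0}\tfrac1\eps S_\lambda$, this yields $\ol S_\lambda a_{-1}\vac = -\bilin{\lambda}{a}\ket\lambda$. (The same is visible directly from \cref{csd}: acting on a grade $+1$ state only the $m=0$ term contributes, and $V_\lambda[0]=1$.) Consequently $a_{-1}\vac\in\Wgc$ if and only if $\bilin{\al_i}{a}=0$ for every $i\in I$.

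It remains to identify this solution space, which is the one point where the affine hypothesis is essential. Under the identification of \cref{sec: identification} one has $\bilin{\al_i}{a}=\la\al_i,a\ra$, so the conditions read $\la\al_i,a\ra=0$ for all $i\in I$; that is, $a$ lies in the common kernel of the functionals $\{\la\al_i,\cdot\ra\}_{i\in I}$. These functionals are linearly independent because the simple roots are, and since $\dim\h=|I|+1$ in affine type their common kernel is one-dimensional. As $\chcent=\cent=\sum_i\check a_i\chal_i$ satisfies $\la\al_i,\chcent\ra=\sum_k\check a_k\cart_{ki}=0$, it lies in this kernel and hence spans it, forcing $a$ to be a multiple of $\chcent$. (Membership of $\chcent_{-1}\vac$ in $\Wgc$ also follows abstractly from \cref{clcor}, since $\chcent_{-1}\vac\in\Wg^\eps$ for every $\eps$.) I do not expect any genuine obstacle here: the corollary is essentially the classical shadow of the preceding lemma, and the only substantive content is the elementary identification of the orthogonal complement of $\sum_i\CC\al_i$ with $\CC\chcent$.
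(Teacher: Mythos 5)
Your proof is correct and follows essentially the same route as the paper: the paper computes $S_\lambda a_{-1}\vac = -\eps\bilin{\lambda}{a}\vacl$ to identify the grade $+1$ part of $\Wg^\eps$ with $\CC\chcent_{-1}\vac$ and then passes to the classical limit via \cref{clcor}, which is exactly your computation read at order $\eps$. Your explicit identification of the common kernel of the $\la\al_i,\cdot\ra$ (and your noting that both inclusions follow from the exact linearity in $\eps$) is a slightly more careful spelling-out of what the paper leaves implicit, but it is not a different argument.
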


\begin{rem} $ $
\begin{enumerate}[(i)]
\item In finite types, the subspace of $\Wgc$ of grade $+1$ has dimension zero. 
\item
In finite types, $\Wgc$ is the usual classical $W$-algebra associated to $\g$. It is generated, as a differential algebra, by $\rank(\g)$ states $S_1,S_2,\dots,S_{\rank\g}$, the first of which is $S_1=\ol\confvec$ \cite{FFIoM}.
\item
In affine types, by contrast, $\Wgc$ is generated as a differential algebra by the states $\chcent_{-1}\vac$ and $\ol\confvec$ \emph{only}, irrespective of the rank of $\g$. This will follow from an identification of the differential algebra $\picc_0$ with the space of \emph{$\g$-Miura opers} on the disc. (See \cref{opdisc} and \cref{Wpi} below.)
\end{enumerate}
\end{rem}

\subsection{Canonical translation operator $\Tk$}\label{sec: tkdef}
Let $\Tk$ be the endomorphism of $\picc_\lambda$ (for any $\lambda\in \h$) given by
\begin{align} 
\Tk &:= \sum_{m=0}^\8 \frac {(-1)^m}{(\coxeter)^m m!} \sum_{n_1,\dots,n_m \geq 1} \frac{1}{n_1\dots n_m} \chcent_{-n_1} \dots \chcent_{-n_m} L_{n_1+\dots+ n_m-1} 
\label{Tk}
\\&= L_{-1} - \frac {\chcent_{-1}}\coxeter L_0 + \left( \frac 12\frac{\chcent_{-1}}\coxeter\frac{\chcent_{-1}}\coxeter - \frac{\chcent_{-2}}{2\coxeter}\right) L_1 + \dots   .\nn
\end{align}
\begin{rem}\label{Vkrem}
Recall the formula \cref{Vf} for the intertwining operator $V_\lambda(x)$, $\lambda\in\h$ and the shift operator $\shift_\lambda$. Define $L^{(\geq -1)}(x) = \sum_{n\geq -1} L_n x^{-n-2}$.  On inspecting the expression above for $\Tk$ one sees that 
\be  \shift_{-\chcent/\coxeter} \circ \Tk =  \res_x V_{-\chcent/\coxeter}(x) L^{(\geq -1)}(x) \nn\ee
as an equality of linear maps $\picc_0 \to \picc_{-\chcent/\coxeter}$.
\end{rem} 

\begin{lem}\label{lem: Lj}
We have $v^{L_0} \Tk v^{-L_0} = v \Tk$ for any $v\in \CCx$, while
for all $j\geq 1$, 
\be \left[ L_j, \Tk \right] = 0 \nn\ee
as an equality of endomorphisms of $\picc_\lambda$, for any $\lambda$ such that $\bilin\chcent\lambda =0$. 
\end{lem}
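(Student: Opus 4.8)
The plan is to treat both assertions uniformly by repackaging $\Tk$ as a single residue, as in \cref{Vkrem}. Writing $E(x) := \exp\!\big(-\tfrac1\coxeter\sum_{n\geq1}\tfrac{\chcent_{-n}}{n}x^n\big)$ for the creation part of $V_{-\chcent/\coxeter}(x)$ (acting on $\picc_\lambda$ by multiplication) and $L^{(\geq-1)}(x):=\sum_{n\geq-1}L_n x^{-n-2}$, one reads off from \cref{Tk} that, as an endomorphism of $\picc_\lambda$, $\Tk=\res_x E(x)L^{(\geq-1)}(x)$: the coefficient of $x^{N}$ in $E(x)$ is exactly the operator multiplying $L_{N-1}$ in \cref{Tk}. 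This is the form on which both parts of the lemma are easiest to prove.

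For the grading statement I would conjugate by $v^{L_0}$. Since $\chcent_{-n}$ has $L_0$-degree $+n$ one has $v^{L_0}E(x)v^{-L_0}=E(vx)$, and since $[L_0,L_n]=-nL_n$ one has $v^{L_0}L^{(\geq-1)}(x)v^{-L_0}=v^2 L^{(\geq-1)}(vx)$. Substituting and rescaling the residue variable gives $v^{L_0}\Tk v^{-L_0}=v\,\Tk$; equivalently $\Tk$ is homogeneous of degree $+1$, i.e. $[L_0,\Tk]=\Tk$.

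For the commutator I would use Leibniz, $[L_j,EL^{(\geq-1)}]=[L_j,E]L^{(\geq-1)}+E[L_j,L^{(\geq-1)}]$, and establish two transformation laws. First, from $\{\ol L_j,\chcent_{-n}\}=n\chcent_{j-n}+j(j+1)\coxeter\,\delta_{j,n}$ (which uses \cref{cld} and $\bilin\chweyl\chcent=\coxeter$) together with the vanishing of the nonnegative modes $\chcent_{j-n}$, $n\le j$, on $\picc_\lambda$, I get that $E(x)$ transforms as a primary field of weight $-1$:
\[ [L_j,E]=x^{j+1}\del_x E-(j+1)x^j E. \]
This is the step where $\bilin\chcent\chcent=0$ and the hypothesis $\bilin\chcent\lambda=0$ enter: they make the zero mode $\chcent_0$ arising at $n=j$ annihilate $\picc_\lambda$ unambiguously, so that this single term contributes only the central scalar $j(j+1)\coxeter$. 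Second, from $[L_j,L_n]=(j-n)L_{j+n}$ (the central charge being $\ol c=\bilin\chweyl\chweyl=0$) I get
\[ [L_j,L^{(\geq-1)}]=x^{j+1}\del_x L^{(\geq-1)}+2(j+1)x^j L^{(\geq-1)}-D(x), \]
where the defect $D(x)=\sum_{d=0}^{j-1}(j{+}d{+}2)\,L_{j-d-2}\,x^{d}$ collects precisely the modes $L_{-1},\dots,L_{j-2}$ that are missing from the truncated sum $L^{(\geq-1)}$; crucially it is a polynomial in $x$ with nonnegative powers only.

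Combining the two primary parts, the weight-$(-1)$ and weight-$2$ contributions assemble into a total derivative,
\[ [L_j,E]L^{(\geq-1)}+E\big(x^{j+1}\del_x L^{(\geq-1)}+2(j+1)x^j L^{(\geq-1)}\big)=\del_x\!\big(x^{j+1}E\,L^{(\geq-1)}\big), \]
whose residue vanishes, leaving $[L_j,\Tk]=-\res_x E(x)D(x)$. Since both $E(x)$ and $D(x)$ contain only nonnegative powers of $x$, their product has no $x^{-1}$ term and the residue is zero, giving $[L_j,\Tk]=0$. I expect the main obstacle to be the careful bookkeeping of the primary law for $E(x)$ — in particular isolating the lone zero-mode term at $n=j$ and confirming, using $\bilin\chcent\lambda=0$, that it produces no spurious contribution — since the total-derivative cancellation and the power-counting of the final residue are then immediate.
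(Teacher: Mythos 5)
Your proof is correct, and it takes a genuinely different route from the paper's. The paper proves $[L_j,\Tk]=0$ by brute force on the multi-sum definition \cref{Tk}: it computes $[L_j,\,\cdot\,]$ of the $m$-th summand, discards the non-negative modes $\chcent_{-n_p+j}$ using $\bilin\chcent\lambda=0$, and observes that the surviving terms telescope in $m$. You instead exploit the residue presentation hinted at in \cref{Vkrem} (which the paper states but never uses in this proof), and your two transformation laws are exactly right: $[L_j,E]=x^{j+1}\del_xE-(j+1)x^jE$ on $\picc_\lambda$ (I checked the bookkeeping — the $n\le j$ terms drop by the hypothesis and by $\bilin\chcent\chcent=0$, and the lone central term at $n=j$ supplies precisely $-(j+1)x^jE$), while the truncation defect works out to $D(x)=\sum_{d=0}^{j-1}(j+d+2)L_{j-d-2}x^d$ as you claim, so that $[L_j,E L^{(\geq-1)}]=\del_x\big(x^{j+1}EL^{(\geq-1)}\big)-ED$ and both residues vanish. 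What your approach buys is conceptual transparency — the identity is revealed as ``$E$ is primary of weight $-1$, $L^{(\geq-1)}$ is weight $2$ up to a regular defect, and weight $-1+2=1$ integrands have vanishing residue of the Lie derivative'' — whereas the paper's telescoping is the combinatorial shadow of the exponential structure you make explicit; the paper's version requires no generating-function formalism but obscures why the cancellation happens. Your residue-rescaling argument for the grading statement also cleanly yields $v^{L_0}\Tk v^{-L_0}=v\Tk$, agreeing with the statement of the lemma (the paper's own one-line justification contains a sign slip, asserting $v^{-1}\Tk$). One small point you gloss over: passing from $[L_j,\chcent_{-n}]$ to $[L_j,E]=E\cdot[L_j,\log E]$ needs $[[L_j,\chcent_{-n}],\chcent_{-m}]=0$, which again follows from $\bilin\chcent\chcent=0$; this is worth a half-sentence but is not a gap.
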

\begin{proof} That $v^{L_0} \Tk v^{-L_0} = v^{-1} \Tk$ is clear. Consider $\left[ L_j, \Tk \right]$. We have the commutation relations (cf. \cref{cld}) 
\be [L_j, L_n] = (j-n) L_{j+n}, \qquad [L_j,\chcent_p] = -p \chcent_{j+p} + j(j+1) \delta_{j+p,0} \coxeter  \nn\ee
for all $j\geq 1$, $n\geq -1$, $p\in \ZZ$. (Recall that $\bilin{-\chweyl} \chcent = - \chweyl(\chcent) = -\coxeter$). 
Thus for all $m\geq 1$,
\begin{align} \label{lsum}
&\left[L_j,  \sum_{n_1,\dots,n_m \geq 1} \frac{1}{n_1\dots n_m} \chcent_{-n_1} \dots \chcent_{-n_m} L_{n_1+\dots+ n_m-1} \right] \\
&=
m \coxeter (j+1) \sum_{n_1,\dots,n_{m-1} \geq 1} \frac 1 {n_1\dots n_{m-1}}  \chcent_{-n_1} \dots \chcent_{-n_{m-1}} L_{n_1+\dots+ n_{m-1} + j -1}  \nn\\
&\quad+
\sum_{p=1}^m  \sum_{n_1,\dots,n_{m} \geq 1} \frac {n_p} {n_1\dots n_{m}}  \chcent_{-n_1} \dots \chcent_{-n_p+j} \dots \chcent_{-n_m} L_{n_1+\dots + n_m -1}  \nn\\
&\quad-
 \!\!\sum_{n_1,\dots,n_{m} \geq 1} 
\frac {(n_1+\dots+n_m-1-j)} {n_1\dots  n_m}  \chcent_{-n_1} \dots \chcent_{-n_m}  L_{n_1+\dots+ n_{m}+j-1}  .\nn
\end{align}
In the penultimate line here, $\chcent_{-n_p+j}$ acts as zero on $\picc_\lambda$ unless $-n_p+j<0$ (as does $a_{-n_p+j}$ for any $a\in \h$ such that $\bilin a\lambda=0$; here we use the assumption that $\bilin\chcent\lambda=0$). So we can drop the first several terms in the sum on $n_p$ and relabel:
\begin{align}
& \sum_{n_1,\dots,n_{m} \geq 1} \frac {n_p} {n_1\dots n_{m}}  \chcent_{-n_1} \dots \chcent_{-n_p+j} \dots \chcent_{-n_m} L_{n_1+\dots + n_m -1}\nn\\
& =\sum_{n_1,\dots,n_{m} \geq 1} \frac {n_p} {n_1\dots n_{m}}   \chcent_{-n_1} \dots \chcent_{-n_m} L_{n_1+\dots + n_m + j -1}.\nn
\end{align}
(The coefficient $\sim n_p/n_p$ is independent of $n_p$). Therefore the final two lines in \cref{lsum} above actually give
\begin{align}
&
\sum_{n_1,\dots,n_{m} \geq 1} \frac {n_1+\dots+n_m} {n_1\dots n_{m}}   \chcent_{-n_1} \dots \chcent_{-n_m} L_{n_1+\dots + n_m + j -1} \nn\\ 
&- \sum_{n_1,\dots,n_{m} \geq 1} 
\frac {(n_1+\dots+n_m-1-j)} {n_1\dots  n_m}  \chcent_{-n_1} \dots \chcent_{-n_m}  L_{n_1+\dots+ n_{m}+j-1}  \nn\\
& = (j+1) \sum_{n_1,\dots,n_{m} \geq 1} 
\frac {1 } {n_1\dots  n_m}  \chcent_{-n_1} \dots \chcent_{-n_m}  L_{n_1+\dots+ n_{m}+j-1}  \nn
\end{align}
Comparing this with the first line in \cref{lsum}, one sees that the sum on $m$ in $[L_j,\Tk]$ telescopes, and we have $[L_j,\Tk]=0$ as required.
\end{proof}

$\Aut\O$ has a subgroup consisting of the rescalings, generated by $L_0$.
Let $\CC\vol$ be the one-dimensional representation of $\Aut\O$ spanned by a vector $\vol$ obeying
\be v^{L_0} \vol = v^{-1} \vol, \quad v\in \CCx \qquad\text{and}\quad L_{j} \vol =0 \quad\text{for all $j\geq 1$}.\nn\ee 

Let $\bra 0\in \picc_0^*$ be the linear map $\picc_0 \to \CC$ which yields $1$ on $\vac$ and pairs as zero with all states of nonzero depth.

Define 
\be H := \bigoplus_{i\in I} \ol S_{\al_i}. \nn\ee
\begin{thm}\label{Tkdiag} We have the following $\Aut\O$-equivariant double complex:
\be \begin{tikzcd}
\CC\vol\\
\picc_0 \ox \CC\vol \rar{H\ox \id} \uar{\bra 0\ox \id}
& \bigoplus_{i\in I} \picc_{\al_i} \ox \CC\vol   \\
\picc_0 \uar{\Tk\vol} \rar{H}
& \bigoplus_{i\in I} \picc_{\al_i} \uar{-\Tk\vol}\\
\CC\vac \uar\\
\end{tikzcd}
\nn\ee
\end{thm}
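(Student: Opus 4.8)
The plan is to check, in turn, the three ingredients bundled into the word ``double complex'': that the two columns are complexes, that the central square anticommutes so that $d_h+d_v$ squares to zero, and that every arrow intertwines the $\Aut\O$-action. The two rows are single arrows and so are complexes automatically. All of the genuine content sits in the claim that $\Tk$ commutes with the screenings; everything else is bookkeeping resting on \cref{clintcom}, \cref{lem: Lj} and the defining properties of $\CC\vol$, so I would spend the bulk of the proof on that commutation and treat the rest briefly.

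For the central square the content is the single identity
\be \ol S_{\al_i}\circ \Tk = \Tk\circ \ol S_{\al_i},\qquad i\in I,\nn\ee
with the left-hand $\Tk$ acting on $\picc_{\al_i}$ and the right-hand one on $\picc_0$; combined with the sign carried by the right vertical arrow this is exactly the anticommutation $d_hd_v+d_vd_h=0$. I would prove it from the explicit form \cref{Tk} of $\Tk$: every summand is a product of multiplication operators $\chcent_{-n}$ ($n\ge1$), which are the modes $(\chcent_{-1}\vac)_{(-n)}$, times a single operator $L_k=(\ol\confvec)_{(k+1)}$ with $k=n_1+\dots+n_m-1\ge-1$, i.e. a nonnegative mode of $\ol\confvec$. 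By \cref{omin,kin} both $\ol\confvec$ and $\chcent_{-1}\vac$ lie in $\Wgc=\bigcap_i\ker\ol S_{\al_i}$, so \cref{clintcom} applies to each of these modes and shows that each one intertwines $\ol S_{\al_i}$ across the relevant pair of Fock modules. Pushing $\ol S_{\al_i}$ through a monomial one factor at a time then gives the identity. The only subtlety is that the sum defining $\Tk$ is infinite: here I would note that on a fixed homogeneous vector only finitely many summands act, since $L_k$ lowers the grade by $k$ and so $n_1+\dots+n_m$ cannot exceed the grade plus one, which both makes $\Tk$ well defined and legitimises the term-by-term argument.

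The column conditions are then immediate. Each summand of $\Tk$ ends in an $L_k$ with $k\ge-1$ and $L_k\vac=0$ for all such $k$, so $\Tk\vac=0$; and $\Tk$ raises the grade by exactly one, so its image meets $\CC\vac$ trivially and $\bra0\circ\Tk=0$. Hence both composites $\Tk\vol\circ\iota$ and $(\bra0\ox\id)\circ\Tk\vol$ in the left column vanish. For $\Aut\O$-equivariance, note that the action is through $R(\mu)=\exp(-\sum_{n>0}v_nL_n)v_0^{-L_0}$, built from the modes $L_n=(\ol\confvec)_{(n+1)}$ with $n\ge0$; since $\ol\confvec\in\ker\ol S_{\al_i}$, \cref{clintcom} makes these commute with $\ol S_{\al_i}$, so $R(\mu)$ commutes with $H$ and the horizontal maps are equivariant. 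For the verticals the factor $\CC\vol$ is what makes equivariance work: \cref{lem: Lj} gives $[L_j,\Tk]=0$ for $j\ge1$ and $v^{L_0}\Tk v^{-L_0}=v\Tk$, and feeding in $L_j\vol=0$ and $v^{L_0}\vol=v^{-1}\vol$ yields $R(\mu)\Tk=v_0^{-1}\Tk R(\mu)$, which says precisely that $v\mapsto(\Tk v)\ox\vol$ intertwines $R(\mu)$ with $R(\mu)\ox R(\mu)$ --- the weight $-1$ of $\vol$ absorbing the grade shift of $\Tk$. I must remember that \cref{lem: Lj} needs $\bilin\chcent\lambda=0$, which on the right column ($\lambda=\al_i$) holds because $\chcent$ is the imaginary root and pairs trivially with every simple root; the bottom arrow $\iota$ is the inclusion of the invariant vacuum and the top arrow $\bra0\ox\id$ is the augmentation extracting the vacuum coefficient.

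I expect the main obstacle to be the commutation $[\Tk,\ol S_{\al_i}]=0$ --- not any one step, but the assembly: recognising $\Tk$ as a locally finite polynomial in the modes of the two generators $\ol\confvec,\chcent_{-1}\vac$ of $\Wgc$, invoking the cross-module form of \cref{clintcom} for every factor, and controlling the infinite sum by the grade bound; and then, in a separate register, arranging the $\vol$-twist through \cref{lem: Lj} so that the vertical maps come out equivariant with exactly the right weight.
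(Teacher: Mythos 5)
Your proposal is correct and follows essentially the same route as the paper: the central square via \cref{clintcom} applied to the modes of $\ol\confvec$ and $\chcent_{-1}\vac$ (using \cref{omin,kin}), the columns by inspection, and the vertical equivariance from \cref{lem: Lj} together with the weight $-1$ of $\vol$. The only detail the paper includes that you elide is the verification that the $\Aut\O$-action on $\picc_{\al_i}$ is actually well defined — namely that $L_0\on\ket{\al_i}=\la\chweyl,\al_i\ra\ket{\al_i}$ has integer eigenvalue, so the $\Der_0\O$-action exponentiates on the target modules as well as on $\picc_0$.
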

\begin{proof}
First, let us establish that the horizontal maps are $\Aut\O$-equivariant. By \cref{clintcom}, the map  $\ol S_{\al_i}: \picc_0\to \picc_{\al_i}$ commutes with the Lie algebra action of the non-negative modes of $\ol\confvec\in\picc_0$. That is, it commutes with the action of $\Der\O$.
Recall from \S\ref{sec: qcs} the definition of this action of $\Der\O$ on the modules $\picc_{\lambda}$.
In the present case, in which $\sv = - \chweyl$, we have,
for any $\al$ in the root lattice, 
\be L_0 \on \ket{\al} = \la \chweyl,\al\ra \ket{\al} \in \ZZ \ket{\al}\nn\ee
and thus $L_0$ acts semisimply on $\picc_{\al}$ with integer eigenvalues. That means the action of $\Der_0\O$ on $\picc_{\al}$ can be exponentiated up to give an action of $\Aut\O$, just as can the action on $\picc_0$. So the  horizontal maps are indeed $\Aut\O$-equivariant.

By inspection, the image of each vertical map is contained in the kernel of the next, so we have a complex in the vertical direction. 

$H$ commutes with $\Tk$ by \cref{clintcom} (in view of \cref{omin}, \cref{kin} and the definition of $\Tk$). So $H(-\Tk) + \Tk H = 0$, making the diagram into a double complex.  \Cref{lem: Lj} together with the definition of $\vol$ ensure that the vertical maps are  $\Aut\O$-equivariant.
\end{proof}

\subsection{Canonical modes}\label{canmodes}
It is a standard observation that the negative modes $a_{-n}$, $n>1$, of any $a\in \h$ can be obtained by repeated application of the translation operator $T= L_{-1}$ on $a_{-1}$: 
\be a_{-2} = [T,a_{-1}],\qquad a_{-3} = \frac 1 2[T,[T,a_{-1}]],\dots,\nn\ee 
and in general
\be \sum_{n<0} a_n x^{-n-1} 
= \exp(x\ad_T) a_{-1} .\nn\ee
We now have a modified translation operator, $\Tk$, which commutes with the action of $\Der_+\O$, i.e. with the generators $L_j$, $j\geq 1$, as in \cref{lem: Lj}. We can use it to define a notion of negative modes of states which shares this property.
Define the formal power series
\be \sum_{n<0} a_{[n]} x^{-n-1} = \exp(x \ad \Tk) a_{-1} .\label{canmodedef}\ee
\begin{lem}\label{affdiff}
For all $j\geq 1$ and all $n>0$, \be [L_j,a_{[-n]}] = 0\nn\ee as endomorphisms of $\picc_\lambda$, for any $a\in \h$ and $\lambda\in \h$ such that $\bilin\lambda a =0$.
\end{lem}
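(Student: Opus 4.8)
The plan is to reduce the statement to the single base commutator $[L_j,a_{-1}]$ by combining \cref{lem: Lj} with the generating-function definition \cref{canmodedef}. The key observation is that, since $[L_j,\Tk]=0$ as an endomorphism of $\picc_\lambda$ by \cref{lem: Lj} (whose hypothesis $\bilin\chcent\lambda=0$ holds in the cases of interest, in particular for $\lambda=0$ and for $\lambda$ in the root lattice), the Jacobi identity gives $[\ad L_j,\ad\Tk]=\ad[L_j,\Tk]=0$. Hence $\ad L_j$ commutes with the formal exponential $\exp(x\ad\Tk)$.

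First I would apply $\ad L_j$ to \cref{canmodedef}. Using that $\ad L_j$ commutes with $\exp(x\ad\Tk)$, this yields
\[ \sum_{n<0}[L_j,a_{[n]}]\,x^{-n-1}=[L_j,\exp(x\ad\Tk)a_{-1}]=\exp(x\ad\Tk)\,[L_j,a_{-1}], \]
so that $[L_j,a_{[-n]}]$ is precisely the coefficient of $x^{n-1}$ on the right. The whole statement therefore follows once one understands the single operator $[L_j,a_{-1}]$ and how $\exp(x\ad\Tk)$ acts upon it.

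Next I would compute this base case directly. Extending the brackets \cref{cld} linearly to all of $\h$ via the identification of \cref{sec: identification}, one has $\{\ol L_j,a_{-1}\}=a_{j-1}-j(j+1)\,\delta_{j,1}\bilin\sv a$ with $\sv=-\chweyl$. On $\picc_\lambda$ the mode $a_{j-1}$ is non-negative for every $j\geq 1$, and so acts as zero in the commutative ($\eps\to 0$) action, while the residual zero-mode contribution present when $j=1$ is annihilated using the hypothesis $\bilin\lambda a=0$. What remains is a scalar multiple of the identity, hence a central element of $\Fc$.

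Finally, because $[L_j,a_{-1}]$ is central it is killed by $\ad\Tk$, so $\exp(x\ad\Tk)[L_j,a_{-1}]=[L_j,a_{-1}]$ is independent of $x$. Extracting the coefficient of $x^{n-1}$ then gives $[L_j,a_{[-n]}]=0$ for all $n\geq 2$ at once; for $n=1$ the canonical mode is $a_{[-1]}=a_{-1}$ and the commutator equals the base case itself, which vanishes for $j\geq 2$ and reduces to the scalar $2\bilin\chweyl a$ when $j=1$ (measuring the failure of $a_{-1}\vac$ to be primary). The step I expect to require the most care is the base-case computation: one must keep careful track of which modes act by multiplication and which act as zero in the commutative limit, so as to confirm that $[L_j,a_{-1}]$ is genuinely central. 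Granting \cref{lem: Lj}, the passage of $\ad L_j$ through the exponential is then immediate.
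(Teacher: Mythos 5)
Your strategy coincides with the paper's: \cref{lem: Lj} lets $L_j$ pass through the exponential in \cref{canmodedef}, reducing everything to the single commutator $[L_j,a_{-1}]$, and your generating-function packaging of that reduction is correct (the coefficient extraction, and the observation that a central base commutator is fixed by $\exp(x\,\mathrm{ad}\,\Tk)$, are both fine).

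The difficulty is with your own conclusion in the base case, which you state but do not resolve. By \cref{cld} with $\sv=-\chweyl$, the endomorphism $[L_j,a_{-1}]$ of $\picc_\lambda$ is multiplication by $a_{j-1}-j(j+1)\delta_{j,1}\bilin\sv a$; for $j\geq 2$ this vanishes, and for $j=1$ the mode $a_0$ is disposed of using $\bilin\lambda a=0$, but the scalar $2\bilin\chweyl a$ survives. Since $a_{[-1]}=a_{-1}$, your argument therefore delivers $[L_1,a_{[-1]}]=2\bilin\chweyl a$, which is nonzero in general, so the proposal ends by exhibiting a nonzero value for a commutator the lemma asserts to vanish. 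This is not really a defect of your reasoning: the paper's own one-line proof writes $[L_j,a_{-1}]=a_{j-1}$ and silently drops the anomaly term, and the case $(j,n)=(1,1)$ of the statement genuinely requires the extra hypothesis $\bilin\chweyl a=0$ (equivalently, that $a_{-1}\vac$ be conformal primary). That condition holds wherever the lemma is applied, since it is only ever invoked for $a=\tildeal_i=\al_i-\chcent/\coxeter$, for which $\bilin\chweyl{\tildeal_i}=1-1=0$. To make your proof close you must either impose that hypothesis explicitly and finish the $(1,1)$ case with it, or restrict the claim to $n\geq 2$; as written there is an unresolved case.
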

\begin{proof} For all $j\geq 1$, $[L_j ,a_{-1}] = a_{-1+j}$ acts as zero on $\picc_\lambda$ (using the assumption that $\bilin\lambda a = 0$, when $j=1$). The result follows using \cref{lem: Lj}. 
\end{proof}
\begin{rem}
It follows that for all $i_1,\dots,i_m\in I$ and $n_1,\dots,n_m \in \ZZ_{>0}$, the state
\be b_{i_1,[-n_1]}\dots  b_{i_m,[-n_m]} \vac \in \picc_0 \nn\ee
is a conformal primary of conformal weight $n_1+\dots+n_m$, i.e. it is in the kernel of $L_j$ for all $j\geq 1$ and an eigenstate of $L_0$ with eigenvalue $n_1+\dots+n_m$.
\end{rem}

\begin{lem}\label{kaff} $\Tk \chcent_{-1} \vac = 0$, and thus  $\chcent_{[-n]} = 0$ for all $n\geq 2$. 
\end{lem}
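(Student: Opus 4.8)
The plan is to compute $\Tk\chcent_{-1}\vac$ directly from the defining series \cref{Tk}, reducing everything to the action of the Virasoro modes $L_{N-1}$ on the single state $\chcent_{-1}\vac$, where $N := n_1+\dots+n_m$. Each summand of $\Tk$ has the shape $(\text{product of }\chcent_{-n_i})\,L_{N-1}$, and the lowering modes $\chcent_{-n_i}$ act by multiplication, so the only input needed is the short list of vectors $L_{N-1}\chcent_{-1}\vac$. First I would read these off from the commutation relation $[L_j,\chcent_p] = -p\,\chcent_{j+p} + j(j+1)\delta_{j+p,0}\coxeter$ recorded in the proof of \cref{lem: Lj} (which continues to hold for $j=0,-1$, where the central term drops out and it recovers the $L_0$-grading and $[T,\chcent_p]=-p\chcent_{p-1}$), together with $L_n\vac = 0$ for $n\geq -1$ and $\chcent_n\vac = 0$ for $n\geq 0$; here I use that $\vac$ is the vacuum and that the negative $\chcent$-modes mutually commute since $\bilin\chcent\chcent = 0$.

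The key observation is that $L_{N-1}\chcent_{-1}\vac$ vanishes for all but three values of $N$, namely
\[
L_{-1}\chcent_{-1}\vac = \chcent_{-2}\vac, \qquad L_0\chcent_{-1}\vac = \chcent_{-1}\vac, \qquad L_1\chcent_{-1}\vac = 2\coxeter\vac,
\]
whereas $L_{N-1}\chcent_{-1}\vac = \chcent_{N-2}\vac = 0$ for $N\geq 3$; the central contribution $2\coxeter$ at $N=2$ comes precisely from $\bilin\chweyl\chcent = \coxeter$. Since $N\geq m$, only $m\in\{0,1,2\}$ contribute. Collecting the survivors, the $m=0$ term gives $\chcent_{-2}\vac$; the $m=1$ term gives $-\coxeter^{-1}\chcent_{-1}^2\vac - \chcent_{-2}\vac$ (from $n_1=1$ and $n_1=2$); and the $m=2$ term gives $+\coxeter^{-1}\chcent_{-1}^2\vac$ (from $n_1=n_2=1$). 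These four contributions cancel in pairs, giving $\Tk\chcent_{-1}\vac = 0$. I expect the main work, and the only genuine subtlety, to be this bookkeeping of the nested sums and confirming that the $\coxeter$ central term is exactly what makes the $\chcent_{-2}\vac$ contributions cancel; the same telescoping is visible more structurally through the generating-function identity of \cref{Vkrem}, which offers an alternative route via a residue computation.

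For the second assertion I would first note that $\Tk\vac = 0$, since every summand of \cref{Tk} ends in $L_{N-1}\vac$ with $N\geq 0$ and $L_n\vac=0$ for $n\geq -1$. Applying the standard identity $\exp(x\ad\Tk)\chcent_{-1} = e^{x\Tk}\chcent_{-1}e^{-x\Tk}$ to $\vac$, the factor $e^{-x\Tk}$ acts trivially on $\vac$ and $e^{x\Tk}$ acts trivially on $\chcent_{-1}\vac$ (because $\Tk\chcent_{-1}\vac=0$), so by \cref{canmodedef}
\[
\sum_{n<0}\chcent_{[n]}\vac\,x^{-n-1} = \exp(x\ad\Tk)\chcent_{-1}\,\vac = \chcent_{-1}\vac .
\]
Comparing coefficients yields $\chcent_{[-1]}\vac = \chcent_{-1}\vac$ and $\chcent_{[-n]}\vac = 0$ for all $n\geq 2$, which is the claim. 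Equivalently, one argues inductively straight from $\chcent_{[-2]}=[\Tk,\chcent_{-1}]$, $\chcent_{[-3]}=\tfrac12[\Tk,[\Tk,\chcent_{-1}]]$, and so on: each nested commutator annihilates $\vac$ because both $\Tk\vac$ and $\Tk\chcent_{-1}\vac$ vanish.
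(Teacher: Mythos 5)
Your computation of $\Tk\chcent_{-1}\vac$ is exactly the paper's: the paper likewise truncates the series \cref{Tk} to the $m=0,1,2$ terms that survive on grading grounds, uses the same three values $L_{-1}\chcent_{-1}\vac=\chcent_{-2}\vac$, $L_0\chcent_{-1}\vac=\chcent_{-1}\vac$, $L_1\chcent_{-1}\vac=2\coxeter\vac$, and cancels $\chcent_{-2}\vac$ against $-\chcent_{-2}\vac$ and $-\coxeter^{-1}\chcent_{-1}^2\vac$ against $+\coxeter^{-1}\chcent_{-1}^2\vac$. So the first assertion is proved correctly and by the same route; your bookkeeping of which $(m;n_1,\dots,n_m)$ contribute is exactly the intended one.

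For the second assertion there is a small but real mismatch between what you prove and what the lemma asserts. By \cref{canmodedef} the canonical modes $\chcent_{[-n]}$ are \emph{operators} ($\chcent_{[-2]}=[\Tk,\chcent_{-1}]$, and so on), and the lemma claims they vanish as such — which is the form in which it is used later, e.g.\ in the ``decoration'' computation in the proof of \cref{afthm}, where $\chcent_{[-2]}$ is discarded while acting on a state other than $\vac$. Your generating-function argument and your inductive variant only establish $\chcent_{[-n]}\vac=0$. The upgrade costs one sentence: every term of $[\Tk,\chcent_{-1}]$ is a product of multiplication operators, because $[L_{N-1},\chcent_{-1}]$ is multiplication by $\{\ol L_{N-1},\chcent_{-1}\}=\chcent_{N-2}+N(N-1)\delta_{N,2}\coxeter$ by \cref{cld}, and in the classical limit the non-negative modes $\chcent_{N-2}$, $N\geq 2$, act as zero on all of $\picc_\lambda$, not merely on $\vac$. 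Hence $[\Tk,\chcent_{-1}]$ is multiplication by a fixed element of $\picc_0$, namely its value on $\vac$, which you have shown to vanish; so it is the zero operator, and all higher nested commutators vanish with it. (The paper is equally terse at this step — it writes only ``and thus'' — so this is a clarification to make your argument match the stated claim rather than a flaw in your method.)
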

\begin{proof}
We have 
\begin{align} \Tk \chcent_{-1} \vac &= 
\left(L_{-1} - \frac 1\coxeter \frac{\chcent_{-1}}{1} L_0 - \frac 1 \coxeter \frac{\chcent_{-2}}2 L_1 
+ \frac 1 {2{\coxeter}^2} \frac{\chcent_{-1}}{1}\frac{\chcent_{-1}}{1} L_1 
\right) \chcent_{-1} \vac\nn\\
& = 
\left(\chcent_{-2} - \frac 1 \coxeter \frac{\chcent_{-1}}{1} \chcent_{-1}  - \frac 1 \coxeter \frac{\chcent_{-2}}{2} 2  \coxeter  
+ \frac 1{2\coxeter^2} \frac{\chcent_{-1}}{1} \frac{\chcent_{-1}}{1} 2 \coxeter \right)\vac 
= 0.\nn
\end{align}
\end{proof}

\begin{lem}\label{lem: tkprim} 
If $v\in \picc_0$ is a conformal primary of conformal weight $\Delta\in \CC$ then 
\be \Tk v = L_{-1}v  - \frac\Delta\coxeter \chcent_{-1} v .\nn\ee
\qed
\end{lem}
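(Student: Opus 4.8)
The plan is to evaluate the explicit series \cref{Tk} for $\Tk$ directly on $v$, exploiting the primary conditions to collapse the infinite sum over $m$ to just two terms. Recall that $v$ being a conformal primary of weight $\Delta$ means $L_j v = 0$ for all $j\geq 1$ and $L_0 v = \Delta v$. The key observation is that in the $m$-th summand of \cref{Tk}, the rightmost operator acting on $v$ is $L_{n_1+\dots+n_m-1}$, and since each $n_t\geq 1$ this index equals $n_1+\dots+n_m-1 \geq m-1$. So whenever $m\geq 2$ the index is $\geq 1$ and the whole summand annihilates $v$; and for $m=1$ the index is $n_1-1$, which is $\geq 1$ (hence kills $v$) unless $n_1=1$.

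First I would isolate the surviving terms. The $m=0$ term contributes $L_{-1}v$ (using the conventions that the empty product of $\chcent$'s is the identity and the empty sum $n_1+\dots+n_m$ is zero, so the index is $-1$), exactly as displayed in the second line of \cref{Tk}. The $m=1$ term is
\be -\frac 1\coxeter \sum_{n_1\geq 1} \frac{1}{n_1} \chcent_{-n_1} L_{n_1-1} v,\nn\ee
in which only $n_1=1$ survives, giving $-\tfrac1\coxeter \chcent_{-1} L_0 v = -\tfrac{\Delta}\coxeter \chcent_{-1} v$. All terms with $m\geq 2$ vanish by the paragraph above. Summing the two surviving contributions yields
\be \Tk v = L_{-1}v - \frac\Delta\coxeter \chcent_{-1}v,\nn\ee
as claimed.

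There is no serious obstacle here: the only point requiring a word of care is that the formal series \cref{Tk} truncates to a \emph{finite} sum the moment it acts on a weight vector, so no convergence or operator-ordering subtleties arise and the manipulation is rigorous. It may be worth remarking that this lemma is the special case of \cref{affdiff} (and the Remark following \cref{canmodedef}) in which one reads off the canonical mode $v_{[-1]}$ directly, and that the computation is formally the same truncation used in \cref{kaff} for the particular primary $v=\chcent_{-1}\vac$, where $\Delta=1$ and the extra cancellation with higher $L_j$-terms forces $\Tk\chcent_{-1}\vac=0$.
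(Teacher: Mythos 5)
Your proof is correct and is precisely the direct computation the paper has in mind: the lemma is stated with no written proof (it is marked as immediate from the definition \cref{Tk}), and your observation that $L_{n_1+\dots+n_m-1}v=0$ whenever $n_1+\dots+n_m\geq 2$ leaves exactly the $m=0$ term and the $m=1$, $n_1=1$ term, giving $L_{-1}v-\frac{\Delta}{\coxeter}\chcent_{-1}v$. Nothing is missing.
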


\subsection{Subquotients of $\h$ and $\picc_0$}
Recall that, for us, after identifying $\h\cong \h^*$ as in \cref{sec: identification},  
\begin{align} \h &= \CC\weyl \oplus \bigoplus_{i\in I} \CC \al_i 
\nn\end{align}
Let $\hr\subset\h$ denote the subspace spanned by the simple roots, and $\hfin$ the quotient of $\hr$ by the span of $\chcent$:
\begin{align} \hr &:= \bigoplus_{i\in I} \CC \al_i 
\nn
\qquad \hfin := \hr/ \CC\chcent.\end{align}
Recall that $\picc_0$ is the vacuum Verma module over the loop algebra $\h((t))$ of $\h$ (and it arose as the $\eps\to 0$ limit of the vacuum Verma modules $\pi^\eps_0$ over the centrally extended loop algebras $\hh^\eps$). 
As a vector space
\be \picc_0 = \CC\left[\weyl_{n}\right]_{n<0} \ox
   \CC\left[\al_{i,n}\right]_{i\in I,n<0} 
\vac
.\nn\ee
Define the subspace $\pir_0$ and its quotient $\pifin_0$ as follows:
\be \pir_0 :=
   \CC\left[\al_{i,n}\right]_{i\in I,n<0}\vac\label{pirdef} \qquad \pifin_0 := \pir_0 \big/ \bigoplus_{n<0} \chcent_n\pir_0.\ee
These are, respectively, the vacuum Verma modules over the loop algebras $\hr((t))$ and $\hfin((t))$. 

\subsection{The action of $\n_+$ on $\picc_0$, $\pir_0$ and $\pifin_0$}
The classical screenings obey the Serre relations of $\g$, in the following sense. (Recall the shift operator $\shift_\lambda$ from \cref{Vf}.)
\begin{lem}\label{naction} The map 
\be \text{ }\qquad  e_i \mapsto Q_i := -\vareps_i^{-1} \shift_{-\al_i}\ol S_{\al_i},\qquad i\in I, \nn\ee
defines an action of the Lie algebra $\n_+$ on $\picc_0$.
\end{lem}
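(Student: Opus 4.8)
The plan is to reduce the assertion to the Serre relations for the operators $Q_i$ and then to verify those by a free-field computation. By the Gabber--Kac theorem \cite{KacBook}, applicable since the affine algebra $\g$ is symmetrizable, the nilpotent subalgebra $\n_+$ is presented by the generators $e_i$, $i\in I$, subject \emph{only} to the Serre relations $(\ad e_i)^{1-\cart_{ij}}e_j=0$. Hence, to extend $e_i\mapsto Q_i$ to a Lie algebra homomorphism $\n_+\to\End(\picc_0)$, it suffices to prove that, as endomorphisms of $\picc_0$ under the commutator bracket,
\be (\ad Q_i)^{1-\cart_{ij}}(Q_j)=0,\qquad i\neq j.\nn\ee

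Before computing I would isolate the structural features that make this manageable. From the explicit formula \cref{csd}, each $Q_i=-\vareps_i^{-1}\shift_{-\al_i}\ol S_{\al_i}$ is a first-order differential operator on the polynomial algebra $\picc_0=\CC[b_{k,n}]_{n<0}$ with vanishing zeroth-order part, i.e.\ a derivation. Every iterated commutator of derivations is again a derivation, so the left-hand side above is a derivation of $\picc_0$ and vanishes precisely when it annihilates the generators $b_{k,-n}$. Furthermore, \cref{clintcom} applied to the modes of $\ol\confvec\in\Wgc$ (\cref{omin}) shows that $\ol S_{\al_i}$ intertwines the Virasoro action; one reads off that $Q_i$ is homogeneous of conformal weight $-1$ and that $[L_{-1},Q_i]=-\al_{i,-1}\,Q_i$, where $\al_{i,-1}$ acts by multiplication. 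These facts govern how $Q_i$ interacts with the differential-algebra structure $b_{k,-n-1}\propto L_{-1}b_{k,-n}$ and organise the generator-by-generator check into a single statement about the associated screening currents.

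The core of the argument is then the commutator computation. I would rewrite $Q_i$ as the residue of a classical screening current $\ol J_{\al_i}(x)$, the derivation-valued field extracted from \cref{Vf2} and \cref{csd}, and evaluate $(\ad Q_i)^{1-\cart_{ij}}(Q_j)$ as an alternating sum of nested residues of products $\ol J_{\al_i}(x_1)\cdots\ol J_{\al_i}(x_{1-\cart_{ij}})\,\ol J_{\al_j}(y)$. As in the free-field computation of \cite{FFIoM}, the entire calculation is controlled by the operator product of two screening currents, which depends on the Cartan data solely through the single pairing $\bilin{\al_i}{\al_j}$. By \cref{kdef} and \cref{nudef} one has $\bilin{\al_i}{\al_j}=\vareps_i^{-1}\cart_{ij}$, and it is exactly this exponent that produces the order of vanishing causing the alternating sum over the $1-\cart_{ij}$ currents of type $i$ to collapse.

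The main obstacle is this current operator-product computation, which is the technical heart of the matter. I expect it to carry over verbatim from the finite-type setting, for the following reason: the computation never sees whether $\g$ is finite- or infinite-dimensional, as it involves only the numbers $\bilin{\al_i}{\al_j}$ for $i,j\in I$. The one genuinely new feature of the affine case, the extra derivation direction in $\h$ spanned by the Weyl vector, enters only as an inert spectator free field, since the current $\ol J_{\al_i}$ carries $\al_i$-charge and couples to the remainder of $\h$ exclusively through $\bilin{\al_i}{\cdot}$. Consequently the Serre relations established for the classical screening operators in finite type \cite{FFIoM} remain valid here, and together with the Gabber--Kac presentation they yield the claimed $\n_+$-action on $\picc_0$.
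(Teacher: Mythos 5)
Your proposal is correct and follows essentially the same route as the paper: write each $Q_i$ explicitly as a derivation of $\picc_0$ whose coefficients $V_{\al_i}[m]$ are polynomials in the modes $\al_{j,n}$ only, observe that the extra Cartan direction spanned by $\weyl$ is therefore inert in all commutators, and import the Serre relations for the classical screening operators from \cite{FFIoM}. The one correction to your framing is that you need not argue that a finite-type computation ``carries over verbatim'': Remark (2.2.9) of \cite{FFIoM} already states that their Proposition (2.2.8) holds for an arbitrary symmetrizable Kac--Moody algebra, and this matters because for $|I|=2$ (types $A_1^{(1)}$ and $A_2^{(2)}$) the pair $(i,j)$ has $\cart_{ij}\cart_{ji}\geq 4$, a configuration of Cartan integers that never occurs for two simple roots of a finite-type algebra, so the relevant Serre relation is not literally one of those verified in the finite-type case — the correct justification is exactly your observation that the computation depends only on the Gram matrix $\bilin{\al_i}{\al_j}$, which is what that remark encodes.
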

\begin{proof} 
These differential operators $Q_i$ are given by
\begin{align} Q_i   
&= - \sum_{m\leq 0}  \vareps_i^{-1} V_{\al_i}[m] \bilin {\al_i}{b_k} \frac{\del}{\del b_{k,-1+m}} \nn\\&
= -  \sum_{m\leq 0}  V_{\al_i}[m] \la\chal_i, b_k \ra \frac{\del}{\del b_{k,-1+m}}\nn\\
&= - \sum_{m\leq 0}  V_{\al_i}[m] \frac{\del}{\del \weyl_{-1+m}}
 -\sum_{\substack{j\in I\\j\neq 0}}  \sum_{m\leq 0} V_{\al_i}[m] \la\chal_i, \al_j \ra \frac{\del}{\del\al_{j,-1+m}},\label{eact}
\end{align}
where, for convenience, we go to the basis $\{b_k\}_{k=1,\dots,\dim\h} = \{\weyl,\chcent\} \cup \{\al_i\}_{i\in I}$ of $\h$. (The $\del/\del \chcent_{-1+m}$, $m\leq 0$, do not appear in this differential operator, since $\la\chal_i,\chcent\ra = 0$; we also used the fact that  $\la\chal_i, \weyl \ra=1$.) Notice that the coefficients, $V_{\al_i}[m]$, are polynomials in the modes $\al_{i,-j}$, $j\geq 1$, only, and do not contain factors $\weyl_{-j}$. Therefore the derivatives  $\del/\del\weyl_{-1+m}$, $m\leq 0$ in \cref{eact} never contribute in commutators $[Q_i,Q_j]$. It is enough to consider the differential operators 
\begin{align}
 -\sum_{\substack{j\in I\\j\neq 0}}  \sum_{m\leq 0} V_{\al_i}[m] \la\chal_i, \al_j \ra \frac{\del}{\del\al_{j,-1+m}}.\label{eactr}
\end{align}
The proof is then as in Proposition (2.2.8) of \cite{FFIoM} which -- cf. Remark (2.2.9) therein -- applies for any symmetrizable Kac-Moody algebra.
\end{proof}
Thus, $\picc_0$ is a module over $\n_+$. From the argument above we see also that the subspace $\pir_0$ of \cref{pirdef} is a submodule for the action of $\n_+$, and the generator $ e_i$ acts on $\pir_0$ by the differential operator in \cref{eactr}. 
Since  $\la\chal_i,\chcent\ra = 0$ for all $i\in I$, this action of $\n_+$ on $\pir_0$ in turn descends to a well-defined action on the quotient $\pifin_0$. Let us write $\Qfin_i$
for the differential operator realizing $ e_i$ on $\pifin_0$. 
It is given by the same formula, \cref{eactr}, but with $\al_{i}\in \hr$ replaced by the equivalence class 
\be [\al_i] := \al_{i} + \CC \chcent \in \hfin.\nn\ee 
Now, these screening operators $\Qfin_i$ were studied in \cite{FFIoM}. We can ``lift'' results from that setting to ours in the following fashion.

\subsection{The subspace $\piaff_0$}\label{sec: piaff}
Let us set
\be \tildeal_i := \al_i - \frac \chcent \coxeter, \qquad i\in I. \label{atdef}\ee
Let $\haff\subset \hr$ denote their span inside $\hr$. 
Define a vector subspace $\piaff_0\subset \pir_0\subset \picc_0$ by
\be \piaff_0 := 
 \CC[\tildeal_{i,[n]}]_{i\in I\setminus\{0\}, n<0}\vac.
 \nn\ee
Let us stress that the modes ${}_{[n]}$ here are those defined in \cref{canmodes}. 

In the remainder of this subsection we shall prove the following key result. 
\begin{thm}\label{afthm} The action of $\n_+$ stabilizes the subspace $\piaff_0\subset \picc_0$. Furthermore, as $\n_+$-modules, $\piaff_0$ and $\pifin_0$ are isomorphic:
\be \piaff_0 \,\,\,\cong_{\n_+} \pifin_0 .\nn\ee 
\end{thm}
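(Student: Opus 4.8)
The plan is to do the whole comparison inside the ``no-derivation'' Fock module $\pir_0$ and to reach $\pifin_0$ through the canonical quotient. Write $\Pr\colon\pir_0\onto\pifin_0$ for the projection with kernel $I:=\bigoplus_{n<0}\chcent_n\pir_0$. Since $\chcent_{-n}\vac\in\Wgc$ (by \cref{kin} and the closure of $\Wgc$ under $T$) we have $Q_i(\chcent_{-n}\vac)=0$, so $I$ is an $\n_+$-submodule and $\Pr$ is a map of $\n_+$-modules; indeed $\Qfin_i$ was defined precisely so that $\Pr\,Q_i=\Qfin_i\,\Pr$. It therefore suffices to establish: (i) $\piaff_0\subseteq\pir_0$; (ii) $\Pr$ restricts to a linear isomorphism $\piaff_0\isom\pifin_0$; and (iii) $\n_+$ stabilizes $\piaff_0$. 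Given these, (ii) together with the $\n_+$-equivariance of $\Pr$ promotes the linear isomorphism to the asserted isomorphism of $\n_+$-modules.

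For (i) I would check that $\Tk$ preserves $\pir_0$. Each summand of $\Tk$ is a product of multiplication operators $\chcent_{-n_\ell}$ (which visibly preserve $\pir_0$) times a single $L_{N-1}$ with $N\geq0$. Working in the basis $\{\weyl,\chcent,\tildeal_i\}_{i\neq0}$ of $\h$, the only piece of $L_m$ that could create a $\weyl$-mode is the part of the quadratic term along $b^{\chcent}_p\propto\weyl_p$; but it is always accompanied by an annihilating $\chcent$-mode, which kills every element of $\pir_0$ because $\bilin\chcent\chcent=0$ and $\bilin\chcent{\tildeal_i}=0$. Hence $L_m$, and so $\Tk$, preserves $\pir_0$, and since $\tildeal_i\in\hr$ all the states $\tildeal_{i,[-n]}\vac$ and their products lie in $\pir_0$. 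For (ii), the point is that canonical modes are triangular with respect to ordinary ones: from \cref{Tk} and \cref{lem: tkprim} one gets $\tildeal_{i,[-n]}\vac=\tildeal_{i,-n}\vac+(\text{terms carrying at least one factor }\chcent_{-m})$. Filtering $\pir_0$ by the number of $\chcent$-factors, the associated graded of $\Pr$ sends $\tildeal_{i_1,[-n_1]}\cdots\tildeal_{i_m,[-n_m]}\vac$ to $[\al_{i_1}]_{-n_1}\cdots[\al_{i_m}]_{-n_m}\vac$; as these images form a basis of $\pifin_0$, the canonical monomials are linearly independent and $\Pr|_{\piaff_0}$ is bijective. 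Equivalently $\pir_0=\piaff_0\oplus I$, whence $\operatorname{ch}\piaff_0=\prod_{n\geq1}(1-q^n)^{-(|I|-1)}$.

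Statement (iii) is where the real content sits, and the clean route is to identify $\piaff_0$ with the space $\pir_0^{\mathrm{prim}}$ of conformal primaries in $\pir_0$. The inclusion $\piaff_0\subseteq\pir_0^{\mathrm{prim}}$ is formal: by \cref{affdiff} each $\tildeal_{i,[-n]}$ commutes with $L_{j}$ for $j\geq1$, and the $L_j=\ol\confvec_{\{j+1\}}$ with $j\geq1$ are \emph{derivations} of the commutative product, so products of the primary generators remain primary. Next, since $\ol\confvec\in\Wgc$, \cref{clintcom} gives $\ol S_{\al_i}L_j=L_j\ol S_{\al_i}$, and as $L_j\ket{\al_i}=0$ for $j\geq1$ the shift $\shift_{-\al_i}$ is transparent to these $L_j$; hence $[Q_i,L_j]=0$ for all $j\geq1$, so $Q_i$ preserves $\pir_0^{\mathrm{prim}}$ as well as $\pir_0$. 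Identifying $\pir_0^{\mathrm{prim}}=\piaff_0$ then makes $Q_i$ preserve $\piaff_0$, which is (iii), and the theorem follows.

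The hard part is exactly the equality $\piaff_0=\pir_0^{\mathrm{prim}}$: one inclusion is formal, but the reverse requires controlling the primaries of a Fock module built on the \emph{degenerate} form on $\hr$, whose radical is the null line $\CC\chcent$ carrying nonzero background charge $\bilin\sv\chcent=-\coxeter$. The natural argument is a character count: using $\pir_0\cong\CC[\chcent_{-n}]_{n\geq1}\ox\piaff_0$ from (i)--(ii), I would show the null-boson tower $\CC[\chcent_{-n}]\vac$ contributes no primaries beyond $\vac$ (already $L_1\chcent_{-1}\vac=2\coxeter\vac\neq0$), so that $\operatorname{ch}\pir_0^{\mathrm{prim}}=\prod_{n\geq1}(1-q^n)^{-(|I|-1)}=\operatorname{ch}\piaff_0$, forcing equality with the inclusion. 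Should the primary-counting prove delicate, the fallback is to avoid it entirely: reduce (iii) to single generators via the derivation property and verify directly that $Q_i(\tildeal_{j,[-n]}\vac)$ is the ``decoration'' of $\Qfin_i([\al_j]_{-n}\vac)$ — already the $n=1,2$ cases display the cancellation of $\chcent$-terms (e.g. $Q_i\,\tildeal_{j,[-2]}\vac=-\vareps_i^{-1}\bilin{\al_i}{\al_j}\,\tildeal_{i,-1}\vac$) that keeps the image inside $\piaff_0$.
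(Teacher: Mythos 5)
Your proposal is correct, and its overall scaffolding (vector-space isomorphism $\piaff_0\isom\pifin_0$, equivariance of the quotient map $\pir_0\onto\pifin_0$, then stability of $\piaff_0$ under the $Q_i$) matches the paper's; but the way you prove the crucial stability step is genuinely different. The paper gets the linear isomorphism from the relation $\sum_{i}\chaaa_i\tildeal_i=0$ of \cref{linrel} (your triangularity/filtration argument proves the same thing and in addition exhibits $\pir_0\cong\CC[\chcent_{-n}]\ox\piaff_0$, which you then need), and it proves stability and the intertwining $Q_i\leftrightarrow\Qfin_i$ \emph{simultaneously} via the commutator $[\Tk,Q_i]=-\tildeal_{i,-1}Q_i$ of \cref{TkQ} and an induction on $n$ in $[Q_i,\tildeal_{j,[-1-n]}]$, observing that the resulting recursion is word-for-word the recursion for $\Qfin_i$ on $[\al_j]_{-1-n}$ with $\Tk\leftrightarrow L_{-1}$ and $\tildeal_i\leftrightarrow[\al_i]$. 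That is essentially your "fallback", and it is the more economical route because it never needs to identify all primaries of $\pir_0$. Your main route -- $\piaff_0=\pir_0^{\mathrm{prim}}$ together with $[Q_i,L_j]=0$ for $j\geq 1$ (which does follow from \cref{clintcom} plus $[L_j,\shift_{-\al_i}]=0$) -- is sound and buys a clean intrinsic characterization of $\piaff_0$ that the paper never states; but, as you say, all the work then hides in the primary count. That count does close: since the $L_j$ act as derivations annihilating $\piaff_0$, primarity of $\sum_\mu P_\mu v_\mu$ (with $v_\mu$ a basis of $\piaff_0$ and $P_\mu\in\CC[\chcent_{-n}]_{n\geq1}$) reduces to primarity of each $P_\mu\vac$ in the single null-boson tower, and there the identity $\{L_j,\chcent_{-n}\}=n\chcent_{j-n}+j(j+1)\coxeter\,\delta_{j,n}$, with $\chcent_q$ for $q\geq1$ killing $\pir_0$, lets one run $j=d,d-1,\dots,1$ downwards and show by induction on the largest part of the indexing partition that every coefficient of a homogeneous $P_\mu$ of degree $\geq1$ vanishes. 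So your argument is complete once that induction is written out; it is not shorter than the paper's, but the by-product $\piaff_0=\pir_0^{\mathrm{prim}}$ is worth having. (Two small points: the constant in your sample computation of $Q_i\,\tildeal_{j,[-2]}\vac$ should be $\la\chal_i,\tildeal_j\ra=\cart_{ij}$ as in \cref{Qactfact}, not $-\vareps_i^{-1}\bilin{\al_i}{\al_j}$; and note that your identification of $\piaff_0$ with the primaries, hence step (iii), tacitly uses that $Q_i$ preserves $\pir_0$, which holds because the $\del/\del\weyl_{-1+m}$ terms in \cref{eact} act by zero there.)
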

\begin{proof}
First, observe that the canonical map $\piaff_0\to \pifin_0$ is actually a vector-space isomorphism, because the $\tildeal_i$ are chosen to obey the linear relation
\be \sum_{i\in I} \chaaa_i \tildeal_i = \sum_{i\in I} \chaaa_i \al_i - \frac 1 \coxeter \sum_{i\in I} \chaaa_i \chcent = \chcent -\chcent = 0.\label{linrel}\ee
So we have a vector-space isomorphism
\be \pifin_0 \isom_\CC \piaff_0.\label{finaff}\ee
One can think that this isomorphism takes monomials in $\pifin_0$ and ``decorates'' them with appropriate terms involving negative modes of $\chcent$. For example 
\begin{align} [\al_i]_{-2}[\al_{j}]_{-1}\vac &\mapsto
\tildeal_{i,[-2]} \tildeal_{j,[-1]} \vac
= \left(\al_{i,[-2]} - \frac 1 \coxeter \chcent_{[-2]} \right)\left(\al_{j,[-1]} - \frac 1 \coxeter \chcent_{[-1]} \right)\vac\nn\\
&= \al_{i,[-2]} \left(\al_{j,-1} - \frac 1 \coxeter \chcent_{-1} \right)\vac
= \left( \al_{i,-2} - \frac 1 \coxeter \chcent_{-1} \al_{i,-1}\right)\left(\al_{j,-1} - \frac 1 \coxeter \chcent_{-1} \right)\vac.\nn\end{align}

\begin{lem}\label{TkQ}
For each $i\in I$, 
\be [\Tk,Q_i] = -\tildeal_{i,-1} Q_i \nn\ee
and (hence)
\be [L_{-1},\Qfin_i] = -[\al_{i}]_{-1} \Qfin_i\nn,\ee
as endomorphisms of $\piaff_0$ and $\pifin_0$ respectively. 
\end{lem}
\begin{proof}
From the definition, \cref{Tk}, of $\Tk$, together with \cref{omin}, \cref{kin} and \cref{clintcom}, we have the commutativity of the diagram
\be \begin{tikzcd} 
\picc_0 \rar{\ol S_{\al_i}}& \picc_{\al_i} \nn\\
\picc_0 \rar{\ol S_{\al_i}}\uar{\Tk}& \picc_{\al_i} \arrow{u}[right]{\Tk}\nn
\end{tikzcd}
\nn\ee
(We used this already in \cref{Tkdiag}.) Now by definition, $Q_i =  -\vareps_i^{-1} \shift_{-\al_i}\ol S_{\al_i}$, so we need also to consider the commutator of $\Tk$ with the shift operator $\shift_{\al_i}$.  
On taking the classical limit of \cref{Deltadef}, one has
\be L_0 \ket{\al_i} = \bilin{\chweyl}{\al_i} \ket{\al_i} = \la\chweyl,\al_i\ra \ket{\al_i} = 1\ket{\al_i}.\nn\ee
In this way we see that $[L_0,\shift_{\al_i}] = \shift_{\al_i}$. It is clear that $[L_j,\shift_{\al_i}] = 0$ for all $j>0$. By definition $[\chcent_{-m},\shift_{\al_i}] = 0$ for all $m>0$. Hence, using \cref{Lm1vacl}, we have that
\be [\Tk,\shift_{\al_i}] = [L_{-1} - \frac 1 \coxeter \chcent_{-1} L_0, \shift_{\al_i}] 
= \left(\al_i - \frac 1 \coxeter \chcent_{-1} \right) \shift_{\al_i} = \tildeal_i \shift_{\al_i}.\nn\ee
Thus $[\Tk,Q_i] = - \tildeal_i Q_i$ as claimed. The statement that $[L_{-1},\Qfin_i] = -[\al_{i}]_{-1} \Qfin_i$ follows (or can be checked directly in a similar fashion).
\end{proof}

\begin{prop}\label{afffin}
The screening operators $Q_i$, $i\in I$, stabilize the subspace $\piaff_0$. Moreover the following diagram commutes for each $i\in I$:
\be \begin{tikzcd}
\piaff_0 \rar{\sim}& \pifin_0 \nn\\
\piaff_0 \rar{\sim}\uar{Q_i}& \pifin_0 \uar{\Qfin_i}\nn
\end{tikzcd} \nn\ee
\end{prop}
\begin{proof}
$Q_i$ acts a derivation, so to show that it stabilizes $\piaff_0$ it is enough to show that it sends any mode $\tildeal_{j,[-1-n]}$, $j\in I\setminus\{0\}$, $n\geq 0$, to some element of $\CC[\tildeal_{k,[m]}]_{k\in I\setminus\{0\},m<0}$. That follows by an induction on $n$, making use of \cref{TkQ}. Explicitly, we have 
\begin{align} 
\left[Q_i,  \tildeal_{j,[-1]}\right] &=  \left[Q_i,  \tildeal_{j,-1}\right] =  V_{\al_i}[0] \la \chal_i, \tildeal_{j,-1} \ra 1
=\la \chal_i, \tildeal_{j,-1} \ra 1 . \label{Qactfact}
\end{align}
and then for the inductive step\begin{align} \left[Q_i , \tildeal_{j,[-1-n]}\right] 
= \frac{1}{n}\left[ Q_i,  \left[\Tk, \tildeal_{j,[-n]} \right] \right]
&=  \frac{1}{n} \left[\left[ Q_i,\Tk\right], \tildeal_{j,[-n]} \right] +  \frac{1}{n} \left[\Tk, \left[ Q_i,\tildeal_{j,[-n]}\right] \right]\nn\\
&=  -\frac{1}{n} \tildeal_{i,-1} \left[ Q_i, \tildeal_{j,[-n]} \right] +  \frac{1}{n} \left[\Tk, \left[ Q_i,\tildeal_{j,[-n]}\right] \right]\nn\\
&=  -\frac{1}{n} \tildeal_{i,[-1]} \left[ Q_i, \tildeal_{j,[-n]} \right] +  \frac{1}{n} \left[\Tk, \left[ Q_i,\tildeal_{j,[-n]}\right] \right]\nn
\nn\end{align}
This induction amounts to giving a recursive definition of $Q_i$ on $\tildeal_{j,[-1-n]}$. One sees that there a recursive definition of $\Qfin_i$ on $[\al_{j}]_{-1-n}$ with exactly the same structure, with $\Tk$ replaced by $L_{-1}$ and $\tildeal_i$ by $[\al_i]$. It follows that the diagram given in the statement of the proposition indeed commutes.
\end{proof}
This completes the proof of \cref{afthm}. 
\end{proof}

\subsection{Structure of $\piaff_0$ and integrals of motion}
The following result about the structure of $\pifin_0$ as an $\n_+$ module was established in \cite{FFIoM}. By \cref{afthm} it applies also to $\piaff_0$.

\begin{thm}[\cite{FFIoM}]$ $\label{ffprop}
\begin{enumerate}[(i)]
\item
As an $\n_+$-module, $\pifin_0$ is isomorphic to the module coinduced from the trivial one-dimensional module over $U(\a_+)$:
\be \pifin_0 \cong \Homres_{\a_+}(U(\n_+), \CC) .\nn\ee
\item Consequently (by Shapiro's lemma) the cohomologies of the Lie algebra $\n_+$ with coefficients in $\pifin_0$ are given by exterior powers of $\a_+^*$:
\be H^\bullet(\n_+,\pifin_0) \cong H^\bullet(\a_+,\CC) \cong \largewedge^\bullet(\a_+^*). \nn\ee
\end{enumerate}
\qed\end{thm}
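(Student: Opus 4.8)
The plan is to prove part~(i), the coinduced structure, and then deduce part~(ii) formally; this is exactly the Feigin--Frenkel analysis of \cite{FFIoM}, and since \cref{afthm} already gives $\piaff_0\cong_{\n_+}\pifin_0$ it suffices to work with the Fock module $\pifin_0$. First I would grade $\n_+$ by its principal degree, so that each screening $\Qfin_i$ realizing $e_i$ lowers the Fock grading of $\pifin_0$ by one; in particular $\Qfin_i\vac=0$, which is immediate from the explicit derivation formula \cref{eactr}, so $\CC\vac$ is the unique $\n_+$-invariant line. This matches the socle of a coinduced module, since $\Hom_{\a_+}(\CC,\CC)=\CC$, and suggests passing to the restricted graded dual.

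The core of part~(i) is then to show that $(\pifin_0)^*$ is the \emph{induced} module $U(\n_+)\otimes_{U(\a_+)}\CC\cong U(\n_+)/U(\n_+)\a_+$; dualizing gives $\pifin_0\cong\Homres_{\a_+}(U(\n_+),\CC)$. With the dual action $(X\cdot\phi)(v)=-\phi(Xv)$, the covector $\bra 0$ is the candidate cyclic generator, and I would establish two facts: (a) $\bra 0$ is annihilated by the image of $\a_+$, i.e.\ $\bra 0\,(p_j v)=0$ for every $j\in E$ and every $v$; and (b) $\bra 0$ generates $(\pifin_0)^*$ over $\n_+$. Fact~(a) is checkable in low degree—for the degree-one generator $p_1\sim\sum_i e_i$ one gets $\bra 0\,(p_1 v)=\la\chcent,\,\cdot\,\ra=0$ using \cref{Qactfact} and $\la\chcent,\tildeal_i\ra=0$—and reflects that the $p_j$ sit in the Kostant-type complement tied to $\chcent$. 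Facts~(a) and~(b) together furnish a surjection $U(\n_+)\otimes_{U(\a_+)}\CC\twoheadrightarrow(\pifin_0)^*$.

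To upgrade this surjection to an isomorphism I would compare principally-graded characters. On one side $\pifin_0\cong\CC[\al_{i,n}]_{i\in I\setminus\{0\},\,n<0}$ (modulo $\chcent$) has the standard bosonic Fock character; on the other, PBW applied to a graded complement $\mathfrak u$ of $\a_+$ in $\n_+$ gives $\mathrm{ch}\,\big(U(\n_+)\otimes_{U(\a_+)}\CC\big)=\mathrm{ch}\,U(\n_+)/\mathrm{ch}\,U(\a_+)$. Equality of the two expressions is precisely the combinatorial content of the description in \cref{sec: adef}: $\ad p_{-1}\colon\g_{j+1}\to\g_j$ is injective for $j\ge 1$ with a one-dimensional cokernel exactly at the exponents $j\in E$, so the graded dimensions of $\n_+$ and of $\mathfrak u=\n_+/\a_+$ are governed by the exponents and the two product formulae agree term by term. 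Matching characters forces the surjection to be an isomorphism, proving~(i).

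Part~(ii) is then purely formal: Shapiro's lemma for the coinduced module yields $H^\bullet(\n_+,\pifin_0)\cong H^\bullet(\a_+,\CC)$, and since $\a_+$ is abelian and acts trivially on $\CC$, the Chevalley--Eilenberg differential vanishes identically, so $H^\bullet(\a_+,\CC)\cong\largewedge^\bullet(\a_+^*)$. The main obstacle is the character identity underlying~(i): establishing that $\bra 0$ generates $(\pifin_0)^*$ with no relations beyond the left ideal $U(\n_+)\a_+$—equivalently the $\a_+$-invariance of $\bra 0$ for all exponent generators together with injectivity of the induced map—is not formal and is exactly where the free-field realization and the affine combinatorics of exponents enter, as carried out in \cite{FFIoM}. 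Once those inputs are in place, the remainder is bookkeeping.
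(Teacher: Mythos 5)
First, a caveat about the comparison: the paper itself gives no proof of \cref{ffprop} --- the result is imported wholesale from \cite{FFIoM} and stated with an immediate \textup{q.e.d.} --- so the only meaningful benchmark is the Feigin--Frenkel argument. Your architecture matches it: pass to the restricted graded dual, exhibit $\bra 0$ as an $\a_+$-invariant cyclic vector to obtain a surjection from the induced module $U(\n_+)\ox_{U(\a_+)}\CC$, upgrade to an isomorphism by matching principally graded characters, then get part (ii) from Shapiro's lemma plus the vanishing of the Chevalley--Eilenberg differential for the abelian $\a_+$ acting trivially. The pieces you actually carry out (the degree count for $\Qfin_i$, the $p_1$ computation via $\sum_i\chaaa_i\cart_{ij}=0$, the reduction of (ii) to (i)) are correct.

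Two points need attention. The substantive one is the character identity. You justify it by quoting \cref{sec: adef}: ``$\ad p_{-1}\colon\g_{j+1}\to\g_j$ is injective for $j\ge 1$ with cokernel concentrated at the exponents.'' In affine type the injectivity fails: since $p_j$ lies in the principal Heisenberg subalgebra one has $[p_{-1},p_j]=0$ for every exponent $j\ge 2$, so $\ad p_{-1}\colon\g_{j+1}\to\g_j$ has kernel of dimension equal to the multiplicity of $j+1$ as an exponent; already for $\widehat{\mathfrak{sl}}_2$ one finds $\dim\g_3=2>1=\dim\g_2$. If you compute the graded dimensions of $\n_+$ from ``injective plus cokernel at exponents'' you get the wrong character for $U(\n_+)$, and the two product formulae do \emph{not} agree. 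The correct input is $\dim\g_j=\dim\hfin+(\text{multiplicity of }j\text{ in }E)$ for $j\ge 1$, which accounts for both kernel and cokernel of $\ad p_{-1}$; with that, both $\pifin_0$ and the (restricted dual of the) induced module have character $\prod_{n\ge 1}(1-q^n)^{-\dim\hfin}$ and the step closes. (The imprecision originates in the paper's own \cref{sec: adef}, but your argument leans on it in a load-bearing way.) Secondly, the two facts you label (a) and (b) --- that $\bra 0(p_j v)=0$ for \emph{all} exponents, not just $j=1$, and that $\bra 0$ generates the restricted dual freely over $U(\n_+)/U(\n_+)\a_+$ --- are precisely the content of the theorem and are only deferred to \cite{FFIoM}; what you have is therefore a correct outline rather than a proof. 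Relatedly, ``$\Qfin_i\vac=0$, so $\CC\vac$ is the unique $\n_+$-invariant line'' is a non sequitur: invariance of $\vac$ is immediate, but uniqueness is exactly the assertion $H^0(\n_+,\pifin_0)\cong\CC$, i.e.\ part of the conclusion, not an input.
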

The first of these cohomologies, $H^1(\n_+,\pifin_0)$ ($\cong H^1(\n_+,\piaff_0)$) plays a vital role in \cref{thm: Iaff} below. To state the theorem, we first note the following analog of \cref{Tkdiag} for the subspace $\piaff_0$. Here we define $\piaff_{\lambda}\subset \picc_\lambda$ for any $\lambda\in \h$: 
\be \piaff_{\lambda} := \CC[\tildeal_{i,[n]}]_{i\in I\setminus\{0\}, n<0}\vacl. \nn\ee
\begin{cor}\label{Tkdiagaff} We have the following $\Aut\O$-equivariant double complex:
\be \begin{tikzcd}
\CC\vol\\
\piaff_0 \ox \CC\vol \rar{H\ox \id} \uar{\bra 0\ox \id}
& \bigoplus_{i\in I} \piaff_{\al_i} \ox \CC\vol   \\
\piaff_0 \uar{\Tk\vol} \rar{H}
& \bigoplus_{i\in I} \piaff_{\al_i}  \uar{-\Tk\vol}\\
\CC\vac \uar
\end{tikzcd}
\nn\ee
\end{cor}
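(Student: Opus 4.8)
The plan is to deduce the statement directly from \cref{Tkdiag} by verifying that every map occurring there, together with the $\Aut\O$-action, restricts to the subspaces $\piaff_0\subset\picc_0$ and $\piaff_{\al_i}\subset\picc_{\al_i}$. Once this is done, all the structural content — that the columns are complexes, that the two squares anticommute so that $H(-\Tk)+\Tk H=0$, and that the horizontal and vertical maps are $\Aut\O$-equivariant — is inherited verbatim from \cref{Tkdiag}, since these are identities among the very same operators, now merely regarded as endomorphisms of subspaces. The remaining ingredients, namely $\vac\in\piaff_0$ and the fact that $\bra0$ and the tensor factor $\CC\vol$ are untouched, are immediate.

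First I would treat the vertical maps. Since $\Tk\vac=0$ and, by the definition \cref{canmodedef} of the canonical modes, $[\Tk,\tildeal_{j,[-n]}]=n\,\tildeal_{j,[-n-1]}$, the operator $\Tk$ acts on a monomial $\tildeal_{j_1,[-n_1]}\dots\tildeal_{j_m,[-n_m]}\vac$ as a derivation and returns a sum of such monomials; hence $\Tk$ preserves $\piaff_0$. The same computation handles $\piaff_{\al_i}$ once one knows $\Tk\ket{\al_i}\in\piaff_{\al_i}$, and indeed only the $L_{-1}$ and $L_0$ terms of \cref{Tk} survive on $\ket{\al_i}$, giving $\Tk\ket{\al_i}=\big(\al_{i,-1}-\tfrac1\coxeter\chcent_{-1}\big)\ket{\al_i}=\tildeal_{i,[-1]}\ket{\al_i}$, which lies in $\piaff_{\al_i}$ (for $i=0$ one rewrites $\tildeal_0$ through the linear relation \cref{linrel}).

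Next I would treat the horizontal map $H$ and the $\Aut\O$-action. Writing $\ol S_{\al_i}=-\vareps_i\,\shift_{\al_i}Q_i$, \cref{afffin} already gives that $Q_i$ stabilizes $\piaff_0$, so it remains to see that $\shift_{\al_i}$ carries $\piaff_0$ into $\piaff_{\al_i}$. This follows by induction on the number of canonical modes from the relation $[\Tk,\shift_{\al_i}]=\tildeal_{i,[-1]}\shift_{\al_i}$ established inside the proof of \cref{TkQ}, together with $\shift_{\al_i}\vac=\ket{\al_i}$ and the commutation of $\shift_{\al_i}$ with the negative modes; concretely one shows $\shift_{\al_i}\big(X\vac\big)=X\ket{\al_i}$ for every $X$ in the algebra generated by the $\tildeal_{j,[-n]}$. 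For the $\Aut\O$-action, recall that $R(\mu)$ of \cref{Rsv} is built from $L_0$ and the $L_k$ with $k\geq1$; on $\piaff_0$ the latter annihilate every monomial by \cref{affdiff} (the relevant module weight being $0$) combined with $L_k\vac=0$, while $L_0$ acts by the grading, so $\piaff_0$ is $\Aut\O$-stable.

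The main obstacle is the $\Aut\O$-stability of $\piaff_{\al_i}$, because — in contrast to $\piaff_0$ — the states of $\piaff_{\al_i}$ are \emph{not} conformal primaries (already $\Tk\ket{\al_i}\neq0$), so one cannot simply invoke annihilation by $L_k$. The resolution is that \cref{lem: Lj} still yields $[L_k,\Tk]=0$ on $\picc_{\al_i}$ for all $k\geq1$, since $\bilin{\chcent}{\al_i}=0$ as $\chcent$ is the imaginary root. Propagating this through the inductive definition $\tildeal_{j,[-n]}=\tfrac1{(n-1)!}(\ad\Tk)^{n-1}\tildeal_{j,-1}$ shows that on $\picc_{\al_i}$ the commutator $[L_k,\tildeal_{j,[-n]}]$ reduces to the scalar $\bilin{\al_j}{\al_i}$ times the identity in the single case $k=n=1$ and vanishes otherwise; here one uses $\bilin{\chweyl}{\tildeal_j}=0$ to see that $[L_1,\tildeal_{j,-1}]$ is the central operator $\tildeal_{j,0}$, which acts as the scalar $\bilin{\al_j}{\al_i}$ on $\picc_{\al_i}$. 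Hence each $L_k$ with $k\geq1$ maps $\piaff_{\al_i}$ into itself, and with $L_0$ acting by the grading we conclude that $\piaff_{\al_i}$ is $\Aut\O$-stable, completing the reduction to \cref{Tkdiag}.
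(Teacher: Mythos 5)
Your proposal is correct, and its overall strategy — reduce everything to \cref{Tkdiag} by checking that the relevant subspaces are stable under $\Tk$, $H$ and $\Aut\O$ — is the same as the paper's. The difference is one of completeness: the paper's proof is two sentences long and only verifies the stability of $\piaff_0$ (citing \cref{affdiff} for the $\Aut\O$-action and \cref{canmodedef} for $\Tk$), leaving implicit both the stability of the spaces $\piaff_{\al_i}$ and the fact that $H$ lands in $\bigoplus_{i\in I}\piaff_{\al_i}$. You are right to single out the $\Aut\O$-stability of $\piaff_{\al_i}$ as the genuinely non-immediate point: \cref{affdiff} cannot be quoted there, since its hypothesis $\bilin{\lambda}{a}=0$ fails for $\lambda=\al_i$, $a=\tildeal_j$ (one has $\bilin{\al_i}{\tildeal_j}=\bilin{\al_i}{\al_j}$). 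Your workaround — going back to \cref{lem: Lj}, which only needs $\bilin{\chcent}{\al_i}=0$, and propagating $[L_k,\Tk]=0$ through $\tildeal_{j,[-n]}=\tfrac1{(n-1)!}(\ad\Tk)^{n-1}\tildeal_{j,-1}$ to conclude that $[L_k,\tildeal_{j,[-n]}]$ is at worst a scalar — is sound, as is the induction showing $\shift_{\al_i}$ commutes with the canonical modes. The one imprecision is the value of the scalar $[L_1,\tildeal_{j,-1}]$ on $\picc_{\al_i}$: besides $\tildeal_{j,0}$ there is an anomaly term $-2\bilin{\sv}{\tildeal_j}$ (which vanishes here since $\bilin{\chweyl}{\tildeal_j}=0$), and whether $\tildeal_{j,0}$ contributes $\bilin{\al_j}{\al_i}$ or $0$ depends on whether one takes the Lie-algebra action or the commutative action of the zero mode; either reading gives a scalar and leaves your argument intact.
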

\begin{proof}
Given \cref{Tkdiag} and \cref{afthm}, what has to be checked is that the subspace $\piaff_0$ is stabilized by the actions of $\Aut\O$ and $\Tk$. But this is immediate from, respectively, \cref{affdiff} and the definition, \cref{canmodedef}, of the modes ${}_{[n]}$. 
\end{proof}

Let $\Faff_0$ denote the quotient of the kernel of $\bra 0\ox \id$ in $ \piaff_0\ox \CC\vol$ by the image of $\Tk$.
We get the quotient linear map 
\be \mc H :  \Faff_0 \longrightarrow \bigoplus_{i\in I} \Faff_{\al_i}. \nn\ee
Define $\Iaff$ to be the kernel of this map,
\be \Iaff := \ker \mc H \subset \Faff_0.\nn\ee 
By construction, $\Faff_0$ admits an action of $\Aut\O$, and $\Iaff$ is an $\Aut\O$-submodule.

The diagram in \cref{Tkdiagaff} respects the $\ZZ$-gradations (i.e. the eigenspaces of $L_0$). So we get $\ZZ$-gradations on $\Faff_0$ and $\Iaff$. Note that $\vol$ has grade $-1$ by definition. 

\begin{thm}\label{thm: Iaff} We have the following isomorphisms of $\ZZ$-graded vector spaces:
\be \Iaff \cong_\CC 
 H^1(\n_+,\piaff_0) \cong_\CC \a_+^* \nn\ee
\end{thm}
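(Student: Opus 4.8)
The plan is to transport everything to the Feigin--Frenkel setting of $\pifin_0$ by means of \cref{afthm}, and then to invoke \cite{FFIoM}. The second isomorphism is then immediate: by \cref{afthm} the $\n_+$-modules $\piaff_0$ and $\pifin_0$ are isomorphic, so $H^1(\n_+,\piaff_0)\cong H^1(\n_+,\pifin_0)$, and \cref{ffprop}(ii) identifies the latter with $\a_+^*$. Since the underlying vector-space isomorphism \cref{finaff} sends $\tildeal_{i,[-n]}$ to $[\al_i]_{-n}$, both of conformal weight $n$, it preserves the $L_0$-grading, and hence so does the induced map on cohomology; thus $H^1(\n_+,\piaff_0)\cong\a_+^*$ as $\ZZ$-graded spaces.

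For the first isomorphism I would first check that \cref{finaff} promotes to an isomorphism of the entire double complex of \cref{Tkdiagaff} onto its counterpart for $\pifin_0$, in which the canonical translation $\Tk$ is replaced by the ordinary translation $T=L_{-1}$. The horizontal maps already correspond: \cref{afffin} shows that $Q_i$ matches $\Qfin_i$ under \cref{finaff}, hence so do $H=\bigoplus_i\ol S_{\al_i}$ and its $\pifin_0$-analog. For the vertical maps one uses that the canonical-mode recursion \cref{canmodedef} yields the operator identity $[\Tk,\tildeal_{i,[-n]}]=n\,\tildeal_{i,[-n-1]}$; together with $\Tk\vac=0$ this shows that $\Tk$ acts on $\piaff_0$ as the derivation $\tildeal_{i,[-n]}\mapsto n\,\tildeal_{i,[-n-1]}$, which is exactly the image of $T$ acting as $[\al_i]_{-n}\mapsto n\,[\al_i]_{-n-1}$ on $\pifin_0$. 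As $\bra 0$ and the rescaling representation $\CC\vol$ are manifestly preserved, the two double complexes are isomorphic, and in particular $\Iaff$ is identified (grading and all) with the integrals-of-motion space that the same construction produces from $\pifin_0$.

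It then remains to identify this latter space with $H^1(\n_+,\pifin_0)$, which is a result of \cite{FFIoM}; transporting it back along \cref{finaff} yields $\Iaff\cong H^1(\n_+,\piaff_0)$ and completes the proof. I expect the genuine difficulty to lie in this cohomological identification rather than in the transport. Concretely, one forms the double complex whose vertical differential is the translation ($\Tk$, resp.\ $T$) producing the functional spaces $\Faff_0$ and $\Faff_{\al_i}$, and whose horizontal differential is the screening map; computing its cohomology by first taking vertical ($\Tk$-)cohomology collapses it so that the surviving differential is $\mc H$, and one must show that $\ker\mc H\cong H^1(\n_+,\cdot)$. This is precisely the spectral-sequence argument of \cite{FFIoM} comparing the functional complex with the Chevalley--Eilenberg complex of $\n_+$; once the two double complexes have been matched as in the previous step it applies without change, with the weight shift contributed by $\vol$ being the same on both sides, so that all the identifications are of $\ZZ$-graded vector spaces.
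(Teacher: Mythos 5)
Your proof is correct and follows essentially the same route as the paper: the second isomorphism via \cref{afthm} and \cref{ffprop}(ii), and the first by transporting the double complex to the $\pifin_0$ setting (using \cref{afffin} for the screenings and the canonical-mode recursion for the translation operator) and then invoking the cohomological identification of \cite{FFIoM}. The paper's own proof is just a two-line citation of these same ingredients, so your write-up merely makes explicit the transport that the paper leaves implicit.
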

\begin{proof} The second isomorphism is a special case of \cref{ffprop}(ii). 
Given \cref{afthm} and \cref{afffin}, the proof of the first isomorphism is the same as in the paper \cite{FFIoM}.
\end{proof}

It is helpful to restate the theorem in more concrete language:
\begin{thmbis}{thm: Iaff}$ $\label{vjthm}
For each exponent $j\in E$ there exists a nonzero conformal primary state of conformal weight $j+1$,
\be \hamd_j\in \piaff_0 \subset \picc_0, \nn\ee
such that $\Iaff$ is the span of the classes $[\hamd_j\ox \CC\vol]$:
\be \Iaff = \bigoplus_{j\in E} \CC [\hamd_j\ox \CC\vol] \nn\ee
\qed\end{thmbis}
(Each $\hamd_j$ is, of course, unique only up to the addition of the canonical translate $\Tk f_j$ of any state $f_j\in \piaff_0$ of grade $j$.)

We have the obvious analogs $\F_0$, $\mc H$ and $\mc I$ for $\picc_0$. In view of \cref{Tkdiagaff} we have $\Faff_0 \subset \F_0$ and $\Iaff \subset \mc I$.

\section{Coinvariants on the Riemann sphere}\label{crs}
So far all the objects we considered were associated with the algebra of functions $\O = \CC[[t]]$ on the disc. Now we want to think that they are local objects, attached to a point of the Riemann sphere. Then we shall define global objects on the Riemann sphere, namely certain spaces of coinvariants/conformal blocks. 

In order to separate concerns, in this section we return to the setting of \cref{sec: pieps} in which $\h$ was a just a finite-dimensional vector space equipped with a non-degenerate symmetric bilinear form. Later, in \cref{sec: tto}, we reintroduce the extra structure coming from the Cartan matrix.

\subsection{Germs of functions}
Let $\cp1$ denote a copy of the Riemann sphere.
Given a point $p\in \cp1$, recall that a \emph{germ} of a holomorphic function at $p$ is just a holomorphic function on some (sufficiently small) open neighbourhood of $p$, with two germs considered equal if they are equal on some (sufficiently small) open neighbourhood of $p$.
We shall write:
\begin{enumerate}
\item[$\O_p$ ---] the local ring of germs of holomorphic functions at $p$,
\item[$\m_p$ ---] the maximal ideal in $\O_p$, i.e. germs of holomorphic functions vanishing at $p$,
\item[$\K_p$ ---] the field of germs of meromorphic functions at $p$, i.e. the field of fractions of $\O_p$.
\end{enumerate}
For any open subset $U\subset \cp1$, we shall write $\K(U)$ for the field of meromorphic functions on $U$ and $\O(U)$ for the ring of holomorphic functions on $U$.
Recall that $U\mapsto \K(U)$ is a sheaf, whose stalk at $p$ is $\K_p$, and likewise for $\O$.

\subsection{Local copies $\pi^\eps_{0,p}$}\label{sec: local copies}
Roughly speaking, our goal is now to attach copies of the vacuum Verma module $\pi^\eps_0$ of \cref{sec: fm} to points in $\cp1$. There is one very natural way of doing so. It will turn out to have a limitation for our purposes, but it is nonetheless instructive to consider it first.

Let $\h$ and $\bilin\cdot\cdot$ be as in \S\ref{sec: ff}.
Associated to a point $p\in \cp1$, we have the loop algebra $\h \ox \K_p$. 
The centrally extended loop algebra $\hh^\eps_p$ is by definition the vector space
\be \hh^\eps_p := \h \ox \K_p \,\,\oplus\,\, \CC\bm 1_p,\nn\ee 
where $\bm 1_p$ is central, equipped with the Lie bracket given by
\be [a \ox f,  b\ox g] = [a,b] \ox fg + \eps \bm 1_p \bilin ab  \res_p 
fdg ,\nn\ee
for $a,b\in \h$, $f,g\in \K_p$. 
For any $\lambda\in \h$, we can define the $\hh^\eps_p$-module 
\be \pi^\eps_{\lambda,p} := U(\hh^\eps) \ox_{U(\h[[t]]\oplus \CC \bm 1)} \CC \vacl \nn\ee
induced from the one-dimensional $(\h\ox \O_p\oplus \CC \bm 1)$-module $\CC \vacl$ spanned by a vector $\vacl$ obeying
\be (a\ox 1) \vacl = \eps\bilin \lambda {a} \vacl , \qquad (a\ox \m_p) \vacl = 0,\label{pvldef}\ee
for all $a\in \h$, and $\bm 1 \vacl = \vacl$. As a special case we have the vacuum Verma module, $\pi^\eps_{0,p}$. 
These definitions are intrinsic, i.e. they involve no choice of coordinate or other data on $\cp1$. 
Now suppose we choose a coordinate patch $U\subset \cp1$ and a holomorphic coordinate 
\be t: U \to \CC.\nn\ee 
For each $p\in U$ let $t_p:= t-t(p)$ be the local coordinate at $p$. The choice of coordinate gives rise to an isomorphism 
\be \K_p \cong \CC\laurent{t_p} \cong \CC\laurent t,\label{cois}\ee 
where $\CC\laurent t$ is the field of Laurent series in $t$ (with nonzero radius of convergence). The latter embeds in the field $\CC((t))$ of formal Laurent series.
This gives an embedding of $\hh^\eps_p$ into the abstract algebra $\hh^\eps$ from \S\ref{sec: ff} and hence a vector-space isomorphism
\be \iota_{t_p} : \pi^\eps_{0,p} \isom \pi^\eps_0 \nn\ee 
between $\pi^\eps_{0,p}$ and the abstract vacuum Verma module $\pi^\eps_0$ from \S\ref{sec: fm}.\footnote{Recall that these latter objects have a $\ZZ$-gradation, coming from the $\ZZ$-gradation of $\CC\laurent t$ as $\CC$-algebra by powers of $t$.
So we get a $\ZZ$-gradation of $\hh^\eps_p$ and of $\pi^\eps_{0,\eps}$. It is coordinate dependent, but its underlying filtration is not. 
}

The choice of local coordinate $t_p$ gives in particular an embedding of $\mc O_p$ into $\mc O$ of \cref{sec: ct} (cf. \cref{rem: holomorphic}) and hence $\Aut \O_p \into \Aut \mc O$. By considering the coordinate transformation to a new local holomorphic coordinate $s_p$ at $p$, with $t_p= \mu_p(s_p)$ for some $\mu_p \in \Aut \O_p$, we get a vector-space isomorphism 
\be \Rnought(\mu_p) :=  \iota_{s_p}\circ \iota_{t_p}^{-1} : \pi^\eps_0 \isom \pi^\eps_0.\nn\ee
By construction, the map $\mu \mapsto \Rnought(\mu)$ is a homomorphism  $\Aut\O_p \to \GL(\pi^\eps_0)$, i.e. an action of $\Aut\O_p$ on $\pi^\eps_0$.\footnote{Indeed, suppose $t=\mu_1(s)$ and $s=\mu_2(r)$ so that $t=\mu_1(\mu_2(r)) = (\mu_2\ast\mu_1)(r)$; then $\Rnought(\mu_2)\circ \Rnought(\mu_1) = \left( \iota_{r}\circ\iota_{s}^{-1} \right) \circ \left( \iota_{s} \circ \iota_t^{-1}\right) = \iota_r \circ \iota_t^{-1} = \Rnought(\mu_2\ast\mu_1)$ as required.}

The problem is that it is not the action we want. Indeed, recall that each choice of conformal vector $\confvec_\sv$, $\sv\in \h$, defines an action $\Rsv$ of $\Aut\O$ on $\pi^\eps_0$, \S\ref{sec: cv} and \S\ref{sec: ct}. It is straightforward to check that the action defined above indeed corresponds to the choice $\sv =0$ -- whereas we are interested in the case $\sv = - \chweyl + \eps\weyl$ and its $\eps\to 0$ limit, as in \S\ref{sec: cvg}.

For that reason, we need a somewhat more general construction; we follow \cite[\S6 and especially \S8]{FrenkelBenZvi}. 

\subsection{The bundle $\Pisv$} First, we define a vector bundle $\Pisv$ over $\cp1$ with fibre isomorphic to $\pi^\eps_0$. 
We define it by giving the transition functions between a class of local trivializations. 
Let $U\subset \cp1$ be any coordinate patch. To each choice of holomorphic coordinate $t:U \to \CC$ we associate a local trivialization 
\be \triv_{U,t}: \Pisv_U \isom U \times \pi^\eps_0.\label{trivdef}\ee
Let $s: U \to \CC$ be another choice of holomorphic coordinate on $U$. To define the bundle it is enough to define suitable transition functions 
\be \triv_{U,s} \circ \triv_{U,t}^{-1} : U \times \pi^\eps_0 \to U \times \pi^\eps_0 \nn\ee 
between the corresponding trivializations. (One can think that $U$ is really the overlap of two coordinate patches.)

At each point $p\in U$ we have the local coordinates $t_p= t-t(p)$ and $s_p = s- s(p)$, and they are related by $t_p = \mu_p(s_p)$ for some $\mu_p\in \Aut\O_p$. After embedding $\O_p\into \O$ using our initial local coordinate $t_p$, we get an element $\mu_p \in \Aut\O$ for each $p\in U$. 
Let 
\be \Rsv: \Aut\O \to \GL(\pi^\eps_0) \nn\ee 
(or, in the $\eps\to 0$ limit, $\Aut\O \to \GL(\picc_0)$) be the homomorphism defined as in \cref{Rsv} in terms of the modes $L_n$ of the conformal vector associated to $\sv\in \h$. We define the transition function $\triv_{U,s} \circ \triv_{U,t}^{-1}$
between the trivializations $\triv_{U,t}$ and $\triv_{U,s}$ to be
\be \left(\triv_{U,s} \circ \triv_{U,t}^{-1}\right)(p,v) = (p, \Rsv(\mu_p)\on v) .\label{trfn}\ee 

\subsection{Sub-bundle $\Pisv_{\leq 1}$}\label{sec: pisvleq}
The action $\Rsv$ of $\Aut\O$ respects the depth filtration of $\pi^\eps_0$. Consequently, the bundle $\Pisv$ has a sub-bundle, which we denote by $\Pisv_{\leq 1}$, with fibre isomorphic to 
\be (\pi^\eps_0)_{\leq 1} \cong  \CC\vac \oplus \h. \nn\ee
From \cref{btrans} we read off the transition functions between trivializations corresponding to different holomorphic coordinates $t$ and $s$ with $t = \mu(s)$. By definition
\begin{align} \mu_p(s_p) &= t_p = t- t(p) 
= \mu(s) - \mu(s(p))
   = \mu\big(s_p + s(p)\big) - \mu(s(p))
\label{chc}\end{align}
so $\mu_p'(0) = \mu'(s(p))$ and $\mu_p''(0) = \mu''(s(p))$. So in the fibre at the point $p$,
\begin{align} \Rsv(\mu_p) \on a_{-1}\vac
&=  \left(a_{-1}\vac  + \bilin{\sv}{a} \frac{\mu''(s(p))}{\mu'(s(p))} \vac \right)\frac {1}{\mu'(s(p))}, \qquad a\in \h\nn\\
\Rsv(\mu_p) \on \vac &= \vac .
\label{btrans2}\end{align}

Let $\Omega$ denote the canonical bundle over $\cp1$, i.e. the holomorphic cotangent bundle. Its fibres are copies of $\CC$. In the local trivialization defined by a holomorphic coordinate $t$, sections of $\Omega$ look like $f(t) dt$, and the transition functions are given by $f(t) dt = \tilde f(s) ds$, i.e. $\tilde f(s) = \mu'(s) f(\mu(s))$. 

Consider the tensor product bundle 
\be \Pisv_{\leq 1} \ox \Omega. \nn\ee
Its fibres are copies of $(\pi^\eps_0)_{\leq 1}\ox \CC \cong (\pi^\eps_0)_{\leq 1}$. 
As a matter of notation, let us write $A(t)dt$, for the image of a meromorphic section of $\Pisv_{\leq 1} \ox \Omega$ in the local trivialization defined by a coordinate $t:U\to \CC$. (So the $dt$ will keep track of the choice of trivialization.) 
We see that the transition functions are given by
\begin{align} a_{-1} \vac f(t) dt  &\mapsto  \left(a_{-1}\vac  + \bilin{\sv}{a} \frac{\mu''(s)}{\mu'(s)} \vac \right) f(\mu(s)) ds\nn\\
\vac f(t) dt &\mapsto \vac f(\mu(s)) \mu'(s) ds .  \label{tfn}\end{align}
\begin{rem}\label{dtr}
If $\sv =0$ then the bundle $\Pisv_{\leq 1} \ox \Omega$ is isomorphic to the direct sum of $\Omega$ and the trivial vector bundle with fibre $\h$.
\end{rem}

\subsection{Connections on $\Pisv$}\label{sec: con}
Given any vector bundle $\mc V$ over $\cp1$, let $U \mapsto \Gamma(U,\mc V)$ denote the sheaf of meromorphic sections of $\mc V$. 

\begin{lem}\label{conlem}
Let $A\in \End(\pi^\eps_0)$. There is a well-defined flat holomorphic connection $\nabla^A: \Gamma(\cdot,\Pisv) \to \Gamma(\cdot,\Pisv \ox \Omega)$ on $\Pisv$ given by
\be \nabla^A \sigma(t) = \left(\sigma'(t) + A \sigma(t)\right) dt \nn\ee
if and only if, for all $\mu\in \Aut\O$,
\be R(\mu_s)^{-1} (\del_s R(\mu_s) ) + R(\mu_s)^{-1}A R(\mu_s)
=  \mu'(s) A 
 \nn\ee
as an equality of endomorphisms of $\pi^\eps_0$, where $\mu_s(x) := \mu(x+s) - \mu(s)$. 
\end{lem}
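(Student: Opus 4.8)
The plan is to verify directly that the local formula $\nabla^A\sigma=(\sigma'+A\sigma)dt$ glues to a well-defined section of $\Pisv\ox\Omega$, and to show that the gluing condition is exactly the stated operator identity; flatness will then come for free. First I would fix a coordinate patch $U$ carrying two holomorphic coordinates $t,s\colon U\to\CC$ related by $t=\mu(s)$, and recall from \cref{trfn} together with the transition law $f(t)dt=\mu'(s)f(\mu(s))\,ds$ for $\Omega$ that a meromorphic section $\sigma$ of $\Pisv$ is recorded in the two trivializations by $\pi_0^\eps$-valued functions $\sigma^t,\sigma^s$ with $\sigma^s(s)=R(\mu_s)\,\sigma^t(\mu(s))$. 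Here $\mu_s(x):=\mu(x+s)-\mu(s)$ is the local coordinate change at the point with $s$-coordinate $s$ (this is \cref{chc}); note $\mu_s(0)=0$ and $\mu_s'(0)=\mu'(s)\neq0$, so $\mu_s\in\Aut\O$ and $R(\mu_s)$ is defined. Consequently a section of $\Pisv\ox\Omega$ with $t$-representative $\tau^t$ has $s$-representative $\tau^s(s)=\mu'(s)R(\mu_s)\,\tau^t(\mu(s))$.

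Next I would note that $\nabla^A$ manifestly obeys the Leibniz rule within each fixed trivialization, so the only thing to check is that the $t$- and $s$-computations $\tau^t:=\del_t\sigma^t+A\sigma^t$ and $\tau^s:=\del_s\sigma^s+A\sigma^s$ represent one and the same section, i.e.\ that $\tau^s(s)=\mu'(s)R(\mu_s)\,\tau^t(\mu(s))$. The heart of the matter is a short product/chain-rule computation: substituting $\sigma^s(s)=R(\mu_s)\,\sigma^t(\mu(s))$ gives
\[ \del_s\sigma^s(s)+A\sigma^s(s) = \big(\del_sR(\mu_s)\big)\sigma^t(\mu(s)) + \mu'(s)R(\mu_s)(\del_t\sigma^t)(\mu(s)) + AR(\mu_s)\sigma^t(\mu(s)), \]
while the target equals $\mu'(s)R(\mu_s)(\del_t\sigma^t)(\mu(s)) + \mu'(s)R(\mu_s)A\,\sigma^t(\mu(s))$. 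The two terms involving $\del_t\sigma^t$ cancel, so the representatives agree for this particular $\sigma$ if and only if
\[ \big(\del_sR(\mu_s)\big)\sigma^t(\mu(s)) + AR(\mu_s)\sigma^t(\mu(s)) = \mu'(s)R(\mu_s)A\,\sigma^t(\mu(s)). \]

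Finally I would strip away the dependence on $\sigma$: at any point of $U$ and for any $v\in\pi_0^\eps$ there is a section (for instance the section equal to the constant $v$ in the $t$-trivialization) with $\sigma^t(\mu(s))=v$ there, so well-definedness of $\nabla^A$ over the overlap holds for all sections if and only if the endomorphism identity $\del_sR(\mu_s)+AR(\mu_s)=\mu'(s)R(\mu_s)A$ holds as an identity in $s$; left-multiplying by $R(\mu_s)^{-1}$ gives precisely the stated condition. Running this over all pairs of coordinates, that is, over all $\mu\in\Aut\O$, yields the asserted equivalence. Holomorphy of $\nabla^A$ is clear, since the connection form $A\,dt$ and the transition functions $R(\mu_s)$ are holomorphic, and flatness is automatic because $\cp1$ is a curve, so the curvature — a two-form — vanishes identically. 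The one step requiring care is the bookkeeping of \emph{where} each representative is evaluated (the $\Pisv$ part via $R(\mu_s)$ and the $\Omega$ part via $\mu'(s)$, both with $\sigma^t$ taken at $t=\mu(s)$); once this is arranged correctly the cancellation and the extraction of the operator identity are routine.
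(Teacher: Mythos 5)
Your proposal is correct and follows essentially the same route as the paper: write the section in the two trivializations related by $R(\mu_s)$ and $\mu'(s)$, compute $\nabla^A$ in each, cancel the $\sigma'$ terms, and read off the operator identity. The extra remarks (stripping off $\sigma$ via constant sections, flatness being automatic on a curve) are fine and only make explicit what the paper leaves implicit.
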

\begin{proof}
Let $s$ and $t = \mu(s)$ be two holomorphic coordinates on a patch $U\subset \cp1$. Let $\sigma: t(U) \to \pi^\eps_0; t\mapsto \sigma(t)$ be the image of a meromorphic section of $\Pisv$ in the trivialization corresponding to the coordinate $t$. By definition of $\Pisv$, the same section is given in the trivialization corresponding to the coordinate $s$ by $s(U) \to \pi^\eps_0; s\mapsto R(\mu_s)\on \sigma(\mu(s))$ where $\mu_s(x) := \mu(x + s) - \mu(s)$ (cf. \cref{chc}).  
In the $t$-trivialization, the derivative of this section is
\be \nabla^A \sigma(t) = \left(\sigma'(t) +A \sigma(t)\right) dt \nn\ee
which is, in the $s$-trivialization,
\be R(\mu_s) \on \nabla \sigma(t) =  \left( R(\mu_s) \on \left(\sigma'(\mu(s)) +A \sigma(\mu(s))\right)\right) \mu'(s) ds\nn\ee 
On the other hand, we can take the derivative in the $s$-trivialization:
\begin{align} 
\nabla^A R(\mu_s)\on \sigma(\mu(s)) 
&= \left(\del_s R(\mu_s)\on  \sigma(\mu(s)) +A R(\mu_s) \on \sigma(\mu(s))\right) ds \nn\\
&= \left(R(\mu_s)\on  \sigma'(\mu(s))\mu'(s) + (\del_s R(\mu_s) )\on \sigma(\mu(s))  +A  R(\mu_s) \on  \sigma(\mu(s))\right) ds \nn\end{align}
These quantities agree if and only if $A$ satisfies the condition given in the statement of the proposition. 
\end{proof}

The following standard result shows that holomorphic connections on $\Pisv$ exist. 
Recall the translation operator $T$ of the vertex algebra $\pi^\eps_0$, and the fact that $T = L_{-1} \equiv (\confvec_\sv)_{(0)}$ irrespective of the choice of $\sv\in \h$, as in \cref{omT}. 
\begin{lem}\label{lem: nab}
There is a well-defined flat holomorphic 
connection $\nabt: \Gamma(\cdot,\Pisv) \to \Gamma(\cdot,\Pisv \ox \Omega)$ on $\Pisv$ given by
\be \nabt \sigma(t) = \left(\sigma'(t) + T\sigma(t)\right) dt. \nn\ee
\end{lem}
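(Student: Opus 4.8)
The plan is to deduce \cref{lem: nab} from the general criterion \cref{conlem} by taking $A = T = L_{-1}$. Recall from \cref{omT} that $T = L_{-1} = (\confvec_\sv)_{(0)}$ independently of the choice of $\sv$, so the operator appearing in the connection is canonical. By \cref{conlem} it then suffices to verify, for every $\mu\in\Aut\O$ and with $\mu_s(x) := \mu(x+s) - \mu(s)$, the operator identity
\be R(\mu_s)^{-1}\big(\partial_s R(\mu_s)\big) + R(\mu_s)^{-1} L_{-1} R(\mu_s) = \mu'(s)\,L_{-1} \nn\ee
as endomorphisms of $\pi^\eps_0$.

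First I would observe that both summands on the left lie in the (topologically completed) span of the operators $L_n$, $n\geq -1$. Indeed, since $[L_m, L_{-1}] = (m+1)L_{m-1}$ for $m\geq 0$, repeated bracketing of $L_{-1}$ with the $L_{\geq 0}$ that build $R(\mu_s)$ never leaves $\{L_n\}_{n\geq -1}$ and produces no central term, so the conjugate $R(\mu_s)^{-1}L_{-1}R(\mu_s)$ is of this form; the differentiated term $R(\mu_s)^{-1}\partial_s R(\mu_s)$ is the left logarithmic derivative of the path $s\mapsto R(\mu_s)$, hence likewise. Because the $L_n$, $n\geq -1$, satisfy the relations of the Witt algebra $\Der\O$ (the central terms of the Virasoro relations all vanish on this range of indices), it is enough to check the identity at the level of $\Der\O$, under the identification $L_n \leftrightarrow -t^{n+1}\partial_t$.

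The key computational input is the flow equation for the family $\mu_s$. Differentiating $\mu_s(x) = \mu(x+s) - \mu(s)$ gives
\be \partial_s \mu_s(x) = \mu'(x+s) - \mu'(s) = \mu_s'(x) - \mu_s'(0), \nn\ee
where $\mu_s'(0) = \mu'(s)$. Writing $y = \mu_s(x)$, the left logarithmic derivative $R(\mu_s)^{-1}\partial_s R(\mu_s)$ is represented by the vector field $\big(\partial_s\mu_s\big)\!\circ\mu_s^{-1}\,\partial_y = \big(\mu_s'(\mu_s^{-1}(y)) - \mu_s'(0)\big)\partial_y$, while the adjoint action of $\Aut\O$ carries $L_{-1} = -\partial_y$ to its pushforward $-\mu_s'(\mu_s^{-1}(y))\,\partial_y$. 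Adding the two, the $y$-dependent terms cancel and one is left with $-\mu_s'(0)\,\partial_y = -\mu'(s)\,\partial_y$, which represents $\mu'(s)\,L_{-1}$. This is exactly the required identity, so \cref{conlem} applies and yields the connection $\nabt$; flatness is in any case automatic on the curve $\cp1$.

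I expect the main obstacle to be the bookkeeping in the second and third steps: pinning down the precise orientation of the adjoint action of $\Aut\O$ on the translation field $-\partial_y$ (it must be the pushforward, not the pullback, for the cancellation to produce $\mu'(s)$ rather than $\mu'(s)^{-1}$), and verifying that $R(\mu_s)^{-1}L_{-1}R(\mu_s)$ is a genuine endomorphism, i.e.\ that the resulting a priori infinite series in the $L_n$ acts locally finitely on the graded space $\pi^\eps_0$. Both points are standard and are carried out in \cite{FrenkelBenZvi}, to which one may alternatively appeal directly, since $\nabt = \partial_t + L_{-1}$ is precisely the canonical connection on a vertex-algebra bundle.
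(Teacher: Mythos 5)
Your proposal is correct and follows essentially the same route as the paper: reduce to the criterion of \cref{conlem} with $A=L_{-1}$, note that both sides live in the (completed) span of the $L_n$, $n\geq -1$, realized as vector fields $-x^{n+1}\partial_x$ on $\O$ where $R(\mu)$ acts by $f(x)\mapsto f(\mu(x))$, and then observe that the logarithmic derivative $\mu_s'(\mu_s^{-1}(x))-\mu'(s)$ and the pushforward $-\mu_s'(\mu_s^{-1}(x))$ cancel to leave $-\mu'(s)$, i.e.\ $\mu'(s)L_{-1}$. This is precisely the computation in the paper's proof (itself taken from \cite[\S6.6]{FrenkelBenZvi}), merely phrased in slightly more geometric language.
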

\ifdefined\short
\begin{proof}
See \cite[\S6.6]{FrenkelBenZvi}.
\end{proof}
\else
\begin{proof} For completeness, let us include the following proof, taken from \cite[\S6.6]{FrenkelBenZvi}. We need to check that $T=L_{-1}$ is one solution to the condition in \cref{conlem}.

Now, we may let $\Der\O$ act on functions of an indeterminate $x$ via the realization $L_n \mapsto -x^{n+1} \del_x$. By definition of $R(\mu)$, in this realization $R(\mu)\on f(x) = f(\mu(x))$. Therefore $R(\mu_s)^{-1}\del_s R(\mu_s) \on x = R(\mu_s)^{-1} \del_s \mu_s(x) = \left(\del_s \mu_s(y)\right)|_{y=\mu_s^{-1}(x)}=  \left(\del_s \left( \mu(s+y) - \mu(s)\right)\right)|_{y=\mu_s^{-1}(x)} = \mu'(s+ \mu_s^{-1}(x)) - \mu'(s)$. And $R(\mu_s)^{-1} L_{-1} R(\mu_s) \on x = R(\mu_s)^{-1} (-\del_x) R(\mu_s) \on x = - R(\mu_s)^{-1}\del_x \mu_s(x) =  - R(\mu_s)^{-1}\mu_s'(x) = - \mu_s'(y)|_{y=\mu_s^{-1}(x)} = -\mu'(s+ \mu_s^{-1}(x))$. Thus 
\be \left(R(\mu_s)^{-1}\del_s R(\mu_s)+ R(\mu_s)^{-1} L_{-1} R(\mu_s)\right) \on x = - \mu'(s).\nn\ee 
And indeed, $\mu'(s) L_{-1} \on x = \mu'(s) (-\del_x)\on x = -\mu'(s)$ also. This is enough to establish the required equality: both sides are sums of the form $\sum_{n\geq -1} c_n L_n$, and one has $\sum_{n\geq -1} c_n (-x^{n+1} \del_x) \on x = 0$ only if $c_n=0$ for all $n$. 
\end{proof}
\fi
However, in our setting this connection will belong a family of such holomorphic connections, defined using the canonical translation operator $\Tk$ of \cref{sec: tkdef}. 
For that reason we shall need the following corollary of \cref{conlem} which describes the (affine space of) holomorphic connections on $\Pisv$.   

\begin{cor}\label{affcor}
Suppose
\be \nabla^A \sigma(t) := \left(\sigma'(t) + A \sigma(t)\right) dt \nn\ee
defines a flat holomorphic connection $\Gamma^A(\cdot,\Pisv) \to \Gamma(\cdot,\Pisv \ox \Omega)$ on $\Pisv$. Then
\be \nabla^{A+B} \sigma(t) := \nabla^A \sigma(t) + B\sigma(t) dt \nn\ee
is another well-defined flat holomorphic connection if and only if the element $B\in \End\pi^\eps_0$ obeys
\be R(\mu)^{-1}B R(\mu) =  \mu'(0) B \nn\ee
for all $\mu \in \Aut\O$.  
\end{cor}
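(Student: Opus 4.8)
The plan is to reduce everything to \cref{conlem} by exploiting that the condition appearing there is affine-linear in its endomorphism argument. Applying \cref{conlem} to $A+B$, the formula $\nabla^{A+B}\sigma(t) = (\sigma'(t) + (A+B)\sigma(t))\,dt$ defines a well-defined flat holomorphic connection on $\Pisv$ if and only if
\be R(\mu_s)^{-1}(\del_s R(\mu_s)) + R(\mu_s)^{-1}(A+B) R(\mu_s) = \mu'(s)(A+B) \nn\ee
for all $\mu\in \Aut\O$, where $\mu_s(x):=\mu(x+s)-\mu(s)$. The inhomogeneous term $R(\mu_s)^{-1}(\del_s R(\mu_s))$ is independent of the endomorphism, while the remaining terms are linear in it. Since by hypothesis $\nabla^A$ is a well-defined flat holomorphic connection, $A$ already satisfies the identity of \cref{conlem}; subtracting it leaves the equivalent requirement
\be R(\mu_s)^{-1} B R(\mu_s) = \mu'(s) B \label{Baffcor}\ee
for all $\mu\in \Aut\O$ and all $s$.

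It then remains to show that \eqref{Baffcor}, for all $\mu$ and $s$, is equivalent to the stated condition $R(\mu)^{-1}BR(\mu)=\mu'(0)B$ for all $\mu\in\Aut\O$. Two elementary observations about the reparametrized maps do the job. First, $\mu_s\in\Aut\O$: expanding about $x=0$ gives $\mu_s(x)=\mu'(s)x+\tfrac12\mu''(s)x^2+\cdots$, so its leading coefficient is $\mu_s'(0)=\mu'(s)$, which is a unit since its value at $s=0$ is $\mu'(0)=c_1\neq 0$. Second, because $\mu(0)=0$ for $\mu\in\Aut\O$, setting $s=0$ gives $\mu_0(x)=\mu(x)-\mu(0)=\mu(x)$, i.e.\ $\mu_0=\mu$.

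Granting these, the equivalence is immediate. For the forward direction, put $s=0$ in \eqref{Baffcor}: using $\mu_0=\mu$ and $\mu_0'(0)=\mu'(0)$ yields $R(\mu)^{-1}BR(\mu)=\mu'(0)B$. For the reverse direction, apply the stated condition to the element $\nu:=\mu_s\in\Aut\O$; since $\nu'(0)=\mu_s'(0)=\mu'(s)$, this returns precisely \eqref{Baffcor}. I do not expect a genuine obstacle: the entire content is the affine-linearity of the criterion in \cref{conlem} together with the fact that the family $\{\mu_s\}$ exhausts $\Aut\O$ (it contains $\mu_0=\mu$ for every $\mu$). The one point requiring care, and the closest thing to a subtlety, is keeping the roles of the formal variable $x$ and the reparametrization parameter $s$ distinct; I would also remark that flatness over the curve $\cp1$ is automatic, so that \cref{conlem} is really controlling the well-definedness of the gluing.
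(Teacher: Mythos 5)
Your proof is correct and follows essentially the same route as the paper's: reduce to \cref{conlem}, use the affine-linearity of the condition to isolate the requirement $R(\mu_s)^{-1}BR(\mu_s)=\mu'(s)B$, and then observe that this holds for all $s$ if and only if it holds at $s=0$, since $\mu_0=\mu$ and each $\mu_s$ is itself an element of $\Aut\O$ with $\mu_s'(0)=\mu'(s)$. The paper states the last equivalence without elaboration, so your spelling out of the two observations about $\mu_s$ is a welcome (and accurate) expansion rather than a deviation.
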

\begin{proof}
Given the \cref{conlem}, certainly a necessary and sufficient condition is that 
$R(\mu_s)^{-1}B R(\mu_s) =  \mu'(s) B$
for all $\mu\in \Aut\O$. But, in turn, a necessary and sufficient condition for that is the same statement with $s=0$. 
\end{proof}

For the moment, we need only the connection $\nabt$ of \cref{lem: nab}. 
In the local trivialisation of $\Pisv \ox \Omega$ defined by a holomorphic coordinate $t: U \to \CC$, we have, for example,
\begin{align} \nabt \left( a_{-1} \vac f(t) \right) &=  a_{-1} \vac f'(t) dt + a_{-2} \vac f(t) dt, \nn\\
      \nabt \left( \phantom{a_{-1}}\vac f(t)\right) &=  \phantom{a_{-1}} \vac f'(t) dt ,\label{nact} \end{align}
for $a\in \h$ and $f(t)\in \K(U)$.

We then have the sheaf $H^1(\Pisv, \nabt)$ of the first de Rham cohomology of this flat connection, i.e. the sheaf which assigns to each open $U\subset \cp1$ the quotient vector space
\be H^1(U,\Pisv, \nabt) := \Gamma(U,\Pisv \ox \Omega) 
                     \big/\nabt \Gamma(U,\Pisv).\label{ndr}\ee
It may be shown that this is in fact a sheaf of Lie algebras \cite[\S9.2 and \S19.4]{FrenkelBenZvi}. 
For our purposes it is enough to consider a certain sub-sheaf, as follows.

\subsection{The sheaf of Lie algebras $\Hsv$}\label{hsvdef}
For any chart $U$ let 
\be \Hsv(U) \into H^1(U,\Pisv, \nabt) \nn\ee 
denote the image in $H^1(U,\Pisv, \nabt)$ of $\Gamma(U,\Pisv_{\leq 1} \ox \Omega)$.
Explicitly, in the local trivialisation of $\Pisv_{\leq 1} \ox \Omega$ defined by a holomorphic coordinate $t: U \to \CC$, $\Hsv(U)$ is the $\CC$-linear span of elements of the form $a_{-1} \vac f(t) dt$ and $\vac f(t)dt$, with $a\in \h$, $f(t)\in \K(U)$, modulo the $\CC$-linear span of elements of the form $\vac f'(t) dt$, cf. \cref{nact}. 
\begin{lem}\label{laslem}
There is a Lie bracket on $\Hsv(U)$ defined by  $[ \vac f(t) dt,\, \cdot\, ] := 0$ and
\begin{align} \left[ a_{-1} \vac f(t) dt,\, b_{-1} \vac g(t) dt \right] &:= \eps \bilin a b \vac f(t) g'(t) dt
, \nn\end{align}
for $a,b\in \h$, $f(t), g(t)\in \K(U)$.

This makes $U \mapsto \Hsv(U)$ into a sheaf of Lie algebras. There is a short exact sequence of sheaves of Lie algebras
\be 0 \to H^1 \to \Hsv \to \Hsv/H^1 \to 0,\nn\ee
where $H^1:= 
\Gamma(\Omega)/ d\K$ is the sheaf of meromorphic sections of the holomorphic de Rham cohomology of $\cp1$, regarded as a sheaf of commutative Lie algebras. 

\end{lem}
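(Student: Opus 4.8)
The plan is to treat the displayed formula as the \emph{definition} of the bracket on representatives and then verify, in order: that it descends to the quotient $\Hsv(U)$, that it satisfies the Lie algebra axioms, that it is independent of the local coordinate and hence glues to a sheaf, and finally that it fits into the asserted short exact sequence. The single observation that organizes everything is that the bracket always takes values in the \emph{$\vac$-part} of $\Hsv(U)$, i.e. the span of classes $\vac\, h(t)\,dt$, and that this $\vac$-part is \emph{central}: by definition $[\vac\, h\,dt,\,\cdot\,]=0$. (One can also check that this formula is nothing but the restriction of the ambient bracket on $H^1(U,\Pisv,\nabt)$ from \cite[\S9.2 and \S19.4]{FrenkelBenZvi}, the only contributing operator product among depth-one generators being $a_{(1)}(b_{-1}\vac)=\eps\bilin ab\vac$, so that $\Hsv$ is genuinely a sub-sheaf of Lie algebras; but the self-contained verification below is what is needed.)

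First, for well-definedness I would recall from \S\ref{hsvdef} that the kernel of $\Gamma(U,\Pisv_{\leq 1}\ox\Omega)\to H^1(U,\Pisv,\nabt)$ is spanned precisely by the exact forms $\vac\, f'(t)\,dt$, and include the short grading argument for this. Any $\tau\in\Gamma(U,\Pisv)$ is, in a trivialization, a \emph{finite} sum of homogeneous pieces $\tau=\sum_{d=0}^N\tau_d$, since its values lie in $\pi^\eps_0$, whose elements have bounded depth. Then $\nabt\tau=\sum_d(\tau_d'+T\tau_{d-1})\,dt$ has top-depth part $T\tau_N$; because $T=L_{-1}$ is injective on the positive-depth subspace (its kernel is $\CC\vac$), requiring $\nabt\tau\in\Pisv_{\leq 1}\ox\Omega$ forces $\tau=\vac\, g(t)$ and hence $\nabt\tau=\vac\, g'(t)\,dt$. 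Consequently the depth-one data $(a,f)$ of a representative $a_{-1}\vac\, f\,dt$ is an invariant of its class, the only freedom being to alter the $\vac$-part by an exact form; since the bracket output is a $\vac$-form and $[\vac(\cdot)dt,\,\cdot\,]=0$, modifying either slot by a kernel element leaves the bracket unchanged. Bilinearity is immediate.

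Next the Lie axioms and coordinate-independence. Antisymmetry follows from $fg'+gf'=(fg)'$: the class $\vac\,(fg)'dt$ is exact, hence zero in $H^1$, so $\eps\bilin ab\vac\, fg'\,dt\equiv-\eps\bilin ab\vac\, gf'\,dt$. The Jacobi identity is automatic, as every bracket lands in the central $\vac$-part and so all iterated brackets vanish. For coordinate-independence I would take $t=\mu(s)$ and apply the transition rule \cref{btrans2}, \cref{tfn}: writing $F(s)=f(\mu(s))$ and $G(s)=g(\mu(s))$, the element $a_{-1}\vac\, f\,dt$ picks up in the $s$-trivialization an extra $\vac$-term proportional to $\mu''/\mu'$, but this term is central and drops out of the bracket, while the chain rule gives $f(t)g'(t)\,dt=F(s)G'(s)\,ds$. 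Hence the bracket computed in either coordinate equals $\eps\bilin ab\vac\, F G'\,ds$, so it is intrinsic and compatible with restriction, making $U\mapsto\Hsv(U)$ a sheaf of Lie algebras.

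Finally, for the short exact sequence I would identify the $\vac$-part of $\Hsv$ with $H^1=\Gamma(\Omega)/d\K$ via $[\omega]\mapsto[\vac\,\omega]$, which is injective exactly because the $\vac$-relations are the exact forms; this is a central ideal, hence a sheaf of commutative Lie algebras, and the quotient $\Hsv/H^1$ is the depth-one part $\{a_{-1}\vac\, f\,dt\}$, on which the induced bracket vanishes. This gives $0\to H^1\to\Hsv\to\Hsv/H^1\to0$ as sheaves of Lie algebras. The main obstacle is really the coordinate-independence step: a priori the $\sv$-dependent mixing of the depth-one fibre into the $\vac$-direction under coordinate change could spoil it, and the crux is precisely that these mixing terms are central and therefore invisible to the bracket; one must also confirm that the $\vac$-valued output transforms as a section of $\Omega$, which is exactly the standard $\Omega$-transition and matches the chain-rule identity $f g'\,dt=FG'\,ds$ above.
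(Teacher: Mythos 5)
Your proof is correct and follows essentially the same route as the paper: the paper simply recalls that $\eps\bilin ab\, f\,dg$ defines the standard central extension of $\h\ox\K(U)$ by $H^1(U)$ and then observes, exactly as you do, that the coordinate-change corrections from \cref{btrans2} are central and so invisible to the bracket. The only difference is that you verify the cocycle facts (descent modulo exact forms, antisymmetry via $fg'+gf'=(fg)'$, Jacobi by centrality) by hand where the paper cites them as known, which is fine.
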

\begin{proof} First, recall that there is a well-defined central extension of the (commutative) Lie algebra $\h \ox \K(U)$ of $\h$-valued meromorphic functions on $U$, by a centre $H^1(U) := \Gamma(U,\Omega)/ d\K(U)$, defined by
\be [a\ox f, b\ox g] = \eps \bilin a b f dg. \label{hcen}\ee
In the trivialization defined by the coordinate $t$, this agrees with our bracket on $\Hsv(U)$, given the obvious identifications, $a_{-1} \vac$ with $a$ and $\vac$ with $1$. 
So we have defined a Lie bracket, in this trivialization. It remains to check that this Lie bracket is intrinsic, i.e. independent of the choice of trivialization. 
But this is clear because the modification to the transition functions is just by central elements with respect to the Lie bracket. 
(In the trivialization defined by $s$ with $t= \mu(s)$, the sections $a_{-1} \vac f(t) dt$ and $b_{-1} \vac g(t) dt$ are written as $\left(a_{-1}\vac  + \bilin{\sv}{a} \frac{\mu''(s)}{\mu'(s)} \vac \right) f(\mu(s)) ds$ and $\left(b_{-1}\vac  + \bilin{\sv}{b} \frac{\mu''(s)}{\mu'(s)} \vac \right) g(\mu(s)) ds$ respectively. The Lie bracket of these is  
$\eps \bilin a b \vac f(\mu(s)) d g(\mu(s))$, which, correctly, is the section $\eps \bilin ab \vac f(t) dg(t)$ in the new coordinate.)
\end{proof}

\subsection{Global Lie algebra}
Let us now pick a collection 
\be \bm x = \{x_1,\dots,x_N\} \subset \cp1 \nn\ee 
of $N\geq 1$ distinct points in $\cp1$. We call them the \emph{marked points}. 
Let $U\mapsto \Hsv(U)_\x$ denote the sheaf consisting of those of sections of $\Hsv$ that are holomorphic away from the marked points. We would like to describe the Lie algebra $\Hsv(\cp1)_{\x}$ of its global meromorphic sections. 
Pick a point $\8\in \cp1 \setminus \x$. Let $U_0 := \cp1 \setminus \8$. Pick a holomorphic coordinate 
\be z : U_0 \isom \CC.\nn\ee 
\begin{prop}\label{hpi} The Lie algebra $\Hsv(\cp1)_{\x}$ is ($\CC$-linearly) spanned by the elements
\be a_{-1}\vac dz  - \vac \frac{2\bilin \sv a dz}{z-z(x_i)} \qquad\text{and}\qquad \frac{a_{-1}\vac dz}{(z-z(x_i))^n},\qquad n\geq 1, \label{ddf1}\ee
with $a\in \h$ and $1\leq i\leq N$, together with the classes of the elements
\be  \frac{\vac d z}{z-z(x_i)} - \frac{\vac d z}{z-z(x_j)}, \qquad 1\leq i < j \leq N.\label{ddf}\ee
\end{prop}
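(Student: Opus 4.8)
The plan is to work in the global coordinate $z:U_0\isom\CC$ and to reduce the statement to two classical computations on $\cp1$: a partial-fraction decomposition of rational functions with poles confined to $\x$, and the description of the meromorphic de Rham cohomology of the $N$-punctured sphere. Since $\Hsv(\cp1)_\x$ is a vector space whose bracket is already supplied by \cref{laslem}, and the assertion concerns only a $\CC$-linear spanning set, I would argue purely at the level of vector spaces and ignore the bracket.

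First I would write an arbitrary global section, in the $z$-trivialisation of $\Pisv_{\leq 1}\ox\Omega$, in the form $\sigma=\bigl(A(z)_{-1}\vac+h(z)\vac\bigr)dz$, with $A$ an $\h$-valued and $h$ a scalar meromorphic function on $U_0$. Holomorphy on $U_0\setminus\x$ is then just the statement that $A$ and $h$ are rational with poles confined to $\{z(x_i)\}_i$. Recalling from \cref{laslem} that $\Hsv(U)$ is $\Gamma(U,\Pisv_{\leq1}\ox\Omega)$ modulo the exact scalar forms $\vac f'(t)\,dt=\nabt(\vac f)$, I note that globally the relations we quotient by are $\vac\,df$ with $f$ regular on $\cp1\setminus\x$, i.e. exactly the scalar one-forms with poles in $\x$ and vanishing residues.

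The main work is to impose holomorphy at $\infty$. Using the transition functions \eqref{tfn} with $z=\mu(w)=1/w$, so that $\mu''(w)/\mu'(w)=-2/w$, the section $\sigma$ becomes in the coordinate $w$
\begin{align*}
\sigma^w=\Bigl(A(1/w)_{-1}\vac+\bigl(-\tfrac{2}{w}\bilin\sv{A(1/w)}-\tfrac{1}{w^2}h(1/w)\bigr)\vac\Bigr)dw .
\end{align*}
Requiring this to be holomorphic at $w=0$ yields, from the depth-one part, that $A$ is holomorphic at $\infty$, and, from the Laurent expansion of the scalar coefficient, that $h(\infty)=0$ together with the coupling condition that the $z^{-1}$-coefficient of $h$ at $\infty$ equals $-2\bilin\sv{A(\infty)}$. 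This inhomogeneous coupling, forced by the $\mu''/\mu'$ term in the transition of the $a_{-1}\vac$ component, is the subtle point of the argument and the part I expect to require the most care.

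With these constraints in hand the identification is mechanical. Partial fractions give $A(z)=A(\infty)+\sum_i\sum_{n\geq1}\frac{a_{i,n}}{(z-z(x_i))^n}$ and $h(z)=\sum_i\sum_{n\geq1}\frac{c_{i,n}}{(z-z(x_i))^n}$. The polar terms of $A$ are precisely the generators \eqref{ddf1} of the form $\frac{a_{-1}\vac\,dz}{(z-z(x_i))^n}$; the constant term $A(\infty)_{-1}\vac\,dz$ I would rewrite using the generators $a_{-1}\vac dz-\vac\frac{2\bilin\sv a\,dz}{z-z(x_1)}$, at the cost of producing one extra scalar pole at $x_1$. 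Collecting all scalar contributions into a single one-form $\omega\,dz$, the coupling condition from the previous paragraph shows that its total residue $\sum_i c_{i,1}+2\bilin\sv{A(\infty)}$ vanishes, so $\omega\,dz$ is a genuine meromorphic one-form on $\cp1$ with poles in $\x$ and holomorphic at $\infty$. Finally I would invoke the standard description of $H^1(\cp1\setminus\x)$: modulo the exact forms $df$ that we quotient by, such a residue-zero form is a $\CC$-combination of the classes \eqref{ddf}. This exhibits every section as a combination of \eqref{ddf1} and \eqref{ddf}, proving the claim; the redundancy among the generators \eqref{ddf1} for different $i$ (they differ by elements of \eqref{ddf}) is harmless, since only a spanning set is asserted.
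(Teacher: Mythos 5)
Your argument is correct and follows essentially the same route as the paper's proof: decompose a general section in the $z$-trivialization, transport it to the coordinate at $\infty$ via the transition functions \eqref{tfn}, extract the regularity conditions at $\infty$ (in particular the inhomogeneous coupling between the $a_{-1}\vac$ and $\vac$ components coming from the $\mu''/\mu'$ term), and finish with partial fractions and the de Rham cohomology of the punctured sphere. The only cosmetic difference is that the paper first splits off the central part $H^1(\cp1)_\x$ using the short exact sequence of \cref{laslem} and then fixes $a\in\h$, whereas you treat the $\h$-valued and scalar components simultaneously and extract the cohomology classes at the end; both yield the same spanning set.
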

\begin{proof}
Consider first the central extension: $\Hsv(\cp1)_{\x}$ fits into the short exact sequence, cf. \cref{laslem},
\be 0 \to H^1(\cp1)_\x \to \Hsv(\cp1)_{\x} \to \Hsv(\cp1)_{\x}/H^1(\cp1)_\x \to 0.\label{ges}\ee
Here $H^1(\cp1)_\x$ is the space of global meromorphic sections of the de Rham cohomology that are holomorphic away from the marked points (regarded as a  commutative Lie algebra). It is spanned by the given cohomology classes. (We need to take \emph{differences} of logarithmic differential forms as in \cref{ddf} to ensure that there is no pole at $\8$. Note that whenever $\sv\neq 0$, these differences are actually in the span of the elements in \cref{ddf1}.)

Now, for the remaining elements, fix an $a\in \h$ and consider the element of $\Hsv(U_0)_{\x}$ defined by the representative
\be a_{-1} \vac f(z) dz + \vac g(z) dz\label{gs}\ee 
in the local trivialization defined by $z$. We can ask under what conditions it corresponds to the restriction of a global section of $\Hsv(\cdot)_\x$. For that it must be regular at $\8$. Let $U_\8 := \cp1 \setminus 0$, where $0$ is the point with $z(0)= 0$. We have the holomorphic coordinate  $\zeta : U_\8 \isom \CC$ on $U_\8$ defined by $z= 1/\zeta$ on $U_0\cap U_\8$. In the trivialization of $\Hsv(U_\8){\x}$ defined by this coordinate $\zeta$, our section   is written
\be \left( a_{-1} \vac - \bilin \sv a \frac{2}\zeta \right)f\left(\frac 1 \zeta\right) d\zeta +
\vac g\left(\frac 1 \zeta\right) \frac {-1} {\zeta^2} d\zeta. \nn\ee 
For this to be regular at the point $\8$, where $\zeta(\8) = 0$, certainly $f$ must be regular at $\8$. (Consider the $a_{-1}\vac$ term). 
Moreover
\be \bilin \xi a  \frac{2}\zeta f\left(\frac 1 \zeta\right)+ g\left(\frac 1 \zeta\right) \frac {1} {\zeta^2} \nn\ee
must be regular at $\8$, modulo exact derivatives. We can regard this last as a condition on $g$, given $a$ and $f$: we need
\be g(z) \underset{z\to \8}\sim - 2 \bilin \sv a \frac{f(z)}{z} + h'(z) + \mc O(1/z^{2}), \nn\ee 
for some meromorphic function $h(z)$ with poles at most at the marked points. 

If $-2\bilin\sv a f$ is actually zero at $\8$ then any such $g$ is itself an exact derivative. 
If $-2\bilin\sv a f$ is not zero at $\8$, then (modulo exact derivatives) the solutions for $g$ are as in \cref{ddf1}.
\end{proof}

Let $\Hsv(\cp1)_\x^\8\subset \Hsv(\cp1)_{\x}$ denote the Lie subalgebra consisting of global sections that vanish at the point $\8$. 

Let $\hsv\subset \Hsv(\cp1)_{\x}$ denote the Lie subalgebra, isomorphic to $\h$, spanned by the elements
\be a_{-1}\vac dz  - \vac \frac{2\bilin \sv a dz}{z-z(x_1)},\nn\ee
with $a\in \h$. (Here, one could replace $x_1$ with $x_i$ for any one fixed $i$ with $1\leq i\leq N$.)

\begin{cor}\label{hpp}$ $  \begin{enumerate}[(i)]
\item As a Lie algebra, $\Hsv(\cp1)_{\x}$ is the direct sum 
\be \Hsv(\cp1)_{\x} = \Hsv(\cp1)_\x^\8 \oplus \hsv. \nn\ee
\item 
The Lie algebra $\Hsv(\cp1)_\x^\8$ does not depend on $\sv\in \h$.\\
Up to isomorphism, it is the central extension
\be 0 \to H^1(\cp1)_\x \to  \Hsv(\cp1)_\x^\8 \to \h\ox \K(\cp1)^\8_\x\to 0 \nn\ee 
(defined as in \cref{hcen}) of the Lie algebra $\h\ox \K(\cp1)^\8_\x$ of $\h$-valued meromorphic functions that vanish at $\8$ and have poles at most at the marked points $\x$, by the space $H^1(\cp1)_\x$ of global meromorphic sections of the de Rham cohomology with poles at most at the marked points $\x$. 
\end{enumerate}\qed
\end{cor}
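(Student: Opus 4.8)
The plan is to read everything off from the explicit generators produced in \cref{hpi}, exploiting the fact that over the affine chart $U_0=\cp1\setminus\8$ none of the bundles $\Pisv$ depend on $\sv$: the $\sv$-dependence enters only through the transition function to the chart at $\8$, hence only through the question of which meromorphic sections extend holomorphically over $\8$.

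\emph{Vector-space decomposition.} Recall from \cref{laslem} the short exact sequence of sheaves $0\to H^1\to\Hsv\to\Hsv/H^1\to0$, globalised in \cref{ges}. By the transition rule \cref{tfn} the coefficient of $a_{-1}\vac$ transforms as a function, so $\Hsv/H^1$ is the sheaf $\h\ox\K$ of $\h$-valued meromorphic functions, and I take ``vanishing at $\8$'' to mean that this $\h\ox\K$-image vanishes at $\8$ (which is unambiguous, unlike evaluation of the cohomological $\vac$-part). Among the generators of \cref{hpi}, the element $a_{-1}\vac dz-\vac\,2\bilin\sv a dz/(z-z(x_i))$ projects to the constant $a$, the element $a_{-1}\vac dz/(z-z(x_i))^n$ projects to $a/(z-z(x_i))^n$, and the logarithmic differences \cref{ddf} project to $0$. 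Thus $\hsv$ maps isomorphically onto the constants in $\h\ox\K(\cp1)_\x$, and I would check that $\hsv\cap\Hsv(\cp1)_\x^\8=0$ (a nonzero constant does not vanish at $\8$) and that $\hsv+\Hsv(\cp1)_\x^\8=\Hsv(\cp1)_\x$: the constant part of any generator is absorbed into the $i=1$ representative, the correction $\big(a_{-1}\vac dz-\vac\,2\bilin\sv a dz/(z-z(x_i))\big)-\big(a_{-1}\vac dz-\vac\,2\bilin\sv a dz/(z-z(x_1))\big)$ being a multiple of a logarithmic difference and hence lying in $H^1(\cp1)_\x\subset\Hsv(\cp1)_\x^\8$.

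\emph{Lie-algebra structure.} To upgrade this to a direct sum of Lie algebras I would show that $\hsv$ is central in $\Hsv(\cp1)_\x$. The bracket of \cref{laslem} ignores the $\vac$-part, and the $a_{-1}\vac$-coefficient of any $h_a\in\hsv$ is the constant function; hence for any global section whose $\h\ox\K$-component is $b\ox g$ with $g\in\K(\cp1)_\x$ one gets, from \cref{laslem}, a bracket $\eps\bilin ab\,\vac\,g'(z)\,dz$, which under the identification $\vac\leftrightarrow1$ is the exact form $\eps\bilin ab\,dg$ and so vanishes in $H^1(\cp1)_\x$. As $H^1$ is central this yields $[\hsv,\Hsv(\cp1)_\x]=0$, making $\hsv$ an abelian central summand and $\Hsv(\cp1)_\x^\8$ the complementary ideal; this proves (i).

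\emph{Independence of $\sv$ and the extension.} The generators $a_{-1}\vac dz/(z-z(x_i))^n$ ($n\geq1$) and the logarithmic differences that span $\Hsv(\cp1)_\x^\8$ are written in the $z$-trivialisation with no reference to $\sv$, and the bracket of \cref{laslem} is likewise $\sv$-free; I would verify that each of these sections extends holomorphically over $\8$ for every $\sv$ (by \cref{btrans2} the $\sv$-induced $\vac$-correction to $a_{-1}\vac dz/(z-z(x_i))^n$ in the coordinate $\zeta=1/z$ is $-2\bilin\sv a\,\zeta^{\,n-1}/(1-z(x_i)\zeta)^n\cdot\vac$, which is regular for $n\geq1$), so that $\Hsv(\cp1)_\x^\8$ is literally the same Lie algebra for all $\sv$. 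Finally, restricting \cref{ges} to the vanishing-at-$\8$ subspaces gives the stated sequence: the quotient is spanned by the $a/(z-z(x_i))^n$, i.e. $\h\ox\K(\cp1)^\8_\x$; the kernel is spanned by the logarithmic differences, i.e. $H^1(\cp1)_\x$; and the bracket $[a_{-1}\vac f,\,b_{-1}\vac g]=\eps\bilin ab\,\vac\,fg'\,dz$ is exactly the cocycle $\eps\bilin ab\,f\,dg$ of \cref{hcen}. The only real obstacle is conceptual rather than computational, namely pinning down what ``vanishes at $\8$'' should mean for a class in the cohomology sheaf $\Hsv$; defining it on the quotient $\Hsv/H^1\cong\h\ox\K$, where point-evaluation is well defined, resolves this and makes $H^1(\cp1)_\x\subset\Hsv(\cp1)_\x^\8$ automatic.
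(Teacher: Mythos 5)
Your proposal is correct and follows the route the paper intends: the corollary is stated with no separate proof because it is meant to be read off directly from the explicit generators in \cref{hpi}, exactly as you do (splitting off the $i=1$ generators as $\hsv$, observing that the remaining generators and the whole bracket are written $\sv$-independently in the $z$-trivialisation, and matching the cocycle with \cref{hcen}). Your explicit resolution of what ``vanishes at $\8$'' means --- via the image in $\Hsv/H^1\cong\h\ox\K$, where point evaluation is well defined --- together with the centrality of $\hsv$, are exactly the details the paper suppresses, and both are handled correctly.
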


\subsection{Local Lie algebras $\Hsv_{x_i}$}\label{sec: lla}
For this subsection, let us focus on one of the marked points $x_i$. Let $U$ be a small open disc containing $x_i$ and  $t:U\to \CC$ a local holomorphic coordinate at $x_i$. 

Let $\Hsv_{x_i}$ denote the stalk of the sheaf $\Hsv$ at $x_i$. 
Explicitly, in the coordinate $t$, $\Hsv_{x_i}$ is the $\CC$-linear span of elements of the form $a_{-1} \vac f(t) dt$ and $\vac f(t)dt$, with $a\in \h$ and now with $f(t)\in \CC\laurent t \cong \K_{x_i}$, modulo the $\CC$-linear span of elements of the form $\vac f'(t) dt$. The Lie bracket is given by the same formulas as in \cref{laslem}:
$[ \vac f(t) dt, \cdot ] := 0$ and
\begin{align} \left[ a_{-1} \vac f(t) dt,\, b_{-1} \vac g(t) dt \right] &:= \eps \bilin a b \vac f(t) g'(t) dt
. \nn\end{align}
The Lie algebra $\Hsv_{x_i}$ fits into the short exact sequence
\be 0 \to H^1_{x_i} \to \Hsv_{x_i} \to \Hsv_{x_i}/H^1_{x_i} \to 0,\nn\ee
where the central extension $H^1_{x_i} = \Gamma_{x_i}(\Omega)/d\K_{x_i}$ is of dimension one, with $\CC$-basis $dt/t$. We can identify $H^1_{x_i}$ with $\CC$ by taking the residue. 
In this way we see that there is a Lie algebra embedding
\be \Hsv_{x_i} \into \hh^\eps \label{locaff}\ee
given by 
\be a_{-1}\vac f(t) dt \mapsto a \ox f(t);\qquad \vac f(t) dt \mapsto \bm 1 \res_{x_i}  f(t) dt.\nn\ee
(It is an embedding rather than an isomorphism only because $\CC\laurent t \into \CC((t))$ is an embedding; cf. \cref{rem: holomorphic}.)

Let $\Hsvp_{x_i}$ (resp. $\Hsvpp_{x_i}$) denote the Lie subalgebra consisting of the germs at $x_i$ of sections of $\Hsv$ that are holomorphic (resp. vanishing) at $x_i$. There are Lie algebra embeddings
\be \Hsvp_{x_i} \into \h \ox \CC[[t]]_{hol}, \qquad  \Hsvpp_{x_i} \into \h \ox t\CC[[t]]_{hol}. \nn\ee
These embeddings are relative to our choice of local holomorphic coordinate $t$. In the special case $\sv = 0$ there are are canonical isomorphisms with the Lie algebras introduced in \cref{sec: local copies}: $\Hnought_{x_i} \cong \hh^\eps_{x_i}$, $\Hnoughtp_{x_i} \cong \h \ox \O_{x_i}$ and $\Hnoughtpp_{x_i} \cong \h \ox \m_{x_i}$, cf. \cref{dtr}. 

\subsection{Embedding of global into local}\label{sec: globloc}
For each marked point $x_i$, we have the canonical restriction homomorphism of Lie algebras $\Hsv(U) \to \Hsv_{x_i}$, 
for any open set $U$ containing $x_i$. In this way we get a canonical homomorphism of Lie algebras
$\Hsv(\cp1)_{\x} \to \bigoplus_{i=1}^N \Hsv_{x_i}$ of the Lie algebra $\Hsv(\cp1)_{\x}$ of global meromorphic sections into the direct sum of the $\Hsv_{x_i}$:
\be \Hsv(\cp1)_{\x} \to \bigoplus_{i=1}^N \Hsv_{x_i}. \nn\ee
\begin{lem} This is an embedding of Lie algebras.
\end{lem}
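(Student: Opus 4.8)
The plan is to show that the homomorphism has trivial kernel. Suppose $\omega \in \Hsv(\cp1)_{\x}$ restricts to zero in $\Hsv_{x_i}$ for every marked point $x_i$. Using the explicit description of $\Hsv$ as (the image of) sections of $\Pisv_{\leq 1}\ox\Omega$ modulo exact forms, I would represent $\omega$, in the coordinate $z$ on $U_0 = \cp1\setminus\8$, by a meromorphic expression
\[
\omega = A_{-1}\vac\,dz + \vac\,\beta\,dz,
\]
holomorphic away from $\x$, where $A$ is an $\h$-valued meromorphic function on $U_0$ (with $A_{-1}\vac$ the evident depth-one state), and $\beta\,dz$ is a scalar meromorphic $1$-form defined modulo exact derivatives. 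The key tool is the local embedding $\Hsv_{x_i}\into\hh^\eps$ of \cref{locaff}: under it the germ of $\omega$ at $x_i$ maps to the $\h$-valued germ of $A$ together with the central contribution $\bm 1\,\res_{x_i}\beta\,dz$. Since that map is injective, vanishing of the germ of $\omega$ at $x_i$ is equivalent to the vanishing of \emph{both} of these pieces.

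From here there are two independent mechanisms, corresponding to the two layers of the central extension of \cref{laslem}. First, for the $\h$-valued part: vanishing of its germ at a single $x_i$ means the meromorphic function $A$ is identically zero on a neighbourhood of $x_i$; since $A$ extends to a meromorphic function on all of $\cp1$ with poles confined to $\x$, the identity theorem forces $A\equiv 0$. Thus $\omega$ is purely central, $\omega = \vac\,\beta\,dz \in H^1(\cp1)_{\x}$. Second, for the central part: the hypothesis now gives $\res_{x_i}\beta\,dz = 0$ for every $i$, and as $\beta\,dz$ is holomorphic away from $\x$ (in particular at $\8$, which is not marked), \emph{all} of its residues on $\cp1$ vanish. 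On the genus-zero surface $\cp1$ a meromorphic $1$-form with vanishing residues is exact, so $\beta\,dz = dh$ for some meromorphic $h$ with poles at most at $\x$; hence the class of $\omega$ in $H^1(\cp1)_{\x}$, and therefore in $\Hsv(\cp1)_{\x}$, is zero.

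I expect no serious obstacle here: the argument cleanly separates the two layers of the extension and reduces to two standard facts about $\cp1$ -- the identity theorem for the $\h$-valued part and the residue criterion for exactness of $1$-forms for the central part -- combined with the injectivity of \cref{locaff}. The one point requiring a word of care is that ``restriction to the germ at $x_i$'' is well defined on cohomology classes, i.e.\ that a globally exact representative restricts to a locally exact one so that localization commutes with passing to the quotient; but this is immediate because $d$ is a local operator. Everything else is routine bookkeeping.
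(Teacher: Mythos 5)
Your argument is correct and is essentially the paper's own proof: both first dispose of the non-central ($\h$-valued) part by noting a meromorphic function on $\cp1$ is determined by its germ at any one marked point, and then handle the residual central part in $H^1(\cp1)_{\x}$ via residues at the $x_i$ (your ``vanishing residues imply exactness on $\cp1$'' is the same fact the paper phrases in terms of the classes of $d\log(z-z(x_i))$ being detected at $x_i$). No gaps.
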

\begin{proof} We have to check injectivity. Injectivity is immediate for elements of the form in \cref{ddf1} (since no two distinct meromorphic functions define the same germ at $x_i$, for any one of the marked points $x_i$).  
For the central extension $H^1(\cp1)_{\x}$, i.e. the span of the elements in \cref{ddf}, we must be slightly more careful: the logarithmic differential form $d\log(z-z(x_i))$ maps to a nonzero cohomology class in the cohomology of the punctured disc at $x_i$, but is cohomologically trivial in the cohomology of the punctured disc about any other point. But since we included all the marked points in the direct sum on the right, we do get injectivity.
\end{proof}

\begin{prop}
As vector spaces,
\be \bigoplus_{i=1}^N \Hsv_{x_i} \cong_\CC \bigoplus_{i=1}^N \Hsvp_{x_i}\oplus \Hsv(\cp1)_\x^\8 \oplus \CC \mathsf c,\nn\ee
where $\mathsf c := \vac\sum_{j=1}^N d\log(z-z(x_j))$. 
\end{prop}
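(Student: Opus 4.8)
The plan is to realize the claimed decomposition as a comparison of two short exact sequences and conclude by the short five lemma, which reduces the statement to a partial-fractions assertion in one degree and a residue computation in the other. Write $\Phi$ for the linear map from the right-hand side into $\bigoplus_{i=1}^N \Hsv_{x_i}$ assembled from the inclusions $\Hsvp_{x_i}\into\Hsv_{x_i}$, the restriction embedding $\Hsv(\cp1)_\x^\8 \to \bigoplus_i \Hsv_{x_i}$ furnished by the preceding embedding lemma, and the germ map on $\CC\mathsf c$. Recall from \cref{laslem} that each stalk fits into $0 \to H^1_{x_i} \to \Hsv_{x_i} \to \h\ox\K_{x_i}\to 0$, where the sub $H^1_{x_i}$ is the one-dimensional ``central'' space (spanned by the class of $\vac\,dt/t$ and identified with $\CC$ by the residue) and the quotient records the $a_{-1}\vac$-part; summing over $i$ gives $0\to\bigoplus_i H^1_{x_i}\to\bigoplus_i\Hsv_{x_i}\to\bigoplus_i\h\ox\K_{x_i}\to 0$.

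First I would check that $\Phi$ is filtered for this central filtration. The element $\mathsf c$ and the classes spanning $H^1(\cp1)_\x$ are pure $\vac$-forms, so they land in $\bigoplus_i H^1_{x_i}$; on the other hand $\Hsvp_{x_i}$ is cohomologically trivial, since holomorphic forms have vanishing residue, and so contributes only to the quotient. Using \cref{hpp}(ii), the domain then carries the matching sequence with sub $H^1(\cp1)_\x \oplus \CC\mathsf c$ and quotient $\bigoplus_i \h\ox\O_{x_i} \oplus \h\ox\K(\cp1)^\8_\x$, and $\Phi$ is a morphism of the two sequences.

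On the quotients, $\Phi$ induces the map $(\phi_i,f)\mapsto(\phi_i + f|_{x_i})_i$ from $\bigoplus_i \h\ox\O_{x_i}\oplus\h\ox\K(\cp1)^\8_\x$ to $\bigoplus_i\h\ox\K_{x_i}$, which I would prove to be an isomorphism by the standard partial-fractions argument tensored with $\h$: surjectivity by prescribing principal parts at the $x_i$ via a rational function vanishing at $\8$ and regular elsewhere, and injectivity because an element of $\K(\cp1)^\8_\x$ regular at every $x_i$ is regular on all of $\cp1$, hence constant, hence zero. On the subs, $\Phi$ induces the residue map $H^1(\cp1)_\x\oplus\CC\mathsf c\to\bigoplus_i H^1_{x_i}\cong\CC^N$. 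Here the residue theorem does the work: a class in $H^1(\cp1)_\x$ is represented by a one-form with poles only at $\x$ and regular at $\8$, so its residues sum to zero, and $\Phi$ carries $H^1(\cp1)_\x$ isomorphically onto the hyperplane $\{\sum_i r_i = 0\}$ (exactly the span of the differences in \cref{ddf}), while $\mathsf c$ has residue $1$ at each $x_i$, so $\Phi(\mathsf c)=(1,\dots,1)$ is transverse to that hyperplane; hence $\Phi$ is an isomorphism on subs. The short five lemma then yields that $\Phi$ is a linear isomorphism.

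The one genuinely delicate step—and the main obstacle—is this central layer, where the argument hinges on the fact that $\mathsf c$ is \emph{not} the germ of a global section in $\Hsv(\cp1)_\x^\8$: since $\sum_j d\log(z-z(x_j))\sim N\,dz/z$ near $\8$ it acquires a pole there, so the residue theorem obstructs the genuine global sections from reaching the diagonal direction $(1,\dots,1)$, and $\mathsf c$ is precisely the extra generator that supplies it. Keeping careful track of regularity at $\8$ throughout—both in the injectivity argument on quotients and in the residue count on subs—is what makes the two induced maps isomorphisms with no leftover discrepancy.
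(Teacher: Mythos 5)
Your proof is correct and follows essentially the same route as the paper's: the two substantive computations are identical, namely the splitting of a Laurent germ at $x_i$ into its holomorphic part (landing in $\Hsvp_{x_i}$) and its principal part (a global section vanishing at $\8$), and the residue bookkeeping showing that $H^1(\cp1)_\x$ hits exactly the hyperplane $\sum_i r_i=0$ while $\mathsf c$ supplies the missing diagonal direction. The only difference is presentational — you package the argument as a morphism of short exact sequences plus the five lemma, whereas the paper decomposes each element of $\Hsv_{x_i}$ directly and checks uniqueness by hand.
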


\begin{proof} 
We must show that every element $\bigoplus_{i=1}^N \Hsv_{x_i}$ decomposes uniquely into such summands. 
By linearity, it is enough to consider in turn elements of $\Hsv_{x_i}$, for each $i$. 
For its central extension, clearly $d\log(z-z(x_i))$ is in the span of the sum $\sum_{j=1}^N d\log(z-z(x_j))$ together with the differences in \cref{ddf}. 
It remains to consider the element of $\Hsv_{x_i}$ of the form $a_{-1} \vac f(z-z(x_i)) dz$ for some $a\in \h$ and Laurent series $f(t) \in \CC\laurent t$. The latter can be written uniquely as $f=f_++f_-$ for a series $f_+\in \CC[[t]]$ (with non-zero radius of convergence) and polynomial $f_- \in t^{-1}\CC[t^{-1}]$. Then $f_-(z-z(x_i))$ defines a meromorphic function on $\cp1$ that vanishes at $\8$ and so (in view of \cref{hpi}) $a_{-1} \vac f_-(z-z(x_i))dz$ belongs to $\Hsv(\cp1)_\x^\8$. So we get the unique decomposition of $a_{-1} \vac f(z-z(x_i)) dz\in \Hsv_{x_i}$ into the sum of $a_{-1}\vac f_+(z-z(x_i)) dz \in \Hsvp_{x_i}$ and $a_{-1} \vac f_-(z-z(x_i))dz \in \Hsv(\cp1)_\x^\8$.
\end{proof}

\begin{cor} We have
\begin{align} \bigoplus_{i=1}^N \Hsv_{x_i}\big/ \CC \mathsf c\,\, \cong_\CC\,\, &\bigoplus_{i=1}^N (\Hsv)_{x_i}^{+} + \Hsv(\cp1)_{\x}\nn\\\qquad\text{and}\qquad &\bigoplus_{i=1}^N (\Hsv)_{x_i}^{+} \cap \Hsv(\cp1)_{\x} = \hsv. \nn\end{align}
\end{cor}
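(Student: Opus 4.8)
The plan is to derive both assertions by pure linear algebra from the two direct-sum decompositions already established: the preceding Proposition gives
\[ \bigoplus_{i=1}^N \Hsv_{x_i} = \bigoplus_{i=1}^N \Hsvp_{x_i} \oplus \Hsv(\cp1)_\x^\8 \oplus \CC\mathsf c, \]
while \cref{hpp}(i) gives $\Hsv(\cp1)_\x = \Hsv(\cp1)_\x^\8 \oplus \hsv$. Abbreviate these four spaces as $V$, $P$, $G$, $G^\8$, so $V = P\oplus G^\8\oplus\CC\mathsf c$ and $G = G^\8\oplus\hsv$, with $G\hookrightarrow V$ the embedding of \cref{sec: globloc}. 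The key structural observation that couples the two pictures is that $G^\8$ is a \emph{common} summand: it is a summand of the ambient $V$ and also of the global algebra $G$.

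First I would treat the spanning statement. Passing to the quotient $V/\CC\mathsf c$, the Proposition reads $V/\CC\mathsf c = \bar P\oplus\bar{G^\8}$, where bars denote images; here $\bar P\cong P$ and $\bar{G^\8}\cong G^\8$ since $\mathsf c$ is a separate summand. Because $G^\8\subseteq G$ we have $\bar{G^\8}\subseteq\bar G$, so $\bar P+\bar G\supseteq \bar P+\bar{G^\8}=V/\CC\mathsf c$, and the reverse inclusion is automatic. This yields the first line, $V/\CC\mathsf c = \bar P+\bar G$.

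For the intersection, the strategy is to detect membership in $P$ through the Proposition's splitting while parametrising elements of $G$ via $G = G^\8\oplus\hsv$. Given $w$ in the intersection, write $w = g+h$ with $g\in G^\8$ and $h\in\hsv$, and expand $h$ in the generators $a_{-1}\vac dz - \vac\,2\bilin\sv a\,dz/(z-z(x_1))$ from \cref{hpi}. One then computes the $P$-, $G^\8$- and $\CC\mathsf c$-components of $w$: the holomorphic parts of the $h$-term fall into $P$, while the logarithmic (central) part of $h$ splits, by the residue bookkeeping in the proof of the Proposition, into a piece lying in $G^\8$ and a multiple of $\mathsf c$. Requiring (modulo $\mathsf c$, exactly as in the first line) that the $G^\8$-component of $w$ vanish pins $g$ down in terms of $h$, and the surviving data is parametrised by $\hsv$; this is what gives $\bigoplus_i\Hsvp_{x_i}\cap\Hsv(\cp1)_\x = \hsv$.

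The main obstacle is precisely this final identification, and it hinges on handling the central direction correctly. The element $\mathsf c = \vac\sum_j d\log(z-z(x_j))$ is the ``total residue'' direction: by the residue theorem it is the one combination of logarithmic germs at the marked points that is \emph{not} the restriction of a global section vanishing at $\infty$, which is exactly what separates it from the residue-zero span $H^1(\cp1)_\x\subseteq G^\8$. The delicate point is to verify that, after quotienting by $\CC\mathsf c$, the intersection is identified with $\hsv$ and not merely with some other complement of $G^\8$ in $G$ of the same dimension; this forces one to use the explicit normalisation of the generators of $\hsv$ together with the freedom to correct by elements of $G^\8$ (which also explains why the base point $x_1$ in the definition of $\hsv$ may be replaced by any $x_i$). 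Once this central-direction bookkeeping is in place, everything else is formal.
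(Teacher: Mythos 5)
Your argument is correct and takes essentially the same route as the paper: the paper's entire proof of this corollary is the one-line remark that it follows from the preceding Proposition and \cref{hpp}, i.e.\ exactly the two decompositions $\bigoplus_{i=1}^N\Hsv_{x_i}=\bigoplus_{i=1}^N\Hsvp_{x_i}\oplus\Hsv(\cp1)_\x^\8\oplus\CC\mathsf c$ and $\Hsv(\cp1)_\x=\Hsv(\cp1)_\x^\8\oplus\hsv$ that you combine. Your explicit bookkeeping of the logarithmic direction --- splitting the germ $\bm 1_{x_1}$ into a multiple of $\mathsf c$ plus residue-zero differences lying in $H^1(\cp1)_\x\subseteq\Hsv(\cp1)_\x^\8$, and noting that the resulting identification of the intersection with $\hsv$ uses the freedom to correct by elements of $\Hsv(\cp1)_\x^\8$ when $\sv\neq 0$ --- is precisely the point the paper leaves implicit, and you handle it correctly.
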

\begin{proof} This follows from the proposition above and \cref{hpp}. \end{proof}

\subsection{The dual bundle $\left(\Pisv_{\leq 1}\right)^*$, and $\conn\sv$}\label{sec: conndef}
Consider the sheaf $U\mapsto \Gamma(U,\left(\Pisv_{\leq 1}\right)^*)$ of meromorphic sections of the dual bundle $\left(\Pisv_{\leq 1}\right)^*$. 
In the local trivialization of $\left(\Pisv_{\leq 1}\right)^*$ coming from a holomorphic coordinate $t:U\to \CC$, its sections look like $(F(t),\chi(t))$ with $F(t) \in \K(U)$ and $\chi(t) \in \h^*\ox\K(U)$, with the canonical pairing
\be\Gamma(U,\left(\Pisv_{\leq 1}\right)^*)\times\Gamma(U,\left(\Pisv_{\leq 1}\right)) \to \K(U)\nn\ee 
being given by 
\be \Big< (F(t), \chi(t)) , \left( f(t) \vac + g(t)_{-1} \vac\right)\Big> := F(t) f(t) + \la\chi(t),g(t)\ra \nn\ee
for $f(t) \in \K(U)$, $g(t) \in \h\ox\K(U)$.
Starting from the transition functions of the bundle $\Pisv_{\leq 1}$ from \cref{btrans2},  one finds\footnote{Using the transition functions \cref{btrans2}, we have that the pairing of these sections in the new coordinate is given by
\begin{align} \Big<  (\tilde F(s), \tilde\chi(s)) , \left( \tilde f(s) \vac + \tilde g(s)_{-1}\vac\right)\Big> 
& = \tilde F(s) \tilde f(s) + \la \tilde\chi(s), \tilde g(s) \ra \nn\\
& =  \tilde F(s) \left( f(\mu(s)) + \bilin\sv {g(\mu(s))} \frac{\mu''(s)}{\mu'(s)^2} \right) 
  + \la \tilde\chi(s), \frac{g(\mu(s))}{\mu'(s)}\ra  \nn\\
& =  \tilde F(s) f(\mu(s)) + \left( \frac{\la\tilde\chi(s),\cdot\ra}{\mu'(s)} + \tilde F(s)  \frac{\mu''(s)}{\mu'(s)^2}  \bilin\sv\cdot \right)\left(g(\mu(s))\right).
\nn\end{align}
This pairing is a function, so it must be the same as the pairing in the old coordinate, i.e. it must equal 
\be F(t) f(t) + \la\chi(t),g(t)\ra  = F(\mu(s)) f(\mu(s)) + \la\chi(\mu(s)),g(\mu(s))\ra .\nn\ee
Since this must be true for all $f$ and $g$, we conclude first that $\tilde F(s) = F(\mu(s))$ and hence that 
\be \tilde \chi(s)= \mu'(s)\chi(\mu(s))  - F(\mu(s))\frac{\mu''(s)}{\mu'(s)} \bilin\sv\cdot.\nn\ee 
} that in the local trivialization of $\left(\Pisv_{\leq 1}\right)^*$ coming from a new holomorphic coordinate $s:U\to \CC$ with $t=\mu(s)$, the same section of $\left(\Pisv_{\leq 1}\right)^*$ is written $(\tilde F(s), \tilde \chi(s))$ where
\be \tilde F(s) := F(\mu(s)) , \qquad \tilde \chi(s):= \mu'(s)\chi(\mu(s))  - F(\mu(s))\frac{\mu''(s)}{\mu'(s)} \bilin\sv\cdot.\nn\ee 
Since the coefficent function $F(t)=F(\mu(s))$ is the same in all such trivializations,  $\Gamma(\left(\Pisv_{\leq 1}\right)^*)$ fibres over $\K$. 
In particular, we have the fibre over the constant function $1\in \K$, consisting of sections whose trivializations in any local coordinate $t:U\to \CC$ take the form $(1,\chi(t))$, $\chi(t) \in \h^*\ox\K(U)$. Let us write $\conn\sv$ for the sheaf of such sections. Its transition functions are given by
\be \tilde \chi(s):= \mu'(s)\chi(\mu(s))  - \frac{\mu''(s)}{\mu'(s)} \bilin\sv\cdot.\label{chitrans}\ee

\subsection{The modules $\CC v_\chi$}\label{sec: cchi}
Let $x_i$ be any of the marked points on $\cp1$. Let $\chi$ be a germ at $x_i$ of a meromorphic section belonging to $\conn\sv\subset \Gamma(\left(\Pisv_{\leq 1}\right)^*)$. There is a one-dimensional representation $\CC v_\chi$ of the Lie algebra $\Hsv_{x_i}$ of \S\ref{sec: lla}, defined as follows. An element $f\in \Hsv_{x_i}$ is represented by a germ at $x_i$ of a meromorphic section of $\Pisv_{\leq 1}\ox \Omega$. Using the canonical pairing, we obtain a germ $\la \chi,f\ra$ of a meromorphic section of $\Omega$. It is ambiguous up to the addition of exact derivatives $dg$, $g\in \K_{x_i}$, because of the freedom to add terms of the form $\vac dg$ to $f$, cf. \S\ref{sec: lla}. Nonetheless, the residue is well-defined, and we set  
\be f \on v_\chi := v_\chi \res_{x_i} \la \chi, f\ra .\nn\ee
(Note that this would not work if $\chi$ were a germ of a section of $\left(\Pisv_{\leq 1}\right)^*$ belonging to the fibre over non-constant $F\in \K_{x_i}$, for then $\la \chi,f\ra$ would be defined only up to addition of terms of the form $Fdg$, whose residue could be nonzero.)

Suppose we pick a collection $\bm \chi := (\chi_i)_{i=1}^N$ of such germs, one for each marked point $x_i$. We obtain the one-dimensional representation
\be \CC v_{\bm\chi} := \bigotimes_{i=1}^N \CC v_{\chi_i} \nn\ee
of the Lie algebra $\Hsvloc$. 
On pulling back by the embedding of \S\ref{sec: globloc}, it is also a one-dimensional representation of $\Hsv(\cp1)_{\x}$. 

Let $U\mapsto \conn\sv(U)_\x$ denote the sheaf consisting of those of sections of $\conn\sv$ that are holomorphic away from the marked points $\bm x = \{x_1,\dots,x_N\}$. 
\begin{lem} As a module over $\Hsv(\cp1)_{\x}$, $\CC v_{\bm\chi}$ is trivial (i.e. $A\on v_{\bm\chi} = 0$ for all $A\in \Hsv(\cp1)_{\x}$) if and only if there exists a global section $\chi\in \conn\sv(\cp1)_\x$ such that $\chi_i$ is the germ at $x_i$ of $\chi$, for $1\leq i\leq N$. 
\end{lem}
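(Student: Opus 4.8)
The plan is to rephrase ``$\CC v_{\bm\chi}$ is trivial'' as a single family of residue identities, and then to recognise the two implications as the residue theorem on $\cp1$ and a Mittag--Leffler existence statement respectively. First I would unwind the action: by the embedding $\Hsv(\cp1)_\x \into \Hsvloc$ of \cref{sec: globloc}, a global section $A$ acts on $v_{\bm\chi}=\bigotimes_i v_{\chi_i}$ through its germs $A_{(i)}$ at the marked points, so that by the definition of the local action in \cref{sec: cchi},
\be A \on v_{\bm\chi} = \Big(\sum_{i=1}^N \res_{x_i} \la \chi_i, A_{(i)}\ra \Big) v_{\bm\chi}. \nn\ee
Thus triviality is equivalent to $\sum_{i=1}^N \res_{x_i}\la\chi_i, A\ra = 0$ for every $A\in\Hsv(\cp1)_\x$, and the whole lemma becomes a statement about this residue pairing.

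The backward direction is then immediate. If $\chi\in\conn\sv(\cp1)_\x$ is a global section with $\chi_i$ its germ at $x_i$, then for any $A\in\Hsv(\cp1)_\x$ the pairing $\la\chi, A\ra$ is a global meromorphic section of $\Omega$, holomorphic away from $\x$ since both factors are; by the residue theorem its residues sum to zero, and as the only poles lie at the marked points this is exactly $\sum_i \res_{x_i}\la\chi_i, A\ra = 0$. Here I would note, as in \cref{sec: cchi}, that the residue is well defined despite the ambiguity of $\la\chi,A\ra$ by exact differentials.

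For the forward direction I would first produce a global section matching the principal parts of the $\chi_i$. Feeding the generators $a_{-1}\vac\,dz - \vac\,2\bilin\sv a\,dz/(z-z(x_i))$ of \cref{hpi} into the triviality condition yields, for every $a\in\h$, the single identity $\sum_i \res_{x_i}\la\chi_i, a\ra\,dz = 2\bilin\sv a$; equivalently the $\h^*$-valued total residue $\sum_i\res_{x_i}\chi_i$ equals $2\bilin\sv\cdot$. A base section $\chi^{\mathrm{base}}\in\conn\sv(\cp1)_\x$ exists (e.g.\ $\chi^{\mathrm{base}}=2\bilin\sv\cdot/(z-z(x_1))$, which one checks is regular at $\infty$ using the transition law \cref{chitrans}), and by the backward direction it obeys the same residue identity. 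Hence the differences $\chi_i-\chi^{\mathrm{base}}$, which are germs of sections of $\Omega\ox\h^*$, have vanishing $\h^*$-valued total residue. Since $H^1(\cp1,\Omega)\cong\CC$ with the isomorphism given by the total residue, this is precisely the obstruction to the Mittag--Leffler problem for $\Omega\ox\h^*$, so there is a global $\eta\in\Gamma(\cp1,\Omega\ox\h^*)_\x$ with the prescribed principal parts. Then $\chi^0:=\chi^{\mathrm{base}}+\eta\in\conn\sv(\cp1)_\x$ agrees with each $\chi_i$ to all polar orders.

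It then remains to upgrade agreement of principal parts to agreement of full germs. Writing $\psi_i:=\chi_i-\chi^0$, a germ of $\Omega\ox\h^*$ that is now \emph{holomorphic} at $x_i$, subtracting the backward direction for $\chi^0$ from triviality gives $\sum_i \res_{x_i}\la\psi_i, A\ra = 0$ for all $A$. Testing against the generators $a_{-1}\vac\,dz/(z-z(x_i))^{k+1}$, $k\ge 0$ (again from \cref{hpi}): the cross terms at $x_j$ with $j\neq i$ vanish because both $\psi_j$ and the test section are holomorphic there, while the term at $x_i$ extracts the $k$-th Taylor coefficient of $\la\psi_i, a\ra$. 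Ranging over all $a\in\h$ and $k\ge0$ forces every Taylor coefficient of $\psi_i$ to vanish, so $\psi_i=0$ and $\chi^0$ has germ $\chi_i$ at each $x_i$, completing the construction. The main obstacle is this forward direction, and within it the step of realising the prescribed principal parts by a global section: this is where the one-dimensional cohomology $H^1(\cp1,\Omega)$ enters and where the $\sv$-dependent inhomogeneous term in the transition law \cref{chitrans} must be tracked, both in extracting the total-residue identity from the generators and in exhibiting the base section $\chi^{\mathrm{base}}$.
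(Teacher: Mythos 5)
Your argument is correct, and the ``if'' direction is the same as the paper's: $\la\chi,f\ra$ is a global meromorphic one-form holomorphic away from $\x$, so its residues sum to zero by the residue theorem (with the exact-differential ambiguity harmless for residues). Where you genuinely diverge is the ``only if'' direction. The paper disposes of it in one sentence by citing the strong residue theorem of \cite[\S 9.2.9]{FrenkelBenZvi} together with non-degeneracy of the pairing; you instead reprove that statement by hand in this special case. Concretely, you extract the total-residue constraint $\sum_{i}\res_{x_i}\chi_i = 2\bilin\sv\cdot$ by testing against the generators $a_{-1}\vac\,dz - \vac\,2\bilin\sv a\,dz/(z-z(x_1))$ of \cref{hpi}, exhibit the base section $2\bilin\sv\cdot/(z-z(x_1))$ permitted by \cref{globsec}, solve a Mittag--Leffler problem for $\Omega\ox\h^*$ on $\cp1$ (the sole obstruction in $H^1(\cp1,\Omega)\cong\CC$ being exactly the total residue, which you have arranged to vanish), and finally kill the holomorphic parts of the differences $\psi_i$ by testing against $a_{-1}\vac\,dz/(z-z(x_i))^{k+1}$ for all $k\geq 0$, which extracts every Taylor coefficient of $\la\psi_i,a\ra$ since the cross terms at the other marked points vanish. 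What the paper's citation buys is brevity and a statement valid on an arbitrary curve; what your route buys is a self-contained elementary proof on $\cp1$ that makes visible precisely how the explicit spanning set of $\Hsv(\cp1)_\x$ and the inhomogeneous $\sv$-term in the transition law \cref{chitrans} enter --- in particular it makes transparent why the constraint $\sum_i\chi_{i,0}=2\bilin\sv\cdot$ in \cref{globsec} is forced by triviality of the module. Both arguments are valid.
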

\begin{proof} Consider the ``if'' direction. 
Suppose such a global section $\chi$ exists. Let $f\in \Hsv(\cp1)_{\x}$. By definition $\la \chi, f\ra\in \Gamma_\cp1(\Omega)$, i.e. it is a meromorphic one-form on $\cp1$, with no poles in $\cp1\setminus\{\x\}$. 
So we have
\be f \on v_{\bm \chi} = v_{\bm \chi} \sum_{i=1}^N \res_{x_i} \la \chi, f\ra =0 \nn\ee 
where the second equality holds by the residue theorem. The ``only if'' direction is the strong residue theorem (cf. \cite[\S9.2.9]{FrenkelBenZvi})  together with the non-degeneracy of the pairing.
\end{proof}

The quotient $\CC v_{\bm\chi} \big/ \Hsv(\cp1)_{\x}:= \CC v_{\bm\chi} \big/ \left(\Hsv(\cp1)_{\x}\on \CC v_{\bm \chi}\right)$ is called the space of coinvariants. 
\begin{cor} \label{cor: coinv}
\be \CC v_{\bm\chi} \big/ \Hsv(\cp1)_{\x} \cong_\CC \begin{cases} \CC & \text{if all $\chi_i$ are the restrictions of one global section $\chi\in \conn\sv(\cp1)_\x$} \\ 0 & \text{otherwise} \end{cases}\nn\ee 
\qed\end{cor}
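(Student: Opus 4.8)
The plan is to deduce this immediately from the preceding lemma, the only extra input being that $\CC v_{\bm\chi}$ is one-dimensional. First I would observe that, since $\CC v_{\bm\chi}$ is a one-dimensional module over the Lie algebra $\Hsv(\cp1)_{\x}$, every element $A\in \Hsv(\cp1)_{\x}$ necessarily acts by a scalar, say $A\on v_{\bm\chi} = c_A\, v_{\bm\chi}$ with $c_A\in \CC$. Consequently the image $\Hsv(\cp1)_{\x}\on \CC v_{\bm\chi}$ is a linear subspace of the one-dimensional space $\CC v_{\bm\chi}$, and is therefore either $\{0\}$ or all of $\CC v_{\bm\chi}$. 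This zero-or-one dichotomy is the crux: the quotient $\CC v_{\bm\chi}\big/\left(\Hsv(\cp1)_{\x}\on \CC v_{\bm\chi}\right)$ defining the coinvariants can only be $\CC$ or $0$, and which one occurs is governed entirely by whether all the scalars $c_A$ vanish.

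Next I would invoke the lemma directly. It tells us that the $c_A$ all vanish --- equivalently, that $\CC v_{\bm\chi}$ is \emph{trivial} as a module over $\Hsv(\cp1)_{\x}$ --- precisely when there exists a global section $\chi\in \conn\sv(\cp1)_\x$ whose germ at each marked point $x_i$ is the prescribed $\chi_i$. In that case $\Hsv(\cp1)_{\x}\on \CC v_{\bm\chi} = \{0\}$, so the space of coinvariants is all of $\CC v_{\bm\chi}\cong_\CC \CC$. Otherwise at least one $c_A\neq 0$, whence $\Hsv(\cp1)_{\x}\on \CC v_{\bm\chi} = \CC v_{\bm\chi}$ and the quotient vanishes. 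Assembling the two cases yields exactly the stated formula.

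There is essentially no obstacle at the level of the corollary itself; all the substance sits in the lemma, which I am assuming. Were one instead to prove the lemma from scratch, the genuinely non-trivial direction would be the ``only if'': showing that triviality of the action forces the germs $\chi_i$ to glue into a single global section. That is where one needs the strong residue theorem, to upgrade the vanishing of $\sum_i \res_{x_i}\la\chi,f\ra$ for all $f\in \Hsv(\cp1)_{\x}$ into the actual existence of a global meromorphic section with the given germs, together with the non-degeneracy of the canonical pairing on $\Pisv_{\leq 1}$ to guarantee that the recovered section is unique and lies in $\conn\sv(\cp1)_\x$. The ``if'' direction, by contrast, is just the ordinary residue theorem applied to the globally defined one-form $\la\chi,f\ra$.
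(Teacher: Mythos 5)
Your proof is correct and matches the paper's treatment: the corollary is stated there with no written argument precisely because it follows immediately from the preceding lemma via the zero-or-all dichotomy for a one-dimensional module, which is exactly the observation you make. Your closing remarks on the residue theorem and strong residue theorem also accurately reflect how the paper proves the lemma itself.
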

Henceforth, we specialize exclusively to the case in which the $\chi_i$ are the restrictions of one global section $\chi\in \conn\sv(\cp1)_\x$.
\subsection{Global sections of $\conn\sv(\cp1)_\x$}
Let us describe these global sections $\chi$.
\begin{prop}\label{globsec} The set of global sections $\conn\sv(\cp1)_\x$ is in one-to-one correspondence with the set of meromorphic $\h^*$-valued functions of the form
\be \chi(z) = \sum_{i=1}^N \sum_{k=0}^{K_i} \frac{\chi_{i,k}}{(z-z(x_i))^{k+1}}, \qquad K_i \in \ZZ_{\geq 0},\quad \chi_{i,k}\in \h^*,\label{chiform}\ee
such that the elements $\chi_{i,0}\in \h^*$ satisfy the constraint 
\be \sum_{i=1}^N \chi_{i,0} =  2 \bilin\sv\cdot. \nn\ee
\end{prop}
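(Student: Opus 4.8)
The plan is to reduce the statement to a purely local analysis at the point $\8\in\cp1\setminus\x$. Following the notation already set up above, let $U_0:=\cp1\setminus\8$ with holomorphic coordinate $z:U_0\isom\CC$. A global section of $\conn\sv(\cp1)_\x$ is completely determined by its representative $\chi(z)$ in this single coordinate, and since the section is required to be holomorphic on $U_0\setminus\x$, this representative is an $\h^*$-valued meromorphic function on $\CC$ whose finite poles lie in $\{z(x_i)\}_{i=1}^N$. Writing out its partial-fraction decomposition,
\[
\chi(z)=P(z)+\sum_{i=1}^N\sum_{k=0}^{K_i}\frac{\chi_{i,k}}{(z-z(x_i))^{k+1}},
\]
with $P$ an $\h^*$-valued polynomial recording the behaviour at $\8$, the proposition amounts to determining exactly when such a $\chi(z)$ extends to a global section, i.e.\ is regular at $\8$.

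To test regularity at $\8$ I would introduce the second coordinate $\zeta$ on $U_\8:=\cp1\setminus 0$ with $z=1/\zeta$, so that in the notation of \cref{chitrans} one takes $t=z=\mu(\zeta)=1/\zeta$, whence $\mu'(\zeta)=-1/\zeta^2$ and $\mu''(\zeta)/\mu'(\zeta)=-2/\zeta$. Substituting into \cref{chitrans} gives the representative in the $\zeta$-trivialization,
\[
\tilde\chi(\zeta)=-\frac{1}{\zeta^2}\,\chi\!\left(\tfrac1\zeta\right)+\frac{2}{\zeta}\bilin\sv\cdot,
\]
and the section is global precisely when $\tilde\chi(\zeta)$ is holomorphic at $\zeta=0$. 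The remaining step is a Laurent-coefficient bookkeeping. The polynomial part contributes $-\zeta^{-2}P(1/\zeta)$, which produces only poles of order $\geq 2$, while the inhomogeneous term $\tfrac2\zeta\bilin\sv\cdot$ reaches only order one; holomorphy therefore forces $P\equiv 0$, so $\chi$ must vanish at $\8$. Expanding each $(1/\zeta-z(x_i))^{-(k+1)}=\zeta^{k+1}(1+\mc O(\zeta))$ then yields $\chi(1/\zeta)=\big(\sum_i\chi_{i,0}\big)\zeta+\mc O(\zeta^2)$, so the coefficient of $\zeta^{-1}$ in $\tilde\chi(\zeta)$ equals $-\sum_{i=1}^N\chi_{i,0}+2\bilin\sv\cdot$, and requiring it to vanish gives precisely the constraint $\sum_{i=1}^N\chi_{i,0}=2\bilin\sv\cdot$.

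Conversely, the very same computation shows that any $\chi$ of the stated form satisfying the constraint has $\tilde\chi$ holomorphic at $\8$ and is manifestly holomorphic away from $\x$, so it defines a global section; together with the uniqueness of the partial-fraction data this establishes the claimed one-to-one correspondence. I do not anticipate a genuine obstacle here: the computation is elementary. The only place demanding real care is orienting the transformation rule \cref{chitrans} correctly — i.e.\ identifying which coordinate plays the role of $s$ and which of $t$ at the overlap — and then tracking the $\zeta^{-2}$ and $\zeta^{-1}$ coefficients, since it is exactly the interplay between the inhomogeneous term $\tfrac{\mu''}{\mu'}\bilin\sv\cdot$ and the double pole coming from $-1/\zeta^2$ that simultaneously forces vanishing of $\chi$ at $\8$ and produces the residue constraint.
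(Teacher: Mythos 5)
Your proposal is correct and follows essentially the same route as the paper's own proof: write the section in the $z$-trivialization as a polynomial plus partial fractions, pass to the coordinate $\zeta=1/z$ via the transition rule \cref{chitrans}, and read off that regularity at $\8$ forces the polynomial part to vanish and the $\zeta^{-1}$ coefficient $-\sum_i\chi_{i,0}+2\bilin\sv\cdot$ to be zero. Your orientation of the transformation rule and the resulting formula for $\tilde\chi(\zeta)$ agree exactly with the paper's.
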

\begin{proof} We have the patches $U_0$ and $U_\8$ and coordinates $z:U_0\isom \CC$, $\zeta: U_\8\isom \CC$ with $z=1/\zeta$ on $U_0\cap U_\8$, as above. A section of $\conn\sv(U_0)_\x$ is given in the trivialization corresponding to the coordinate $z$ by a meromorphic $\h^*$-valued function $\chi(z)$ of the form 
\be \chi(z) =   f(z) + \sum_{i=1}^N \sum_{k=0}^{K_i} \frac{\chi_{i,k}}{(z-z(x_i))^{k+1}}\nn\ee
where $K_i \in \ZZ_{\geq 0}$, $\chi_{i,k}\in \h^*$, and where $f(z) \in \h^*[z]$ is a polynomial. 
Over $U_0\cap U_\8$ the same section is given in the trivialization corresponding to the coordinate $\zeta$ by the function
\be \tilde\chi(\zeta) = - \frac{1}{\zeta^2} f\left(\frac 1\zeta\right) - \frac{1}{\zeta^2} \sum_{i=1}^N \sum_{k=0}^{K_i} \frac{\chi_{i,k}}{(1/\zeta-z(x_i))^{k+1}} +\frac 2 \zeta \bilin\sv \cdot \nn\ee
and this function is regular at $\zeta = 0$ (so that the section was in fact the restriction of a section in $\conn\sv(\cp1)_\x$) if and only if the given condition on the $\chi_{i,0}$ holds and $f(z) =0$.
\end{proof}

\subsection{Properties of coinvariants}
Let us consider a more general space of coinvariants. Suppose $\M_i$ is a smooth module over the Lie algebra $\Hsv_{x_i}$ for each marked point $x_i$. As shorthand, we shall write \be\bm 1_x := \vac d \log(z-z(x))\nn\ee for any $x\in \cp1$. Suppose that the central element $\bm 1_{x_i} \in \Hsv_{x_i}$ acts as 1 on $\M_i$, for each $i$. That is,  assume the modules $\M_i$ are all of level one. We have the space of coinvariants
\be H(\cp1,(x_i),(\M_i))_{i=1}^N := \left.\M_{1} \ox \dots \ox\M_{N} \right/ \Hsv(\cp1)_\x.\nn\ee
As an aside, note that according to \cref{hpp}, this space is non-trivial only if $\Delta^N a$  acts on $\M_{1} \ox \dots \ox\M_{N}$ by scalar multiplication by $2\bilin \sv a$, for all $a\in \h$,\footnote{Here, each $\M_i$ is in particular a module over the subalgebra of zero modes, $\h\into\Hsv_{x_i}; a\mapsto a_{-1} \vac d(z-z(x_i))$, and $\Delta^Na$ denotes the $N$-fold coproduct.} which is consistent with the constraint in \cref{globsec}. 

Now consider adding an additional marked point, $x$, in $\cp1\setminus\{\8\}$, distinct from the $\x=\{x_1,\dots,x_N\}$. 

The fibre $\Pisv_x$ of $\Pisv$ at $x$ is canonically a module over $\Hsv_x$. Namely, let $\CC\vac_x$ be the trivial one-dimensional module over the Lie algebra $\Hsvp_x$ of germs at $x$ of holomorphic sections of $\Hsv$. We make it into a module over $\Hsvp_x\oplus \CC \bm 1_x$ by declaring that $\bm 1_x$ acts as one. 
Then the fibre $\Pisv_x$ is the induced module
\be \Pisv_x \cong U(\Hsv_x) \ox_{U(\Hsvp_x\oplus \CC\bm 1_x)} \CC \vac_x. \nn\ee

We have the $(\Hsvloc\oplus \Hsv_x)$-module
\be  \M_{1} \ox \dots \ox\M_{N} \ox \Pisv_x  \nn\ee
and hence the space of coinvariants
\be  \left(\M_{1} \ox \dots \ox\M_{N} \ox \Pisv_x\right) \big/ \Hsv(\cp1)_{\x\cup\{x\}}.
\nn\ee
\begin{prop} There are canonical vector-space isomorphisms 
\begin{align}  \left(\M_{1} \ox \dots \ox\M_{N} \ox \Pisv_x\right) \big/ \Hsv(\cp1)_{\x\cup\{x\}} 
&\cong_\CC 
\left(\M_{1} \ox \dots \M_{N}\ox \CC\vac \right) \big/  \Hsv(\cp1)_\x \nn\\
&\cong_\CC 
\M_{1} \ox \dots \ox\M_{N} \big/ \Hsv(\cp1)_{\x}.  
\nn\end{align}
\end{prop}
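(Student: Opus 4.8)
The plan is to prove the two isomorphisms separately, the second being essentially formal and the first being an instance of \emph{propagation of vacua}. For the second isomorphism, I would use that the extra point $x$ does not lie in $\x$: every global section in $\Hsv(\cp1)_\x$ is holomorphic at $x$, hence restricts at $x$ to an element of $\Hsvp_x$, which acts as zero on the lowest piece $\CC\vac$. Thus $\CC\vac$ is a \emph{trivial} $\Hsv(\cp1)_\x$-module (the action factors entirely through the germs at the $x_i$), and tensoring with a one-dimensional trivial module leaves the space of coinvariants unchanged. This gives $\left(\M_1\ox\dots\ox\M_N\ox\CC\vac\right)\big/\Hsv(\cp1)_\x \cong_\CC \M_1\ox\dots\ox\M_N\big/\Hsv(\cp1)_\x$ at once.

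For the first isomorphism I would start from the linear map $\phi$ induced by the inclusion $\M_1\ox\dots\ox\M_N\ox\CC\vac_x \hookrightarrow \M_1\ox\dots\ox\M_N\ox\Pisv_x$ followed by projection to the coinvariants on the right. The first thing to check is that $\phi$ annihilates the image of $\Hsv(\cp1)_\x$: for $A$ in this subalgebra, $A$ is holomorphic at $x$, so $A\on(m\ox\vac_x) = (A\on m)\ox\vac_x$, and since $\Hsv(\cp1)_\x\subseteq\Hsv(\cp1)_{\x\cup\{x\}}$ this vanishes in the target. Hence $\phi$ descends to $\bar\phi$ on $\left(\M_1\ox\dots\ox\M_N\ox\CC\vac\right)\big/\Hsv(\cp1)_\x$, and it remains to prove $\bar\phi$ is bijective.

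Surjectivity I would obtain by induction on the grading of $\Pisv_x$, with $\vac_x$ in degree zero and the modes of $\Hsv_x$ with a pole at $x$ raising the grading. The key input is the decomposition of $\bigoplus_i\Hsv_{x_i}$ established above, applied now to the enlarged set $\x\cup\{x\}$: it shows that any grading-raising mode $c\in\Hsv_x$ can be written as $c = A|_x - h$, where $A\in\Hsv(\cp1)_{\x\cup\{x\}}$ is a global section holomorphic at every $x_i$ (with its only pole at $x$) and $h\in\Hsvp_x$. Writing a state of positive degree as $c\on P'$ with $P'$ of strictly smaller degree and applying the coinvariance relation $0=[A\on(m\ox P')]=[(A\on m)\ox P']+[m\ox((c+h)\on P')]$, one trades $[m\ox c\on P']$ for $-[(A\on m)\ox P']-[m\ox h\on P']$. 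Since $h\in\Hsvp_x$ does not raise the grading, both resulting terms have strictly smaller degree in the $x$-slot, so they lie in the image of $\bar\phi$ by the inductive hypothesis; hence $\bar\phi$ is onto.

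The main obstacle is injectivity, which I would settle by constructing an explicit inverse $\bar\Theta$ to $\bar\phi$. One sets $\Theta(m\ox\vac_x):=[m]\in\left(\M_1\ox\dots\ox\M_N\right)\big/\Hsv(\cp1)_\x$ and uses the same ``commute the creation modes off the $x$-slot'' recursion as above to define $\Theta$ on all of $\M_1\ox\dots\ox\M_N\ox\Pisv_x$. The substance of the argument is proving $\Theta$ well defined, namely independent of the choice of lift $A$ of each mode $c$ and of the order in which the (pairwise commuting, up to a central term) creation modes at $x$ are removed. Independence of the lift is automatic: the ambiguity lies in $\bigl(\bigoplus_i\Hsvp_{x_i}\oplus\Hsvp_x\bigr)\cap\Hsv(\cp1)_{\x\cup\{x\}}=\hsv$ by the Corollary above, and $\hsv\subset\Hsv(\cp1)_\x$ acts as zero on the target of $\Theta$. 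Order-independence I would check directly from the Heisenberg commutation relations on $\Hsv_x$: reordering two creation modes introduces a central element $\vac f\,dg$ acting on $\Pisv_x$ by $\res_x(f\,dg)$, and these central contributions cancel across the recursion by the residue theorem (equivalently, by the strong residue theorem invoked in \cref{cor: coinv}). Granting well-definedness, $\Theta$ descends to $\bar\Theta$ on the target coinvariants; $\bar\Theta\circ\bar\phi=\id$ holds by construction and $\bar\phi\circ\bar\Theta=\id$ follows from the surjectivity already shown, so $\bar\phi$ is the desired canonical isomorphism.
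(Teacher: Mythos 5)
Your proposal is correct and follows essentially the same route as the paper: the paper establishes the first isomorphism by combining the decomposition $\Hsv(\cp1)_{\x\cup\{x\}}^\8 \cong \Hsv(\cp1)_\x^\8 \oplus \Hsv(\cp1)_{\{x\}}^\8 \oplus \CC(\bm 1_{x_1}-\bm 1_x)$ with the freeness of $\Pisv_x$ over $U(\Hsv(\cp1)_{\{x\}}^\8)$ and then cites the standard propagation-of-vacua argument of \cite[Proposition 3.1]{VY2}, which is exactly the induction-plus-explicit-inverse you carry out by hand, while the second isomorphism is handled in both cases by observing that global sections regular at $x$ annihilate $\vac_x$. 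The only cosmetic difference is that the paper works first with the $\8$-vanishing subalgebras and treats the complement $\hsv$ separately at the end, whereas you absorb that bookkeeping into the statement that the ambiguity of the lift lies in $\Hsv(\cp1)_\x$ and hence acts as zero on the target.
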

\begin{proof}
As vector spaces, $\Hsv_x \cong_\CC \Hsv(\cp1)_{\{x\}}^\8 \oplus \left(\Hsvp_x\oplus \CC \bm 1_x\right)$. Therefore $\Pisv_x$ is free as a module over $\Hsv(\cp1)_{\{x\}}^\8$, and hence so too is $\M_{1} \ox \dots \ox\M_{N} \ox \Pisv_x$. Thus
\be \M_{1} \ox \dots \ox\M_{N} \ox \Pisv_x \cong_\CC U(\Hsv(\cp1)_{\{x\}}^\8) \on \left(\M_{1} \ox \dots \ox\M_{N} \ox \CC\vac_x\right).\nn\ee
Now, observe that
\begin{align} \Hsv(\cp1)_{\x \cup\{x\}}^\8 &\cong_\CC \Hsv(\cp1)_\x^\8 \oplus \Hsv(\cp1)_{\{x\}}^\8  \oplus \CC (\bm 1_{x_1} - \bm 1_x) . \nn\end{align}
Also, $\Hsv(\cp1)_\x^\8$ acts as zero on  $\vac_x$, and $(\bm 1_{x_1} - \bm 1_x)$ acts as $1-1=0$ on all of $\Pisv \ox \M_1\ox\dots\M_N$. It follows, as in e.g. the proof of \cite[Proposition 3.1]{VY2}, that there are canonical vector-space isomorphisms 
\begin{align}  \left(\M_{1} \ox \dots \ox\M_{N} \ox \Pisv_x\right) \big/ \Hsv(\cp1)_{\x\cup\{x\}}^\8 
&\cong_\CC 
\left(\M_{1} \ox \dots \M_{N}\ox \CC\vac \right) \big/  \Hsv(\cp1)_\x^\8 \nn\\
&\cong_\CC 
\M_{1} \ox \dots \ox\M_{N} \big/ \Hsv(\cp1)_{\x}^\8.  
\nn\end{align}
We have $\Hsv(\cp1)_{\x} = \Hsv(\cp1)_\x^\8 \oplus \hsv$ and $\Hsv(\cp1)_{\x\cup\{x\}} = \Hsv(\cp1)_{\x\cup\{x\}}^\8 \oplus \hsv$,  as in \cref{hpp}. So it remains to consider the action of $\hsv$. Since $a_{-1} \vac dz$, $a\in \h$, is a zero mode it acts trivially on $\Pisv_x$; and $- 2\bilin \sv a \bm 1_{x_1} $ by definition acts only on the tensor factor $\M_1$. So $\hsv$ does not act at all on the tensor factor $\Pisv_x$, and we have the result.
\end{proof}
Therefore we obtain a linear map 
\be \Pisv_x \to \End\left( H(\cp1,(x_i),(\M_i))_{i=1}^N\right),\nn\ee
which sends $X\in \Pisv_x$ to the endomorphism of the space of coinvariants given by
\be [v] \mapsto [v \ox X] .\nn\ee 
Given a meromorphic section $\sigma$ of $\Pisv$ over some open set $U$, we get for each $x\in U\setminus\x$ the linear map $[v] \mapsto [v\ox \sigma|_x]$. Note that, for a given section $\sigma$, this is by construction a well-defined function of $x$ (for example, it cannot depend on any choice of trivialization).

This linear map depends meromorphically on $x$ with poles at most at the marked points $\x$.  
(Cf. the explicit calculation below.)
\begin{cor} There is a well-defined map
\be  \Gamma(U,\Pisv) \to  \End\left( H(\cp1,(x_i),(\M_i))_{i=1}^N\right) \ox \K(U) \nn\ee
\qed\end{cor}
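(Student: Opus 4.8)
The plan is to combine the pointwise statement with a proof that the dependence on the insertion point is meromorphic with poles confined to $\x$. The pointwise claim is already in hand: the preceding Proposition identifies, for each $x\in U\setminus\x$, the space of coinvariants carrying an extra copy of $\Pisv_x$ inserted at $x$ with the original space $H(\cp1,(x_i),(\M_i))_{i=1}^N$, so that $X\mapsto([v]\mapsto[v\ox X])$ is a genuine map $\Pisv_x\to\End(H)$. Since evaluation of a section at $x$ is fibrewise multiplication by a scalar, the assignment $\sigma\mapsto(x\mapsto[v\ox\sigma|_x])$ is $\K(U)$-linear; hence it suffices to treat \emph{holomorphic} sections $\sigma$ and to show that the resulting endomorphism-valued function is holomorphic on $U\setminus\x$ and meromorphic across $\x$, the general meromorphic case following by multiplication by the coefficient in $\K(U)$. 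After fixing a coordinate $z$ on $U_0=\cp1\setminus\8$ and trivializing, $\Pisv_x\cong\pi^\eps_0$ is generated from $\vac$ by the creation modes $a\ox t_x^{-k}$, $k\geq1$, $t_x=z-z(x)$, so by linearity it is enough to treat states $X=a^{(1)}_{-k_1}\dotsm a^{(r)}_{-k_r}\vac$ and to induct on the total grade $k_1+\dots+k_r$.

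The key step is a Ward-identity recursion. A coinvariant functional $\varphi$ annihilates the action of every global section of $\Hsv(\cp1)_{\x\cup\{x\}}$; splitting such a section into its local expansions gives, for any $A\in\Hsv(\cp1)_{\x\cup\{x\}}$,
\be \varphi\big([v\ox (A|_x\cdot X')]\big) = -\sum_{i=1}^N \varphi\big([(A|_{x_i}\cdot v)\ox X']\big).\nn\ee
I would feed in the global sections of \cref{hpi}, adjoining $x$ as an extra marked point: the section $a_{-1}\vac\,dz/(z-z(x))^{k}$ is holomorphic on $\cp1\setminus(\x\cup\{x\})$ and regular at $\8$ (for $k=1$ this uses the $\sv$-correction in the transition functions, exactly as in \cref{hpi}), its expansion at $x$ in the coordinate $t_x$ is precisely the single creation mode $a\ox t_x^{-k}$, and its expansions at each $x_i$ are power series in $t_{x_i}$ whose coefficients are rational functions of $z(x)$ with poles only at $z(x)=z(x_i)$. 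Applying the displayed identity with $X=a\ox t_x^{-k}\cdot X'$ therefore expresses $\varphi([v\ox X])$ as a finite $\CC$-combination, with coefficients rational in $z(x)$ and regular away from $\x$, of terms $\varphi([(a\ox t_{x_i}^{m}\cdot v)\ox X'])$ in which the state inserted at $x$ has strictly smaller grade; smoothness of the $\M_i$ guarantees that only finitely many such terms are nonzero. Inducting on the grade, with base case $X=\vac$ giving the constant function $\varphi([v])$, shows that $x\mapsto\varphi([v\ox\sigma|_x])$ lies in $\K(U)$ with poles at most at $\x$, uniformly over $v$ and $\varphi$; this is precisely the assertion that the image lies in $\End(H)\ox\K(U)$.

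The main obstacle is purely one of bookkeeping rather than of depth. Concretely, one must confirm that each application of a global section strictly lowers the grade of the inserted state (so that the recursion terminates) while introducing only coefficients that are rational in $z(x)$ with the claimed pole locus, and that the central ($\vac$-valued, residue) pieces of the localizations $A|_x,A|_{x_i}$, together with the $\sv$-dependent corrections needed for regularity at $\8$, are absorbed exactly as in the proof of \cref{hpi} without creating spurious poles. Independence of the choice of coordinate $z$ and of the auxiliary point $\8$ is automatic, since the underlying construction $X\mapsto([v]\mapsto[v\ox X])$ was already shown to be intrinsic; the content of the explicit calculation is thus the routine but lengthy tracking of the recursion.
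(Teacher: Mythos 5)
Your proposal is correct and takes essentially the same route as the paper: the pointwise statement is the preceding proposition, and the meromorphic dependence on $x$ is established by exactly the Ward-identity/residue-swapping recursion that the paper carries out explicitly (for the one-dimensional modules $\CC v_{\chi_i}$) immediately after the corollary, using the global sections $a_{-1}\vac\, dz/(z-z(x))^n$ of \cref{hpi} with $x$ adjoined as an extra marked point. (One minor inaccuracy: for every $k\geq 1$ the section $a_{-1}\vac\, dz/(z-z(x))^{k}$ is already regular at $\8$ with no central $\sv$-correction --- it is only the zero-mode sections $a_{-1}\vac\, dz$ that require one --- so your parenthetical about the $k=1$ case is superfluous, though harmless.)
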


Now we specialise back to the case $\M_i = \CC v_{\chi_i}$ as above. 
The dimension of the space of coinvariants is then one, and the linear map is just a rescaling by a complex factor.
So we get a map $ \Gamma(U,\Pisv) \to \CC\ox \K(U) = \K(U)$. This map will also depend on our choice of $\chi\in \conn\sv$. Let us compute this map explicitly, working with the global coordinate 
\be z : U_0 \isom \CC\nn\ee 
and the corresponding identifications of $\Hsv_x$ with $\hh^\eps$ and $\Pisv_x$ with $\pi^\eps_0$ for all $x\in U_0 = \cp1\setminus \{\8\}$. Since, by definition of coinvariants,
\be 0 = \left[ \left( \frac{b_{j}}{(z-z(x))^n} \right) \on ( v\ox X) \right] \nn\ee
for all $n\geq 1$, we have
\begin{align} [v \ox b_{j,-n} X] 
&= - [v\ox X] \sum_{i=1}^N \res_{x_i}  \la \chi_i, \iota_{x_i} \frac{b_{j}}{(z-z(x))^n}\ra\nn\\
&= -[v\ox X] \frac 1 {(n-1)!} \left(\frac{\del}{\del z(x)}\right)^{n-1}  \sum_{i=1}^N \res_{x_i}  \la \chi_i, \iota_{x_i} \frac{b_{j}}{z-z(x)}\ra\nn\\
&= [v\ox X] \frac 1 {(n-1)!} \left(\frac{\del}{\del z(x)}\right)^{n-1} \sum_{i=1}^N \sum_{k=0}^\8  \frac{ \res_{x_i}   \la \chi_i, b_{j} (z-z(x_i))^k \ra}{(z(x) - z(x_i))^{k+1}}.
 \nn\end{align}
Recall that we are assuming that $\bm\chi = (\chi_i)_{i=1}^N$ are the restrictions of a global section in $\conn\sv(\cp1)_\x$, given, in the trivialization of $\conn\sv(U_0)_\x$ defined  by the $z$ coordinate, by a meromorphic $\h^*$-valued function $\chi(z)$ as in \cref{chiform}. So $ \res_{x_i}\chi_i  (z-z(x_i))^k = \chi_{i,k}$, and we have established that
\begin{align} [v \ox b_{j,-n} \atp z X] 
&= [v\ox X] \la \chi^{(n)}(z), b_j \ra,
 \label{bswap}\end{align}
where we adopt the shorthand $\chi^{(n)}(z) := \frac 1 {(n-1)!} \left(\frac{\del}{\del z}\right)^{n-1} \chi(z)$. 
Consider now an arbitrary section of $\Gamma(U_0,\Pisv)_\x$: such a section takes the form, in the trivialization coming from the coordinate $z$,
\be  \sum_{m=0}^M f^{j_1,\dots,j_m}_{n_1,\dots,n_m}(z) b_{j_1,-n_1}\dots b_{j_m,-n_m} \vac\nn\ee
where the functions $f^{j_1,\dots,j_m}_{n_1,\dots,n_m}(z)$ are meromorphic (and we continue to employ summation convention on the indices $j_k$). 
We get
\begin{align} &\sum_{m=0}^M f^{j_1,\dots,j_m}_{n_1,\dots,n_m}(z) [v \ox b_{j_1,-n_1}\dots b_{j_m,-n_m}\atp z \vac ] \nn\\
= &[v] \sum_{m=0}^M f^{j_1,\dots,j_m}_{n_1,\dots,n_m}(z) \la \chi^{(n_1)}(z), b_{j_1} \ra\dots\la \chi^{(n_m)}(z), b_{j_m} \ra .
 \nn\end{align}
(Here, to stress the point, both $f^{j_1,\dots,j_m}_{n_1,\dots,n_m}(z)$ and $\la \chi^{(n_1)}(z), b_{j_1} \ra$ have non-trivial transformation properties under changes in coordinate, but the expression above is by construction a function.)

We have established the following.
\begin{prop} \label{secpair}
There is a well-defined map, $\K(U)$-linear in the first slot,
\be \coinv : \Gamma(U,\Pisv)_\x \times \conn\sv(\cp1)_\x \to \K(U)_\x; \qquad (\sigma,\chi) \mapsto \coinv_\chi(\sigma)  \nn\ee
defined by 
\be [v_{\bm \chi} \ox \sigma|_x] =   [v_{\bm \chi}]\,\, \coinv_\chi(\sigma)|_x ,\nn\ee 
and given explicitly in the global coordinate $z:U_0\isom \CC$ by 
\begin{align}
&\coinv_\chi\left(\sum_{m=0}^M f^{j_1,\dots,j_m}_{n_1,\dots,n_m}(z) b_{j_1,-n_1}\dots b_{j_m,-n_m} \vac\right)
\nn\\&
= \sum_{m=0}^M f^{j_1,\dots,j_m}_{n_1,\dots,n_m}(z) \la \chi^{(n_1)}(z), b_{j_1} \ra\dots\la \chi^{(n_m)}(z), b_{j_m} \ra .\nn\end{align}\qed
\end{prop}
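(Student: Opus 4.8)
The plan is to assemble the results of the preceding subsection. We specialise to the case $\M_i = \CC v_{\chi_i}$, where the germs $\chi_i$ are the restrictions of a single global section $\chi\in\conn\sv(\cp1)_\x$. By \cref{cor: coinv} the space of coinvariants $H(\cp1,(x_i),(\CC v_{\chi_i}))_{i=1}^N$ is then one-dimensional, spanned by the class $[v_{\bm\chi}]$. The preceding proposition and its corollary produce, for any section $\sigma\in\Gamma(U,\Pisv)_\x$, a well-defined element of $\End(H)\ox\K(U)$ depending meromorphically on the insertion point $x\in U\setminus\x$; since $\dim_\CC H = 1$ there is a canonical identification $\End(H)\cong\CC$, and composing with it turns this element into a meromorphic function on $U$. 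This function is what we call $\coinv_\chi(\sigma)$, and it is characterised by the displayed relation $[v_{\bm\chi}\ox\sigma|_x] = [v_{\bm\chi}]\,\coinv_\chi(\sigma)|_x$. The well-definedness, in particular the independence of the choice of local trivialisation and the meromorphic dependence on $x$, is precisely the content of that corollary, so nothing further is required here.

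It then remains to extract the explicit formula. First I would recall the key relation \cref{bswap}: for every $X\in\Pisv_x$, the class of $b_{j,-n}X$ inserted at $x$ equals $[v\ox X]\,\la\chi^{(n)}(z),b_j\ra$, where $z=z(x)$. This is obtained by letting the global section $b_j/(z-z(x))^n$, which lies in $\Hsv(\cp1)_{\x\cup\{x\}}$ and hence annihilates the coinvariant, act on $v\ox X$ and then distributing its action between the factors. Applying \cref{bswap} to a monomial $b_{j_1,-n_1}\cdots b_{j_m,-n_m}\vac$, one peels off the negative modes one at a time: each step contributes a factor $\la\chi^{(n_k)}(z),b_{j_k}\ra$ and leaves a shorter monomial inserted at $x$. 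Iterating down to $\vac$, which pairs to the generator $[v_{\bm\chi}]$ itself, produces the product $\prod_k \la\chi^{(n_k)}(z),b_{j_k}\ra$. Extending over a general section $\sum_m f^{j_1,\dots,j_m}_{n_1,\dots,n_m}(z)\,b_{j_1,-n_1}\cdots b_{j_m,-n_m}\vac$ by linearity in the meromorphic coefficient functions reproduces the stated formula. The asserted $\K(U)$-linearity in the first slot is then immediate, since multiplying $\sigma$ by $g\in\K(U)$ multiplies $\sigma|_x$ by the scalar $g(x)$ and the insertion $[v_{\bm\chi}\ox\,\cdot\,]$ is linear on each fibre.

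The one point that demands care, and which I regard as the main obstacle, is the legitimacy of the iterated use of \cref{bswap}. Because the cocycle \cref{hcom} pairs only modes of opposite sign, the negative modes $b_{j,-n}$ with $n>0$ commute amongst themselves; hence the monomial $b_{j_1,-n_1}\cdots b_{j_m,-n_m}\vac$ is unambiguously defined, the order in which modes are peeled off is irrelevant, and no central correction terms intervene. One should also confirm that the section $b_j/(z-z(x))^n$ invoked in deriving \cref{bswap} does represent an element of $\Hsv(\cp1)_{\x\cup\{x\}}$ that kills the coinvariant; this follows from \cref{hpi} together with the residue theorem, using the hypothesis, recorded in \cref{chiform}, that $\chi$ extends to a genuine global section. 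Once these consistency checks are in place, the explicit computation above goes through verbatim and the proposition follows.
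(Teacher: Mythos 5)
Your proposal is correct and follows essentially the same route as the paper: one-dimensionality of the coinvariants from \cref{cor: coinv}, the insertion map from the preceding proposition and its corollary, the swap relation \cref{bswap} obtained by letting the global section $b_j\,dz/(z-z(x))^n$ annihilate the coinvariant and distributing its action over the tensor factors, and then iteration plus $\K(U)$-linearity to get the explicit product formula. Your added consistency checks (commutativity of the negative modes, membership of $b_j\,dz/(z-z(x))^n$ in $\Hsv(\cp1)_{\x\cup\{x\}}$ via \cref{hpi}) are sound and merely make explicit what the paper takes for granted.
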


\section{The bundle $\Pic$ and the sheaf $\conn{-\chweyl}$}\label{sec: tto}
After the general discussion in the previous section about coinvariants of $\hh$-modules on the Riemann sphere, we can now specialise again to our real case of interest and prove the main global results, \cref{act} and \cref{tkIntro}, from the introduction.  

Namely we suppose that $\h$ is the Cartan subalgebra of a Kac-Moody algebra $\g$ of affine type; we go to the $\eps\to 0$ classical limit $\picc_0$ of $\pi^\eps_0$; and we choose the conformal structure coming from the choice $\sv = - \chweyl$.
 
To de-clutter the notation, let us write
\be \Pic := {}^{-\chweyl}\Pi^0 \nn\ee
for the bundle $\Pisv$ with this choice, $\sv = - \chweyl$, in the limit $\eps\to 0$. We also write, by a slight abuse of notation,
\be \conn{-\chweyl} := \conn{-\chweyl} \nn\ee 
for the sheaf defined in \cref{sec: conndef}. (In the next section, in \cref{mopconn}, we shall see that this sheaf is isomorphic to the sheaf of Miura opers.)

We shall show that the bundles $\Pi\ox \Omega^j$ admit a one-parameter family of flat connections, defined using our local notion of canonical translation from \cref{sec: caf} above; then we check that the map $\coinv_\chi$ behaves well with respect to these connections. 

\subsection{From  $\conn{-\chweyl}$ to affine connections}\label{sec: conrho}
By definition, \cref{chitrans}, $\conn{-\chweyl}$ is the sheaf of meromorphic sections of the vector bundle with fibre isomorphic to $\h^*\cong\h$ and transition functions given by
\be \chi(t) \mapsto \tilde \chi(s)= \mu'(s)\chi(\mu(s))  +  \frac{\mu''(s)}{\mu'(s)} \chweyl\label{conrhotrans}\ee
between the trivializations $\chi(t)$ and $\tilde\chi(s)$ coming from local holomorphic coordinates $t$ and $s$ respectively, with $t=\mu(s)$. 
We continue to write $\hr$ for the subspace of $\h$ spanned by the simple (co)roots:
\be \hr:= \bigoplus_{i\in I} \CC \chal_i.\nn\ee
Then we have the direct sum decomposition 
\be \h \cong \hr\oplus \CC\chweyl \nn\ee
of $\h$, and the corresponding decomposition of our section $\chi(t)$ of $\conn{\chweyl}$:
\be \chi(t) = \sum_{i\in I} \chi_i(t) \chal_i + \frac{\twist(t)}{\coxeter} \chweyl,\nn\ee
where
$\chi_i(t) :=  \la \chi(t),\La_i \ra$ and $\twist(t) := \la \chi(t), \chcent\ra$.
From \cref{conrhotrans} we see that $\chi_i(t)$ transforms like a section of $\Omega$ for each $i$, while $-\twist(t)/\coxeter$ transforms as follows:
\be -\frac{\twist(t)}{\coxeter} \mapsto -\frac{\tilde \twist(s)}\coxeter =  -\mu'(s)\frac{\twist(\mu(s))}\coxeter  -  \frac{\mu''(s)}{\mu'(s)}.\label{twisttrans}\ee
This is\footnote{Indeed, consider a section $f(t) dt = \tilde f(s) ds$ of $\Omega$. Here $\tilde f(s) = f(t) \mu'(s)$. Its derivative should be a section of $\Omega\ox \Omega$ so we must have $\tilde f'(s) + \tilde A(s) \tilde f(s) = (f'(t)) + A(t) f(t)) \mu'(s)^{2}$, whereas in fact
$\tilde f'(s) + \tilde A(s) \tilde f(s) 
= \del_s\big( f(t) \mu'(s)\big) + \tilde A(s) f(t) \mu'(s)
= f'(t) \mu'(s)^{2} + f(t) \mu''(s)+ \tilde A(s) f(t) \mu'(s)$.
On comparing the two, we obtain the transformation rule for the component $A(t)$ of the connection given in \cref{twisttrans}.} the transformation property of the component of a connection
\be \Gamma(U, \Omega) \to \Gamma(U, \Omega \ox \Omega) ,\qquad
 f(t) dt \mapsto \left(f'(t)-\frac 1 \coxeter\twist(t)f(t)\right) dt. \nn\ee
on the canonical bundle $\Omega$.
Let $U\mapsto \Conn(U,\Omega)$ denote the sheaf of meromorphic connections on $\Omega$, i.e. of meromorphic \emph{affine connections}. 
We have established the following.
\begin{lem} There is an isomorphism
\be \conn{-\chweyl}(U)\,\, \cong\,\, \hr \ox\Gamma(U, \Omega)\,\, \oplus\,\, \Conn(U,\Omega). \nn\ee
\qed\end{lem}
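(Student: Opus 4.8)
The plan is to realise the claimed splitting as the explicit decomposition of $\chi$ along $\h=\hr\oplus\CC\chweyl$ that has already been written down, and to read off from the transition rule \cref{conrhotrans} that the two pieces transform, respectively, as an $\hr$-valued section of $\Omega$ and as an affine connection. Concretely, I would send a section $\chi\in\conn{-\chweyl}(U)$, written in a local coordinate $t$ as $\chi(t)=\sum_{i\in I}\chi_i(t)\chal_i+\tfrac{1}{\coxeter}\twist(t)\chweyl$ with $\chi_i(t)=\la\chi(t),\La_i\ra$ and $\twist(t)=\la\chi(t),\chcent\ra$, to the pair consisting of $\sum_{i\in I}\chi_i(t)\chal_i\in\hr\ox\Gamma(U,\Omega)$ and the affine connection $\nabla_\chi\in\Conn(U,\Omega)$ whose coefficient in the coordinate $t$ is $-\twist(t)/\coxeter$, i.e.\ $f(t)\,dt\mapsto(f'(t)-\tfrac{1}{\coxeter}\twist(t)f(t))\,dt$.

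First I would check coordinate-independence of each component. The crucial inputs are the Cartan-data identities of \cref{sec: cartan}: $\la\La_i,\chweyl\ra=0$ and $\la\La_i,\chal_j\ra=\delta_{ij}$, together with $\la\chcent,\chal_j\ra=0$ and $\la\chcent,\chweyl\ra=\coxeter$. They ensure that the inhomogeneous term $\tfrac{\mu''}{\mu'}\chweyl$ appearing in \cref{conrhotrans} is annihilated by the functionals $\la\cdot,\La_i\ra$ and is detected, with coefficient exactly $\coxeter$, only by $\la\cdot,\chcent\ra$. Pairing \cref{conrhotrans} with $\La_i$ thus gives $\tilde\chi_i(s)=\mu'(s)\chi_i(\mu(s))$, the transformation law of a section of $\Omega$, so $\sum_i\chi_i\chal_i$ is a well-defined section of $\hr\ox\Omega$ over $U$. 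Pairing with $\chcent$ reproduces \cref{twisttrans} for $-\twist/\coxeter$, which I would match against the transformation law $\tilde A(s)=\mu'(s)A(\mu(s))-\mu''(s)/\mu'(s)$ of the coefficient of an affine connection recorded just above; since they coincide, $\nabla_\chi$ is a well-defined element of $\Conn(U,\Omega)$.

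Finally I would produce the inverse map, sending a pair $\big((c_i)_{i\in I},\nabla\big)$ with $c_i\in\Gamma(U,\Omega)$ and $\nabla$ of local coefficient $A(t)$ to the section $\chi(t):=\sum_{i\in I}c_i(t)\chal_i-A(t)\chweyl$, and verify directly from the same two transformation laws that this $\chi$ satisfies \cref{conrhotrans}, hence lies in $\conn{-\chweyl}(U)$; the two assignments are visibly mutually inverse. Since the construction commutes with restriction to smaller opens, this bijection is natural in $U$ and so upgrades to the asserted isomorphism of sheaves. I do not expect a serious obstacle here: the only delicate point is the bookkeeping that the inhomogeneous part of \cref{conrhotrans} lies entirely along $\chweyl$, which is what decouples the genuine vector bundle summand $\hr\ox\Omega$ from the affine bundle $\Conn(\cdot,\Omega)$ — and this decoupling is precisely the content of the orthogonality relations among $\La_i$, $\chcent$ and $\chweyl$.
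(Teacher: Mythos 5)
Your argument is correct and follows essentially the same route as the paper: decompose $\chi$ along $\h\cong\hr\oplus\CC\chweyl$ via the functionals $\La_i$ and $\chcent$, and read off from \cref{conrhotrans} that the $\chal_i$-components transform as sections of $\Omega$ while $-\twist/\coxeter$ transforms as the component of an affine connection. The paper leaves the inverse map and naturality implicit, so your extra verification is harmless but not a different method.
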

In particular, we have a surjection
\be \conn{-\chweyl}(\cp1)_\x \onto \Conn(\cp1,\Omega)_\x; \quad 
                     \chi \mapsto \Tkn_\chi \label{tknchidef}\ee
which takes a global section $\chi$ of $\conn{-\chweyl}(\cp1)_\x$ and produces a meromorphic affine connection (holomorphic away from the marked points $\x$) which we denote by $\Tkn_\chi$. 
It is easy to check that if $-\twist(z)/\coxeter$ is the component of a connection on $\Omega$ then $-j\twist(z)/\coxeter$ is the component of a connection on the $j$th tensor power $\Omega^j:= \Omega^{\ox j}$. In this way, the affine connection $\Tkn_\chi$ allows us to differentiate sections of $\Omega^j$ for all $j$.  

Now we will introduce an ``operator'' version $\Tkn$ of this connection $\Tkn_\chi$, which will not depend on a choice of $\chi$. 

\subsection{Connections on $\Pic\ox \Omega^j$} \label{sec: con}
Recall the canonical translation operator $\Tk$ of \cref{sec: tkdef}. 

\begin{thm} Let $\alpha\in \CC$. 
\begin{enumerate}[(i)]
\item
There is a well-defined flat holomorphic 
connection $\Gamma(\cdot,\Pic) \to \Gamma(\cdot,\Pic \ox \Omega)$ on $\Pic$ given by
\be  \sigma(t) \mapsto \left(\sigma'(t) + (L_{-1}- \alpha \Tk) \sigma(t)\right) dt .\nn\ee
\item\label{tkndef} More generally, 
 there is a flat holomorphic connection 
\be\Gamma(\cdot, \Pic \ox \Omega^j) \to \Gamma(\cdot, \Pic\ox \Omega^{j+1})\nn\ee
on $\Pic\ox\Omega^j$ for each integer $j$, given by 
\be \sigma(t) dt^j \mapsto \left(\sigma'(t) + (L_{-1} - \alpha\Tk - j \frac{\chcent_{-1}}\coxeter ) \sigma(t)\right) dt^{j+1}. \nn\ee
\end{enumerate}
\end{thm}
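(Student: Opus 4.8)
The plan is to reduce both statements to the well-definedness criterion of \cref{conlem} (which, together with \cref{lem: nab} and \cref{affcor}, applies verbatim to $\picc_0$ in the $\eps\to0$ limit), adapted to the twist by $\Omega^j$. First I would observe that on a curve any holomorphic connection is automatically flat, its curvature being a $2$-form in a single complex variable; so only well-definedness — compatibility with the transition functions of $\Pic\ox\Omega^j$ — must be checked. The transition function of $\Pic\ox\Omega^j$ between the $t=\mu(s)$ and the $s$ trivializations is $R(\mu_s)$ composed with the scalar $\mu'(s)^j$ coming from $\Omega^j$. Repeating the computation in the proof of \cref{conlem} with this extra factor, I expect the map $\sigma(t)dt^j\mapsto(\sigma'(t)+A\sigma(t))dt^{j+1}$ to be well defined if and only if
\[ R(\mu_s)^{-1}\bigl(\del_s R(\mu_s)\bigr)+R(\mu_s)^{-1}AR(\mu_s)+j\,\frac{\mu''(s)}{\mu'(s)}=\mu'(s)\,A \]
for all $\mu\in\Aut\O$, where $\mu_s(x):=\mu(x+s)-\mu(s)$. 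This is precisely the condition of \cref{conlem} together with a single inhomogeneous scalar term $j\,\mu''(s)/\mu'(s)$ produced by the $\Omega^j$ factor; the $j=0$ case recovers \cref{conlem} exactly.

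For part (i) I take $A=L_{-1}-\alpha\Tk$ and $j=0$. Since \cref{lem: nab} already supplies $L_{-1}=T$ as a solution of the $j=0$ condition, I would apply \cref{affcor} with $B=-\alpha\Tk$, so it suffices to verify $R(\mu)^{-1}\Tk R(\mu)=\mu'(0)\Tk$. Writing $R(\mu)=\exp\!\bigl(-\sum_{n>0}v_nL_n\bigr)v_0^{-L_0}$ with $v_0=\mu'(0)$, the factor $\exp(-\sum_{n>0}v_nL_n)$ commutes with $\Tk$ because $[L_n,\Tk]=0$ for every $n\geq1$ (\cref{lem: Lj}), while $v_0^{L_0}\Tk v_0^{-L_0}=v_0\Tk$, again by \cref{lem: Lj}. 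Hence $R(\mu)^{-1}\Tk R(\mu)=v_0^{L_0}\Tk v_0^{-L_0}=\mu'(0)\Tk$, and part (i) follows from \cref{affcor}.

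For part (ii) I set $A=A_j:=L_{-1}-\alpha\Tk-j\,\chcent_{-1}/\coxeter$ and subtract the part-(i) contribution. Using part (i) for the $L_{-1}-\alpha\Tk$ piece, the displayed condition becomes equivalent to the single operator identity
\[ R(\mu_s)^{-1}\chcent_{-1}R(\mu_s)=\mu'(s)\,\chcent_{-1}+\coxeter\,\frac{\mu''(s)}{\mu'(s)}\,\id. \]
The key observation is that $R(\mu)$ is an \emph{algebra} automorphism of the commutative product on $\picc_0$, being a product of exponentials of the operators $L_n$ with $n\geq0$, which act as derivations of the commutative product (the non-negative vertex-Lie products are derivations, \cref{sec: cls}). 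Since $\chcent_{-1}$ acts on $\picc_0\cong\CC[b_{i,n}]_{n<0}$ as multiplication by the element $\chcent_{-1}\vac$, its conjugate $R(\mu_s)^{-1}\chcent_{-1}R(\mu_s)$ is multiplication by $R(\mu_s)^{-1}(\chcent_{-1}\vac)$, which reduces the identity to a computation on the grade-$1$ subspace. Inverting the action $\chcent_{-1}\vac\mapsto\mu'(s)^{-1}\chcent_{-1}\vac-\coxeter\,\mu''(s)\mu'(s)^{-2}\vac$, $\vac\mapsto\vac$ — which is \cref{btrans} for $\sv=-\chweyl$, using $\bilin{-\chweyl}{\chcent}=-\coxeter$ — gives $R(\mu_s)^{-1}(\chcent_{-1}\vac)=\mu'(s)\chcent_{-1}\vac+\coxeter\,\mu''(s)\mu'(s)^{-1}\vac$, which establishes the identity.

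Substituting this into the displayed condition, the anomalous term $-\tfrac{j}{\coxeter}\cdot\coxeter\,\mu''(s)/\mu'(s)=-j\,\mu''(s)/\mu'(s)$ carried by $\chcent_{-1}$ exactly cancels the $+j\,\mu''(s)/\mu'(s)$ produced by the $\Omega^j$ twist, while the homogeneous parts match by linearity; this verifies the condition for $A_j$ and completes part (ii). I expect the main obstacle to be getting the twisted compatibility condition right — specifically, tracking the inhomogeneous anomaly term introduced by $\Omega^j$ and recognizing that it is cancelled precisely by the connection-like transformation law of the multiplication operator $\chcent_{-1}$. This is exactly why the affine-connection component $\chcent_{-1}\vac$ (which is \emph{not} a conformal primary) rather than a primary state is the correct object to twist by.
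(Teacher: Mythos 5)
Your proposal is correct and follows essentially the same route as the paper: part (i) is exactly the paper's argument via \cref{affcor} together with \cref{lem: Lj} (noting $v_0=\mu'(0)$), and part (ii) amounts to the same computation, since your ``twisted'' well-definedness criterion with the inhomogeneous term $j\,\mu''(s)/\mu'(s)$ is precisely what the paper obtains by comparing the derivative in the $t$- and $s$-trivializations, with the cancellation coming from the conjugation formula for $\chcent_{-1}$. Your justification of that formula --- $R(\mu)$ is an automorphism of the commutative product, so conjugating the multiplication operator $\chcent_{-1}$ reduces to \cref{btrans} on the grade-one subspace --- is a slightly cleaner packaging of the paper's direct appeal to \cref{btrans2} (with the remaining non-negative modes acting as zero on $\picc_0$), but it is the same substance.
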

\begin{proof}
In \cref{sec: con} we described the affine space of flat holomorphic connections on $\Pisv$ and hence on $\Pic$. In view of the formula \cref{Rsv} for $R(\mu)$, we see that if $\mu\in \Aut\O$ then $v_0 = \mu'(0)$.  So \cref{lem: Lj} and \cref{affcor} together yield part (i).

Consider part (ii). Let $\tilde \sigma(s)$ be the section $\sigma(t)$ in the $s$ coordinate. We know from part (i) that $\sigma'(t)dt + (L_{-1} - \alpha\Tk) \sigma(t) dt$ is a well-defined section of $\Pic \ox \Omega$ given, in the $s$ coordinate, by $\tilde\sigma'(s)ds + (L_{-1} - \alpha\Tk) \tilde \sigma(s)ds$. 
Hence the would-be derivative, 
\begin{align} &\left(\sigma'(t) + (L_{-1} - \alpha\Tk - j \frac{\chcent_{-1}}\coxeter ) \sigma(t)\right) dt^{j+1}\nn\\
 &= \left( \sigma'(t)dt + (L_{-1} - \alpha\Tk) \sigma(t) dt \right) dt^j - j \frac{\chcent_{-1}}\coxeter \sigma(t) dt^{j+1} \nn\end{align}
is given in the $s$ coordinate by
\begin{align}
 & \left( \tilde\sigma'(s)ds + (L_{-1} - \alpha\Tk) \tilde \sigma(s)ds \right) \mu'(s)^j ds^j
      - j  \left(\frac{\chcent_{-1}}{\coxeter \mu'(s)} - \frac{\mu''(s)}{\mu'(s)^2} \right) \tilde\sigma(s) \mu'(s)^{j+1} ds^{j+1} \nn.
\end{align}
Here we used the fact that $R(\mu_s) \chcent_{-1} R(\mu_s)^{-1} = \frac{\chcent_{-1}}{\coxeter \mu'(s)} - \frac{\mu''(s)}{\mu'(s)^2} + \dots$ where $\dots$ are terms with non-negative modes of $\chcent$; these act as zero on $\picc_0$. 
On the other hand, in the $s$ coordinate, $\sigma(t) dt^j$ becomes $\tilde \sigma(s) \mu'(s)^j ds^j$ and its derivative should, therefore, be
\begin{align}
\left(\tilde\sigma'(s)\mu'(s)^j  + j \tilde\sigma(s)\mu'(s)^{j-1} \mu''(s) 
+ (L_{-1} - \alpha\Tk - j \frac{\chcent_{-1}}\coxeter ) \tilde \sigma(s) \mu'(s)^j \right) ds^{j+1}.
\nn\end{align}
The expressions above agree, which completes the proof of part (ii). 
\end{proof}

Define $\Tkn$ to be the flat holomorphic connection obtained by taking $\alpha = 0$ in part (ii) of the theorem. That is, $\Tkn$ is the connection
\be \Tkn: \Gamma(\cdot,\Pic\ox\Omega^j) \to \Gamma(\cdot,\Pic \ox \Omega^{j+1}) \nn\ee
given by
\begin{align} \Tkn \sigma(t) dt^j &:= \left(\sigma'(t) + \left(L_{-1} - j \frac{\chcent_{-1}}\coxeter\right) \sigma(t)\right) dt^{j+1} .\nn\end{align}
Equivalently $\Tkn \sigma = \nabt \sigma - j \frac{\chcent_{-1}}\coxeter \sigma dt$
where $\nabt$ is the connection from \cref{lem: nab}. 

\subsection{Functoriality}
By \cref{secpair} we have, for each $\chi \in \conn{-\chweyl}(\cp1)_\x$, $\K(U)$-linear maps
\be \coinv_\chi: \Gamma(U,\Pic)_\x \to \K(U)_\x  \nn\ee
and hence for all $j\in \ZZ$,
\be \coinv_\chi: \Gamma(U,\Pic\ox \Omega^j)_\x \to \Gamma(U,\Omega^j)_\x . \nn\ee
Recall the definition of $\Tkn_\chi$ from \cref{tknchidef}.

\begin{thm} \label{functhm}
For any section $\sigma\in \Gamma(U,\Pic\ox \Omega^j)$, $j\in \ZZ$, and for any connection $\chi\in \conn{-\chweyl}(\cp1)_\x$,
\be \coinv_\chi \left(\Tkn \sigma\right)  = \Tkn_\chi \coinv_\chi\left(\sigma\right). \nn\ee
\end{thm}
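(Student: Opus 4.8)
The plan is to work throughout in the global coordinate $z:U_0\isom\CC$, in which $\coinv_\chi$ is given by the explicit product formula of \cref{secpair}. Since both $\Tkn$ and $\Tkn_\chi$ are intrinsic and $\coinv_\chi(\sigma)$ is a genuine section of $\Omega^{j+1}$, it is enough to verify the asserted identity after trivializing by $z$; the tensor factor $\Omega^j$ is merely carried along as a power $dz^j$, so that the content is an equality of meromorphic functions. The key structural observation, which I would record first, is that on the fibre $\coinv_\chi$ is the unique $\K(U)$-algebra homomorphism from $\picc_0\ox\K(U)$ to the field of meromorphic functions determined by $\vac\mapsto 1$ and $b_{i,-m}\mapsto\la\chi^{(m)}(z),b_i\ra$ for $m\geq 1$. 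This is exactly what \cref{secpair} (derived from \eqref{bswap}) says, and all three pieces of the computation below rest on it. Writing a section in the $z$-trivialization as $\sigma=\sigma_0(z)\,dz^j$ with $\sigma_0(z)\in\picc_0\ox\K(U)$, we have by definition $\Tkn\sigma=\big(\sigma_0'(z)+(L_{-1}-\tfrac{j}{\coxeter}\chcent_{-1})\sigma_0(z)\big)\,dz^{j+1}$.

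Next I would treat the ``$\nabt$ part'', namely $\sigma_0'(z)+L_{-1}\sigma_0(z)$, and show that $\coinv_\chi$ converts it into the honest derivative $\del_z\coinv_\chi(\sigma_0)$. The coefficient-derivative $\sigma_0'(z)$ passes through $\coinv_\chi$ by $\K(U)$-linearity and lands on the coefficient functions. For the fibre operator $L_{-1}=T$, which acts on $\picc_0$ as the derivation $\del$ with $\del b_{i,-m}=m\,b_{i,-m-1}$, I would verify the matching on a single generator via the elementary identity $\del_z\chi^{(m)}(z)=m\,\chi^{(m+1)}(z)$ (immediate from $\chi^{(m)}=\tfrac{1}{(m-1)!}\del_z^{m-1}\chi$), so that $\coinv_\chi(L_{-1}b_{i,-m}\vac)=m\la\chi^{(m+1)},b_i\ra=\del_z\la\chi^{(m)},b_i\ra=\del_z\coinv_\chi(b_{i,-m}\vac)$. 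Because $L_{-1}$ is a derivation and $\coinv_\chi$ a homomorphism, this extends from generators to arbitrary monomials by the Leibniz rule, giving $\coinv_\chi\circ L_{-1}=\del_z\circ\coinv_\chi$ on the fibre; combined with the coefficient derivative this yields $\coinv_\chi(\sigma_0'+L_{-1}\sigma_0)=\del_z\coinv_\chi(\sigma_0)$.

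It then remains to handle the extra term $-\tfrac{j}{\coxeter}\chcent_{-1}\sigma_0$. Here $\chcent_{-1}$ acts on $\picc_0$ as a creation operator, i.e.\ by multiplication by $\chcent_{-1}\vac$, so the homomorphism property gives $\coinv_\chi(\chcent_{-1}\sigma_0)=\coinv_\chi(\chcent_{-1}\vac)\,\coinv_\chi(\sigma_0)$. Expanding $\chcent$ in the basis $\{b_i\}$ and using $\chi^{(1)}=\chi$, one computes $\coinv_\chi(\chcent_{-1}\vac)=\la\chi(z),\chcent\ra=\twist(z)$, which is precisely the function appearing in the component of $\Tkn_\chi$ (see \cref{sec: conrho}, where $-\tfrac{j}{\coxeter}\twist$ is the connection coefficient on $\Omega^j$). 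Assembling the three pieces gives
\begin{align}
\coinv_\chi(\Tkn\sigma)
&=\Big(\del_z\coinv_\chi(\sigma_0)-\tfrac{j}{\coxeter}\twist(z)\,\coinv_\chi(\sigma_0)\Big)dz^{j+1}\nn\\
&=\Tkn_\chi\,\coinv_\chi(\sigma),\nn
\end{align}
as required; the induced map of cohomologies then follows formally, since $\coinv_\chi$ carries $\Tkn$-exact sections to $\Tkn_\chi$-exact ones.

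The computation is essentially bookkeeping once the homomorphism picture is in place, and the only genuinely substantive point --- the place I would be most careful --- is the translation step: verifying that $\coinv_\chi$ intertwines the fibre operator $L_{-1}$ with the ordinary derivative $\del_z$. This rests on the compatibility of the derivation $\del\,b_{i,-m}=m\,b_{i,-m-1}$ on $\picc_0$ with the identity $\del_z\chi^{(m)}=m\,\chi^{(m+1)}$, together with the fact that both $L_{-1}$ and $\del_z$ act as derivations compatibly with the algebra homomorphism $\coinv_\chi$, so that checking on generators suffices.
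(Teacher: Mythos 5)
Your proposal is correct and follows essentially the same route as the paper: work in the global coordinate $z$, split $\Tkn$ as $\nabt - \frac{j}{\coxeter}\chcent_{-1}$, and use the explicit product formula of \cref{secpair} to see that $\coinv_\chi$ intertwines $\nabt$ with $\del_z$ and converts $\chcent_{-1}$ into multiplication by $\twist(z)$. The only difference is that you spell out the generator check $\coinv_\chi(L_{-1}b_{i,-m}\vac)=m\la\chi^{(m+1)},b_i\ra=\del_z\la\chi^{(m)},b_i\ra$ and the Leibniz argument, which the paper leaves implicit by simply citing its explicit formula.
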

\begin{proof} 
It is enough to consider a coordinate patch $U$ such that $U \subset U_0 = \cp1\setminus \{\8\}$ or $U \subset U_\8 = \cp1 \setminus \{0\}$. Without loss of generality (by our choice of what to call $0$ and what $\8$) suppose $U\subset U_0$.

Let us work in the restriction to $U$ of the global coordinate $z:U_0\isom \CC$. 
From the explicit expression in \cref{secpair} we see that $\coinv_\chi\left(\nabt_{\del_z} \sigma\right) = \del_z\coinv_\chi(\sigma)$. At the same time, by \cref{bswap} we know that $\coinv_\chi(\chcent_{-1} v dt^j) = \la \chi(z),\chcent\ra \coinv_\chi(vdt^j) = \twist(z) \coinv_\chi(vdz^j)$. 
Hence indeed
\begin{align} \coinv_\chi(\Tkn_{\del_z} \sigma(z)dz^j ) &= \coinv_\chi(\nabt_{\del_z} \sigma(z)dz^j - \frac j\coxeter \chcent_{-1} \sigma(z)dz^j) \nn\\
&= \left(\del_z - \frac j\coxeter \twist(z)\right) \coinv_\chi(\sigma(z)dz^j) = \left(\Tkn_\chi\right)_{\del_z} \coinv_\chi(\sigma(z)dz^j).\nn\end{align} 
\end{proof}
\begin{rem}\label{flem}
In the special case $j=0$ we recover the statement that, for any section $\sigma\in \Gamma(U,\Pic)$ and for any connection $\chi\in \conn{-\chweyl}(\cp1)_\x$,
\be \coinv_\chi(\nabt \sigma) = d\coinv_\chi(\sigma) .\nn\ee
\end{rem}

Now, for $j\geq 0$, let $H^1(\cdot,\Pic\ox\Omega^j,\Tkn)$ denote the sheaf of the first de Rham cohomology of the connection $\Tkn$ with coefficients in $\Pic\ox\Omega^j$:
\be H^1(U,\Pic\ox \Omega^j, \Tkn) := \Gamma(U,\Pic \ox \Omega^j\ox \Omega) 
                     \Big/\Tkn \Gamma(U,\Pic\ox \Omega^j);\nn\ee
and let $H^1(\cdot,\Omega^j,\Tkn_\chi)$ denote the sheaf of the first de Rham cohomology of the connection $\Tkn_\chi$ with coefficients in $\Omega^j$:
\be H^1(U,\Omega^j, \Tkn) := \Gamma(U,\Omega^j\ox \Omega) 
                     \Big/\Tkn_\chi \Gamma(U, \Omega^j).\nn\ee
\begin{cor}\label{funccor}
For each $\chi\in \conn{-\chweyl}(\cp1)_\x$ we have well-defined $\K(U)$-linear map
\be [\coinv_\chi]: H^1(U,\Pic\ox \Omega^j,\Tkn)_\x \to H^1(U,\Omega^j, \Tkn_\chi)_\x . \nn\ee
\qed
\end{cor}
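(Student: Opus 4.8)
The plan is to deduce \cref{funccor} formally from \cref{functhm}: that theorem already supplies the intertwining relation $\coinv_\chi \Tkn = \Tkn_\chi \coinv_\chi$, which is exactly the assertion that $\coinv_\chi$ is a chain map between the two length-one complexes whose cokernels are the cohomologies in question. So essentially no new computation is needed; the content of the corollary is the observation that a chain map descends to cohomology, and the only labour is the bookkeeping needed to see that $\coinv_\chi$ carries the relevant subspaces into one another.

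First I would recall that, by the extension of \cref{secpair} to all tensor powers noted at the start of the Functoriality subsection, for each integer $j$ we have a $\K(U)$-linear map $\coinv_\chi : \Gamma(U,\Pic\ox\Omega^j)_\x \to \Gamma(U,\Omega^j)_\x$, and likewise one degree higher a map $\coinv_\chi : \Gamma(U,\Pic\ox\Omega^{j+1})_\x \to \Gamma(U,\Omega^{j+1})_\x$. These are precisely the maps acting on the spaces being quotiented, and on the numerators, in the definitions
\[ H^1(U,\Pic\ox\Omega^j,\Tkn)_\x = \Gamma(U,\Pic\ox\Omega^{j+1})_\x \big/ \Tkn\,\Gamma(U,\Pic\ox\Omega^j)_\x, \]
\[ H^1(U,\Omega^j,\Tkn_\chi)_\x = \Gamma(U,\Omega^{j+1})_\x \big/ \Tkn_\chi\,\Gamma(U,\Omega^j)_\x . \]

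The key step is then an immediate application of \cref{functhm}: for every $\sigma\in\Gamma(U,\Pic\ox\Omega^j)_\x$ one has $\coinv_\chi(\Tkn\sigma) = \Tkn_\chi\,\coinv_\chi(\sigma)$, so $\coinv_\chi$ maps the subspace $\Tkn\,\Gamma(U,\Pic\ox\Omega^j)_\x$ into $\Tkn_\chi\,\Gamma(U,\Omega^j)_\x$. Consequently the composite $\Gamma(U,\Pic\ox\Omega^{j+1})_\x \xrightarrow{\coinv_\chi} \Gamma(U,\Omega^{j+1})_\x \to H^1(U,\Omega^j,\Tkn_\chi)_\x$ annihilates $\Tkn\,\Gamma(U,\Pic\ox\Omega^j)_\x$ and hence factors uniquely through the quotient defining $H^1(U,\Pic\ox\Omega^j,\Tkn)_\x$; this factorization is the map $[\coinv_\chi]$. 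Its $\K(U)$-linearity is inherited, at the level of representatives, from the $\K(U)$-linearity of $\coinv_\chi$ recorded in \cref{secpair}, and the ``holomorphic away from $\x$'' condition is preserved because $\coinv_\chi$ is given fibrewise by the explicit meromorphic formula of \cref{secpair} while $\Tkn$ and $\Tkn_\chi$ introduce no poles outside $\x$.

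I do not expect any genuine obstacle here: once \cref{functhm} is in hand the corollary is purely formal. The only point requiring minor care is to confirm that $\coinv_\chi$ is available, and $\K(U)$-linear, simultaneously in degrees $j$ and $j+1$, and that the containment $\coinv_\chi\big(\Tkn\,\Gamma(U,\Pic\ox\Omega^j)_\x\big)\subseteq \Tkn_\chi\,\Gamma(U,\Omega^j)_\x$ (rather than equality) is all that the descent to cohomology requires.
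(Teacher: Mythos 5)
Your argument is correct and matches the paper's (implicit) reasoning: the paper states this corollary with no written proof, treating it as an immediate consequence of \cref{functhm}, exactly as you do by observing that the intertwining relation makes $\coinv_\chi$ a chain map so that it carries $\Tkn\,\Gamma(U,\Pic\ox\Omega^j)_\x$ into $\Tkn_\chi\,\Gamma(U,\Omega^j)_\x$ and hence descends to the quotients. Nothing further is needed.
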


\begin{rem}\label{twistrem} Recall we denote by $-\twist(t)/\coxeter :=  -\la \chi(t),\chcent\ra/\coxeter \nn$ the component, in a local holomorphic chart $t: U \to \CC$, of the affine connection $\Tkn_\chi$ defined by a global section $\chi \in \conn{-\chweyl}(\cp1)_\x$, and we write $\twist^{(n)}(t) := \frac 1 {(n-1)!} \left(\frac{\del}{\del t}\right)^{n-1} \twist(t)$. 
\Cref{functhm} implies that
\be \coinv_\chi( \chcent_{-1} \vac )(t) = \coxeter\twist(t)   \label{swaptwist},\ee 
and, more generally, we have
\be \coinv_\chi( \chcent_{-n} v  )(t) = \coxeter\twist^{(n)}(t) \coinv_\chi(v)(t)  \nn\ee 
for any $v\in \picc_0$ and any $n\geq 1$.

Let us check these statements explicitly.
We know from \cref{bswap} that these statements hold in the restriction to $U$ of the global coordinate $z:U_0\isom \CC$. Let $z=\mu(s)$ where $s:U\to \CC$ is another local holomorphic coordinate. Let $\tau = \mu^{-1}$. So $z= \mu(\tau(z))$, $1= \mu'(s) \tau'(z)$ and hence $0=\mu''(s) \tau'(z) + \mu'(s)^2 \tau''(z)$. Recall the transition functions from \cref{btrans2}. The section given by $\chcent_{-1}\vac$ in  the coordinate $s$  is written, in the trivialization coming from the coordinate $z$,
\be \frac{1}{\tau'(z)} \left( \chcent_{-1} \vac - \coxeter \frac{\tau''(z)}{\tau'(z)} \vac \right)
  = \mu'(s) \left(  \chcent_{-1} \vac + \coxeter \frac{\mu''(s)}{\mu'(s)^2} \vac \right). \nn\ee
After applying $\coinv_\chi$, we get $\mu'(s) \coxeter\twist(z) + \coxeter \mu''(s)/\mu'(s)$ and, in view of \cref{twisttrans}, we recognise this as $\coxeter\tilde \twist(s)$. This establishes the statement about $\coinv_\chi( \chcent_{-1} \vac)(t)$ and more generally about $\coinv_\chi( \chcent_{-1} v)(t)$ for all $v\in \picc_0$, since the correction to the transition functions are by terms with non-negative modes of $\chcent$, which act as zero on all of $\picc_0$. For the same reason, to obtain the statement about $\coinv_\chi( \chcent_{-n} v )(t)$ it is enough to consider $\chcent_{-n}\vac$. And the fact that $\coinv_\chi( \chcent_{-n} \vac  )(t) = \coxeter\twist^{(n)}(t) $ follows from \cref{swaptwist} and \cref{flem}. 
\end{rem}

\subsection{Conformal primaries and global constant sections}
\begin{prop}\label{tktk}
Suppose $v\in \picc_0$ is a conformal primary of conformal weight $j$. Associated to $v$ is a well-defined global section of $\Pic\ox\Omega^j$ given, in the trivialization defined by any local holomorphic coordinate $t: U \to \CC$, by $v dt^j$.

This section $vdt^j$ obeys
\be \Tkn vdt^j = \left(L_{-1} - \frac j\coxeter\chcent_{-1}\right)  vdt^{j+1} = (\Tk v)dt^{j+1}\nn\ee
(so it is constant with respect to the connection $\Tkn-\Tk$). 
\ifdefined\short
\qed
\else
\begin{proof}
Consider the section of $\Pic$ which is given by $v$ in the trivialization coming from a holomorphic coordinate $t:U\to \CC$ (i.e the section $\triv_{U,t}^{-1}(U \times \{v\})$). In the coordinate $s$ with $t=\mu(s)$ the same section looks like $v \mu'(s)^{-j}$ (since $v$ is primary). At the same time, $ds^j = \mu'(s)^j dt^j$. So indeed the section of $\Pic\ox \Omega^j$ given by $v dt^j$ in the $t$ coordinate is given by $vds^j$ in the $s$ coordinate.
It is clear that $\Tkn vdt^j = (L_{-1} - j/\coxeter\chcent_{-1})  vdt^j$ by definition of $\Tkn$. The final equality follows by \cref{lem: tkprim}.
\end{proof}
\fi
\end{prop}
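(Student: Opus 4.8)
The plan is to verify two things in turn: that $v\,dt^j$ is a well-defined global section of $\Pic\ox\Omega^j$, and that it satisfies the stated differential identity. Both reduce to the behaviour of a conformal primary under the action of $R(\mu)$ and to the definitions already in place.

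First I would check well-definedness by comparing the two local expressions under a change of holomorphic coordinate $t=\mu(s)$. The section of $\Pic$ that equals $v$ in the $t$-trivialization is, by the definition of the transition functions of $\Pic$ (cf. the proof of \cref{conlem}), given in the $s$-trivialization by $R(\mu_s)\on v$, where $\mu_s(x):=\mu(x+s)-\mu(s)$. The key computation is to show $R(\mu_s)\on v=\mu'(s)^{-j}v$. Using the formula \cref{Rsv} for $R$, namely $R(\mu_s)=\exp\!\big(-\sum_{n>0}v_n L_n\big)\,v_0^{-L_0}$ with leading coefficient $v_0=\mu_s'(0)=\mu'(s)$, together with the hypothesis that $v$ is conformal primary of weight $j$ (so $L_n v=0$ for $n>0$ and $L_0 v=jv$), the factor $v_0^{-L_0}$ contributes the scalar $\mu'(s)^{-j}$ and every exponential factor acts as the identity. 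Since $\Omega^j$ has transition functions $dt^j=\mu'(s)^j\,ds^j$, the two factors cancel and $v\,dt^j=v\,ds^j$, so the section is coordinate-independent.

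Second, for the differential identity I would simply evaluate $\Tkn$ on the constant section. By the definition of $\Tkn$ (the $\alpha=0$ specialization of part (ii) of the theorem above), applied to $\sigma(t)=v$ constant in $t$, the derivative term $\sigma'(t)$ vanishes and one is left with
\be \Tkn v\,dt^j=\left(L_{-1}-j\frac{\chcent_{-1}}{\coxeter}\right)v\,dt^{j+1}. \nn\ee
The remaining equality $\big(L_{-1}-j\chcent_{-1}/\coxeter\big)v=\Tk v$ is then exactly the content of \cref{lem: tkprim} applied with $\Delta=j$.

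Both steps are short, and I do not expect a serious obstacle. The only point requiring care is in the first step: identifying the correct automorphism $\mu_s$ and, in particular, verifying that the leading coefficient $v_0$ entering $R(\mu_s)$ is $\mu'(s)$ rather than $\mu'(0)$. This is precisely what makes the scalar $\mu'(s)^{-j}$ match the $\Omega^j$ transition function $\mu'(s)^j$ pointwise along $U$, so that the cancellation holds at every point rather than merely at the base point of the chart.
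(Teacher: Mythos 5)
Your proposal is correct and follows essentially the same route as the paper: the transition function of $\Pic$ applied to a primary of weight $j$ gives the scalar $\mu'(s)^{-j}$, which cancels against the $\Omega^j$ factor, and the identity then follows from the definition of $\Tkn$ on a constant section together with \cref{lem: tkprim}. The only difference is that you spell out explicitly, via the formula \cref{Rsv} and the fact that $v_0=\mu_s'(0)=\mu'(s)$, the step the paper compresses into ``since $v$ is primary''.
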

Recall that inside $\picc_0$ we have the subspace $\piaff_0 \subset \picc_0$
which is stable under the action of $\Aut\O$, and is in fact spanned by conformal primaries.
We defined 
\be \Faff_0 \subset \left(\piaff_0\ox \CC\vol\right)\Big/ (\Tk\vol) (\piaff_0)\nn\ee
cf. \cref{Tkdiagaff}. 
\begin{cor}\label{tktkcor} 
Suppose $v\in \piaff_0$ is a conformal primary of conformal weight $j+1$. 
It defines a class $[v\ox \vol]\in \Faff_0$. Associated to this class $[v\ox \vol]$ is a well-defined global section 
\be [v dt^{j+1}] \in  H^1(\cp1 ,\Pic\ox \Omega^j, \Tkn).\nn\ee 
\qed\end{cor}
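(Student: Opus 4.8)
The plan is to deduce the corollary almost directly from \cref{tktk}, the only real work being to match the ambiguity in the class $[v\ox\vol]\in\Faff_0$ with coboundaries for the connection $\Tkn$.

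First I would apply \cref{tktk} to the conformal primary $v$, whose conformal weight is $j+1$: this produces a well-defined global section $v\,dt^{j+1}$ of $\Pic\ox\Omega^{j+1}=\Pic\ox\Omega^j\ox\Omega$. Being an element of $\Gamma(\cp1,\Pic\ox\Omega^j\ox\Omega)$, it determines a class $[v\,dt^{j+1}]$ in the quotient $H^1(\cp1,\Pic\ox\Omega^j,\Tkn)$. That $v\ox\vol$ really represents a class in $\Faff_0$ is immediate: since $j+1\geq 1$ the state $v$ has vanishing vacuum component, so $\bra 0 v=0$ and $v\ox\vol$ lies in $\ker(\bra 0\ox\id)$.

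The substantive step is to check that $[v\,dt^{j+1}]$ depends only on $[v\ox\vol]$ and not on the chosen representative $v$. Here I would track the grading: by \cref{Tkdiagaff} the vector $\vol$ has grade $-1$, so the representatives of $[v\ox\vol]$ in the relevant grade of $\Faff_0$ differ precisely by the image under $\Tk\vol$ of grade-$j$ elements of $\piaff_0$; that is, $v$ may be replaced by $v+\Tk f$ with $f\in\piaff_0$ of grade $j$. Since $\piaff_0$ is spanned by conformal primaries, such an $f$ is itself a conformal primary, now of weight $j$, so \cref{tktk} applies to it and yields both that $f\,dt^j$ is a well-defined global section of $\Pic\ox\Omega^j$ and that $\Tkn(f\,dt^j)=(\Tk f)\,dt^{j+1}$. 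Hence $(\Tk f)\,dt^{j+1}$ is a $\Tkn$-coboundary, and replacing $v$ by $v+\Tk f$ leaves the class $[v\,dt^{j+1}]$ unchanged.

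I do not expect a genuine obstacle here: the whole statement reduces to two applications of \cref{tktk}, once to $v$ (weight $j+1$) to build the section and once to $f$ (weight $j$) to identify the representative ambiguity with coboundaries. The only point requiring care is the grading bookkeeping --- in particular remembering the grade $-1$ of $\vol$, so that the ambiguity $f$ has exactly weight $j$, which is what makes \cref{tktk} applicable to $f$ and renders $(\Tk f)\,dt^{j+1}$ the image under $\Tkn$ of an honest global section.
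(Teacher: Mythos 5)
Your argument is correct and is exactly the (implicit) argument the paper intends for this corollary: one application of \cref{tktk} to $v$ (weight $j+1$) to produce the global section of $\Pic\ox\Omega^{j}\ox\Omega$, and one to the grade-$j$ ambiguity $f$ to identify $(\Tk f)\,dt^{j+1}=\Tkn(f\,dt^j)$ as a coboundary, using that homogeneous elements of $\piaff_0$ are primaries and that $\vol$ has grade $-1$. Nothing is missing.
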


Now we can apply the results above to the classes from \cref{vjthm}, 
\be [\hamd_j\ox \vol] \in \Iaff_0 = \ker \mc H \subset \Faff_0 .\nn\ee
Using \cref{tktkcor} we obtain well-defined global holomorphic sections of the cohomology of $\Tkn$,
\be [\hamd_j dt^{j+1} ] \in H^1(\cp1 ,\Pic\ox \Omega^j, \Tkn). \nn\ee
Then, for any meromorphic connection $\chi\in \conn{-\chweyl}(\cp1)_\x$, we obtain using \cref{funccor} well-defined global meromorphic sections of the cohomology of $\Tkn_\chi$:
\be [ \coinv_\chi(\hamd_j dt^{j+1}) ] \in  H^1(\cp1 ,\Omega^j, \Tkn_\chi)_\x \nn\ee

Now we can ask what special properties these cohomology classes possess, by virtue of the fact that $[\hamd_j\ox \vol]$ lay not just in $\Faff_0$ but in the kernel of $\mc H$; 
in other words, by virtue of the fact that these classes are invariant under the screening flows $Q_i$, $i\in I$, on $\picc_0$.

\section{$\g$-Opers and $\g$-Miura Opers}\label{sec: opers}
In this final section, we recall the definitions of opers and Miura opers, both local and global. That will allow us to establish the remaining results of the paper, \cref{Wpi} and \cref{fc}.  
We continue to suppose that $\g$ is a Kac-Moody algebra of affine type. 
Opers were first introduced on the disc in \cite{DS} and for a general Riemann surface in \cite{BDopers}. Miura opers for simple Lie algebras were introduced in \cite{Frenkel_2005}; affine opers and affine Miura opers were introduced in \cite{FFsolitons}. We follow the conventions from \cite[\S6]{LVY}.  

It is convenient to treat in parallel the cases of opers on the disc, and meromorphic opers on a coordinate patch $U\subset \cp1$.
Thus, let $\A$ denote either the ring $\CC[[t]]$ of formal power series in $t$, or the field $\K(t(U))$ of meromorphic functions on the image $t(U)\subset \CC$ of a holomorphic coordinate $t:U\to \CC$. 
In either case we have a notion of coordinate transformations. On the disc, these form the group $\Aut\O$ defined in  \cref{mudef,astdef}. In the meromorphic setting, we have the changes of holomorphic coordinate $\mu(s)$ between any pair of holomorphic coordinates $t,s:U\to \CC$ on $U$, which we think of as sending $f(t)\in \K(t(U))$ to $f(\mu(s))\in \K(s(U))$. (Since $s(U)$ and $t(U)$ need not coincide as subsets of $\CC$, the latter do not form a group, but only a groupoid.)

Let $\U(\A)$ denote the group of units of $\A$. We saw $\U(\O)$, the multiplicative group of invertible formal power series, already above. Since $\K(t(U))$ is a field, $\U(\K(t(U))) = \K(t(U)) \setminus\{0\}$.

\subsection{Completion of $\n_+(\A)$}

For any Lie algebra $\p$, we have the Lie algebra 
\be \p(\A):= \p \ox_\CC \A\nn\ee 
(of $\p$-valued functions on the disc, or $\p$-valued meromorphic functions on $t(U)$). 

Let $\n_k$ denote the Lie ideal in $\n_+$ spanned by elements of grade $n\geq k$. We get a descending $\ZZ_{>0}$-filtration of $\n_+(\A)$ by the Lie ideals $\n_k(\A)$ such that the quotient Lie algebras $\n_+(\A)\big/ \n_k(\A)$ are nilpotent. The inverse limit is a Lie algebra we shall write as
\be \hn_+(\A) := \invlim_k \n_+(\A)\big/ \n_k(\A) ,\nn\ee
whose elements are by definition infinite sums $\sum_{n>0}u_n$, with $u_n\in \g_n(\A)$ for each $n$, which truncate to finite sums when working in any given quotient $\n_+(\A)\big/ \n_k(\A)$, $k\in \ZZ_{>0}$. 

Define also
\begin{align}
 \hb_+(\A) &:= \h(\A) \oplus \hn_+(\A)\nn\\
 \hg(\A) := \n_-(\A) \oplus \hb_+(\A) &=  \n_-(\A) \oplus\h(\A) \oplus \hn_+(\A)  . \nn\end{align}
These are Lie algebras. 

\subsection{The group $\hN_+(\A)$} \label{sec: group lhN}
Let $\exp \big( \n_+(\A) / \n_k(\A) \big)$ denote a copy of the vector space $\n_+(\A)/ \n_k(\A)$ and let $m\mapsto \exp(m)$ be the map into this copy, for each $k>0$. As the notation is supposed to suggest, $\exp \big( \n_+(\A) / \n_k(\A) \big)$ gets a group structure, given by
\be  \exp(x)\exp(y) := \exp\left(x+y+ \frac 12 [x,y] + \dots\right) ,\nn\ee
where the expression on the right is the usual Baker-Campbell-Hausdorff formula, from which we need only finitely many terms since each $\n_+(\A) / \n_k(\A)$ is nilpotent. We have the commutative diagram
\be\begin{tikzcd}
\n_+(\A) / \n_m(\A)\rar[two heads]\dar{\sim} & \n_+(\A) / \n_k(\A) \dar{\sim}\\
\exp \big(\n_+(\A) / \n_m(\A) \big) \rar[two heads] & \exp \big( \n_+(\A) / \n_k(\A) \big)
\end{tikzcd}
\nn\ee
where the vertical maps are the formal exponential maps, which are by definition bijections, and the horizontal maps are the canonical projections for all $m\geq k$. 
The group $\hN_+(\A)$ is then the inverse limit
\be  \hN_+(\A) := \invlim_k \exp \big( \n_+(\A) / \n_k(\A) \big). \nn\ee
and the diagram above defines an exponential map $\exp : \hn_+(\A) \to \hN_+(\A)$.

\subsection{The group $\hB_+(\A)$}
Recall the fundamental coweights $\{\chLa_i\}_{i\in I}$ from \cref{ld}.  
Let $P:= \bigoplus_{i=0}^\ell \ZZ \chLa_i\subset \h$ and define $\H(\A)$ to be the abelian group generated by elements of the form $\phi^\lambda$ with $\phi \in \U(\A)$ and $\lambda\in P$, subject to the relations $\phi^{\lambda} \psi^{\lambda} = (\phi\psi)^\lambda$ and $\phi^{\lambda+\mu} = \phi^\lambda \phi^\mu$ for all $\phi,\psi\in \U(\A)$ and $\lambda,\mu\in P$. We have the adjoint action of $\H(\A)$ on the Lie algebra $\hn_+(\A)$, defined weight space by weight space, with 
\be 
\phi^\lambda n \phi^{-\lambda} :=  \phi^{\langle \lambda, \al\rangle} n,
\label{aaH}\ee
for all $n$ in the subspace of $\hn(\A)$ of weight $\al\in \bigoplus_{i\in I}\ZZ\al_i$. (Note that $\langle \lambda, \al\rangle$ is an integer and hence $\phi^{\langle \lambda, \al \rangle} \in \U(\A)$.) The adjoint action of $\H(\A)$ on the group $\hN_+(\A)$ is then given by $\phi^\lambda\exp(n)\phi^{-\lambda}:= \exp(\phi^\lambda n\phi^{-\lambda})$. Finally, we define $\hB_+(\A)$ to be the semi-direct product
\be
\hB_+(\A) := \hN_+(\A) \rtimes \H(\A),
\nn\ee
so elements of $\hB_+(\A)$ are of the form $\exp(n) \phi^\lambda$, $n\in \hn_+(\A)$, $\phi\in \U(\A)$ and $\lambda\in P$, and the product is given by
\be
(\exp(n) \phi^{\lambda})( \exp(m) \psi^{\mu}) := \Big( \! \exp(n) \exp\big( \phi^{\lambda} m \phi^{-\lambda}\big) \Big) \big(\phi^\lambda \psi^\mu \big).
\nn\ee

\subsection{Definition of $\g$-opers} \label{sec: def oper}
Let $\wt\op_\g$ be the set of all connections of the form
\be \label{tnf}
\nabla = d + \left(\sum_{i=0}^\ell \psi_i(t) f_i + b(t)\right) dt
\ee
with $\psi_i(t)\in \U(\A)$ for each $i\in I$, and $b(t)\in \hb_+(\A) := \h(\A) \oplus \hn_+(\A)$. 

In calling these connections, we mean that there is an action of the group $\hB_+(\A)$ on $\wt\op_\g$ (by gauge transformations) and that the set $\wt\op_\g$ transforms in a well-defined manner under changes in coordinate. Let us describe these transformation properties. 

The action of the coordinate transformation $t=\mu(s)$ on $\nabla$ is given by
\be \mu \on \nabla := d + \left(\sum_{i=0}^\ell \psi_i(\mu(s)) f_i + b(\mu(s))\right) \mu'(s) ds. \label{cch}\ee
(Here we choose to think of $\mu$ as a passive coordinate transformation, i.e. to think that $\mu\on \nabla$ is the same connection $\nabla$ in the new coordinate $s$ given by $t=\mu(s)$.)

The action of an element $g=e^m \phi^\lambda \in \hB_+(\A)$ on $\nabla$ is defined as follows. 
First, \cref{aaH} defines the adjoint action of $\phi^\lambda\in\H(\A)$ on the Lie algebra $\hg(\A):= \n_-(\A) \oplus \hb_+(\A)$, weight space by weight space, while the adjoint action of $e^m \in \hN_+(\A)$ on an element $u\in \hg(\A)$ is given by
\be u\mapsto e^m u e^{-m}  := \sum_{k \geq 0} \frac 1{k!} \ad_m^k u, \nn\ee
where $\ad_m u := [m,u]$.
In this way we get the adjoint action of $\hB_+(\A) = \hN_+(\A) \rtimes \H(\A)$ on $\hg(\A)$. 
For any $g=e^m \phi^\lambda \in \hB_+(\A)$, define 
\be (\del_t g) g^{-1} :=  \lambda \phi^{-1} \del_t\phi + \sum_{k \geq 1} \frac{1}{k!} \ad_m^{k-1} \del_tm ,\nn\ee
an element of $\hb_+(\A)$. One can then check that there is a well-defined action of $\hB_+(\A)$ on the affine space (over $\hg(\A)$) of connections valued in $\hg(\A)$, given by
\be d + v dt \mapsto d + \left(g v g^{-1} - (\del_t g)g^{-1}\right)dt,\qquad v\in \hg(\A). \nn\ee
This action stabilizes the set $\wt\op_\g$. We shall write $g \nabla g^{-1}\in \wt\op_\g$ for the image under the action of $g$ of the connection $\nabla\in \wt\op_\g$. We call this the gauge action of $\hB_+(\A)$ on $\wt\op_\g$.
 
The \emph{space of $\g$-opers}, is by definition the quotient of the set of connections $\wt\op_\g$ by the gauge action of $\hB_+(\A)$:
\be \Op_{\g} := \wt\op_{\g} \big/ \hB_+(\A). \ee

Recall the definition of the element $p_{-1}\in \n_-$ from \cref{pmdef}: $p_{-1} := \sum_{i\in I}  f_i$.
Let $\op_\g\subset \wt\op_\g$ denote the set of connections of the form \cref{tnf} with $\psi_i=1$ for each $i\in I$, i.e. the set of connections of the form
\be \nabla = d + (p_{-1} + b) dt, \qquad b\in \hb_+(\A).\nn\ee
(It is an affine space over $\hb_+(\A)$.)
\begin{lem}\label{oplem} 
Each $\H(\A)$-orbit in $\wt\op_\g$ contains a representative in $\op_\g$. This representative is unique, and  the stabilizer of $\op_\g$ in $\wt\op_\g$ is $\hN_+(\A)$.
Hence
\be \Op_\g \cong \op_\g \big/ \hN_+(\A).\nn\ee\qed
\end{lem}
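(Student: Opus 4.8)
The plan is to leverage the semidirect product $\hB_+(\A) = \hN_+(\A) \rtimes \H(\A)$ together with the principal grading, reducing everything to two bookkeeping facts. First I would pin down the structure of $\H(\A)$: since $P = \bigoplus_i \ZZ \chLa_i$ is free, the defining relations $\phi^\lambda\psi^\lambda = (\phi\psi)^\lambda$ and $\phi^{\lambda+\mu}=\phi^\lambda\phi^\mu$ give $\phi^{n\chLa_i}=(\phi^n)^{\chLa_i}$, so every element can be written uniquely as $g=\prod_{i}\psi_i^{\chLa_i}$ with $\psi_i\in\U(\A)$. Using $\la \al_j,\chLa_i\ra=\delta_{ij}$ from \cref{ld} and the adjoint action \cref{aaH} on the weight-$(-\al_j)$ vector $f_j$, such a $g$ acts by $g f_j g^{-1} = \psi_j^{-1} f_j$; in particular $g$ is the identity iff all $\psi_j=1$. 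Second, I would record the grading behaviour of $\hN_+(\A)$: since $m\in\hn_+(\A)$ has strictly positive grade, $\ad_m$ can only raise the grade, so the grade $(-1)$ component of $e^m(p_{-1}+b)e^{-m}$ is exactly $p_{-1}$, while $(\del_t e^m)e^{-m}\in\hn_+(\A)$ has grade $\geq 1$; hence $e^m$ maps $d+(p_{-1}+b)\,dt$ to $d+(p_{-1}+b')\,dt$ with $b'\in\hb_+(\A)$, i.e. $\hN_+(\A)$ preserves $\op_\g$.

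For existence of a representative in $\op_\g$, I would take an arbitrary $\nabla = d+\big(\sum_i\psi_i f_i + b\big)dt\in\wt\op_\g$ and apply $g=\prod_i\psi_i^{\chLa_i}\in\H(\A)$. By the computation above the coefficient of $f_j$ is sent from $\psi_j$ to $\psi_j\cdot\psi_j^{-1}=1$, the term $b$ is carried into $\hb_+(\A)$ weight space by weight space, and the newly produced term $(\del_t g)g^{-1}\in\h(\A)\subset\hb_+(\A)$; so $g\nabla g^{-1}\in\op_\g$.

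For uniqueness, I would suppose $\nabla,\nabla'\in\op_\g$ with $\nabla'=h\nabla h^{-1}$ for some $h=\prod_i\psi_i^{\chLa_i}\in\H(\A)$: the coefficient of $f_j$ transforms from $1$ to $\psi_j^{-1}$, and for $\nabla'$ to remain in $\op_\g$ this forces $\psi_j=1$ for every $j$, whence $h$ is the identity and $\nabla'=\nabla$. The same argument identifies the stabilizer of $\op_\g$ in $\hB_+(\A)$: writing any $g=e^m h$ with $h\in\H(\A)$, and using that $e^m$ fixes the grade $(-1)$ component, the grade $(-1)$ part of $g\nabla g^{-1}$ equals $\sum_j \psi_j^{-1}f_j$; if this lies in $\op_\g$ then all $\psi_j=1$, so $h$ is trivial and $g=e^m\in\hN_+(\A)$. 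Conversely $\hN_+(\A)$ already preserves $\op_\g$ by the first paragraph, so the stabilizer is exactly $\hN_+(\A)$.

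Finally I would deduce the isomorphism $\Op_\g\cong\op_\g/\hN_+(\A)$ formally: the inclusion $\op_\g\hookrightarrow\wt\op_\g$ descends to a map $\op_\g/\hN_+(\A)\to\wt\op_\g/\hB_+(\A)=\Op_\g$, which is surjective by existence and injective by the stabilizer computation (two elements of $\op_\g$ that are $\hB_+(\A)$-equivalent are, by the grading argument, already $\hN_+(\A)$-equivalent). The main obstacle is not conceptual but a matter of care with the grading and with the group $\H(\A)$: I must verify rigorously that conjugation by $e^m$ genuinely cannot perturb the grade $(-1)$ component, that the connection-form correction terms $(\del_t g)g^{-1}$ never leave $\hb_+(\A)$, and that the normal form $g=\prod_i\psi_i^{\chLa_i}$ really is unique, so that ``all $\psi_j=1$'' does imply $g$ is the identity element of $\H(\A)$.
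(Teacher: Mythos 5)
The paper states this lemma without proof (it is the standard Drinfeld--Sokolov normalization argument), so there is no argument of the paper's to compare against; your proof is correct and complete. The two ingredients you isolate --- that every $h\in \H(\A)$ is uniquely $\prod_i\psi_i^{\chLa_i}$ and acts on $f_j$ by $\psi_j^{-1}$ (via $\la\al_j,\chLa_i\ra=\delta_{ij}$ and \cref{aaH}), and that conjugation by $e^m$ together with the correction term $(\del_t e^m)e^{-m}\in\hn_+(\A)$ cannot perturb the grade $(-1)$ component --- are exactly what is needed, and the passage from the $\H(\A)$-orbit statement to $\Op_\g\cong\op_\g\big/\hN_+(\A)$ via the unique factorization $g=e^m h$ in the semidirect product is handled correctly.
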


\subsection{The (quasi-)canonical form} \label{sec: qcf}
Recall from \cref{sec: adef} the definition of the exponents $j\in E$ of $\g$ and of the generators $p_j\in \a_+$. The following result was established in \cite{LVY}, following \cite{GelfandDorfman,DS}.  (The proof in \cite{LVY} is in the meromorphic setting, but the same proof goes through the case of the disc.) 
\begin{thm}[\cite{LVY}]\label{thm: quasi-canonical form}
Suppose $\g$ is of affine type. 
Fix a formal coordinate $t$ on $D$ (resp. a holomorphic coordinate $t$ on $U$).

Every oper $[\nabla] \in \Op_{\g}$ has a \emph{quasicanonical} representative of the form 
\be
\nabla = d + \Bigg( p_{-1} - \frac{\twist(t)}{\coxeter} \chweyl + \sum_{j \in E_{\geq 1}} v_j(t) p_j  \Bigg) dt,
\ee
where $\twist(t)$ and $v_j(t)$, for each positive exponent $j \in E$, are formal series in $t$ (resp. meromorphic functions on $t(U) \subset \CC$). 

The gauge transformations preserving quasi-canonical form are those of the form $\exp\big( \sum_{j \in E_{\geq 2}} f_j p_j \big)$ for formal series (resp. meromorphic functions) $f_j(t)$. The effect of such gauge transformations is to send 
\begin{equation}
v_j(t) \longmapsto v_j(t) - f_j'(t)+ \frac{j \twist(t)}{\coxeter} f_j(t)\label{vuptof}
\end{equation}
for all $j\in E_{\geq 2}$, and to leave $\twist$ and $v_1$ invariant.
\qed\end{thm}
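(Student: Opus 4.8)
The plan is to prove both statements by the Drinfeld--Sokolov gauge-fixing procedure, working grade by grade in the principal gradation. By \cref{oplem} it suffices to work inside $\op_\g$, so I would start from a representative $\nabla = d + (p_{-1}+b)\,dt$, $b\in\hb_+(\A)$, and use only gauge transformations in $\hN_+(\A)=\exp(\hn_+(\A))$. Decomposing $b=\sum_{k\geq 0}b_k$ into principal-homogeneous components $b_k\in\g_k(\A)$ (with $b_0\in\h(\A)$), I would remove the unwanted pieces one grade at a time. The mechanism is that for $m\in\g_{k+1}(\A)$ the transformation $\exp(m)\,\nabla\,\exp(-m)$ alters the grade-$k$ component only by $[m,p_{-1}]=-[p_{-1},m]\in[p_{-1},\g_{k+1}](\A)$ and leaves all grades $<k$ unchanged, since the remaining contributions $[m,b]$ and $(\del_t\exp m)\exp(-m)$ have grade $>k$.

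At grade $0$ I would first apply $m\in\g_1(\A)$: because $[p_{-1},e_j]=-\chal_j$ and the simple coroots are linearly independent, $[p_{-1},\cdot]$ carries $\g_1$ isomorphically onto $\h'=\bigoplus_{i\in I}\CC\chal_i$, and by the conventions of \cref{sec: cartan} one has $\h=\h'\oplus\CC\chweyl$; hence I can cancel the $\h'$-part of $b_0$ and be left with a multiple of $\chweyl$, which I write as $-\twist(t)\chweyl/\coxeter$, defining $\twist$. Then for each $k\geq 1$ in turn I apply $m\in\g_{k+1}(\A)$ and use the decomposition $\g_k=[p_{-1},\g_{k+1}]\oplus\CC p_k$ for $k\in E$ (resp.\ $\g_k=[p_{-1},\g_{k+1}]$ for $k\notin E$) from \cref{sec: adef} to annihilate the entire $[p_{-1},\g_{k+1}]$-component, leaving $v_k(t)p_k$ when $k\in E$ and nothing otherwise. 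As each step disturbs only strictly higher grades, the composite of these transformations converges in $\hN_+(\A)$ and brings $\nabla$ to quasi-canonical form.

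For the second statement the decisive structural fact is that $[p_{-1},p_j]=0$ for every $j\in E_{\geq 2}$, while $[p_{-1},p_1]=-\chcent$. This follows from the choices in \cref{sec: adef}: Jacobi and $[p_1,p_j]=0$ give $[p_1,[p_{-1},p_j]]=[\chcent,p_j]=0$, so $[p_{-1},p_j]$ lies in $[p_{-1},\g_j]\cap\ker(\ad p_1)$; since $\ker(\ad p_1)$ meets $\g_{j-1}$ in at most the line $\CC p_{j-1}$ and $p_{j-1}\notin[p_{-1},\g_j]$, this intersection is $0$. Granting this, together with the abelianness of $\a_+$, the forward direction is a short computation: for $g=\exp(\sum_{j\in E_{\geq 2}}f_jp_j)$ all multiple brackets vanish, $(\del_t g)g^{-1}=\sum_j f_j'p_j$, and $g\,\mathsf A\,g^{-1}=\mathsf A+\sum_j\tfrac{j\twist}\coxeter f_j p_j$ where $\mathsf A:=p_{-1}-\tfrac{\twist}\coxeter\chweyl+\sum_{k\in E_{\geq 1}}v_kp_k$; hence $g$ preserves quasi-canonical form with $v_j\mapsto v_j-f_j'+\tfrac{j\twist}\coxeter f_j$ for $j\in E_{\geq 2}$ and $\twist,v_1$ fixed. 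For the converse I would run the grade-by-grade induction: matching at grade $0$ forces $[m_1,p_{-1}]\in\h'\cap\CC\chweyl=0$, hence $m_1=0$ (so in particular a $p_1$-term is forbidden, because it would contribute $f_1\chcent\in\h'$ at grade $0$ --- this is exactly why the index set is $E_{\geq 2}$ and not $E$); and inductively, since the already-determined modes lie in $\a_+(\A)$ and therefore create no image component in $[p_{-1},\g_k]$, the only source of an image component at grade $k-1$ is $[m_k,p_{-1}]$, so its vanishing forces $m_k\in\ker(\ad p_{-1})\cap\g_k$, which is $\CC p_k$ for $k\in E_{\geq 2}$ and $0$ otherwise.

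I expect the main obstacle to be the residual-gauge direction, and in particular pinning down the vanishing $[p_{-1},p_j]=0$ for $j\in E_{\geq 2}$ (equivalently $\a_+\subset\ker(\ad p_{-1})$, i.e.\ that $\a_+$ is the positive part of the principal Heisenberg subalgebra). Everything clean about the transformation law rests on it: without it $\exp(f_jp_j)$ would inject a spurious term at grade $j-1$ and couple different exponents, destroying the stated formula. The secondary care-point is the inductive converse --- verifying that the lower, already-fixed $\a_+$-valued modes never regenerate an image component at any grade --- but this reduces to the abelianness of $\a_+$ together with the same vanishing property.
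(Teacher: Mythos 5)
Your proposal is correct and follows essentially the same route as the source: the paper itself gives no proof of this theorem, deferring to \cite{LVY}, whose argument is exactly this Drinfeld--Sokolov recursion in the principal gradation (kill $[p_{-1},\g_{k+1}]$-components grade by grade, identify the residual gauge freedom with $\exp(\a_{\geq 2}(\A))$). The one ingredient you rightly isolate, $[p_{-1},p_j]=0$ for $j\in E_{\geq 2}$ (equivalently that $\a_+$ sits inside the principal Heisenberg subalgebra, i.e.\ centralizes $p_{-1}$ modulo the centre), is standard structure theory (Kac, Ch.~14) and is precisely the input invoked in \cite{LVY}, so your derivation of it via $\ker(\ad p_1)\cap \g_{j-1}\subseteq \CC p_{j-1}$ is an acceptable substitute.
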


\subsection{Coordinate transformations}\label{sec: op ct}

One can now study the behaviour under coordinate transformations of the functions $v_j$ and $\twist$ appearing in the quasi-canonical form of the oper. 
That is, one can start with a connection $\nabla$ of quasi-canonical form with respect to the coordinate $t$ and then transform to a new coordinate given by $t=\mu(s)\neq s$, as in \cref{cch}. The resulting connection will not be in quasi-canonical form in the new coordinate -- but it can necessarily be brought to this form by means of a gauge transformation. One interprets the resulting power series $\tilde v_j$ and $\tilde \twist$ as the images of $v_j$ and $\twist$ under the coordinate transformation. 
One finds the following. (For details see e.g. \cite[\S6]{LVY}, or the proof of \cref{prop: doao} below where an example of this sort of calculation occurs.)
\begin{align} \label{phipt}
- \frac{1}{\coxeter} \tilde\twist(s) &= - \frac{1}{\coxeter} \twist(\mu(s))\mu'(s) - \frac{\mu''(s)}{\mu'(s)},\\
\tilde v_j(s)  &= v_j(\mu(s)) \mu'(s)^{j+1}, \qquad j\in E.\label{vpt}
\end{align}
In the meromorphic setting, we know from \cref{crs} above how to interpret \cref{phipt,vpt,vuptof}, and we recover the following.
\begin{thm}[\cite{LVY}]\label{opu}$ $
$\Op_{\g}(U)$ fibres over the affine space $\Conn(U, \Omega)$ of meromorphic connections on the canonical bundle and we have the isomorphism 
\be
\Op_{\g}(U)^{\overline{\nabla}} \cong \Gamma(U,\Omega^2) \times
\prod_{j\in E_{\geq 2}} H^1(U,\Omega^j, \overline{\nabla})
\nn\ee
for the fibre over any connection $\overline{\nabla} \in \Conn(U, \Omega)$. \qed
\end{thm}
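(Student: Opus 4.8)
The plan is to read the coordinate- and gauge-invariant content of an oper directly off its quasi-canonical form, as provided by \cref{thm: quasi-canonical form}, and to recognise the residual gauge freedom as the de Rham differential of a connection on the canonical bundle. First I would construct the fibration map. Given $[\nabla]\in\Op_\g(U)$, I pick a quasi-canonical representative
\be \nabla = d + \Bigg( p_{-1} - \frac{\twist(t)}{\coxeter} \chweyl + \sum_{j \in E_{\geq 1}} v_j(t) p_j  \Bigg) dt \nn\ee
and send it to the affine connection $\overline\nabla\in\Conn(U,\Omega)$ whose component in the coordinate $t$ is $-\twist(t)/\coxeter$. This is well-defined: by \cref{thm: quasi-canonical form} the function $\twist$ is invariant under the residual gauge transformations, and comparing the coordinate-transformation rule \cref{phipt} with \cref{twisttrans} shows that $-\twist/\coxeter$ transforms exactly as the component of a connection on $\Omega$. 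Hence $[\nabla]\mapsto\overline\nabla$ is independent of all choices and gives the fibration $\Op_\g(U)\to\Conn(U,\Omega)$.

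Next I would analyse the fibre $\Op_\g(U)^{\overline\nabla}$ over a fixed $\overline\nabla$, whose remaining data are the functions $v_j$, $j\in E_{\geq 1}$. For the lowest exponent $j=1$, the rule \cref{vpt} reads $\tilde v_1(s)=v_1(\mu(s))\mu'(s)^2$, so $v_1$ is a meromorphic section of $\Omega^2$; and by \cref{thm: quasi-canonical form} it is gauge invariant. This accounts for the factor $\Gamma(U,\Omega^2)$. For each $j\in E_{\geq 2}$, \cref{vpt} shows $v_j\,dt^{j+1}$ is a meromorphic section of $\Omega^{j+1}=\Omega^j\ox\Omega$. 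The key step is to match the residual gauge action \cref{vuptof} with the differential of $\overline\nabla$ on $\Omega^j$: recalling from \cref{sec: conrho} that $\overline\nabla$ acts on $f_j\,dt^j$ by $\overline\nabla(f_j\,dt^j)=\bigl(f_j'-\tfrac{j\twist}{\coxeter}f_j\bigr)dt^{j+1}$, the gauge shift \cref{vuptof} is precisely $v_j\,dt^{j+1}\mapsto v_j\,dt^{j+1}-\overline\nabla(f_j\,dt^j)$. Thus the gauge-invariant datum attached to $j$ is the class
\be [v_j\,dt^{j+1}]\in H^1(U,\Omega^j,\overline\nabla)=\Gamma(U,\Omega^j\ox\Omega)\big/\overline\nabla\,\Gamma(U,\Omega^j). \nn\ee

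Finally I would assemble these into the claimed isomorphism, sending $[\nabla]$ to $\bigl(\overline\nabla;\,v_1,\,([v_j\,dt^{j+1}])_{j\in E_{\geq 2}}\bigr)$. Surjectivity is immediate, since any choice of $\twist$ (defining $\overline\nabla$), section $v_1$, and representatives $v_j$ yields a quasi-canonical connection; injectivity on each fibre holds because two quasi-canonical opers over the same $\overline\nabla$ with the same $v_1$ and the same classes differ only by residual gauge transformations of the form \cref{vuptof}, hence define the same oper. The main obstacle is the bookkeeping in the second step: one must check that the identification of $v_j\,dt^{j+1}$ as a section of $\Omega^{j+1}$ is compatible across coordinate charts with the gauge structure, so that the quotient by residual gauge genuinely produces the coordinate-\emph{independent} cohomology $H^1(U,\Omega^j,\overline\nabla)$ rather than a coordinate-dependent quotient. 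This reduces to verifying that \cref{vpt} and \cref{vuptof} are correctly intertwined, which is exactly the compatibility already used in \cref{sec: conrho} to promote $\overline\nabla$ to all tensor powers $\Omega^j$.
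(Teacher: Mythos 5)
Your proposal is correct and follows essentially the same route as the paper (and as \cite{LVY}, to which the result is attributed): pass to the quasi-canonical form of \cref{thm: quasi-canonical form}, read off $-\twist/\coxeter$ as the component of an affine connection and $v_1$ as a section of $\Omega^2$ via \cref{phipt,vpt}, and identify the residual gauge shift \cref{vuptof} with subtraction of $\overline\nabla(f_j\,dt^j)$ so that the $v_j$, $j\in E_{\geq 2}$, yield classes in $H^1(U,\Omega^j,\overline\nabla)$. Your matching of \cref{vuptof} with the $\overline\nabla$-differential on $\Omega^j$ is exactly the key identification the paper relies on.
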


To state the analogous result on the disc, we need some notations.
A \emph{$j$-differential (on D)} is an object of the form $f(t) dt^j$, specified by a series $f(t) \in \CC[[t]]$, its component in the coordinate $t$. By definition (and as the notation suggests) this component must transform as  $f(t) \mapsto f(\mu(s)) \mu'(s)^j$ under the coordinate transformation $t=\mu(s)$, $\mu\in \Aut\O$. Let us write $\Diff^j$ for the $\O$-module of $j$-differentials. 

For each $j\in \ZZ_{\geq 1}$, we write $\Conn(\Diff^j)$ for the space of connections on $j$-differentials. It is an an affine space over $\Diff$. That is to say, as an abelian group under addition, $\Diff$ acts freely transitively on $\Conn(\Diff^j)$ (and this action is $\Aut\O$ equivariant). Just as in the meromorphic case, cf. \cref{twisttrans} and the discussion following, $\Conn(\Diff) \cong \Conn(\Diff^j)$ for all $j\geq 1$

On comparing \cref{phipt} with \cref{twisttrans} we see that $- \frac{1}{\coxeter}\twist(t)$ transforms as the component of an affine connection, call it  $\nabla^\twist$. For each $j\in E$, the power series $v_j(t)$ transforms as a $(j+1)$-differential. As in the meromorphic case, we can now interpret \cref{vuptof} as saying that the $v_j(t)$ define classes in the cohomology $H^1(\Diff^j, \nabla^\twist) := \Diff^{j+1} \big/ \nabla^\twist \Diff^j$
of the connection $\nabla^\twist$. But these cohomologies on the disc are all trivial.\footnote{Indeed, let $F(t) = \sum_{n<0} F_n t^{-n-1} \in \CC[[t]]$ be the component of a $(j+1)$-differential $F(t)dt^j\in \Diff^{j+1}$. We must show that there exists $G(t) =\sum_{n\leq 0} G_n t^{-n} \in \CC[[t]]$ such that $F(t) = G'(t) - \frac j\coxeter\twist(t) G(t)$. That is,
\begin{align} 
F_{-1} &= G_{-1} - \frac j \coxeter \twist_{-1} G_0 \nn\\
F_{-2} &= 2G_{-2} - \frac j \coxeter \left(\twist_{-2} G_0 + \twist_{-1} G_{-1}\right)  \nn\\
&\vdots\nn\\
F_{-n} &=  n G_{-n} - \frac j\coxeter \sum_{k=-n}^{-1} \twist_k G_{-n-k}.\nn
\end{align}
where $\twist(t) = \sum_{n<0} \twist_n t^{-n-1}$ is the component of the connection. 
We can pick $G_0=0$ and then solve the $n$th equation above by choice of $G_{-n}$, for each $n\geq 1$.}

We have arrived at the following description of the space $\Op_{\g}(D)$ of opers on the disc. 
\begin{prop}\label{opdisc} We have the following ($\Aut\O$-equivariant) isomorphism:
\be \Op_{\g}(D) \cong \Conn(\Diff) \times \Diff^2.\nn\ee\qed
\end{prop}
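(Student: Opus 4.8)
The plan is to read off the isomorphism directly from the quasi-canonical description of opers on the disc, \cref{thm: quasi-canonical form}, together with the cohomology-triviality footnote already supplied. First I would recall that by \cref{thm: quasi-canonical form} every oper $[\nabla]\in\Op_\g(D)$ has a quasi-canonical representative
\be
\nabla = d + \Bigg( p_{-1} - \frac{\twist(t)}{\coxeter}\chweyl + \sum_{j\in E_{\geq 1}} v_j(t)\, p_j\Bigg) dt,
\nn\ee
and that the residual gauge freedom, given by $\exp\big(\sum_{j\in E_{\geq 2}} f_j p_j\big)$, leaves $\twist$ and $v_1$ untouched while acting on each $v_j$, $j\in E_{\geq 2}$, by the shift \cref{vuptof}. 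So as a set $\Op_\g(D)$ is parametrized by the data $\twist$, the $v_1$-differential, and the classes of the $v_j$ for $j\in E_{\geq 2}$ modulo this shift.

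Next I would interpret each piece of data geometrically using the coordinate-transformation rules \cref{phipt,vpt}. The function $-\twist(t)/\coxeter$ transforms exactly as the component of an affine connection on $\Diff$, by comparing \cref{phipt} with \cref{twisttrans}; this identifies the $\twist$-data with an element of $\Conn(\Diff)$, and this is the base of the fibration, giving the connection $\nabla^\twist$. The exponent $j=1$ is always present (it is the lowest exponent, with $v_1$ invariant under all residual gauge transformations), and by \cref{vpt} the component $v_1(t)$ transforms as a $2$-differential, i.e. it is an element of $\Diff^2$. For each higher exponent $j\in E_{\geq 2}$, the function $v_j(t)$ transforms as a $(j+1)$-differential, and the gauge shift \cref{vuptof} is precisely the image of a $j$-differential under the connection $\nabla^\twist$ (note the identical coefficient $j\twist/\coxeter$); hence the gauge class of $v_j$ is an element of the cohomology $H^1(\Diff^j,\nabla^\twist) = \Diff^{j+1}/\nabla^\twist\Diff^j$.

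The decisive step, already done in the footnote preceding the proposition, is that these disc cohomologies all vanish: given any $(j+1)$-differential $F(t)\,dt^j$ one solves the triangular recursion $F_{-n} = nG_{-n} - \tfrac{j}{\coxeter}\sum_{k=-n}^{-1}\twist_k G_{-n-k}$ term by term for a $j$-differential $G(t)\,dt^j$, so every $v_j$ with $j\in E_{\geq 2}$ can be gauged away entirely. Consequently the only surviving data are the affine connection $\nabla^\twist\in\Conn(\Diff)$ and the $2$-differential $v_1(t)\,dt^2\in\Diff^2$, and the assignment $[\nabla]\mapsto(\nabla^\twist, v_1\,dt^2)$ is the desired bijection. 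I would then check $\Aut\O$-equivariance, which is immediate since both the identification of $\twist$ with a connection and of $v_1$ with a $2$-differential were made precisely through the $\Aut\O$ transformation laws \cref{phipt,vpt}, and the gauging-away of higher $v_j$ is $\Aut\O$-natural because $\nabla^\twist$ is. The only point requiring care — and the main obstacle — is verifying that the residual gauge action on $v_1$ is genuinely trivial and that no component of the quasi-canonical form has been double-counted; but this is guaranteed by the uniqueness clauses in \cref{thm: quasi-canonical form} and \cref{oplem}, so the argument reduces to assembling these cited facts rather than any fresh computation.
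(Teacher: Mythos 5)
Your proposal is correct and follows essentially the same route as the paper: the paper derives \cref{opdisc} directly from \cref{thm: quasi-canonical form} by identifying $-\twist/\coxeter$ with an affine connection via \cref{phipt}, $v_1$ with a $2$-differential and the higher $v_j$ with classes in $H^1(\Diff^j,\nabla^\twist)$ via \cref{vpt,vuptof}, and then invoking the triangular-recursion footnote to kill those cohomologies on the disc. Nothing essential is missing from your argument.
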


\subsection{Miura opers}\label{sec: Miura oper def}
An \emph{$\g$-Muira oper} is a connection $\nabla\in \op_\g \subset \wt\op_\g$ of the special form
\be \nabla = d + (p_{-1} + u(t)) dt, \qquad u(t)\in \h(\A).\label{mop}\ee
Let $\MOp_\g$ denote the set of such connections. It is an affine space over $\h(\A)$.

We have the canonical map 
\be \MOp_\g \to \Op_\g \label{moptoop}\ee
which associates to each Miura oper $\nabla$ the underlying oper $[\nabla]$, i.e. its $\hB_+(\A)$-gauge equivalence class in $\wt\op_\g$. 

\begin{rem} For us (and also for \cite{MVopers,VW,VWW}) a Miura oper is thus not an oper, strictly speaking -- but one can think of a Miura oper $\nabla$ as consisting of its underlying oper $[\nabla]\in \Op_\g$ together with the extra data of a representative $\nabla$ of the special form \cref{mop}. That is the right intuition, because in the original, more geometric, definition of opers \cite{BDopers} and Miura opers \cite{Frenkel_2005} on Riemann surfaces, a Miura oper is by definition an oper together with extra data. (See \cite[\S5]{Fopersontheprojectiveline}. Note also that, for compatibility with that definition, what we call Miura opers here should really be called generic Miura opers.) 
\end{rem}

\subsection{Identification of $\picc_0$ with $\Fun\MOp_{\g}(D)$}
We write $\MOp_\g(D)$ for the space of Miura opers on the disc, i.e. in the case $\A = \CC[[t]]$. 
In the coordinate $t$ we can write the series expansion of the element $u\in\h(\A)$ in \cref{mop} as 
\be u = \sum_{n<0} u_{n} t^{-n-1},\nn\ee
with $u_n\in \h$ for each $n$.
Recall $\{b^i\}_{i=1,\dots,\dim\h}$ and $\{b_i\}_{i=1,\dots,\dim\h}$ are dual bases of $\h$ with respect to the form $\bilin\cdot\cdot$. Define
\be u^i_n := \la u_n, b^i \ra ,\qquad u_{i,n} := \la u_n, b_i\ra.\nn\ee
Let $\uu_{i,n}\in \MOp_{\g}(D)^*$ be the linear map which sends $\nabla$ to $u_{i,n}$. We can regard $\uu_{i,n}$ as coordinate functions on the space $\MOp_{\g}(D)$. They generate the $\CC$-algebra 
\be \Fun\MOp_\g(D) := \CC[\uu_{i,n}]_{i=1,\dots,\dim\h,n<0} \nn\ee 
of polynomial functions on $\MOp_\g(D)$. 
In view of \cref{picdef}, by identifying $\uu_{k,n}$ with $b_{k,n}$, we get an isomorphism differential algebras
\be \Fun\MOp_\g(D) \cong \picc_0 .\nn\ee
We endow $\picc_0$ with the structure of a conformal algebra as in \S\ref{sec: cvg}.

\begin{prop}\label{prop: doao}
This isomorphism
\be \Fun\MOp_\g(D) \cong \picc_0 .\label{mopisom}\ee
of differential algebras is $\Aut\O$- and $\Der\O$-equivariant.
\end{prop}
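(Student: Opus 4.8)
The plan is to make the $\Aut\O$-action on $\MOp_\g(D)$ explicit and match it, generator by generator, with the action on $\picc_0$ recorded in \cref{sec: ct} for the conformal vector with $\sv=-\chweyl$. Because \cref{mopisom} is already an isomorphism of differential algebras, the translation $L_{-1}=T$ is intertwined by construction, so it suffices to treat $L_n$ for $n\geq 0$; exponentiating the resulting $\Der_0\O$-action then yields $\Aut\O$-equivariance. Both actions are by algebra automorphisms (respectively derivations), hence are determined by their values on the generators under the identification $\uu_{i,n}\leftrightarrow b_{i,n}$, and it is these values I would compare.

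First I would compute how the component $u$ of a Miura oper $\nabla=d+(p_{-1}+u(t))dt$ transforms under a coordinate change $t=\mu(s)$. By \cref{cch} the change of coordinate gives $\mu\on\nabla=d+\big(\mu'(s)p_{-1}+\mu'(s)u(\mu(s))\big)ds$, which has left the canonical form $\op_\g$. By \cref{oplem} there is a unique element of $\H(\A)$ restoring canonical form, and the crucial observation is that this is $g=\mu'(s)^{\sum_i\chLa_i}$: since $\langle\al_i,\sum_j\chLa_j\rangle=1$ for every $i$, conjugation by $g$ rescales each $f_i$, hence $p_{-1}$, by $\mu'(s)^{-1}$, exactly cancelling the unwanted factor. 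As $\ad g$ fixes the weight-zero part $u$ while $(\del_s g)g^{-1}=(\sum_i\chLa_i)\,\mu''(s)/\mu'(s)$, the gauge-transformed connection is again a Miura oper, now with component
\be \tilde u(s)=\mu'(s)\,u(\mu(s))-\frac{\mu''(s)}{\mu'(s)}\sum_i\chLa_i. \nn\ee
Under the normalizations fixed in \cref{sec: cartan} (in particular \cref{rhonull}) one has $\sum_i\chLa_i=\chweyl$, so this reads $\tilde u(s)=\mu'(s)u(\mu(s))-\chweyl\,\mu''(s)/\mu'(s)$, and this formula defines the $\Aut\O$-action on $\MOp_\g(D)$.

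Next I would read off the induced transformation of the modes and compare with \cref{cld}. Linearising $\mu$ about the identity, the homogeneous part $\mu'(s)u(\mu(s))$ produces exactly the off-diagonal terms $-p\,b_{i,n+p}$ (with the same vanishing range $n>-p$), while the inhomogeneous part $-\chweyl\,\mu''/\mu'$ contributes only at the mode $p=-n$ and reproduces the anomalous term $-n(n+1)\bilin\sv{b_i}\vac$ for the choice $\sv=-\chweyl$. Thus the action of each $L_n$ on $\uu_{i,p}$ coincides with its action on $b_{i,p}$, and since both sides act by derivations the two $\Der\O$-actions agree on all of $\Fun\MOp_\g(D)\cong\picc_0$; exponentiating the $n\geq 0$ part gives the $\Aut\O$-statement. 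The degree-one sector can alternatively be checked directly against the transition functions \cref{btrans,btrans2} and the formula \cref{Rsv} for $R(\mu)$, giving a useful cross-check.

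The main obstacle is the gauge-restoration step: one must recognise that bringing $\mu\on\nabla$ back to canonical form is effected precisely by the torus element $\mu'(s)^{\sum_i\chLa_i}$, and that the anomaly $(\del_s g)g^{-1}$ it generates is exactly the inhomogeneity produced on $\picc_0$ by the conformal weight $\sv=-\chweyl$ — this is what singles out the conformal structure of \cref{sec: cvg} as the correct one. A secondary but genuine point of care is that this anomaly lies along $\sum_i\chLa_i$ rather than only in the root directions; identifying it with $\chweyl$ in the derivation ($\chcent$) direction relies on the normalization choices of \cref{sec: cartan}, which were fixed with exactly this consistency in mind. The remaining work — tracking signs and the active/passive convention relating the coordinate change $t=\mu(s)$ to the operator $R(\mu)$ — is routine but must be carried out consistently so that the anomalous terms match on the nose.
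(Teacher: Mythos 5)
Your proof is correct and follows essentially the same route as the paper's: transform the Miura connection under $t=\mu(s)$, restore Miura form by a gauge transformation in $\H(\A)$ to obtain $\tilde u(s)=\mu'(s)\,u(\mu(s))-\chweyl\,\mu''(s)/\mu'(s)$, then linearise in $\mu$ and match the resulting action on the modes $\uu_{i,n}$ against \cref{cld} with $\sv=-\chweyl$. The only (cosmetic) difference is that you identify the restoring torus element as $\mu'(s)^{\sum_i\chLa_i}$, the unique element of $\H(\A)$ with the required adjoint action on the $f_i$, whereas the paper writes it directly as $\mu'(s)^{\chweyl}$; these agree because any element of $P=\bigoplus_i\ZZ\chLa_i$ pairing to $1$ with every $\al_i$ must be $\sum_i\chLa_i$, so the paper's own assertion that $\mu'(s)^{\chweyl}\in\H(\O)$ already presupposes $\chweyl=\sum_i\chLa_i$ (they differ a priori by a multiple of $\cent$, which the normalisations of \cref{sec: cartan} eliminate).
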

\begin{proof} First, just as in the calculation sketched in \S\ref{sec: op ct} above, one can find the behaviour of the power series
\be u_i := \la u,b_i\ra = \sum_{n<0} u_{i,n} t^{-n-1} \nn\ee
under the coordinate transformation $t\mapsto s$ with $t=\mu(s)$, $\mu\in \Aut\O$. Indeed, in the new coordinate $s$ the connection in \cref{mop} becomes
\be \nabla = d + p_{-1} \mu'(s) ds + u(\mu(s)) \mu'(s) ds.\label{nmop}\ee
This can be brought back into the Miura form by performing a gauge transformation by $\mu'(s)^\chweyl\in \H(\O)$:
\be \mu'(s)^\chweyl \,\nabla\, \mu'(s)^{-\chweyl} =   d + (p_{-1} + \tilde u(s))ds \nn\ee
where
\be \tilde u(s) = u(\mu(s)) \mu'(s) - \chweyl \frac{\mu''(s)}{\mu'(s)} .\label{utrans}\ee
Let us adopt the viewpoint that this is an active transformation, transforming the Miura oper $\nabla= d+ (p_{-1} + u(t))dt$ to the new Miura oper $\mu\on \nabla = d+(p_{-1} + \tilde u(t))dt$. This gives the action of $\Aut\O$ on $\MOp_{\g}(D)$. The action of $\Aut\O$ on $\Fun\MOp_{\g}(D)$ is then given by  $(\mu\on \uu_{i,n})(\nabla) := \uu_{i,n}(\mu^{-1} \on \nabla)$ (so that the pairing $\uu_{i,n}(\nabla)\in \CC$ is, correctly, $\Aut\O$-invariant). 


Consider an infinitesimal transformation,  $\mu(s) = (1+\eps L_k) s = (1 - \eps s^{k+1} \del_s) s = s- \eps s^{k+1}$, so that $\mu'(s) = 1-\eps (k+1) s^k$. We have 
\begin{align}
\tilde u(s)  &= \left( u(s) - \eps s^{k+1} u'(s)\right)\left(1-\eps (k+1) s^k\right) - \eps  k (k+1) s^{k-1}\chweyl + \dots\nn\\
 &= u(s) - \eps\left( s^{k+1} u'(s) + (k+1) s^k u(s) + k (k+1) s^{k-1} \chweyl\right) + \dots.\nn
\end{align}
Let us arrange the coordinate functions $\uu_{i,n}$ into power series in the same way as $u_{i,n}$, i.e. $\uu(s) := \sum_{n<0} \uu_n t^{-n-1}$ with $\uu_n := \uu_{i,n} b^i$. 
By definition 
\be \tilde u(s) = (\mu^{-1}\on \uu)(\nabla) = ((1-\eps L_k+\dots)\on \uu)(\nabla).\nn\ee 
On comparing coefficients, we see that 
\be L_k\uu_{n} = \res ds s^n \left( s^{k+1} \uu'(s) + (k+1) s^k\uu(s) + k (k+1) s^{k-1}\chweyl\right). \nn\ee  
That is
\be L_k \uu_n = \begin{cases} -n \uu_{n+k} & k\leq -n, \\ k(k+1) \chweyl & k=-n,\\ 0 & k>-n. \end{cases}\nn\ee
In view of \cref{cld}, and recalling that $\sv = -\mu^{-1}(\chweyl)$, this establishes the $\Aut\O$-equivariance of the isomorphism \cref{mopisom}. The Lie algebra $\Der\O$ is slightly larger than the Lie algebra $\Der_0\O= t\CC[[t]]\del_t$ of $\Aut\O$, because it contains also the generator $L_{-1} = -\del_t$. But the calculation above still goes through for $k=-1$, i.e. for this generator $L_{-1}$.
\end{proof} 
\subsection{Identification of $\Wgc$ with $\Fun\Op_{\g}(D)$}\label{sec: wop}
Now we shall show that the action of the screening operators on $\picc_0$ coincides with the action of infinitesimal gauge transformations by simple root vectors on $\Fun\MOp_{\g}$. 
Let us start with a Miura $\g$-oper $\nabla$ as in \cref{mop}.
We shall consider the effect of a gauge transformation by the element $\exp(a e_i) \in \hN_+(\O)$. 
We have (noting in the second line that $[e_i,[ e_i,u]] =0$ since $[e_i,u] \propto e_i$),
\begin{align} e^{a e_i} \nabla  e^{-a e_i}
&= d - a' e_i dt + e^{a \ad e_i)}\left( p_{-1} + u\right)dt \nn\\
&= d - a' e_i dt + \bigg( p_{-1} + a\al_i + \half a^2 \left[ e_i,\al_i\right]  
+u + a \left[ e_i, u \right]\bigg)dt\nn\\
&= d + \bigg( p_{-1} + u + a \al_i 
+ e_i \left( a^2 - a' - a \la \al_i,u\ra \right) \bigg)dt.\label{scf}
\end{align}
Note $\left[ e_i , u\right] = - \la u, \al_i\ra e_i$. Therefore this gauge transformation preserves the Miura form if and only if $a$ obeys the (``Ricatti-type'') equation 
\be a^2 - a' - a\la\al_i,u\ra=0. \label{ric}\ee

(In the meromorphic setting on $\cp1$, solutions of this equation give rise to the \emph{reproduction procedure} which generates families of new solutions to the Bethe equations starting from a given solution; see \cite{ScV,MV1,MVopers}.)

In our setting of the formal disc, $u$ and $a$ are power series. We can choose to write them as
\be u = \sum_{n<0} u_{n} t^{-n-1}, \qquad a = \sum_{n\leq 0} a_n t^{-n} ,\nn\ee
with $u_n , a_n\in \h$ for each $n$. Given a power series $f=\sum_{n\geq 0} f_n t^n $, set $\int f := \sum_{n\geq 0} f_n t^{n+1} / (n+1)$. 
For any value of a parameter $c\in \CC$, there is a solution to \cref{ric} given by
\be a = -\frac{g}{ c^{-1} +\int g },\quad\text{where}\quad g=\exp\int \left(-\la u,\al_i\ra\right),\nn\ee
for then indeed
\begin{align} a^2 - a' &= \frac{g^2}{\left(c^{-1}+\int g\right)^2} - \left(-\frac{g'}{c^{-1}+\int g} + \frac{g^2}{\left(c^{-1}+\int g\right)^2}\right) \nn\\
&= \frac{g'}{c^{-1}+\int g} = \frac{g}{c^{-1}+\int g} \frac{g'}{g} = a \la u,\al_i\ra;\nn\end{align}
moreover this is the unique solution with leading behaviour $a= -c+\dots$. 

Thus the solutions to \cref{ric} form a one-parameter family containing the trivial solution $(a=-c=0)$.\footnote{One can ask about the point at infinity, $c^{-1}=0$. Generically there is no corresponding solution because $0+\int g$ is not invertible in $\O$. But there is a special case when $\la u,\al_i\ra$ is nonzero and constant. Then $g= \exp(- \la u,\al_i\ra t)$ and we have the constant nonzero solution $a=-g/\int g =  \la u ,\al_i\ra$. That is important because it gives rise to the action of the (discrete) Weyl group on ``finite Miura opers'', i.e. elements of $\g$ of the form $p_{-1} + u$.}

Consequently, it is enough to consider infinitesimal gauge transformations $\exp(\eps a  e_i) \sim 1+ \eps a  e_i$. 
In that case $a$ solves the equation 
\be a'=  -\la \al_i,u\ra a, \label{iric}\ee
whose solution, unique up to scale, is 
\be a = \exp\int\left(-\la u,\al_i\ra\right)
 = \exp\left( -\sum_{n<0}\frac{ \la u_n,\al_i \ra t^{-n}}{n} \right) 
 = \exp\left( -\bilin{\al_i}{b_j} \sum_{n<0}\frac{ u^j_n  t^{-n}}{n} \right) .
\label{adef}\ee 
In view of \cref{Vf}, \cref{Vf2} and \cref{Vdef}, 
if we identify $u_{k,n}$ with $b_{k,n}$ then we have
\be a = \sum_{n\leq 0} V_{\al_i}[n] t^{-n}.\nn\ee 
From \cref{scf} we see that $\delta u = \eps a\al_i$. That is, $\delta u_{k,n} := \eps \la \delta u_n,b_k\ra= \eps a_{n+1} \la \al_i , b_k\ra$. 
Thus this infinitessimal gauge transformation acts on polynomials in the $u_{k,n}$, by the derivation 
\be \sum_{m<0} a_{m+1} \la \al_i , b_k \ra \frac{\del}{\del u_{k,m}}
= \sum_{m\leq 0} V_{\al_i}[m] \la \al_i, b_k \ra \frac{\del}{\del u_{k,-1+m}}.\label{Qdef}\ee
Recall from \cref{kdef} that $\la \al_i , \cdot \ra = \vareps_i^{-1} \bilin{\al_i}\cdot$. We have established the following.
\begin{prop} \label{flowprop} 
After identifying $\uu_{k,n}$ with $b_{k,n}$, the action of 
the flow generated by 
$ e_i$ on $\MOp_\g(D) \cong \pi_0$ is given by
\be Q_i = -\vareps_i^{-1} \shift_{\al_i}^{-1} \ol S_{\al_i}.\nn\ee
\qed
\end{prop}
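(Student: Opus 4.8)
The plan is to verify \cref{flowprop} by directly comparing the explicit derivation \cref{Qdef}, which realizes the infinitesimal gauge flow generated by $e_i$ on $\Fun\MOp_\g(D)$, against the explicit form of the classical screening operator $Q_i$ established in \cref{naction}. The essential point is that both operators turn out to be \emph{the same differential operator} once $\uu_{k,n}$ is identified with $b_{k,n}$; so after the computation of the flow carried out in the preceding subsection, there is essentially nothing left to do beyond matching formulas.

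Concretely, I would proceed as follows. First, recall from \cref{Qdef} that the flow generated by $e_i$ acts on polynomials in $u_{k,m}$ by the derivation
\be \sum_{m\leq 0} V_{\al_i}[m] \la \al_i, b_k \ra \frac{\del}{\del u_{k,-1+m}}. \nn\ee
Next, I invoke \cref{kdef} in the form $\la \al_i,\cdot\ra = \vareps_i^{-1}\bilin{\al_i}\cdot$ (equivalently $\la\al_i, b_k\ra = \vareps_i^{-1}\bilin{\al_i}{b_k}$), which lets me rewrite the coefficient as $\vareps_i^{-1}\bilin{\al_i}{b_k}$. On the other side, the defining formula \cref{csd} for the classical screening reads
\be \ol S_{\al_i} = - \shift_{\al_i} \sum_{m\leq 0} V_{\al_i}[m] \bilin {\al_i}{b_k} \frac{\del}{\del b_{k,-1+m}}, \nn\ee
so, after cancelling the shift operator by applying $\shift_{\al_i}^{-1}$ on the left and multiplying by $-\vareps_i^{-1}$, one obtains exactly $\vareps_i^{-1}\sum_{m\leq 0} V_{\al_i}[m]\bilin{\al_i}{b_k}\del/\del b_{k,-1+m}$. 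Identifying $\uu_{k,n}$ with $b_{k,n}$ as in \cref{prop: doao}, the two derivations coincide term by term, giving $Q_i = -\vareps_i^{-1}\shift_{\al_i}^{-1}\ol S_{\al_i}$ as claimed. The only care needed is to track the sign and the $\vareps_i^{-1}$ factor consistently, and to note that the $V_{\al_i}[m]$ appearing in \cref{adef} (via the substitution $u_{k,n}\mapsto b_{k,n}$ in $a=\sum_{n\leq 0}V_{\al_i}[n]t^{-n}$) are literally the same polynomials $V_{\al_i}[m]$ defined in \cref{Vdef}, so no further computation of these coefficients is required.

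I do not anticipate a genuine obstacle here, since all the substantive work — solving the Ricatti equation \cref{ric}, extracting its infinitesimal solution \cref{adef}, and identifying $a$ with $\sum_{n\leq 0}V_{\al_i}[n]t^{-n}$ — has already been carried out in the lead-up to the proposition. The mildest point of friction is purely bookkeeping: making sure that the normal-ordering and shift conventions used to define $\ol S_{\al_i}$ in \cref{csd} align with the direction of the gauge flow in \cref{scf}, so that the operator $\shift_{\al_i}^{-1}$ (rather than $\shift_{\al_i}$) and the overall sign appear correctly. Accordingly, the proof reduces to the single observation that the two explicit expressions agree, which is why the statement can be closed immediately with \qed once \cref{Qdef,csd,kdef} are placed side by side.
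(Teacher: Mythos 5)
Your proposal is correct and follows exactly the route the paper takes: the proposition is stated with \qed precisely because its proof \emph{is} the preceding derivation of the infinitesimal gauge flow (\cref{scf}, \cref{ric}, \cref{adef}, \cref{Qdef}), and the only remaining step is the term-by-term comparison of the derivation \cref{Qdef} with $-\vareps_i^{-1}\shift_{\al_i}^{-1}\ol S_{\al_i}$ via \cref{csd} and the identity $\la\al_i,\cdot\ra = \vareps_i^{-1}\bilin{\al_i}{\cdot}$, which is exactly what you do. No gaps; your bookkeeping of the sign, the $\vareps_i^{-1}$ factor, and the identification of $a$ with $\sum_{n\leq 0}V_{\al_i}[n]t^{-n}$ matches the paper's.
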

Let us write $\Fun\Op_\g(D)$ for the space of polynomial functions on $\Op_{\g}(D)$:  
\be \Fun\Op_{\g}(D) \cong \CC[\twist_{n}, v_{1,n}]_{n<0},\nn\ee
cf. \cref{thm: quasi-canonical form} and \cref{opdisc}. 

By construction, a polynomial in the $\uu_{i,n}$, i.e. an element of $\Fun\MOp_{\g}(D)$, is an element of $\Fun\Op_{\g}(D)$ if and only if it is in the kernel of the $Q_i$. 

After identifying $\uu_{k,n}$ with $b_{k,n}$, one finds $\twist_{n} = \chcent_{n}$ and $v_{1,n} = \ol\confvec_{n}$, cf. \cref{sec: cvg,sec: afc}.

In view of the definition of $\Wgc$ as the kernel of the screenings, we have arrived at the following statement. 
(For the analogous statement in finite types see \cite[\S8.2.5]{Fre07}.)
\begin{thm}\label{Wpi} We have the commutative diagram:
\be
\begin{tikzcd}
\picc_0 \rar{\sim} & \Fun \MOp_\g(D) \\
\Wgc\uar[hook]  \rar{\sim}& \Fun \Op_\g(D) \uar[hook] 
\end{tikzcd}
\nn\ee
These maps are equivariant with respect to $\Der\O$ and $\Aut\O$.
\qed\end{thm}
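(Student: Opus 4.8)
The plan is to assemble the square from the two identifications already established — \cref{prop: doao} for the top row and \cref{flowprop} for the comparison of screenings with residual gauge flows — so that the only genuinely new point is that the two descriptions of the bottom-left corner, as a kernel of classical screenings and as the ring of gauge-invariant functions on Miura opers, coincide. Throughout I work under the identification $\uu_{k,n}\leftrightarrow b_{k,n}$ of \S\ref{sec: wop}.

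First I would record the top horizontal isomorphism $\picc_0 \isom \Fun\MOp_\g(D)$ together with its $\Der\O$- and $\Aut\O$-equivariance; this is exactly \cref{prop: doao}. The right-hand vertical arrow is the realization of $\Fun\Op_\g(D)$ inside $\Fun\MOp_\g(D)$ furnished by the canonical map \cref{moptoop}: by the analysis of the quasi-canonical form (\cref{thm: quasi-canonical form}, \cref{opdisc}) and the gauge computation around \cref{scf}--\cref{Qdef}, a polynomial in the $\uu_{i,n}$ lies in $\Fun\Op_\g(D)$ precisely when it is annihilated by every infinitesimal residual gauge flow, so this arrow is simply the inclusion of the invariant subalgebra $\bigcap_{i\in I}\ker Q_i$. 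The left-hand vertical arrow is the definition $\Wgc = \bigcap_{i\in I}\ker\ol S_{\al_i}\subset\picc_0$.

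The core step is then to match these two subalgebras. By \cref{flowprop} the flow generated by $e_i$ acts as $Q_i = -\vareps_i^{-1}\shift_{\al_i}^{-1}\ol S_{\al_i}$, and since $\shift_{\al_i}^{-1}\colon\picc_{\al_i}\to\picc_0$ is a vector-space isomorphism and $-\vareps_i^{-1}\neq 0$, we have $\ker Q_i = \ker\ol S_{\al_i}$ for each $i\in I$. Hence
\be
\Fun\Op_\g(D) \;=\; \bigcap_{i\in I}\ker Q_i \;=\; \bigcap_{i\in I}\ker\ol S_{\al_i} \;=\; \Wgc,
\nn\ee
so the top isomorphism restricts to the claimed bottom isomorphism $\Wgc \isom \Fun\Op_\g(D)$. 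Commutativity of the square is then automatic, the two horizontal arrows being the single isomorphism of \cref{prop: doao} and its restriction, and the verticals being inclusions.

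For the equivariance of the bottom row it suffices to observe that both subspaces are stable under the relevant actions: $\Wgc$ is $\Der\O$- and $\Aut\O$-stable because $\ol S_{\al_i}$ commutes with the action of the non-negative modes of $\ol\confvec$ by \cref{clintcom} (as used already in \cref{Tkdiag}), and this $\Der\O$-action exponentiates to $\Aut\O$ by \S\ref{sec: qcs}; while $\Fun\Op_\g(D)$ is stable because the Miura-to-oper map is equivariant for the coordinate-change action \cref{cch}. The restriction of the equivariant top map of \cref{prop: doao} to these stable subspaces is therefore equivariant. I expect the only real obstacle to be the bookkeeping in the core step — specifically, confirming that "$Q_i$-invariant" is exactly the characterization of $\Fun\Op_\g(D)$ coming from the quasi-canonical form, so that the invariant algebra is neither larger nor smaller than the functions descending from $\Op_\g(D)$; but this is precisely what the computation leading to \cref{Qdef} provides.
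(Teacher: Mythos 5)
Your proposal is correct and follows essentially the same route as the paper: the top isomorphism is \cref{prop: doao}, the identification $\ker Q_i=\ker\ol S_{\al_i}$ via \cref{flowprop} (together with the characterization of $\Fun\Op_\g(D)$ as the subalgebra of $\Fun\MOp_\g(D)$ killed by the residual gauge flows, which the paper likewise asserts "by construction" from the computation leading to \cref{Qdef}) gives the bottom isomorphism, and equivariance is inherited by restriction. The "bookkeeping" worry you flag at the end is treated no more explicitly in the paper than in your write-up, so there is no gap relative to the paper's own level of detail.
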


\subsection{Global identifications}\label{sec: gi}
\begin{prop}\label{mopconn}
We have an isomorphism of sheaves on $\cp1$,
\be \conn{-\chweyl}(U) \cong \MOp_{\g}(U), \nn\ee
given by
\be \chi(t) \mapsto d + (p_{-1} - \chi(t)) dt .\nn\ee
\end{prop}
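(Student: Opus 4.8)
The plan is to verify that the explicit assignment $\chi(t) \mapsto d + (p_{-1} - \chi(t))dt$ is a bijection in each coordinate chart and that it intertwines the transition functions of the two sheaves, so that it glues to an isomorphism of sheaves over $\cp1$. Throughout I identify $\h \cong \h^*$ by the form, as in \cref{sec: identification}, so that a section of $\conn{-\chweyl}$, which in a local trivialization from a coordinate $t:U\to\CC$ is a pair $(1,\chi(t))$ with $\chi(t)\in\h^*\ox\K(U)$, is read as an $\h$-valued meromorphic function $\chi(t)$. Then $-\chi(t)\in\h(\A)$ with $\A=\K(t(U))$, so $d+(p_{-1}-\chi(t))dt$ is a Miura $\g$-oper in the sense of \cref{mop}, and $\chi\mapsto -\chi$ is a bijection onto $\MOp_\g(U)$ in the fixed coordinate $t$, with inverse $u\mapsto -u$. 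It visibly commutes with restriction to smaller opens, since on both sides restriction is just restriction of the coefficient function.

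The heart of the argument is compatibility with changes of coordinate. First I would recall that, by \cref{conrhotrans}, a section of $\conn{-\chweyl}$ given by $\chi(t)$ in the $t$-trivialization is given in a second trivialization $s$, with $t=\mu(s)$, by $\tilde\chi(s) = \mu'(s)\chi(\mu(s)) + \tfrac{\mu''(s)}{\mu'(s)}\chweyl$. On the Miura side, the coordinate change acts on $d+(p_{-1}+u(t))dt$ first as in \cref{cch}, producing the connection \cref{nmop}, which is no longer in Miura form; one then restores Miura form by the compensating gauge transformation by $\mu'(s)^\chweyl\in\H(\A)$, exactly as in the proof of \cref{prop: doao} (the computation there is purely algebraic and applies verbatim in the meromorphic setting $\A=\K(t(U))$), with net effect on the coefficient given by \cref{utrans},
\be \tilde u(s) = u(\mu(s))\mu'(s) - \chweyl\frac{\mu''(s)}{\mu'(s)}. \nn\ee
Substituting $u=-\chi$ gives $\tilde u(s) = -\mu'(s)\chi(\mu(s)) - \chweyl\,\mu''(s)/\mu'(s) = -\tilde\chi(s)$, which is precisely the image under our map of the transformed section $\tilde\chi(s)$. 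Hence the map commutes with the transition functions of the two sheaves on every overlap.

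With these two facts in place the conclusion is immediate: the map is given in each trivialization by the same coordinate-free recipe $\chi\mapsto d+(p_{-1}-\chi)dt$, it is a bijection in each chart, it respects restrictions, and it intertwines transition functions; therefore it assembles into a well-defined isomorphism of sheaves $\conn{-\chweyl}(U)\cong\MOp_\g(U)$, natural in $U$.

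The one place where care is needed — which I would present as the conceptual content rather than a genuine obstacle — is the exact matching of the anomalous inhomogeneous term. The transition rule for $\conn{-\chweyl}$ carries a term $+\tfrac{\mu''}{\mu'}\chweyl$, arising from the choice $\sv=-\chweyl$ in \cref{chitrans}, while the transition rule for Miura opers carries $-\chweyl\tfrac{\mu''}{\mu'}$, arising from the gauge factor $\mu'(s)^\chweyl$ that reinstates Miura form; these are forced to agree (up to the sign absorbed by $u=-\chi$) precisely because the conformal structure built into $\conn{\sv}$ is calibrated to $\sv=-\chweyl$. I would also note, to be sure the two \emph{affine} structures match, that no higher-order term in $\mu''/\mu'$ and no correction valued in $\n_-$ or $\hn_+$ appears on either side: both transition laws are affine in the Cartan-valued coefficient, and the $[e_i,\cdot]$-type corrections that occur in a general gauge transformation never enter the purely $\h(\A)$-valued Miura part. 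Checking this sign bookkeeping together with the $\h\cong\h^*$ identification is the only delicate point.
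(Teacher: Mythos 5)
Your proof is correct and takes essentially the same route as the paper, whose entire proof is the citation ``This follows from \cref{chitrans,utrans}'': you simply spell out the check that the substitution $u=-\chi$ intertwines the transition law \cref{chitrans} (with $\sv=-\chweyl$) with the Miura transformation \cref{utrans}. Nothing further is needed.
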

\begin{proof}
This follows from \cref{chitrans,utrans}. 
\end{proof}
In particular, 
\be \conn{-\chweyl}(\cp1)_\x \cong \MOp_{\g}(\cp1)_\x .\nn\ee

Thus, given $\chi \in \conn{-\chweyl}(\cp1)_\x$ we have the corresponding Miura oper $\nabla = d + (p_{-1} - \chi(z))dz$ and its underlying oper $[\nabla]\in \Op_{\g}(\cp1)_\x$. The affine connection $\Tkn_\chi$ is manifestly the affine connection defined by $[\nabla]$, cf. \cref{sec: conrho} and \cref{thm: quasi-canonical form}.

\begin{prop}\label{fc} The cohomology classes $[ \coinv_\chi(\hamd_j dt^{j+1}) ] \in  H^1(\cp1 ,\Omega^j, \Tkn_\chi)_\x$ are
the classes of the functions appearing in (any) quasi-canonical form of the oper $[\nabla]$:
\be [\coinv_\chi( \hamd_j dt^j ) ] = [ v_j(t) dt^j] .\nn\ee
\end{prop}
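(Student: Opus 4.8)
The plan is to reduce the statement to a comparison of two classes in the one-dimensional graded cohomology of local integrals of motion, and then transport the comparison from the disc to $\cp1$ using the functoriality of $\coinv_\chi$ established in \cref{functhm}.

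First I would make the map $\coinv_\chi$ fully explicit on the relevant states. By \cref{secpair} and \cref{twistrem}, $\coinv_\chi$ acts on any element of $\picc_0 \cong \Fun\MOp_\g(D)$, written as a differential polynomial in the modes $b_{k,-n}$, by substituting the components of $\chi$ (with $\chcent_{-n}$ sent to $\coxeter\twist^{(n)}$). On the other side, \cref{mopconn} identifies $\chi$ with the Miura oper $\nabla = d + (p_{-1}-\chi)\,dz$, and bringing $\nabla$ to the quasi-canonical form of \cref{thm: quasi-canonical form} is effected by a gauge transformation in $\hN_+$; the resulting function $v_j$ is a \emph{universal} differential polynomial in the Miura field $u=-\chi$. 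Hence $v_j$ is the image under $\coinv_\chi$ of a definite element $V_j \in \Fun\MOp_\g(D) \cong \picc_0$ — the element corresponding to $v_j$ under the dictionary of \cref{Wpi} — so that $\coinv_\chi(V_j\,dt^{j+1}) = v_j\,dt^{j+1}$. This turns the proposition into the assertion that $[\coinv_\chi(\hamd_j\,dt^{j+1})] = [\coinv_\chi(V_j\,dt^{j+1})]$.

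Next I would record the two properties of $V_j$ that place it alongside $\hamd_j$. Because conjugation by $\hN_+$ does not alter the underlying oper $[\nabla]$, the function $v_j$ is invariant under the screening flows, which by \cref{flowprop} are exactly the infinitesimal gauge transformations $Q_i$; thus $V_j \in \ker H = \Wgc$. Since $v_j$ transforms as a $(j+1)$-differential, \cref{vpt}, the state $V_j$ is a conformal primary of weight $j+1$. Consequently $V_j$ and $\hamd_j$ both define classes in the grade-$(j+1)$ part of $H^1(\n_+,\picc_0)$ (the screening complex being the Chevalley complex of $\n_+$). By \cref{thm: Iaff} and \cref{vjthm} this graded piece, pulled back to $\piaff_0$, is the exponent-$j$ line of $\a_+^*$, one-dimensional and spanned by $[\hamd_j\ox\vol]$; showing that $V_j$ represents a \emph{nonzero} class there — which is precisely the nontriviality of $v_j$ as an oper invariant in \cref{thm: quasi-canonical form} — forces $V_j$ and $\hamd_j$ to coincide modulo canonical translates, i.e. $\hamd_j - V_j = \Tk f$ for a weight-$j$ primary $f$. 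Finally \cref{functhm} gives $\coinv_\chi(\Tk f\,dt^{j+1}) = \Tkn_\chi\,\coinv_\chi(f\,dt^{j})$, an exact form, so by \cref{funccor} the two global classes agree in $H^1(\cp1,\Omega^j,\Tkn_\chi)_\x$, which is the claim.

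The hard part will be the middle step: identifying the class of $V_j$ with that of $\hamd_j$ inside $\a_+^*$. This demands (i) verifying that the substitution rule for $\coinv_\chi$ genuinely reproduces the Drinfeld--Sokolov gauge fixing, i.e. that the meromorphic gauge transformation producing $v_j$ on $\cp1$ is governed by the same universal differential polynomial as on the formal disc, including the correct signs and factors of $\coxeter$ coming from the dictionary of \cref{Wpi}; (ii) pinning down the normalization so that the two generators of the one-dimensional cohomology are equal and not merely proportional; and (iii) treating the exponents of multiplicity two in type ${}^1\!D_{2k}$, where the relevant graded piece of $\a_+^*$ is two-dimensional and the choice \cref{p1pj} of the $p_j$ (via $[p_1,p_j]=0$) must be used to separate the two densities. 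Everything outside this step is a formal consequence of \cref{functhm,flowprop,thm: quasi-canonical form}.
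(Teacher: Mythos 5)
Your argument takes a genuinely different route from the paper's, and its pivotal step has a real gap. You propose to realize $v_j$ as $\coinv_\chi(V_j\,dt^{j+1})$ for a universal element $V_j\in\Wgc\subset\picc_0$ and then to force $[V_j]=[\hamd_j]$ by a dimension count in a one-dimensional graded cohomology. But the one-dimensionality you quote (\cref{thm: Iaff}, via \cref{ffprop} and \cref{afthm}) is established only for $\Iaff\subset\Faff_0$, i.e.\ for classes admitting representatives in $\piaff_0=\CC[\tildeal_{i,[n]}]_{i\in I\setminus\{0\},\,n<0}\vac$. Your $V_j$ is a priori only an element of $\Wgc\subset\picc_0$, so its class lives in the larger space $\mc I\subset\F_0$; the paper records only the inclusion $\Iaff\subset\mc I$, and one can check the inclusion is strict. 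For instance, at grade $2$ the state $\chcent_{-1}\chcent_{-1}\vac$ lies in $\Wgc$ (each $Q_i$ is a derivation killing $\chcent_{-1}\vac$), is not in $\Tk(\picc_0)$ (by \cref{kaff} $\Tk$ annihilates $\chcent_{-1}\vac$, while for $a$ not proportional to $\chcent$ the image $\Tk\,a_{-1}\vac$ has a nonvanishing $a_{-2}\vac$ component), and is not congruent modulo $\Tk(\picc_0)$ to any element of $\piaff_0$. So it defines a class in $\mc I$ independent of $[\hamd_1\ox\vol]$, and nonvanishing of $[V_j]$ in $\mc I$ does not place it on the line $\CC[\hamd_j\ox\vol]$. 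To repair this you would have to show that $V_j$ can be chosen, using the gauge freedom \cref{vuptof} (which does correspond to shifting by $\Tk$-translates), to lie in $\piaff_0$ --- and even then, as you note in (ii), you would only obtain proportionality, with the constant still to be fixed.

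The paper avoids the comparison-of-classes problem entirely: it uses the affine structure on $\cp1\setminus\x$ defined by $\Tkn_\chi$ to pass to local coordinates in which $\twist\equiv 0$. In such a chart $\coinv_\chi$ kills every mode $\chcent_{-n}$ (\cref{twistrem}) and therefore factors through the quotient $\pifin_0$, while on the oper side one is reduced to opers for the derived subalgebra $\g'=[\g,\g]$; the identity $\coinv_\chi(\hamd_j)=v_j$ then becomes exactly the Drinfeld--Sokolov statement that the canonical-form coefficients of a $\g'$-oper are the mKdV densities, which is quoted rather than re-derived. If you want to keep your algebraic route, the lesson is that the comparison must be carried out after projecting to $\pifin_0$, where the one-dimensionality is actually available --- which is precisely what the choice of flat coordinates for $\Tkn_\chi$ accomplishes.
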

\begin{proof}
The affine connection $\Tkn_\chi$ gives an affine structure on $\cp1\setminus \x$, i.e. an atlas of local charts $t: U \to \CC$ where each local holomorphic coordinate $t$ is such that $dt$ is constant for $\Tkn_\chi$. It is enough to show that the statement holds in such a chart. But in such a chart the statement reduces to a standard statement about densities of $\g$-mKdV Hamiltonians. 

Indeed, in such a chart the component of the connection $\Tkn_\chi$ is vanishing: $\twist(t) = 0$. Hence, given \cref{twistrem}, $\coinv_\chi(\chcent_{-n} v)$ vanishes too, for all $v\in \picc_0$.  But that means $\coinv_\chi$ descends to a well-defined map from the quotient by such modes, and this quotient is what we called $\pifin_0$ above, cf. \eqref{afffin}. So in this coordinate the $\coinv_\chi( \hamd_j )$ are precisely the densities of integrals of motion of Affine Toda field theory or equivalently (see \cite{FFIoM}) the densities of integrals of motion of $\g$-mKdV. On the oper side, the fact that $\twist(t)=0$ means we are effectively working with opers for the derived subalgebra $\smash{\g}'= [\g,\g]$. This is the setting of \cite[Section 6]{DS}. It is shown there  (in Proposition 6.11) that the functions in the quasi-canonical form of such opers are these same densities. (In \cite{DS}, what we are calling the quasi-canonical form of the affine oper appears in Proposition 6.2, and the Miura form appears in Lemma 6.7.)
\end{proof}

\newcommand{\etalchar}[1]{$^{#1}$}
\providecommand{\MR}[1]{}
\providecommand{\bysame}{\leavevmode\hbox to3em{\hrulefill}\thinspace}
\providecommand{\MR}{\relax\ifhmode\unskip\space\fi MR }
\providecommand{\MRhref}[2]{%
  \href{http://www.ams.org/mathscinet-getitem?mr=#1}{#2}
}
\providecommand{\href}[2]{#2}

\end{document}